\let\OLDthebibliography\thebibliography
\renewcommand\thebibliography[1]{
  \OLDthebibliography{#1}
  \setlength{\parskip}{0pt}
  \setlength{\itemsep}{4pt plus 0.3ex}
}
\newcommand{\B}{\mathcal{B}}
\def\XXint#1#2#3{{\setbox0=\hbox{$#1{#2#3}{\int}$ }
\vcenter{\hbox{$#2#3$ }}\kern-.56\wd0}}
\newcommand{\Qb}{\mathcal{Q}}
\newcommand{\N}{\mathbb{N}}
\newcommand{\R}{\mathbb{R}}
\newcommand{\Sp}{\mathbb{S}}
\newcommand{\ep}{\varepsilon}
\newcommand{\conv}[1]{\xrightarrow{\,#1\,}}
\newcommand{\wconv}[1]{\xrightharpoonup{\,#1\,}} 
\newcommand{\vol}{\text{Vol}}
\newcommand{\sing}{\textnormal{sing}}
\newcommand{\dive}{\text{div}}
\newcommand{\s}{\hspace{7pt}}
\newcommand{\vsp}{\vspace{4.5pt}}
\newcommand{\ov}[1]{\widetilde{#1}}
\newcommand{\lp}{\langle}
\newcommand{\rp}{\rangle}
\newcommand{\p}{\mathfrak p}
\newcommand{\inj}{\textnormal{inj}}
\newtheorem{theorem}{Theorem}[section]
\newtheorem{proposition}[theorem]{Proposition}
\newtheorem{lemma}[theorem]{Lemma}
\newtheorem{corollary}[theorem]{Corollary}
\theoremstyle{definition}
\newtheorem{definition}[theorem]{Definition}
\newtheorem{remark}[theorem]{Remark}
\newcommand{\addressa}[1]{\gdef\@addressa{#1}}
\newcommand{\emaila}[1]{\gdef\@emaila{\url{#1}}}
\newcommand{\addressb}[1]{\gdef\@addressb{#1}}
\newcommand{\emailb}[1]{\gdef\@emailb{\url{#1}}}
\newcommand{\addressc}[1]{\gdef\@addressc{#1}}
\newcommand{\emailc}[1]{\gdef\@emailc{\url{#1}}}
\newcommand{\@endstuff}{\par\vspace{\baselineskip}\noindent
\begin{tabular}{@{}l}\scshape\@addressa\\\textit{E-mail address:} \@emaila\end{tabular} 

\vspace{12pt} \noindent
\begin{tabular}{@{}l}\scshape\@addressb\\ \textit{E-mail address:} \@emailb\end{tabular}

\vspace{12pt} \noindent
\begin{tabular}{@{}l}\scshape\@addressc\\ \textit{E-mail address:} \@emailc\end{tabular}
}
\begin{document}

\title{Yau's conjecture for nonlocal minimal surfaces}
\date{\today}


\author{Michele Caselli, Enric Florit-Simon and Joaquim Serra}


\addressa{Michele Caselli \\ Scuola Normale Superiore\\ Piazza dei Cavalieri 7, 56126 Pisa, Italy }
\emaila{michele.caselli@sns.it}

\addressb{Enric Florit-Simon \\ Department of Mathematics, ETH Z\"{u}rich \\ Rämistrasse 101, 8092 Zürich, Switzerland}
\emailb{enric.florit@math.ethz.ch}

\addressc{Joaquim Serra \\ Department of Mathematics, ETH Z\"{u}rich \\ Rämistrasse 101, 8092 Zürich, Switzerland }
\emailc{joaquim.serra@math.ethz.ch}


\setlength\parindent{12pt}

\maketitle

\begin{abstract}
We introduce nonlocal minimal surfaces on closed manifolds and establish a far-reaching Yau-type result: in every closed, $n$-dimensional Riemannian manifold (without any genericity assumption on the metric), we construct infinitely many nonlocal $s$-minimal surfaces. We prove that when $s\in (0,1)$ is sufficiently close to $1$, the constructed surfaces are smooth for $n=3$ and $n=4$, while for $n\ge 5$, they are smooth outside of a closed set of dimension $n-5$. 

 \vsp
Moreover, we prove surprisingly strong regularity and rigidity properties of finite Morse index $s$-minimal surfaces, such as a ``finite Morse index Bernstein-type result'' and the compactness of the class of finite index $s$-minimal surfaces in the strongest geometric sense (that is, they are shown to subsequentially converge smoothly and with multiplicity one).

 \vsp
These properties make nonlocal minimal surfaces ideal objects on which to  apply min-max variational methods as well as to approximate classical minimal surfaces. Combined with the recent results in \cite{CDSV}, which proves uniform curvature estimates and optimal sheet separation for stable $s$-minimal surfaces as $s\nearrow 1$ (i.e. as they converge to classical minimal surfaces), this work sets a new powerful method for the study of old and new questions on the existence of classical minimal surfaces.
\end{abstract} 

\tableofcontents

\section{Introduction}

\subsection{The ``classical'' Yau conjecture}

The existence and regularity of minimal hypersurfaces in closed manifolds is one of the central questions in Riemannian geometry.
Yau's conjecture (raised in 1982 by S.-T. Yau~\cite{Yau82}) is a particularly famous and archetypal problem.  It states that every closed three-dimensional manifold must contain infinitely many smooth minimal surfaces. This problem exposes the enormous difficulties in applying variational methods to the area functional defined on the class of ``surfaces''.

\vspace{2pt}
Yau's conjecture was recently established  by K. Irie,  F. C. Marques, and A. Neves \cite{IMN} (in the case of generic metrics) and by A. Song \cite{Song} (in full generality):

\begin{theorem}[\cite{IMN,Song}] \label{Yaucon}
    Let $(M^n,g)$ be a closed Riemannian manifold of dimension $3\le n\le 7$. Then, there exists an infinite number of smooth, closed, minimal hypersurfaces in $M$. 
\end{theorem}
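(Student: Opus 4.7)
The plan is to combine Almgren--Pitts min-max theory with the Weyl law for the volume spectrum and White's bumpy metrics theorem. For every $k \ge 1$, I would define the $k$-width $\omega_k(M,g)$ as the infimum, over $k$-parameter sweepouts in the space $\mathcal{Z}_{n-1}(M;\Z_2)$ of mod-$2$ flat $(n-1)$-cycles, of the maximum mass attained along the sweepout. The Almgren--Pitts existence theorem, combined with Schoen--Simon regularity and Pitts' curvature estimates, realizes each $\omega_k$ as the total mass of a closed embedded minimal hypersurface $\Sigma_k \subset M$ (possibly with integer multiplicities); in the dimension range $3 \le n \le 7$ such $\Sigma_k$ is smooth, since its singular set would have codimension at least $7$.

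The next analytic input is the Weyl law of Liokumovich--Marques--Neves, which asserts $\omega_k \sim a(n)\,\mathrm{vol}(M,g)^{(n-1)/n}\,k^{1/n}$ as $k \to \infty$. In particular $\omega_k \to \infty$, so no finite collection of minimal hypersurfaces can realize all the widths simultaneously.

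For generic (bumpy) metrics I would follow Irie--Marques--Neves: by White's theorem, in a residual subset of $C^\infty$ metrics every minimal hypersurface is nondegenerate and hence isolated. Given such a metric $g$, suppose toward a contradiction that some nonempty open set $U \subset M$ is disjoint from every minimal hypersurface. Consider the conformal family $g_t = (1+t\varphi)^2 g$ with $0\le \varphi\in C^\infty_c(U)$ nontrivial. A careful perturbation argument, exploiting that the $\omega_k$-realizers are disjoint from the support of $\varphi$ and depend lower semicontinuously on the metric, shows $\omega_k(g_t) = \omega_k(g)$ for all $k$ and all small $t$. On the other hand, the Weyl law forces $\omega_k(g_t)/\omega_k(g) \to (\mathrm{vol}(M,g_t)/\mathrm{vol}(M,g))^{(n-1)/n} > 1$ as $k \to \infty$, a contradiction. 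Hence in any bumpy metric every open set is met by some minimal hypersurface, yielding infinitely many; by Baire category the conclusion extends to a residual set of metrics. For non-generic metrics one invokes Song's exhaustion/localization refinement, which performs min-max constructions in small subdomains where rigidity fails and then combines them via a delicate cluster argument.

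The main obstacle, present in both parts, is the \emph{multiplicity one} problem: a priori the widths $\omega_k$ could all be realized by integer multiples of the same minimal hypersurface, producing no new surfaces. Resolving it — Marques--Neves' multiplicity-one theorem in the generic case and Song's cluster argument in the general case — is the genuinely hard analytic step, relying on the Lusternik--Schnirelmann inequality for the volume spectrum and on Zhou's work on one-parameter min-max. A secondary difficulty is the dimension restriction $n \le 7$, dictated by the Schoen--Simon--Pitts regularity theory for stable and min-max minimal hypersurfaces, beyond which one must allow singularities.
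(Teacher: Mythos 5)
The paper does not prove Theorem~\ref{Yaucon}: it is quoted directly from Irie--Marques--Neves \cite{IMN} and Song \cite{Song}, and the authors' own contribution is the nonlocal analogue developed in the remaining sections. So there is no internal proof to compare your proposal against. Your sketch of the original route is broadly faithful in its ingredients (Almgren--Pitts with Schoen--Simon--Pitts regularity to produce the smooth realizers of the widths $\omega_k$ for $3\le n\le 7$, the Liokumovich--Marques--Neves Weyl law, White's bumpy-metric theorem, the conformal perturbation argument for the generic case, and Song's localization for the general case), but it contains one genuine conceptual error.

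The Irie--Marques--Neves density argument does not need, and does not prove, multiplicity one. The mechanism is more elementary: assuming no closed minimal hypersurface of $g_t$ meets the perturbation region $U=\mathrm{supp}(\varphi)$ for $t$ in an interval, every varifold realizing $\omega_k(g_t)$ is an integer-multiplicity sum of $g_0$-minimal hypersurfaces (because $g_t=g_0$ off $U$), so $\omega_k(g_t)$ takes values in a fixed countable set of $g_0$-masses. Since $t\mapsto\omega_k(g_t)$ is continuous (even Lipschitz), it must be constant in $t$, and the Weyl law then gives a contradiction as $k\to\infty$. No control on the multiplicity of the min-max varifold is ever invoked, and Song's extension to non-generic metrics is likewise a localization/cylindrical-end argument built on this machinery rather than on a multiplicity-one theorem. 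The Marques--Neves/Zhou Multiplicity One Theorem is a separate and later result, essential for index bounds, equidistribution, and the Morse-theoretic program, but Theorem~\ref{Yaucon} as stated was settled without it. Singling it out as ``the genuinely hard analytic step'' of both proofs is therefore a misattribution; the hard step in \cite{IMN} is the Weyl law itself together with the continuity-into-a-countable-set trick, and in \cite{Song} it is the construction on the noncompact core with cylindrical ends.
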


To construct an infinite number of minimal surfaces, one must consider non-stable critical points of the area functional (since in general closed 3-manifolds contain only a finite number of stable minimal surfaces). These surfaces are naturally constructed using min-max (i.e., mountain-pass type) methods. 
The use of min-max methods for the area functional goes back to Almgren \cite{Alm2, AlmNotes} and afterward, Pitts \cite{Pitts} in the 1960s.

\vspace{2pt}
Several essential difficulties arise when trying to construct minimal surfaces employing a min-max scheme. The principal underlying issue is that, in the case of the area functional for surfaces,  
min-max sequences can be extremely noncompact, unless we work with very weak notions of convergence. 

\vspace{2pt}
Indeed, suppose we are given a sequence of minimal surfaces with uniformly bounded Morse index.
This is only a best case scenario, as the accumulation points of min-max sequences, if they exist, will be finite index minimal surfaces.
A concrete instance of this best case scenario would be a sequence of shrinking catenoids that converge to a hyperplane (with multiplicity two). In this example, the sequence does converge in some weak sense (namely, in the sense of varifolds). However, the limiting object, a ``double hyperplane'', has arguably very few things in common with the catenoids that approximate it (e.g.  their topologies or their total curvatures are completely different!).
More generally, it is possible to guarantee that every sequence of minimal surfaces with uniformly bounded index and area will have convergent subsequences, provided one chooses a weak enough notion of convergence. However, as in the example of the catenoid, weak convergence has undesired side effects: different sequences of minimal surfaces with ``interesting topologies'' may yield, in the limit, the same object with a loss of topology (with an integer multiplicity).

\vspace{2pt}
We emphasize that, as the example of the catenoid shows, there is no way to avoid the side effects: they must come with any convergence weak enough to guarantee compactness. Hence, a crucial difficulty in the proof of Theorem \ref{Yaucon} is the extraordinary difficulty in ensuring that multiple minimal surfaces, with bigger and bigger areas, constructed via a min-max method are distinct (and not the same one counted multiple times). Another very delicate question is the control of the topology of the minimal surfaces constructed via min-max (e.g. to construct minimal spheres in a given manifold). Such issues have only been solved, in particular cases, through a huge effort in several outstanding works, including Almgren \cite{Alm, Alm2, AlmNotes}, Pitts \cite{Pitts}, Schoen-Simon \cite{SchoenSimon}, Marques-Neves \cite{MN, MN2, MN3, Willmore}, Irie-Marques-Neves \cite{IMN} and Song \cite{Song}; or Simon-Smith \cite{FSmith}, Haslhofer-Ketover \cite{minspheres} and Wang-Zhou \cite{WangZhou}, to cite a few.

\subsection{Purpose of the paper}
This work introduces nonlocal minimal (hyper)surfaces ---in the spirit of Caffarelli-Roquejoffre-Savin \cite{CRS}--- on closed Riemannian manifolds and develops their existence and regularity theory. A main purpose of this paper is to demonstrate that they are an ideal class of objects on which to apply min-max methods (as they seem to prevent almost every pathology that arises for classical minimal surfaces, such as multiplicity and loss of topology), as well as to approximate classical minimal surfaces.

\vsp
On this second point, let us emphasize that nonlocal minimal surfaces approximate classical minimal surfaces as the fractional parameter $s\in (0,1)$ converges to $1$.
The recent results in \cite{CDSV} show, among other things, uniform curvature estimates and optimal sheet separation (of order $\sqrt{1-s}$) for stable nonlocal $s$-minimal surfaces in a three-dimensional Euclidean setting as $s\nearrow 1$, which implies their multisheeted convergence towards smooth classical minimal surfaces. A forthcoming work \cite{Florit} will prove a Weyl Law for the $s$-minimal surfaces obtained in the present paper, as well as show their convergence (building on \cite{CDSV}), as $s\to 1$ and for $n=3$, to classical minimal surfaces, obtaining a new proof of their density (and in particular of Theorem \ref{Yaucon}) for three dimensional manifolds with a generic metric.\\
Therefore, in combination with the present paper, nonlocal minimal surfaces provide a powerful new method to construct classical minimal surfaces. This method resembles in some ways the Allen-Cahn approximation in \cite{G, GG1, GG2, CM}, but presents several advantages (some of which are discussed in \cite{CDSV}, and some of which will become evident in this work).

\vsp
We obtain surprisingly strong estimates for finite Morse index nonlocal minimal surfaces that do not hold for classical minimal surfaces. 
These estimates confer finite Morse index nonlocal minimal surfaces exceptional compactness and regularity properties, thanks to which we establish far-reaching existence and regularity results, including a nonlocal analog of Theorem \ref{Yaucon}. Let us give a quick selection/highlights of our results here:

\begin{itemize}
    \item[(i)] Any closed manifold of dimension $n\ge 3$ contains infinitely many nonlocal minimal (hyper)surfaces (i.e., the nonlocal analog of Yau's conjecture holds). More precisely, given $s\in (0,1)$, for every $\p\in \N$ there exists an $s$-minimal surface with Morse index $\le \p$ and fractional perimeter comparable to $\p^{s/n}$. These surfaces are smooth in low dimensions and smooth away from a closed lower-dimensional set in every dimension. We stress that our result holds for \textit{every} metric and not just generic ones.
    \item[(ii)]  For $n\in \{3,4\}$ and $s\in (0,1)$ sufficiently close to $1$ (the limit case $s=1$ formally corresponds to classical minimal surfaces), the following holds:
\begin{itemize}
\item  Any smooth (embedded) $s$-minimal hypersurface of finite Morse index in $\R^n$ must be a hyperplane.
\item In a closed $n$-dimensional manifold $M^n$, any sequence of smooth $s$-minimal (hyper)surfaces  with uniformly bounded Morse index automatically satisfies uniform curvature and sheet separation estimates. As a consequence, any such sequence has a subsequence that converges smoothly and with multiplicity one to a (smooth) submanifold. In particular, if all the elements of the sequence are homeomorphic to the same topological space $\mathbb X$ then the limit is also homeomorphic to $\mathbb X$.
\end{itemize}
\end{itemize}

Thanks to their exceptional compactness and regularity properties, Morse theory for nonlocal minimal surfaces is in some sense as ``flawless'' as finite-dimensional Morse theory, at least from the functional analysis (i.e. compactness) perspective. It goes without saying that this is in striking contrast with the situation for classical minimal surfaces for the area functional.

\vsp
Since we think that the paper can be of interest to readers who do not necessarily have any previous knowledge on nonlocal elliptic equations, we try to give an accessible and mostly self-contained presentation. For the reader's convenience, the present paper is complemented by the companion article \cite{FracSobPaper}, which provides greater detail about the definitions introduced in Section \ref{DefsSection} and includes the proofs of the estimates for the kernel $K_s(p,q)$ (here stated in Section \ref{kernsection}) and the monotonicity formula given in Section \ref{GenMonFormula}.
Moreover, we have spared no efforts in trying to make our proofs as efficient as possible.

\subsection{Nonlocal minimal (hyper)surfaces on a closed Riemannian manifold} 

Nonlocal minimal (hyper)surfaces in $\R^n$ were first introduced and studied in \cite{CRS}. In this section, we define  nonlocal minimal (hyper)surfaces on a closed Riemannian manifold, emphasizing the ``canonical nature'' of these new geometric objects. 

\vsp
Let $(M^n, g)$ be an $n$-dimensional, closed Riemannian manifold, with $n\ge2$.
Let us start by giving a canonical definition of the fractional Sobolev seminorm $H^{s/2}(M)$. This can be done in at least three equivalent ways:
 \begin{itemize}
 \item[(i)] Using the {\em heat kernel}\footnote{As customary, by heat kernel here we mean
the fundamental solution of the heat equation $\partial_t u = \Delta u$ on $M$, where $\Delta$ denotes the Laplace-Beltrami operator on $M$.} $H_M(t,p,q)$ of  $M$, we can put
 \begin{equation*}
K_s(p,q) :=  
 \int_{0}^\infty  H_M(p,q,t)\,\frac{dt}{t^{1+s/2}}.
 \end{equation*}
We then define
 \begin{equation}\label{wethiowhoihw}
 [u]^2_{H^{s/2}(M)} := \iint_{M\times M}(u(p)-u(q))^2 K_s(p,q) \,dV_p \,d V_q.
 \end{equation}
The kernel $K_s(p,q)$ will be shown to be comparable to $\frac{1}{d(p,q)^{n+s}}$, and they coincide in the case $M=\R^n$ (up to a constant factor).
\item[(ii)]  Following a  {\em spectral approach}, we can set
\begin{equation}\label{ghfghfg}
    [u]^2_{H^{s/2}(M)} = \sum_{k\ge 1} \lambda_k^{s/2} \langle u,\varphi_k \rangle^2_{L^2(M)}
\end{equation}
 where $\{\varphi_k\}_k$ is an orthonormal basis of eigenfunctions of the Laplace-Beltrami operator $(-\Delta_g)$ and $\{\lambda_k\}_k$ are the corresponding eigenvalues. For $s=2$ this gives the usual $[u]^2_{H^{1}(M)}$ seminorm.

\item[(iii)] Considering a {\em Caffarelli-Silvestre type extension (cf. \cite{CafSi,BGS})}, namely, a degenerate-harmonic extension problem in one extra dimension, we can set
$$ [u]^2_{H^{s/2}(M)} = \inf\left\{ \int_{M\times \R_+} z^{1-s} |\ov{\nabla}  U(p,z)|^2 \, dV_p dz \s {\mbox{s.t.}} \s U(x, 0)=u(x) \right\}.$$
\end{itemize}
Here $\widetilde \nabla$ denotes the Riemannian gradient of the manifold $\widetilde M = M\times \R_+$, with respect to the natural product metric $\widetilde g= g + dz\otimes dz$, and the infimum is taken over all  $U$ belonging to the weighted Hilbert space $\widetilde{H}^1(\widetilde{M})$ (see Definition \ref{weighSobspacemfd} for the precise definition of this space, and we refer to Section \ref{DefsSection} in general for all the basic properties of this extension characterization).

\vsp
It is proved by the three authors in \cite{FracSobPaper} that (i)-(iii) define the same norm (not merely equivalent norms), up to explicit multiplicative constants. We emphasize that this gives a canonical definition of the $H^{s/2}(M)$ seminorm on a closed manifold. For the reader's convenience, we recall here some definitions and results from \cite{FracSobPaper}. 

\vsp
Here and onwards $M$ denotes a closed $n$-dimensional Riemannian manifold. 
\begin{definition}
    Given $s \in (0, 1)$
and a (measurable) set $E \subset M$, we define the \textit{$s$-perimeter of $E$} as
\begin{equation}\label{wbhtouw1}
\textnormal{Per}_s(E): = [\chi_E]^2_{H^{{s/2}}(M)} = \frac{1}{4}[\chi_E-\chi_{E^c}]^2_{H^{{s/2}}(M)}  = 2\int_{E}\int_{E^c} K_s(p,q)dV_pdV_q\,, 
\end{equation}
where $\chi_E$ is the characteristic function of $E$, $E^c: = M\setminus E$ and $[\cdot]^2_{H^{s/2}(M)}$ is defined by \eqref{wethiowhoihw}. 
\end{definition}

From the estimates in \cite{FracSobPaper} for the kernel $K_s(p,q)$, one can see that for every set $E\subset M$ with smooth boundary, one has that
$(1-s)\textnormal{Per}_s(E) \to \textnormal{Per}(E)$  as $s\uparrow 1$ (up to a multiplicative dimensional constant, see \cite{BBM01}
and also~\cite{Dav02, CV11, ADPM11}
for further details on the computation in the case of $\R^n$). 

\vsp
Moreover, it is convenient to define localized or relative versions of the fractional perimeter, somewhat analogous to the classical relative perimeter.

\begin{definition}
Given a bounded open set $\Omega \subset M$ (with Lipschitz boundary), a \textit{relative $s$-perimeter} in $\Omega$ is a functional denoted
by $\textnormal{Per}_s(\, \cdot \,, \Omega)$ and satisfying the following two properties:
\begin{itemize}
    \item[\rm (I)] $ \textnormal{Per}_s(E , \Omega) - \textnormal{Per}_s(F, \Omega) = \textnormal{Per}_s(E)-\textnormal{Per}_s(F)$ for all (measurable) sets $E$ and $F$ that coincide outside $\Omega$ and $\textnormal{Per}_s(F) < \infty$.
    \item[\rm (II)] $\textnormal{Per}_s(E, \Omega) <\infty $ if $\partial E$ is a smooth submanifold in a neighborhood of the compact set $\overline{\Omega}$.
\end{itemize}
\end{definition}

Throughout the paper, we fix a relative $s$-perimeter defined similarly as in \cite{CRS} (there for the case of the Euclidean space $\R^n$). We define the \textit{relative $s$-perimeter of $E$ in $\Omega$} as
\begin{equation*}
    \textnormal{Per}_s(E, \Omega) :=  \iint_{(M \times M) \setminus (\Omega^c \times \Omega^c)} (\chi_E(p)-\chi_E(q))^2K_s(p,q) dV_p dV_q \,,
\end{equation*}
where $\Omega^c := M \setminus \Omega$ is the complement of $\Omega$. With this definition, one can easily check that properties \rm{(I)} and \rm{(II)} above hold. Moreover, it follows directly from its definition that the previous notion of relative $s$-perimeter satisfies the following properties. 
\begin{itemize}
    \item
    $\textnormal{Per}_s(E, \Omega) = \textnormal{Per}_s(E^c, \Omega) $ for every (measurable) $E\subset M$.
    \item
    If $E\subset \Omega$ or $E^c \subset \Omega$ then $\textnormal{Per}_s(E, \Omega) = \textnormal{Per}_s(E)$, where $\textnormal{Per}_s(E)$ is the $s$ -perimeter on the entire manifold $M$ defined in \eqref{wbhtouw1}.
    \item
    Let $\Omega_1, \Omega_2 \subset M$ with $|\Omega_1 \cap \Omega_2|=0$. Then $\textnormal{Per}_s(E, \Omega_1 \cup \Omega_2) \ge \textnormal{Per}_s(E, \Omega_1)+\textnormal{Per}_s(E, \Omega_2)$.
    \item
    Let $E_1, E_2 \subset M$ with $|E_1 \cap E_2|=0$. Then $ \textnormal{Per}_s(E_1 \cup E_2, \Omega) \le \textnormal{Per}_s(E_1, \Omega)+\textnormal{Per}_s(E_2, \Omega)$.
\end{itemize} 

\begin{remark}
Notice that there would be other possibilities to define a relative $s$-perimeter. For example, in view of the ``spectral definition" \eqref{ghfghfg} of the fractional perimeter, we could have defined a different relative perimeter by $\textnormal{Per}_s(E, \Omega) = \sum_{k\ge 1} \lambda_k^{s/2} \langle \chi_E ,\varphi_k \rangle^2_{L^2(\Omega)}$. It is easy to check that this satisfies properties \rm{(I)}-\rm{(II)} above as well.
\end{remark}

In this work we denote by $\mathfrak X(\mathcal{U})$ the space of smooth vector fields in $\mathcal{U} \subseteq M$, by ${\rm spt}(X)$ the support of $X$ and $\mathfrak X_c(\mathcal{U})$ the space of smooth vector fields with compact support in $\mathcal{U}$. 

\begin{definition}\label{def-fracminsurface}
Let $(M,g)$ be a closed Riemannian manifold. Given $s\in (0,1)$, the boundary $\partial E$ of a set $E\subset M$ is said to be an $s$-\textit{minimal surface} if  $\textnormal{Per}_s(E)<\infty$  and, for every $X\in \mathfrak X(M)$, we have 
\begin{equation*}
     \frac{d}{dt}\Big|_{t=0} \textnormal{Per}_s(\psi^t_X(E)) = 0 \, ,
\end{equation*}
where $\psi^t_X : M \times \R \to M$ denotes the flow of $X$ at time $t$.
\end{definition}

The previous definition admits a natural local version.
\begin{definition}\label{def-fracminsurface2}
Let $(M,g)$ be a closed Riemannian manifold. Given $\mathcal U\subset M$ open, the boundary $\partial E$ of a set $E\subset M$ is said to be an $s$-\textit{minimal surface} in $\mathcal U$ if for every Lipschitz domain $\Omega$ with compact closure such that $\overline \Omega\subset \mathcal U$ we have $\textnormal{Per}_s(E,\Omega)<\infty$, and for every smooth and compactly supported vector field $X\in \mathfrak X_c(\mathcal U)$ with ${\rm spt}(X) \subset \overline\Omega$  we have
\begin{equation*}
     \frac{d}{dt}\Big|_{t=0} \textnormal{Per}_s(\psi^t_X(E), \Omega) = 0 \, .
\end{equation*}
\end{definition}

\begin{definition}[Morse index and stability]\label{WeakMorseDef} Let $(M,g)$ be a closed Riemannian manifold and $\partial E$ be an $s$-minimal surface in $\mathcal U\subset M$ open (as in Definition \ref{def-fracminsurface2}). Then,  $\partial E$ is said to have {\em Morse index} at most $m$ in $\mathcal U$ if for every 
 Lipschitz domain $\Omega$ with compact closure such that $\overline \Omega\subset \mathcal U$, for every $(m+1)$ vector fields $X_0,\dotsc ,X_m \in \mathfrak X_c(\mathcal U)$ with $\cup_{i=0}^{m}{\rm spt}(X_i) \subset \overline\Omega\subset \mathcal U$ there exists some linear combination $X=a_0X_0+\dotsc+a_mX_m$ with $a_0^2 + a_1^2+\dotsc+a_m^2=1$ such that 
\begin{equation*}
    \frac{d^2}{dt^2}\Big|_{t=0}  \textnormal{Per}_s(\psi^t_X(E), \Omega) \ge 0.
\end{equation*}
In the particular case $m=0$, we say that $\partial E$ is \textit{stable} in $\mathcal U$.
\end{definition}
\begin{remark}
    This should be read as: there are at most $m$ independent ``directions'' which decrease the fractional perimeter. This general formulation of Morse index is well suited for passage to the limit.
\end{remark}

\begin{remark}
It follows\footnote{Notice that for functions taking values in $\{\pm 1\}$ the potential part of the energy vanishes and the Sobolev part of the energy gives the fractional perimeter.} from Lemma \ref{enboundslemma} and Lemma \ref{enboundslemma2} (which are results from \cite{FracSobPaper}) that, if $\textnormal{Per}_s(E,\Omega)<\infty$ and $X\in \mathfrak X_c(\mathcal U)$ is such that  ${\rm spt}(X) \subset \overline\Omega$ then the map $t\mapsto \textnormal{Per}_s(\psi^t_X(E), \Omega)$ is well-defined for all $t$ and of class $C^{\infty}$. Thus, the previous definitions are meaningful. 
\end{remark}

\subsection{Main results} 

One of the main goals of this paper is to establish the existence of infinitely many $s$-minimal surfaces on every closed manifold:

\begin{theorem}[\textbf{Fractional Yau-type result}]\label{FracYau3}
    Let $(M^n, g)$ be an $n$-dimensional, closed Riemannian manifold, with $n\ge2$. Fix $s_0 \in (0,1)$ and let $s\in (s_0, 1)$. Then, for every natural number $\p \ge 1$, there exists an $s$-minimal surface $\Sigma^\p = \partial E^\p$ with Morse index at most $\p$ ---in the sense of Definition \ref{WeakMorseDef}--- and fractional perimeter
    \begin{equation*}
    C^{-1} \p^{s/n}  \leq (1-s)\textnormal{Per}_s(E^\p) \leq C \p^{s/n} ,
\end{equation*}
for some $C=C(M, s_0)>1$. In particular, $M$ contains infinitely many $s$-minimal surfaces. Moreover, these surfaces are viscosity solutions to the NMS (i.e. Nonlocal Minimal Surface) equation (see Proposition \ref{prop:viscosity}), and satisfy the structural properties \eqref{cdcd}-\eqref{ababab} in Proposition \ref{StrongConv}.
\end{theorem}

The regularity of the constructed surfaces depends on the classification of stable $s$-minimal cones (An open subset $E \subset \R^n $ is said to be a cone if $E$ is an open set and $\lambda E = E$ for all $\lambda>0$).

\begin{definition}\label{critdim}
    Given $s\in(0,1)$, we define the critical dimension $n_s^*$ as the minimum dimension $n\geq 3$ such that there exists a smooth and stable $s$-minimal cone in $\R^n\setminus\{0\}$ which is not a hyperplane.
\end{definition}
By \cite{CCS} and \cite{CDSV}, $n_s^*\geq 5$ for all $s\in (s_0,1]$, where $s_0\in(0,1)$ is a universal constant. It is conjectured that, in fact, $n_s^*=8$ for all $s$ sufficiently close to $1$. For $n=8$, the Simons cone
$E =\{x_1^2 + x_2^2 + x_3^2 + x_4^2 < x_5^2 + x_6^2 + x_7^2 + x_8^2\} \subset \R^8$, which is a minimizer in the classical case $s=1$, is easily shown to be stable for all $s\in (s_0,1)$, for some $s_0<1$ sufficiently close to 1, so that $n_s^*\leq 8$ in this case\footnote{More generally, the natural generalization of the Simons cone to higher (even) $n$ has been shown in \cite{FelSanz} to be stable in dimension $n\geq 14$ for any $s\in(0,1)$. In particular, $n_s^*\leq 14$ for any $s\in(0,1)$, and the definition of $n_s^*$ as a minimum is justified.}.

\vsp
We now state a regularity result for the constructed surfaces, which will be proved in Section \ref{dimredsection}.
\begin{theorem}[\textbf{Size of the singular set}]\label{FracYau2}
    For $n\geq 3$, the surfaces $\{ \Sigma^{\p} \}_{\p \in \N}$ of Theorem \ref{FracYau3} are smooth submanifolds outside of a closed set $\sing(\Sigma^\p)$ of Hausdorff dimension at most $n-n_s^*$. In particular, $\sing(\Sigma^\p)=\varnothing $ if $n<n_s^*$ (and this holds for $n=3,4$ and $s$ close to $1$, since $n_s^* \ge 5$). Moreover, in the case $n=n_s^*$ the set $\sing(\Sigma^\p)$ is discrete.
\end{theorem}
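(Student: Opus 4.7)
The plan is to perform a Federer-type dimension reduction, following the blueprint established in \cite{CRS} for minimizers of the fractional perimeter, combined with an additional step that transfers the finite Morse index condition into a local stability condition.

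The first step is to show that $\Sigma^{\p}$ is stable in $M\setminus F$ for some finite set $F\subset M$ with $\# F\le \p$. This is precisely the ``(almost) stability in one out of $\p+1$ disjoint domains'' principle alluded to in the introduction. The underlying idea is that if $\Sigma^{\p}$ were unstable in $\p+1$ pairwise disjoint (and well-separated) open sets, then the corresponding $\p+1$ compactly supported test vector fields with strictly negative second variation would, taken as an independent family, contradict Definition \ref{WeakMorseDef}. The nonlocal nature of the second variation makes the decoupling between disjoint-support variations slightly delicate (hence the word ``almost''), but the cross terms should be controlled by the separation distance and vanish in the limit, so an iterative covering argument by shrinking balls concentrates the instability on at most $\p$ points.

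On $M\setminus F$, the surface $\Sigma^{\p}$ is a stable $s$-minimal surface. At any candidate singular point $x_0\in \sing(\Sigma^{\p})\setminus F$, I would work in normal coordinates and consider the rescalings $r^{-1}(\Sigma^{\p}-x_0)$. These are stable $s$-minimal surfaces whose associated kernels converge to the Euclidean kernel $c_{n,s}|x-y|^{-n-s}$ as $r\to 0$, thanks to the asymptotic $K_s(p,q)\asymp d(p,q)^{-n-s}$. Invoking the monotonicity formula for stationary $s$-minimal surfaces mentioned in the introduction, together with a compactness result for stable $s$-minimal surfaces, a subsequence of these blow-ups converges to a stable $s$-minimal cone $C\subset \R^n$. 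If $n < n_s^*$, then by definition of $n_s^*$ the cone $C$ is a hyperplane, and an $\varepsilon$-regularity theorem near hyperplanes yields smoothness at $x_0$, contradicting $x_0\in\sing(\Sigma^{\p})$. In general, the standard Federer iteration applied to tangent cones of tangent cones (using that the translation-invariant directions of a stable cone push the stability down one dimension) yields
\[
\dim_{\mathcal H}\bigl(\sing(\Sigma^{\p})\setminus F\bigr) \le n - n_s^*,
\]
with discreteness in the borderline case $n = n_s^*$. Adding back the $0$-dimensional set $F$ does not spoil the bound, since $n - n_s^* \ge 0$ in every non-vacuous instance.

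The main obstacle will be the blow-up analysis: proving that rescalings of a stable $s$-minimal surface subconverge to a stable $s$-minimal cone in ambient Euclidean space. This requires (i) a strong compactness theorem for stable $s$-minimal surfaces allowing passage to the limit in both the Euler--Lagrange equation and the stability inequality, (ii) the monotonicity formula with rigidity in the equality case, ensuring that tangent objects are cones, and (iii) an $\varepsilon$-regularity theorem near hyperplanes. All of these ingredients should be in place from the foundational parts of the paper, after which the Federer scheme applies essentially verbatim in the nonlocal setting.
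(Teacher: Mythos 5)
Your plan correctly identifies the broad Federer-type dimension-reduction scheme, which is indeed what Section~\ref{dimredsection} carries out, and your ingredients (ii) and (iii) — the monotonicity formula with its rigidity statement, and the improvement-of-flatness $\varepsilon$-regularity — are the ones the paper invokes. But there is a genuine gap in your Step~1, and it is the step on which everything else rests.

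You claim that finite Morse index implies $\Sigma^{\p}$ is \emph{stable} in $M\setminus F$ for some finite set $F$ with $\#F\le \p$, acknowledging in passing that the nonlocal cross-terms make this only ``almost'' stability but then asserting that a shrinking-ball covering ``concentrates the instability on at most $\p$ points.'' This does not follow. What the finite Morse index condition actually yields (Lemmas~\ref{asineq}, \ref{improvineq1}, \ref{lemasbup}) is \emph{quantitative almost-stability in one out of any $m+1$ disjoint sets}: an inequality of the form $\mathcal{E}''(u)[\xi,\xi]\ge -\Lambda\|\xi\|_{L^1}^2$ where $\Lambda$ blows up as the separation between the sets shrinks. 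There is no way to pass from this to honest stability outside a finite set of points on $M$, and the paper never attempts to. The error term does not vanish under any covering argument at the level of the original surface; in fact, the nonlocal interaction between disjoint pieces is precisely the obstruction. The paper's resolution is to derive genuine stability \emph{only at the level of blow-up cones} (Lemma~\ref{StableCone}): for a cone $C$, one tests the almost-stability inequality on $m+1$ rescaled annuli $A_0, RA_0,\dots, R^mA_0$, transports the resulting inequality back to $A_0$ using the dilation-invariance of $C$ and of the $s$-perimeter, and observes that the scaling of the $L^1$ error terms is favorable, so sending $R\to\infty$ kills them. This argument is unavailable for the original (non-conical) surface $\Sigma^{\p}$, which is why the paper does not and cannot assert stability of $\Sigma^{\p}$ away from a finite set.

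A secondary gap is in your ingredient (i), the compactness of blow-ups. You treat $\Sigma^{\p}$ as an abstract finite Morse index $s$-minimal surface, but the regularity theory assembled in the paper is for the class $\mathcal{A}_m(M)$ of Allen--Cahn limits: the uniform BV estimates, density estimates, viscosity-solution property, and the almost-stability inequality are all proved by passing to the approximating Allen--Cahn solutions (Proposition~\ref{prop:viscosity}, Remark~\ref{remprop}, Lemma~\ref{lemasbup}). A blow-up of $\Sigma^{\p}\in\mathcal{A}_m(M)$ lands in $\mathcal{A}_m^{Blow\text{-}up}$, and the classification Theorem~\ref{A-CBernstein} together with the dimension-reduction Lemmas~\ref{singcone}--\ref{singcone2} is what produces the Hausdorff-dimension bound. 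Your proposal gives no mechanism to verify (i) or to justify that the blow-up limit still satisfies a stability-type inequality; both are nontrivial and are exactly the content of Sections~\ref{BlowUpSection}--\ref{dimredsection}.
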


The surfaces in Theorem \ref{FracYau3} will be constructed as limits as $\ep\to 0^+$ of solutions to the fractional Allen-Cahn equation on $M$. We emphasize that ---in sharp contrast to the case of classical minimal surfaces--- the Allen-Cahn approximation does not really play a crucial role in our construction. We just use it so that we are able to apply standard min-max existence results of critical points (like those in the book by Ghossoub \cite{Gho1}). What really makes our construction easier, in comparison with the classical case $s=1$, are the very strong a priori estimates satisfied by finite Morse index $s$-minimal surfaces for $s<1$, see Remark \ref{EstSurf}. Corresponding analog estimates are satisfied by Allen-Cahn solutions with bounded index, which allows us to send $\ep\to 0$ without problems. In contrast, in the classical case, this passage to the limit is really delicate: one is forced to use varifold convergence, and then multiplicity and neck-pinching situations need to be ruled out. This requires generic metric assumptions and has only been done for $n=3$ in \cite{CM}.

\begin{definition}[Fractional Allen-Cahn energy]
    Let $s\in (0,2)$ and $\ep>0$. Given $v:M\to\R$, we define the \textit{fractional Allen-Cahn (abbr. A-C) energy} of $v$ on the  open set $\Omega\subseteq M$ as
\begin{equation}\label{ACenergy}
 \mathcal E_{\Omega} (v) :=  \mathcal E_{\Omega}^{\rm Sob} (v)   + \mathcal E_{\Omega}^{\rm Pot} (v) ,
 \end{equation}
 where
 \[
 \mathcal E_{\Omega}^{\rm Sob} (v) := \frac 1 4 \iint_{M\times M\setminus \Omega^c\times \Omega^c} (v(p)-v(q))^2 K_s(p,q)dV_p dV_q,   \quad \mathcal E_{\Omega}^{\rm Pot} (v) :=\ep^{-s}\int_{\Omega} W(v)\,dx\, ,
  \]
and $W(v)=\frac{1}{4}(1-v^2)^2$ is the standard quartic double-well potential with wells at $\pm 1$.  We will sometimes denote $\mathcal E_{\Omega}$ by $\mathcal E_{\Omega}^{\ep,s}$ or $\mathcal E_{\Omega}^{\ep}$ if we want to stress the dependence of the energy from $\ep$ and/or $s$. 
\end{definition}
Note that, with this definition of the Allen-Cahn energy, we have
\begin{equation*}
    \mathcal E_{\Omega} (\chi_E-\chi_{E^c}) = \mathcal E_{\Omega}^{\rm Sob} (\chi_E-\chi_{E^c}) = \textnormal{Per}_s(E, \Omega) \,,
\end{equation*}
and 
\begin{equation*}
    \mathcal E_{\Omega_1 \cup \, \Omega_2} (v) \le \mathcal E_{\Omega_1} (v) + \mathcal E_{\Omega_2} (v) \,.
\end{equation*}

The double-well potential penalizes functions that are not identical to $\pm 1$, and that is why one expects to find nonlocal $s$-minimal surfaces as the limits of critical points of this energy when $\ep\to 0$. 

\vsp
A function $u:M\to \R $ is a critical point of $\mathcal E_{\Omega}$ if and only if it solves the fractional Allen-Cahn equation
\begin{equation}\label{restrictedeq}
 (-\Delta)^{s/2}u + \ep^{-s}W'(u)=0  \quad  \mbox{in } \Omega \, .
\end{equation}
Here $(-\Delta)^{{s/2}}$ is the fractional Laplacian on $(M,g)$, and it can be represented as (see Section \ref{DefsSection} for details)
\begin{equation*}
    (-\Delta)^{{s/2}} u(p) = \int_{M} (u(p)-u(q))K_s(p,q) \, dV_q \, .
\end{equation*}

We also have a definition of Morse index, related to the second variation of the energy.
\begin{proposition}[Second variation]
Let $\Omega\subset M$ be an open set. Let $ u \in H^{s/2}(M) $ be a critical point of $\mathcal E_{\Omega}$. Then, given $\xi\in C_c^1(\Omega)$, the second variation of $\mathcal E_{\Omega}$ at $u$ is given by
\begin{align}\label{2ndvar}
\mathcal E_{\Omega}''(u)[\xi,\xi] =
\frac{1}{4}\iint_{(M\times M) \setminus(\Omega^c\times\Omega^c)} |\xi(p)-\xi(q)|^2 K_s(p,q)\,dV_p\,dV_q + \ep^{-s}\int_{\Omega} W''(u)\xi^2\,dV .
\end{align}
\end{proposition}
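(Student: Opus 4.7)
The plan is to compute $\frac{d^2}{dt^2}\big|_{t=0}\mathcal E_\Omega(u+t\xi)$ directly by a Taylor expansion, treating the Sobolev and potential pieces of $\mathcal E_\Omega$ separately. Note that by Lemma \ref{enboundslemma}/Lemma \ref{enboundslemma2} (cited in the excerpt's Remark) the map $t\mapsto \mathcal E_\Omega(u+t\xi)$ is smooth, so we may differentiate termwise; we just need to identify the $t^2$-coefficient in each term.

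For the Sobolev part, set $A(p,q)=u(p)-u(q)$ and $B(p,q)=\xi(p)-\xi(q)$. The algebraic identity
\[
(A+tB)^2 = A^2 + 2tAB + t^2 B^2
\]
is exact and shows that $t\mapsto \mathcal E^{\rm Sob}_\Omega(u+t\xi)$ is a quadratic polynomial in $t$. Its linear and quadratic coefficients are finite: for the cross term one invokes Cauchy--Schwarz together with $u\in H^{s/2}(M)$ and $\xi\in C^1_c(\Omega)\subset H^{s/2}(M)$ (the latter following from smoothness and compact support, plus the kernel bound $K_s(p,q)\asymp d(p,q)^{-n-s}$ established earlier in the paper). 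Reading off the $t^2$-coefficient yields the nonlocal quadratic form
\[
\tfrac{1}{4}\iint_{(M\times M)\setminus(\Omega^c\times\Omega^c)} (\xi(p)-\xi(q))^2 K_s(p,q)\,dV_p\,dV_q,
\]
where one can equivalently integrate over $M\times M$ since $\xi\equiv 0$ on $\Omega^c$ makes the integrand vanish on $\Omega^c\times\Omega^c$.

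For the potential part, the pointwise Taylor expansion
\[
W(u+t\xi) = W(u) + tW'(u)\xi + \tfrac{t^2}{2} W''(u)\xi^2 + R_3(t,u,\xi),
\]
with remainder $|R_3|\le \tfrac{|t|^3}{6}\sup_{|v|\le \|u\|_{L^\infty({\rm spt}\,\xi)}+|t|\|\xi\|_\infty}|W'''(v)|\cdot|\xi|^3$, integrates over $\Omega$ (using that a critical point $u$ is at least locally bounded; alternatively, since $W$ is a polynomial, only finitely many moments of $u$ on ${\rm spt}\,\xi$ are needed and these are all finite by $u\in L^2$ and $\xi\in C^\infty_c$). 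Differentiating twice under the integral sign --- justified by dominated convergence on the compact set ${\rm spt}\,\xi\subset\Omega$ --- and evaluating at $t=0$ gives the contribution $\ep^{-s}\int_\Omega W''(u)\xi^2\,dV$.

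There is no serious obstacle: the whole argument is a Gâteaux-type second-derivative computation, and the only care needed is (i) interchanging the integration with the two derivatives in $t$ for the singular kernel term, handled by the Sobolev finiteness of both $u$ and $\xi$, and (ii) controlling the potential's cubic Taylor remainder, handled by local boundedness of $u$ and compact support of $\xi$. Summing the two contributions produces the claimed formula for $\mathcal E_\Omega''(u)[\xi,\xi]$.
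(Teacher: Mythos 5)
Your strategy --- expand $t\mapsto\mathcal E_\Omega(u+t\xi)$ in $t$, treat the Sobolev and potential pieces separately, and justify the interchange of differentiation and integration --- is the natural one, and since the paper states the proposition without giving a proof, it is presumably also what the authors had in mind.

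There is, however, a bookkeeping slip that exactly tracks an inconsistency already present in the displayed formula \eqref{2ndvar}, and it is worth naming precisely. You announce at the outset that $\mathcal E_\Omega''(u)[\xi,\xi]:=\frac{d^2}{dt^2}\big|_{t=0}\mathcal E_\Omega(u+t\xi)$. With $A=u(p)-u(q)$, $B=\xi(p)-\xi(q)$ you correctly observe that
\[
\mathcal E^{\rm Sob}_\Omega(u+t\xi)=\tfrac{1}{4}\iint A^2K_s\,+\,\tfrac{t}{2}\iint ABK_s\,+\,\tfrac{t^2}{4}\iint B^2K_s,
\]
a quadratic polynomial in $t$. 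But the \emph{second derivative} of $c_0+c_1t+c_2t^2$ at $t=0$ is $2c_2$, not $c_2$. Thus, consistently applying your own definition, the Sobolev contribution to $\frac{d^2}{dt^2}\big|_{t=0}$ is $\frac{1}{2}\iint B^2 K_s$, not $\frac{1}{4}\iint B^2 K_s$. For the potential piece you do apply the second derivative correctly (from $W(u+t\xi)=W(u)+tW'(u)\xi+\frac{t^2}{2}W''(u)\xi^2+O(t^3)$ you extract $\ep^{-s}\int W''(u)\xi^2$, i.e.\ twice the $t^2$-coefficient). So the two halves of your final expression are normalized differently: you quote the $t^2$-coefficient for the Sobolev term and $\frac{d^2}{dt^2}\big|_{t=0}$ for the potential term. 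A uniform convention gives either $\frac{1}{2}\iint B^2K_s+\ep^{-s}\int W''(u)\xi^2$ (second-derivative convention) or $\frac{1}{4}\iint B^2K_s+\frac{1}{2}\ep^{-s}\int W''(u)\xi^2$ ($t^2$-coefficient convention), but not the mixed expression $\frac{1}{4}\iint B^2K_s+\ep^{-s}\int W''(u)\xi^2$. You should be aware that the downstream uses of this quadratic form in the paper --- for instance, the coefficient $-2ab\iint_{\mathcal U_1\times\mathcal U_2}\xi_1(p)\xi_2(q)K_s$ in the cross term appearing in Lemma~\ref{asineq}, obtained by polarization with disjointly supported $\xi_1,\xi_2$ --- require the constant $\frac{1}{2}$ in front of the Sobolev piece, i.e.\ the second-derivative normalization. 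The rest of your argument (the Cauchy--Schwarz justification that $\xi\in C_c^1(\Omega)\subset H^{s/2}(M)$ makes the cross term finite, and the dominated-convergence argument for the polynomial potential restricted to ${\rm spt}\,\xi$) is fine.
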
 

\begin{definition}[Morse index]\label{MorseDef}
Let $\Omega\subset M$ an open set, and let $ u \in H^{s/2}(M) $ be a critical point of $\mathcal E_{\Omega}$. The \textit{Morse index} of $u$ in $\Omega$, denoted by $m_{\Omega}(u)$, is defined as the maximum dimension among all linear subspaces $\mathcal L \subset C^1_c(\Omega) \subset H^{s/2}(M)$ such that $\mathcal{E}_{\Omega}''(u) $ is negative definite on $\mathcal L$. Moreover, we say that $u$ is \textit{stable} in $\Omega$ if $m_{\Omega}(u)=0$. 
\end{definition}

For $3\le n<n_s^*$, we prove a strong regularity and separation result for $s$-minimal surfaces which are limits of Allen-Cahn solutions with bounded index (as in our case), and which will be proved in Section \ref{UnifRegSection}.
\begin{definition}[Family of Allen-Cahn limits]\label{AClimits}
    A surface $ \Sigma \subset M$ is said to belong to the class $\mathcal{A}_{m}(M)$ if $\Sigma=\partial E$ and there exists a sequence of functions $u_j:M\to(-1,1)$ which are solutions to the Allen-Cahn equation (\ref{restrictedeq}) on $M$, with Morse index $ m(u_j) \leq m$ for all $j$, and parameters $\ep_j\to 0$, such that $u_j\to u_0:
    =\chi_E-\chi_{E^c}$ in $L^1(M)$. 
\end{definition}

\begin{theorem}[\textbf{Uniform regularity and separation}]\label{UnifReg} 
Let $s\in (0,1)$ and $3\leq n < n_s^*$. Let $(M^n, g)$ be an $n$-dimensional, closed Riemannian manifold satisfying the flatness assumption ${\rm FA}_3(M, g, p, 1, \varphi)$ around $p$ (see Definition \ref{flatnessassup}). Assume that $\partial E\in\mathcal A_m(M)$ is an Allen-Cahn limit (see Definition \ref{AClimits}).
\\
Then $\partial E$ is a $C^{1,\alpha}$ hypersurface for some $\alpha\in(0,1)$, with uniform regularity and separation estimates around $p$. That is, there exists a radius $R=R(n,s,m)>0$ such that, after a rotation, $\varphi^{-1}(\partial E)\cap \big( \B_{R}^{n-1}(0)\times[-R,R]\big)$ is the graph of a \textbf{single} function $f: \B_{R}^{n-1}(0)\times\{0\}\to[-R,R]$ inside the chart, and 
\begin{equation*}
    \|f\|_{C^{1,\alpha}( B_{R}^{n-1}\times\{0\})}\leq C(n,s,m) \,.
\end{equation*}
\end{theorem}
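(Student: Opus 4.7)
The plan is a blow-up/compactness argument combining a multi-scale Morse-index pigeonhole with the classification of stable $s$-minimal cones below the critical dimension $n_s^*$. I argue by contradiction: if the theorem fails, there exist manifolds $M_j$ satisfying $\text{FA}_3(M_j, g_j, p_j, 1, \varphi_j)$, Allen-Cahn limits $\partial E_j\in\mathcal A_m(M_j)$ with approximating solutions $u_j^{(k)}$ of index $\le m$, and scales $R_j\downarrow 0$ at which either the single-sheet graph description fails or the rescaled $C^{1,\alpha}$ norm exceeds $j$. Rescaling by $R_j^{-1}$ around $p_j$ and using $\text{FA}_3$, the pulled-back metrics converge to the Euclidean metric in $C^3_{\rm loc}(\R^n)$, while the rescaled parameters $\ep_j^{(k)}/R_j$ tend to some $\ep_\infty\in[0,\infty]$. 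In the principal case $\ep_\infty=0$, the monotonicity formula for stationary $s$-minimal surfaces yields uniform density bounds, and one extracts an $L^1_{\rm loc}$ limit $\partial E_\infty\subset\R^n$ of the rescaled interfaces $\partial\widetilde E_j$.

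The central step is the Morse-index pigeonhole, performed at multiple scales. For each dyadic radius $r=2^\ell$, partition the annulus $\B_{2r}\setminus\overline{\B_r}\subset\R^n$ into $m+1$ pairwise disjoint sub-shells of definite proportion. By Definition \ref{MorseDef}, the bound $m_{\Omega}(u_j^{(k)})\le m$ forces $u_j^{(k)}$ to be stable on at least one of these shells (otherwise one produces $m+1$ linearly independent destabilizing variations with pairwise disjoint supports, contradicting the index bound). Diagonalizing over $\ell$, $j$, and $k$, there is, for each dyadic scale, a fixed sub-shell $A_{i^*(\ell)}\subset\B_{2^{\ell+1}}\setminus\overline{\B_{2^\ell}}$ on which $u_j^{(k)}$ is stable for every $j,k$ along the chosen subsequence. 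Passing first to the sharp-interface limit $\ep_j^{(k)}\to 0$ and then to the geometric limit $j\to\infty$, $\partial E_\infty$ is stable on every $A_{i^*(\ell)}$ in the sense of Definition \ref{WeakMorseDef}. Blowing down $\partial E_\infty$ at $0$ along the scales $2^\ell$ with $\ell\to-\infty$ and pigeonholing once more, the tangent cone $C$ at the origin is a stable $s$-minimal cone on some annulus of $\R^n\setminus\{0\}$; scale invariance of cones then propagates stability to every annulus, hence to the whole of $\R^n\setminus\{0\}$.

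By Definition \ref{critdim}, since $n<n_s^*$, the dimension-reduction arguments of Section \ref{dimredsection} force $C$ to be smooth outside the origin, and the definition of $n_s^*$ then forces $C$ to be a hyperplane. Hence every tangent cone of $\partial E_\infty$ at $0$ is a hyperplane, and by uniqueness of tangent cones for stable $s$-minimal limits below $n_s^*$ together with the $C^{1,\alpha}$ improvement-of-flatness theory for nonlocal minimal surfaces, $\partial E_\infty$ is a smooth $C^{1,\alpha}$ graph in a neighborhood of $0$. Transferring this back to the rescaled $\partial \widetilde E_j$ via the smooth convergence coming from stable curvature estimates in subcritical dimensions yields a single-sheet $C^{1,\alpha}$ graph on $\B_1$ for all large $j$, contradicting the failure assumption. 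The main obstacle is precisely the multi-scale pigeonhole plus the identification of the tangent cone: one must argue carefully that the family of stable sub-shells obtained by diagonalization is rich enough to populate every annulus of the blow-down picture, so that the cone inherits stability on the whole of $\R^n\setminus\{0\}$ rather than merely on one specific annulus. A secondary technical point is the case $\ep_\infty>0$, which is handled analogously by invoking the fractional elliptic theory for the Allen-Cahn equation at fixed positive scale.
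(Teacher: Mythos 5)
Your overall architecture (blow-up by contradiction, identification of the tangent cone as a stable $s$-minimal cone, classification below $n_s^*$, then improvement of flatness) is the same as the paper's (Sections \ref{BlowupLimSection}--\ref{UnifRegSection}). However, the central ``multi-scale Morse-index pigeonhole'' step contains a genuine gap that is fatal in the nonlocal setting.

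You assert that a Morse index bound $m_\Omega(u)\le m$ forces $u$ to be \emph{stable} on one of $m+1$ pairwise disjoint sub-shells, ``otherwise one produces $m+1$ linearly independent destabilizing variations with pairwise disjoint supports, contradicting the index bound.'' This is the correct reasoning for a local functional, where the second variation of a linear combination of disjointly supported variations decouples. For the fractional Allen-Cahn energy it is false: the second variation \eqref{2ndvar} contains a nonlocal cross term $\iint \xi_i(p)\xi_j(q)K_s(p,q)\,dV_pdV_q$ that does not vanish for $i\neq j$, and can be large enough to make some linear combination nonnegative even when each $\mathcal E''(u)[\xi_i,\xi_i]<0$. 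The correct conclusion, obtained in Lemma \ref{asineq}, is only \emph{almost-stability}: $\mathcal E''(u)[\xi,\xi]\ge -\Lambda\|\xi\|_{L^1}^2$ with $\Lambda$ governed by $\sup K_s$ over the product of the two sets, i.e.\ by the separation $D_{ij}$. For your adjacent dyadic sub-shells the separations $D_{ij}$ are comparable to their diameters, so $\Lambda$ stays proportional to $r^{-(n+s)}$ and the negative term in the almost-stability inequality does \emph{not} vanish as you blow down; you never upgrade to genuine stability on any fixed annulus, let alone on all of $\R^n\setminus\{0\}$.

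The paper circumvents this in Lemma \ref{StableCone} by a materially different pigeonhole: it tests almost-stability on $m+1$ \emph{widely separated dilates} $A_i=R^iA_0$ of a single annulus (with $R\to\infty$, not at dyadic scales), uses the dilation invariance of the limiting cone to transport the resulting inequality back to $A_0$ via the scaling identity \eqref{2dresrel}, and then tunes the weights $\lambda_{ij}=R^{(n+s)/2}$ in the quantitative almost-stability estimate \eqref{asper} so that every error term carries a strictly negative power of $R$; sending $R\to\infty$ then yields exact stability on $A_0$. Without this separation-and-weights mechanism the passage from ``index $\le m$'' to ``cone stable outside the origin'' is not justified, and the rest of your argument (classification of stable cones, improvement of flatness, transfer back) has nothing to apply to. You would also need the quantitative almost-stability to survive the Allen-Cahn limit $\ep\to 0$ and then the blow-up (this is Lemmas \ref{improvineq1} and \ref{lemasbup}), rather than appealing to an ``$\ep_\infty$'' dichotomy, but that is a secondary issue compared with the missing upgrade from almost-stability to stability.
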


As an immediate application of Theorem \eqref{UnifReg} and Arzel\`a-Ascoli we obtain: 
\begin{corollary}
    Let $(M^n, g)$ be a closed Riemannian manifold, and let $s\in (0,1)$ and $3\leq n < n_s^*$. Then, every sequence $\Sigma_k  = \partial E_k \in \mathcal{A}_m(M)$ admits a subsequence converging to some $\Sigma_\infty$ in the strongest possible sense of convergence for submanifolds. In particular, if all elements $\Sigma_k$ of the sequence are homeomorphic to the same topological space $\mathbb X$, then the limit $\Sigma_\infty$ is also homeomorphic to $\mathbb X$.
\end{corollary}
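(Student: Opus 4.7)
The plan is to deduce the corollary from the local single-sheet estimate of Theorem \ref{UnifReg} via a finite covering plus a standard Arzelà–Ascoli/diagonal argument. Since $(M,g)$ is smooth and compact, there exist a finite collection of points $\{p_i\}_{i=1}^N\subset M$ and charts $\varphi_i$ such that ${\rm FA}_3(M,g,p_i,1,\varphi_i)$ holds at each $p_i$ (at some fixed scale which, after a trivial rescaling of the charts, we take to be $1$) and the balls $\varphi_i(\B^n_{1/2}(0))$ cover $M$; the number $N$ depends only on $(M,g)$. Applying Theorem \ref{UnifReg} around each $p_i$ to an arbitrary element $\Sigma_k=\partial E_k\in \mathcal A_m(M)$ yields a radius $R=R(n,s,m)>0$ and, after a chart-dependent rotation, a $C^{1,\alpha}$ function $f_{i,k}\colon \B_R^{n-1}(0)\to[-R,R]$ with $\|f_{i,k}\|_{C^{1,\alpha}}\le C(n,s,m)$ whose graph in $\varphi_i$-coordinates is $\varphi_i^{-1}(\Sigma_k)\cap(\B_R^{n-1}(0)\times[-R,R])$, viewed as a \textbf{single sheet}.

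Fix any $\alpha'<\alpha$. Arzelà–Ascoli gives precompactness of $\{f_{i,k}\}_k$ in $C^{1,\alpha'}$, and a diagonal extraction over the finitely many indices $i=1,\dots,N$ produces a subsequence $k_j\to\infty$ along which $f_{i,k_j}\to f_{i,\infty}$ in $C^{1,\alpha'}$ for every $i$. The local graphs $f_{i,\infty}$ must be pairwise compatible on chart overlaps (being $C^1$-limits of graphs that agree there), so they glue into a closed $C^{1,\alpha'}$ hypersurface $\Sigma_\infty\subset M$. Up to extracting a further subsequence we also have $E_{k_j}\to E_\infty$ in $L^1(M)$ with $\partial E_\infty=\Sigma_\infty$, so the orientation of the normal is globally consistent.

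For the topological conclusion, note that in each chart the slopes $\nabla f_{i,k_j}$ are uniformly bounded and converge in $C^0$, so for all large $j$ the hypersurface $\Sigma_{k_j}$ is a $C^1$-small normal graph over $\Sigma_\infty$. Hence the nearest-point projection $\Sigma_{k_j}\to \Sigma_\infty$ is a $C^1$-diffeomorphism for $j$ large, and in particular a homeomorphism; therefore if every $\Sigma_k$ is homeomorphic to a fixed topological space $\X$, so is $\Sigma_\infty$.

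The only substantial obstacle has been absorbed into Theorem \ref{UnifReg} itself: its \emph{single-sheet} multiplicity-one conclusion is exactly what rules out sheet stacking, neck pinching, and topology loss — the pathologies that make the analogous limiting procedure notoriously delicate in the classical case $s=1$. Once this structural estimate is in place, the corollary reduces to the covering-plus-Arzelà–Ascoli argument sketched above.
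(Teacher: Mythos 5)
Your argument is essentially the same covering-plus-Arzel\`a--Ascoli deduction that the paper alludes to when it calls the corollary ``an immediate application of Theorem \ref{UnifReg} and Arzel\`a--Ascoli,'' and the key ingredient you isolate---the \emph{single-sheet} conclusion of Theorem \ref{UnifReg}, which rules out multiplicity and topology loss---is indeed where all the real work lives. Two small book-keeping points worth tightening: the covering must be taken at a scale comparable to the $R=R(n,s,m)$ of Theorem \ref{UnifReg} rather than at a fixed scale $1/2$ (otherwise the chart balls may exceed the region where the graphical description is valid); and since the rotation in Theorem \ref{UnifReg} depends on the surface $\Sigma_k$, not just the chart, you should first extract a subsequence along which these rotations converge in $SO(n)$ and then rewrite the graphs over a common direction before invoking Arzel\`a--Ascoli. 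Finally, to reach the ``strongest possible'' (i.e.\ $C^\infty$) convergence advertised in the statement rather than just $C^{1,\alpha'}$, one should append the $C^{1,\alpha}$-to-$C^\infty$ bootstrap for nonlocal minimal graphs (\cite{BFV}, \cite{FrancS}), which upgrades the uniform $C^{1,\alpha}$ bounds to uniform $C^k$ bounds for every $k$.
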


We now make an important remark.
\begin{remark}\label{EstSurf}
    Define the class $\mathcal{A}_m'(M)$ consisting of surfaces $ \Sigma=\partial E \subset M$ such that there exists a sequence of $s$-minimal surfaces $\Sigma_j = \partial E_j$ of class $C^2$, with Morse index at most $m$ for all $j$, such that $E_j\to E$ in $L^1(M)$. Then, the result of Theorem \ref{UnifReg} would also hold for surfaces in $\mathcal{A}_m'(M)$ (and in particular for surfaces which are a priori known to be $C^2$), with a similar proof but with several technical modifications.
\end{remark}

We conclude this section with a technical remark about dimension $n=2$, which is excluded from our statements.

\begin{remark}
    In the case $n=2$, we cannot expect Theorem \ref{UnifReg} to hold in general anymore, since Lemma \ref{2Dcone} (which is where the assumption $n\geq 3$ is used) cannot be in general replicated. Indeed, for $s$ close to $1$, we expect a cross $\{xy>0\}\subset \R^2$, which is always an $s$-minimal surface, to be a limit of index one fractional Allen--Cahn solutions, just like in the classical case $s=1$. Nevertheless, the proof of Lemma \ref{2Dcone} shows that the only $s$-minimal cone in $\R^2$ which is a limit of stable (or almost-stable) Allen--Cahn solutions is a straight line: the finite index and $n\geq 3$ assumptions are used just to reduce to the almost-stable (see Definition \ref{almoststab}) case via a translation. Our analysis would easily show then that, for $n=2$ and any $s\in(0,1)$, the elements in $\mathcal A_m(M)$ are smooth curves outside of possibly up to $m$ points (where the Morse index of the approximating Allen--Cahn solutions concentrates).
    
    The former generally matches what happens in the local case. There is, however, a striking improvement for $s$ close to $0$: the recent article \cite{CasCones} shows that the only stable (outside of the origin) $s$-minimal cones in $\R^2$ are straight lines for this range of $s$. Therefore, \ref{2Dcone} (which is where $n\geq 3$ is used), and thus Theorem \ref{UnifReg}, hold for $n=2$ and $s$ close to $0$ as well. See \cite[Section 1.1]{CasCones} for more details.
\end{remark}

\subsection{Other highlighted results and overview of the paper}

\subsubsection{Existence of min-max solutions to Allen-Cahn} \label{ExistenceIntro}
In Section \ref{ExistenceSection}, we exhibit in a simple manner the existence of critical points of the Allen-Cahn energy \eqref{ACenergy} on $M$, employing a min-max theorem as in \cite{GG1}. Then, we prove lower and upper bounds for the energies of the constructed solutions. The complete statement of our result is the following.
\begin{theorem}[Existence of min-max Allen-Cahn solutions]
\label{Existence1}
Let $(M^n, g)$ be an $n$-dimensional, closed Riemannian manifold, and fix $s_0\in(0,1)$. Let $ \p \ge 1 $ be a natural number (the number of min-max parameters) and  $s\in(s_0,1)$. Then, there exists $\ep_\p>0$ (depending on $M$, $s$ and $\p$) such that for all $\ep \in (0,\ep_\p)$, there exists a solution $u_{\ep,\p}$ to the Allen-Cahn equation \eqref{restrictedeq} on $M$ with Morse index $m(u_{\ep,\p})\leq \p$. Moreover, there exists $C>1$ depending only on $M$ and $s_0$ such that
\begin{equation}
\label{minmaxboundeq}
    C^{-1}\p^{s/n} \leq (1-s) \,\mathcal{E}_M^{\ep,s} (u_{\ep,\p}) \leq C \p^{s/n}\, .
\end{equation}
\end{theorem}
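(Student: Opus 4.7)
The plan is to apply a symmetric Ljusternik-Schnirelmann / min-max scheme to the fractional Allen-Cahn energy on $H^{s/2}(M)$, as in Ghoussoub's monograph \cite{Gho1}, and then to sandwich the $\p$-th min-max value between explicit multiples of $\p^{s/n}/(1-s)$ by a direct geometric construction (upper bound) and a Weyl-type spectral comparison (lower bound). Fix $s\in (s_0,1)$. The functional $\mathcal E_M^{\ep,s}$ is $C^2$, bounded below, even under $u\mapsto -u$, and satisfies the Palais-Smale condition: bounded PS sequences are weakly precompact in $H^{s/2}(M)$, and $W'(u)=u^3-u$ is a compact perturbation of $(-\Delta)^{s/2}$ through the compact embedding $H^{s/2}(M)\hookrightarrow L^4(M)$ (truncating $W$ at infinity if the dimension forces it, without affecting the sublevel sets of interest). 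Letting $\mathcal{F}_\p$ be the family of closed symmetric subsets of $H^{s/2}(M)\setminus\{0\}$ of Krasnoselskii genus at least $\p$, the values $c_\p(\ep,s):=\inf_{A\in\mathcal F_\p} \sup_{u\in A} \mathcal E_M^{\ep,s}(u)$ are critical, and the abstract theorem of \cite{Gho1} (coupled with a Marino-Prodi perturbation if needed) produces a critical point $u_{\ep,\p}$ at level $c_\p(\ep,s)$ with Morse index at most $\p$.

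For the upper bound, partition $M$ into $\p$ cells $D_1,\dots,D_\p$ of uniformly bounded eccentricity with $|D_i|\asymp 1/\p$ and $\operatorname{diam}(D_i)\asymp \p^{-1/n}$, and define an odd continuous map $F_\ep\colon S^{\p-1}\to H^{s/2}(M)$ by
\[
F_\ep(a)(x) := h\!\left(\frac{\operatorname{dist}_{\pm}(x,\partial E_a)}{\ep}\right),\qquad E_a:=\bigcup_{a_i>0}D_i,
\]
where $h$ is an odd monotone smooth truncation of the optimal 1D Allen-Cahn profile. The image has genus at least $\p$, so it is admissible for $c_\p$. Using $K_s(p,q)\asymp d(p,q)^{-n-s}$ together with the scaling $\textnormal{Per}_s(B_r)\asymp (1-s)^{-1} r^{n-s}$, each cell contributes $(1-s)\textnormal{Per}_s(D_i)\le C\,|D_i|^{(n-s)/n}\asymp C\,\p^{-(n-s)/n}$; summing over $i$ and adding the contribution from the transition layer gives $(1-s)\,\mathcal E_M^{\ep,s,\mathrm{Sob}}(F_\ep(a))\le C\,\p^{s/n}$, with $C=C(M,s_0)$, while the potential part is $o(1)$ as $\ep\downarrow 0$ by the standard 1D profile calibration. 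Hence $(1-s)\,c_\p(\ep,s)\le C\,\p^{s/n}$ for all $\ep$ smaller than some $\ep_\p$.

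For the lower bound, compare $c_\p$ with a $\p$-width of the $s$-perimeter functional on the space of measurable subsets of $M$. For $u\in H^{s/2}(M)$ with $\mathcal E_M^{\ep,s}(u)$ bounded and $\ep$ small, the potential term forces $|u|\approx 1$ on most of $M$, so a direct calibration yields $(1-s)\mathcal E_M^{\ep,s}(u)\ge c\,(1-s)\textnormal{Per}_s(\{u\ge 0\}) - o(1)$. An $\ep$-perturbed retraction sends an admissible $A\in \mathcal F_\p$ to a symmetric family of characteristic functions whose index is preserved, and a standard Lusternik-Schnirelmann / genus count together with Weyl's asymptotics $\lambda_k((-\Delta)^{s/2})\asymp k^{s/n}$ on $M$ yields the existence of an element with $(1-s)\textnormal{Per}_s(E)\ge c\,\p^{s/n}$. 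This gives the matching lower bound on $c_\p(\ep,s)$.

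The principal difficulty is the \emph{uniformity} of all the constants as $s\uparrow 1$, since $s_0<1$ is fixed and the factor $(1-s)$ in \eqref{minmaxboundeq} is precisely the BBM renormalization. The heat-kernel representation \eqref{wethiowhoihw2} of $K_s$ is the cleanest technical tool for this, since $(1-s)K_s$ admits a well-controlled limit as $s\uparrow 1$, which transfers to uniform Weyl constants, uniform comparisons $K_s\asymp d^{-n-s}$, and uniform isoperimetric constants on $M$. A secondary delicacy is preserving the Morse-index bound $\le \p$ while enforcing the sandwich $[C^{-1},C]\,\p^{s/n}/(1-s)$ on the energy; this is handled by the abstract theorem of \cite{Gho1}, possibly after a Marino-Prodi perturbation that is small enough not to disturb the energy sandwich.
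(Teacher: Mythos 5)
Your overall strategy matches the paper's: set up a $\mathbb{Z}_2$-symmetric min-max for $\mathcal{E}_M^{\ep,s}$ on $H^{s/2}(M)$, invoke Ghoussoub's abstract theorem to get critical points at the min-max levels with Morse index $\le\p$, and then pin the levels between two explicit multiples of $\p^{s/n}/(1-s)$. Both the choice of admissible family (odd images of spheres vs.\ genus-$\ge\p$ sets, which differ only by a harmless shift in $\p$) and the cell scale $r\sim\p^{-1/n}$ are the right ones, and the remark about the $(1-s)$-factor being the BBM renormalization and the heat-kernel formula being the correct uniform-in-$s$ tool is on point. However, both of your key lemmas have genuine gaps.

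\textbf{Upper bound.} The test map $F_\ep(a)(x)=h\big(\mathrm{dist}_\pm(x,\partial E_a)/\ep\big)$ with $E_a=\bigcup_{a_i>0}D_i$ is \emph{not} continuous in $a$. When a coordinate $a_i$ crosses zero, the cell $D_i$ switches between being entirely inside $E_a$ and entirely outside, so $F_\ep(a)$ jumps by essentially $2$ on most of $D_i$; this is a jump of size $\sim\p^{-1}$ in $L^1$ and hence a jump in $H^{s/2}$ after composing with the profile $h$. Since the map is not continuous, $F_\ep(\Sp^{\p-1})$ is not an admissible competitor, and your upper bound for $c_\p$ is not established. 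The paper's construction avoids exactly this obstruction by building $E^\pm$ from the sign of the polynomial $P_a(z)=a_0+\cdots+a_\p z^\p$ along a string of $N$ balls (with $N$ a free parameter optimized to $N\sim\p$): since the roots of $P_a$ vary continuously in $a$, the region where $u_a$ and $u_{a'}$ disagree has small measure when $a\to a'$, so $a\mapsto u_a$ is continuous in $L^1$ and hence (by interpolation with the uniform BV bound) in $H^{s/2}$. The price paid for continuity — up to $\p$ extra transition hypersurfaces of area $\lesssim r^{n-1}$ each — is precisely what forces the optimal $r\sim\p^{-1/n}$. So the conceptual role of the polynomial in \cite{GG1} (and in the paper) is to restore continuity, not merely to organize the combinatorics; this is the missing idea in your sketch.

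\textbf{Lower bound.} The ``$\ep$-perturbed retraction'' from an admissible set $A\subset H^{s/2}(M)\setminus\{0\}$ onto a symmetric family of characteristic functions does not exist: any candidate such as $u\mapsto\chi_{\{u\ge 0\}}-\chi_{\{u<0\}}$ fails to be continuous (in $L^1$, let alone $H^{s/2}$). Consequently the genus count is not transferred to the space of sets, and the Weyl asymptotics $\lambda_k\big((-\Delta)^{s/2}\big)\asymp k^{s/n}$ — which compare linear min-max values — do not by themselves control the nonlinear perimeter widths; indeed, the Weyl-type law for $s$-perimeter widths is essentially what Theorem \ref{mmbounds} is proving, so invoking it here is circular. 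The paper's lower bound is much more elementary and bypasses widths entirely: Lemma \ref{coveringballlem} produces $N\sim\p$ disjoint balls of radius $r\sim\p^{-1/n}$; the Borsuk–Ulam theorem (Lemma \ref{LowBound1}) guarantees that every admissible $A$ contains a $u$ with zero average on all $N$ balls; and Lemma \ref{LowBound2} shows by compactness plus the fractional isoperimetric inequality that a zero-average, $[-1,1]$-valued function has energy $\ge c\,r^{n-s}/(1-s)$ in each such ball whenever $\ep\le\ep_0 r$. Summing over the $N$ balls gives the matching $\p^{s/n}/(1-s)$, and the constraint $\ep\le\ep_0 r$ is where $\ep_\p$ comes from — a point your sketch does not address. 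Your proposal would need to be replaced by an argument of this type, since the retraction step cannot be made rigorous.

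As a minor remark, the paper also simplifies the Palais–Smale verification by observing that the truncation $T(u)=\max\{-1,\min\{u,1\}\}$ strictly decreases $\mathcal{E}_M^{\ep,s}$, so one may restrict to sweepouts with $|u|\le 1$ pointwise; this removes the need to modify $W$ and makes the $\ep$-dependence of the PS condition harmless.
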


After proving this result, our main goal will be to show that, for fixed $\p$, as $\ep\to 0$ a subsequence of the $u_{\ep,\p}$ converges in a strong sense to a fractional minimal surface $ \Sigma^\p = \partial E^\p \subset M$, meaning in particular that
\begin{equation*}
\textnormal{Per}_s(E^\p)=\lim_{\ep\to 0}\mathcal{E}_M^{\ep,s}(u_{\ep, \p}) \,.   
\end{equation*}

Together with the bound given by \eqref{minmaxboundeq}, we get for every $\p \in \N $ a fractional minimal surface $\Sigma^\p = \partial E^\p$ with fractional perimeter $ \textnormal{Per}_s(E^\p)\sim \p^{s/n}$. This perimeter growth shows that the family of surfaces $\{\Sigma^\p\}_{\p\in \N}$ necessarily forms an infinite set, thus proving the fractional Yau's conjecture.

\vsp
For this reason, a  large portion of the article is devoted to studying the properties of solutions to the Allen-Cahn equation with a uniform upper bound on their Morse index. We now state and explain the main results of Sections \ref{EstimatesSection} and \ref{ConvSection}.

\subsubsection{Estimates for finite Morse index solutions to Allen-Cahn}
In Section \ref{EstimatesSection}, we prove several estimates for finite Morse index solutions to the Allen-Cahn equation. 

\vsp
In order to quantify the dependence of the constants in the estimates on the geometry of the ambient manifold precisely, the notion of ``local flatness assumption'' will be very useful (this quantification will be important when we perform blow-up arguments). Let us introduce it below.

\vsp 
Here, as in the rest of the paper, $\B_R(0)$ denotes the Euclidean ball of radius $R$ centered at $0$ of  $\R^n$, and $B_R(p) $ denotes the metric ball on $M$ of radius $R$ and center $p$. 

\begin{definition}[Local flatness assumption]\label{flatnessassup}
Let $(M^n,g)$ be an $n$-dimensional Riemannian manifold and $p\in M$. For $R>0$, we say that $ (M,g)$ satisfies the $\ell$-th order \textit{flatness assumption at scale $R$ around the point $p$, with parametrization $\varphi$}, abbreviated as ${\rm FA}_\ell(M,g, R,p,\varphi)$,  whenever there exists an open neighborhood $V$ of $p$ and a diffeomorphism
\begin{equation*}
\varphi:  \B_R(0) \to V, \quad \mbox{with }\varphi(0)=p \,,  
\end{equation*}
such that, letting $g_{ij}=g\left(\varphi_*\left(\frac{\partial}{\partial x^i}\right), \varphi_* \left(\frac{\partial}{\partial x^j} \right) \right)$  
be the representation of the metric $g$ in the coordinates $\varphi^{-1}$, we have
\begin{equation}\label{hsohoh1}
\big( 1-\tfrac{1}{100}\big) |v|^2 \le  g_{ij}(x)v^i v^j \le \big( 1+\tfrac{1}{100}\big)|v|^2 \s\s \forall \, v\in \R^n  \mbox{ and } \forall \, x\in \B_R(0) \,,
\end{equation}
and
\begin{equation*}
R^{|\alpha|}\bigg|\frac{\partial^{|\alpha|} g_{ij} (x)}{\partial x^{\alpha}}\bigg|\le \tfrac{1}{100} \s\s\forall \alpha \mbox{ multi-index  with }1\le|\alpha|\le \ell, \mbox{ and }\forall   x\in \B_R(0).
\end{equation*}
\end{definition}

\begin{remark}\label{fbsvdg}
Notice that for any smooth closed Riemannian manifold $(M,g)$, given $\ell \ge 0$, there exists $R_0>0$ depending on $M$ for which ${\rm FA}_\ell(M,g,R_0,p,\varphi_p)$ is satisfied for all $p\in M$,  where $\varphi_p$ can be chosen to be the restriction of the exponential map\footnote{That is $\varphi_p = (\exp_p\circ \, i)|_{\B_{R_0}(0)}$ for any isometric identification of $i : \R^n \to TM_p$}(of $M$) at $p$ to the (normal) ball $\B_{R_0}(0)\subset T_p M \cong \R^n$.
\end{remark}

\begin{remark}
    The notion of local flatness introduced in Definition~\ref{flatnessassup} is fundamental to our analysis. It guarantees that, once a geodesic ball on $M$ (diffeomorphic to an Euclidean ball, with quantitative control) is fixed, our estimates are uniform independently on the geometry of $M$ outside this ball. Consequently, although the equation we consider is nonlocal---meaning it always depends on the global geometry of $M$---the estimates we derive depend only on the local geometry of $M$.
\end{remark}

\begin{remark}\label{flatscalingrmk}
Throughout the paper, the following scaling properties will be used several times.
\begin{itemize}
\item[(a)] Given $M = (M,g)$ and $r>0$, we can consider the "rescaled manifold" $\widehat M = (M,  r^2g)$. When performing this rescaling, the new heat kernel $H_{\widehat {M}}$ satisfies 
\begin{equation*}
    H_{\widehat{M}}(p,q,t) = r^{-n} H_M(p,q,t/r^2)  \,.
\end{equation*}
As a consequence,  the "rescaled kernel" $\widehat  K_s$ defining the $s$-perimeter on $\widehat M$ satisfies 
\[
\widehat  K_s(p,q) = r^{-(n+s)} K_s(p,q).
\]

\item[(b)] Concerning the flatness assumption, it is  easy to show that ${\rm FA}_\ell(M,g, R, p,\varphi) \Rightarrow {\rm FA}_\ell(M,g, R',p,\varphi)$ for all $R'<R$ and  ${\rm FA}_\ell(M,g, R, p,\varphi)\Leftrightarrow {\rm FA}_\ell(M, r^2g, R/r, p,\varphi(r\,\cdot\,))$.

\item[(c)] Similarly, if ${\rm FA}_\ell(M,g, R, p,\varphi)$ holds,  and $q\in \varphi(\B_R(0))$ is such that $\B_\varrho(\varphi^{-1}(q))\subset \B_R(0)$, then 
${\rm FA}_\ell(M,r^2g, \varrho/r, q,\varphi_{\varphi^{-1}(q),r})$ holds, where $\varphi_{x,\,\rho} := \varphi(x+\rho \, \cdot \,)$. 
\end{itemize}

\end{remark}

One of the main results in the present work is the following estimate, to be proved in Section \ref{BVSection}.

\begin{theorem}[\textbf{BV estimate}]\label{BVEst}
Let $M$ be a closed $n$-dimensional Riemannian manifold for which   ${\rm FA}_2(M,g,R,p,\varphi)$ holds ---see  Definition \ref{flatnessassup}. Let $s\in (0,1)$ and $u: B_R(p)\to (-1,1)$ be a solution of the Allen-Cahn equation (\ref{restrictedeq}) in $B_{R}(p)\subset M$ with parameter $\ep$, and with Morse index $m_{B_{R}(p)}(u)\leq m$. Then 
\begin{equation*}
\int_{B_{R/2}(p)}|\nabla u|dx \leq C R^{n-1},
\end{equation*}
for some $ C= C(n, s ,m )$. 
\end{theorem}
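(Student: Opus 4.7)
The plan is to prove the estimate in two stages: first the BV bound under the additional hypothesis that $u$ is \emph{stable} in $B_R(p)$, and then a reduction from the finite Morse index case to the stable case via covering plus induction on $m$. The reduction rests on the following combinatorial consequence of Definition \ref{MorseDef}: given any $m+1$ mutually disjoint subdomains of $B_R(p)$, the restriction of $u$ to at least one of them is stable. Combining this with a dyadic decomposition of $B_{R/2}(p)$ at a scale $\rho \sim R/C(n,m)$ and inducting on $m$, one controls the contribution of the $O_{n,m}(1)$ bad subcubes (each with strictly smaller index, so that the inductive hypothesis applies at a finer scale) in terms of the good subcubes, where the stable estimate $\int_{B_\rho}|\nabla u|\le C\rho^{n-1}$ applies directly. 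Summing over the cover yields the claimed bound.

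For the stable case, the key step is to extract from the stability inequality \eqref{2ndvar} a nonlocal Caccioppoli-type bound. Testing \eqref{2ndvar} with $\xi = g(u)\eta$ for an odd function $g$ satisfying $g(\pm 1)=0$ and a smooth cutoff $\eta$ supported in a ball slightly larger than $B_{R/2}(p)$, and expanding the resulting nonlocal Dirichlet form using the Allen--Cahn equation, one obtains a uniform energy bound of the form $(1-s)\mathcal{E}^{\ep,s}_{B_{3R/4}(p)}(u) \le C R^{n-1}$. The flatness assumption ${\rm FA}_2(M,g,R,p,\varphi)$ enters here to invoke the two-sided kernel comparison $K_s(p,q) \asymp d(p,q)^{-(n+s)}$ and reduce the remainder of the computation to the Euclidean model.

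The main obstacle, as I see it, is the passage from this \emph{nonlocal} energy bound to the \emph{local} $L^1$ gradient bound demanded by the conclusion. In the classical ($s=1$) Allen--Cahn setting this step follows from the pointwise Modica inequality $|\nabla u|^2 \le 2 W(u)/\ep^2$, which has no known pointwise analog in the fractional case. My proposed workaround is to use the Caffarelli--Silvestre extension characterization (iii), recasting $(-\Delta)^{s/2}$ as a weighted Dirichlet-to-Neumann operator on $\widetilde M = M\times\R_+$; weighted interior gradient estimates on the extension $U$ of $u$, combined with the trace identity relating boundary derivatives of $U$ to $\nabla u$, should transfer the nonlocal energy bound to integral bounds on $|\nabla u|$. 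A complementary and perhaps cleaner route is to combine the fractional energy bound with uniform-in-$\ep$ interior regularity of $u$ to bound $\mathcal{H}^{n-1}(\{u=t\}\cap B_{R/2}(p))$ uniformly in $t\in(-1,1)$, and then conclude via the coarea identity
\[
\int_{B_{R/2}(p)}|\nabla u|\,dx = \int_{-1}^{1}\mathcal{H}^{n-1}\bigl(\{u=t\}\cap B_{R/2}(p)\bigr)\,dt.
\]
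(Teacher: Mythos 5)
Your reduction from finite Morse index to stability is the main gap, and it fails for a structural reason specific to the nonlocal setting. You claim that if $u$ has Morse index $\le m$ then, given any $m+1$ disjoint subdomains, $u$ is stable in at least one. That is true for the \emph{local} Allen--Cahn functional, where test functions with disjoint supports are orthogonal for the second variation, but the nonlocal bilinear form \eqref{2ndvar} has cross-terms $\iint \xi_i(p)\xi_j(q)K_s(p,q)\,dV_p\,dV_q$ that never vanish for disjoint supports. The paper spells this out at the start of Section 3.2.1 and in Lemma \ref{asineq}: what one obtains is only \emph{$\Lambda$-almost-stability} in one of the domains, with $\Lambda = m\max_{i\neq j}\sup_{\mathcal{U}_i\times\mathcal{U}_j}K_s$. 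This is not a cosmetic weakening: the entire architecture of the paper's proof (Definition \ref{almoststab}, Proposition \ref{BVest}, and especially the covering Lemma \ref{morsecovering}) exists to track the size of $\Lambda$ quantitatively. Because $K_s(p,q)\sim d(p,q)^{-(n+s)}$, making $\Lambda r^{n+s}$ small at scale $r$ forces the good cubes to be at mutual distance $\gtrsim D_0 r$ with $D_0$ large; Lemma \ref{morsecovering} is a dyadic iteration designed precisely around that constraint. Your proposed ``induct on $m$'' step also does not survive scrutiny even in the local case: the bad subdomains can carry index as large as $m$ (e.g.\ one bad domain of index $m$ and $m$ stable ones), so the index need not strictly decrease under the decomposition.

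The stable-case argument has a second, independent gap: your test function $\xi = g(u)\eta$ with $g(\pm1)=0$ is the one that yields \emph{energy} estimates (it is the Ambrosio--Cabr\'e/Cabr\'e--Cinti--Serra device, and indeed gives $(1-s)\mathcal{E}^{\ep,s}\lesssim R^{n-1}$ as in Theorem \ref{A-C-energy-est}), but an $H^{s/2}$-energy bound does \emph{not} imply a BV bound. The interpolation Proposition \ref{interprop} goes only in the direction $BV \Rightarrow H^{s/2}$; the converse is false (bounded $H^{s/2}$ norm is compatible with unbounded total variation). Your two proposed escape routes do not close this gap: the coarea formula is an identity, not an estimate, and requires exactly the level-set bound you are trying to establish; weighted interior gradient estimates for the Caffarelli--Silvestre extension degenerate as $z\to 0^+$ and do not control the trace gradient in $L^1$. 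The paper's actual route (Lemma \ref{bdfsggwe}, Proposition \ref{BVest}) is quite different: it tests almost-stability with $\xi=|\nabla_X u|$, uses the pointwise identity
\[
\mathcal{E}_B''(u)[|\nabla_X u|,|\nabla_X u|]=\mathcal{E}_B''(u)[\nabla_X u,\nabla_X u]-4\int_B\int_B(\nabla_X u)_+(p)(\nabla_X u)_-(q)K_s(p,q)\,dV_p\,dV_q,
\]
the lower bound $K_s\geq c_0>0$ on $B\times B$ to make the cross-term produce the product of $L^1$-norms of the positive and negative parts of $\nabla_X u$, and the divergence theorem to control their difference. The quantity $\mathcal{E}_B''(u)[\nabla_X u,\nabla_X u]$ is then bounded through the extension (Lemma \ref{ext2derbound}) and re-expressed in terms of $\|\nabla u\|_{L^1}$ via the interpolation Lemma \ref{lem:whtorwohh}, after which Young's inequality and L.~Simon's absorption Lemma \ref{LSlemma} close the self-improving loop. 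This choice of test function is the step that produces a genuine $L^1$-gradient bound rather than an energy bound, and it is missing from your proposal.
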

Note that the $BV$ estimate above holds uniformly in the Allen-Cahn parameter $\ep$. The nomenclature of ``$BV$" is to be read as ``\textit{bounded variation}".

\begin{remark}
    Our proof of Theorem \ref{BVEst} gives a control on the behavior of the constant $C(n,s,m)$ as $s\uparrow 1$. More precisely, for fixed $s_\circ \in (0,1)$ we have $C(n,s,m) \le C(n,s_\circ,m)/(1-s)$ for all $s\in (s_\circ,1)$. In view of the results from \cite{CDSV}, the sharp asymptotic for $s$ close to $1$ is expected to be $C(n,s,m) \le C(n,s_\circ,m)/(1-s)^{1/2}$.
\end{remark}

Another important result is a bound on the Sobolev and Potential parts of the energies, obtained in Section \ref{PotDecSection}:
\begin{theorem}[\textbf{Energy estimate}]\label{A-C-energy-est}
Let $u : M\to (-1,1)$ be a solution of (\ref{restrictedeq}) in $B_{R}(p)\subset M$ with parameter $\ep$ and Morse index $m_{B_{R}(p)}(u)\leq m$. Suppose that  ${\rm FA}_2(M,g,R,p,\varphi)$ holds ---see Definition \ref{flatnessassup}. Then
\begin{equation*}
\mathcal E^{\rm Sob}_{B_{R/2}(p)}(u) \le CR^{n-s} ,
\end{equation*}
and there exists $\ep_0 = \ep_0(n,s,m)$ such that for $\ep <\ep_0$
\begin{equation*}
\mathcal E^{\rm Pot}_{B_{R/2}(p)}(u) \le C \Big(\frac{\ep}{R}\Big)^\beta R^{n-s} ,
\end{equation*}
where $C = C(n,s,m)$ and $\beta := \min\big(\frac{1-s}{2}, s \big) >0$.
\end{theorem}

Section \ref{DensEstSection} will prove the following result, and which will give, among other things, that the level sets of Allen-Cahn solutions converge to the limit (hyper)surfaces in the Hausdorff distance of sets.
\begin{proposition}[\textbf{Density estimates}]\label{DensEst}
Let $u:M\to (-1,1)$ be a solution of (\ref{restrictedeq}) in $B_{R}(p)\subset M$ with Morse index $m_{B_{R}(p)}(u)\leq m$, and suppose that ${\rm FA}_2(M,g,R,p,\varphi)$ holds ---see Definition \ref{flatnessassup}. Then, there exist positive constants $\omega_0$, $C_0$ and $\ep_0$, depending only on $n$, $s$, and $m$, such that the following holds: whenever $\ep \le \ep_0$, $R \ge C_0\ep$ and
\begin{equation*}
R^{-n}\int_{B_R(p)} |1+u_\ep|  \leq \omega_0\qquad \left(\mbox{respectively,}\quad R^{-n}\int_{B_R(p)} |1-u_\ep|  \leq \omega_0\right), 
\end{equation*}
then
\begin{equation*}
\left\{ u_\ep\ge -\tfrac{9}{10}\right\}\cap B_{R/2}(p) = \varnothing\qquad  \bigg(\mbox{respectively,}\quad \left\{ u_\ep\le \tfrac{9}{10}\right\}\cap B_{R/2}(p) = \varnothing\bigg).
\end{equation*}
\end{proposition}

\subsubsection{Convergence results}
In Section \ref{ConvSection}, the estimates we have just stated are used to show the convergence, as $\ep\to 0$, of solutions of \eqref{restrictedeq} to a limit interface. 

The complete statement of our convergence result is the following. 
\begin{theorem}
\label{StrongConv}(\textbf{Convergence as $\ep\to 0^+$}). Fix $s \in (0,1)$. Let $u_{\ep_j}$ be a sequence of solutions of (\ref{restrictedeq}) on $M$ with parameters $\ep_j\to 0$ and Morse index $m(u_{\ep_j}) \le m$. Then, there exist a subsequence, still denoted by $u_{\ep_j}$, and an $s$-minimal surface $\Sigma= \partial E$ with Morse index at most $m$, such that
\begin{equation*}
    u_{\ep_j} \conv{H^{s/2}} u_0=\chi_E-\chi_{E^c}\, .
\end{equation*}
In particular $\mathcal{E}_{M}^{\rm Sob}(u_{\ep_j}) \to \textnormal{Per}_s(E) = \mathcal{E}_{M}^{ \rm Sob}(u_0)$. Moreover, $\mathcal{E}_{M}^{\rm Pot}(u_{\ep_j})\to 0=\mathcal{E}_{M}^{\rm Pot}(u_0)$.

\vsp
In addition, up to changing $E$ on a set of measure zero, we have
\begin{eqnarray}
\textnormal{int}(E) & \supseteq & \Big\{ p \in M \, : \liminf_{r\downarrow 0} \tfrac{|E \cap B_r(p) |}{|B_r(p)|} =1 \Big\} , \label{cdcd} \\ \nonumber 
M\setminus \overline{E} &\supseteq & \Big\{  p \in M \, : \limsup_{r\downarrow 0} \tfrac{|E \cap B_r(p) |}{|B_r(p)|} =0 \Big\} , \\ \label{ababab}
\Sigma  &= & \Big\{ p \in M \, : \tfrac{|E \cap B_r(p) |}{|B_r(p)|} \in [\delta, 1-\delta]\quad \forall \,  r\in (0,r_p), \s \textnormal{for some } r_p>0 \Big\} ,
\end{eqnarray}
where $\delta=\delta(n,s,m) \ll 1$ and $\Sigma=\partial E$ represents the topological boundary of $E$. Moreover, for all given $c\in(-1,1)$ 
\begin{equation*}
  d_{\rm H} ( \{u_{\ep_j} \ge   c\} ,  E ) \rightarrow 0 \,, \s \text{as } j \to \infty \,,
\end{equation*}
where $d_{\rm H}(X,Y) = \inf\{ \rho>0 \,: \, X \subseteq \bigcup_{y \in Y} B_\rho(y) \ \mbox{and} \ Y\subseteq \bigcup_{x \in X} B_\rho(x) \}$  denotes the standard Hausdorff distance between subsets of $M$.
\end{theorem}

As explained in Section \ref{ExistenceIntro}, this result combined with Theorem \ref{Existence1} gives Theorem \ref{FracYau3}.

\subsubsection{Regularity in low dimensions}
Sections \ref{BlowUpSection}--\ref{dimredsection} are devoted to proving the uniform regularity and separation estimate in low dimensions of Theorem \ref{UnifReg}, as well as the result of Theorem \ref{FracYau2} on the size of the singular set in higher dimensions.

First, Sections \ref{BlowUpSection} and \ref{ACpropsect} define and describe the properties of blow-ups of $s$-minimal surfaces, in particular when they are the limits of Allen-Cahn solutions with bounded index.

Then, in Section \ref{BlowupLimSection} it is shown that such blow-ups converge to a single hyperplane in $\R^n$, under the assumption that stable $s$-minimal cones in $\R^n$ are flat; that is, when $n<n_s^*$ is less than the critical dimension of Definition \ref{critdim}. This classification result for blow-ups is used in Section \ref{UnifRegSection} to prove Theorem \ref{UnifReg}. The proof is done by a blow-up and contradiction strategy to show that the surfaces are flat at some fixed scale, and an improvement of flatness theorem\footnote{This improvement of flatness theorem was proved on $\R^n$ in the seminal article \cite{CRS} which first defined nonlocal minimal surfaces, and the version of it on manifolds has been recently proved in \cite{Moy}.} which holds for all nonlocal minimal surfaces which are viscosity solutions of the zero nonlocal mean curvature equation, a criticality condition much weaker than minimality.

Finally, a dimension-reduction argument combined with the previous strategy allows to prove Theorem \ref{FracYau2} for all $n$.

\subsubsection{Bernstein and De Giorgi type results}
Section \ref{DGBSection} establishes the validity of the ``finite Morse index versions'' of the nonlocal De Giorgi and Bernstein conjectures, once again under the assumption of the classification of stable cones. This represents a remarkable departure from the behavior of classical minimal surfaces and of solutions to the classical (local) Allen-Cahn equation with a bounded index. 

\vsp
The proof of both results uses the same strategy as the proof, in Section \ref{BlowupLimSection}, of the fact that blow-up limits of $s$-minimal surfaces satisfying a certain list of properties, which are in particular satisfied by limits of Allen-Cahn need to be half-spaces.

\vsp
The Bernstein conjecture (today theorem) states that graphical complete minimal hypersurfaces must be hyperplanes in low dimensions. See \cite{Alm,Bern,CL,dCP,FishS,Pog,Simons} for related generalizations to the classes of minimizing and stable hypersurfaces.

\vsp
In Section \ref{DGBSection} we establish:

\begin{theorem}[Finite index nonlocal Bernstein-type result]\label{BernsteinIntro}
Let $s \in (0,1)$ and $3 \le n<n_s^*$, where $n_s^*$ is the critical dimension (see Definition \ref{critdim}). 

\vsp
Then, any finite Morse index $s$-minimal surface in $\R^n$ of class $C^2$ is a half-space.
\end{theorem}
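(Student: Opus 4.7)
The plan is to blow down $\partial E$ at infinity, apply the classification of smooth stable $s$-minimal cones available in dimension $n<n_s^*$, and then propagate flatness back down to every scale through an iterated improvement-of-flatness argument. Let $\partial E \subset \R^n$ be a $C^2$ $s$-minimal surface with Morse index at most $m$. Since $m<\infty$, there exists $R_0>0$ outside of which $\partial E$ is stable. Consider the rescalings $E_R := R^{-1}E$: by scale invariance of the $s$-perimeter on $\R^n$, each $\partial E_R$ is again a $C^2$ $s$-minimal surface of Morse index at most $m$, and is stable outside the shrinking ball $B_{R_0/R}$.

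The first step is to extract a subsequential limit as $R\to\infty$. Using the $\mathcal A_m'$-version (Remark \ref{EstSurf}) of the $BV$ estimate (Theorem \ref{BVEst}), the energy estimate (Theorem \ref{A-C-energy-est}), and the density estimates (Proposition \ref{DensEst}) applied on compact subsets of $\R^n\setminus\{0\}$, the blow-downs $\partial E_R$ enjoy uniform local perimeter bounds on such sets. Hence, along a subsequence $R_j\to\infty$, one has $E_{R_j}\to E_\infty$ in $L^1_{\rm loc}(\R^n)$, with Hausdorff convergence of the boundaries as in Theorem \ref{StrongConv}. The limit $\partial E_\infty$ is an $s$-minimal surface in $\R^n$, stable on $\R^n\setminus\{0\}$. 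By the monotonicity formula for stationary $s$-minimal surfaces developed earlier in the paper, blow-down limits are necessarily cones, i.e.\ $\lambda E_\infty = E_\infty$ for every $\lambda>0$.

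Next comes regularity and classification of $\partial E_\infty$. Applying Theorem \ref{UnifReg} in its $\mathcal A_m'$-version from Remark \ref{EstSurf} to $E_{R_j}$ on annuli bounded away from the origin yields that $\partial E_\infty$ is a smooth submanifold of $\R^n\setminus\{0\}$. Thus $\partial E_\infty$ is a smooth stable $s$-minimal cone in $\R^n\setminus\{0\}$, and the assumption $n<n_s^*$ together with Definition \ref{critdim} forces $\partial E_\infty$ to be a hyperplane through the origin.

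The final step is to upgrade ``every blow-down limit is a hyperplane'' to ``$\partial E$ is itself a hyperplane''. From the previous step, for every $\delta>0$ there exists $R_\delta$ such that $\partial E\cap B_R$ lies in a slab of width $\delta R$ for every $R\ge R_\delta$; in particular $\partial E$ is $\delta$-flat at some sufficiently large scale. Iterating the improvement-of-flatness theorem of Caffarelli--Roquejoffre--Savin \cite{CRS} ---valid for viscosity solutions of the zero nonlocal mean curvature equation, as $\partial E$ is--- propagates this flatness down to all smaller scales and, combined with a Liouville-type rigidity argument for the resulting entire $C^{1,\alpha}$ graph with decaying gradient, forces $\partial E$ to coincide with a single hyperplane. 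The main obstacle I expect is precisely this last bootstrap: one must rule out that different subsequential blow-downs yield different hyperplanes, and then convert asymptotic flatness at infinity into global rigidity. Both points should be handled by a scale-by-scale application of improvement of flatness, anchored by the monotonicity formula to pin down the asymptotic normal direction as $R\to\infty$.
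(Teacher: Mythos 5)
Your overall plan — blow down to a cone, classify the stable cone using $n<n_s^*$, then propagate flatness down all scales via improvement of flatness — is the same skeleton as the paper's proof (which applies Theorem \ref{A-CBernstein} to the class $\mathcal{A}_m'(\R^n)$, with Proposition \ref{good2} verifying its hypotheses). However, your very first inference contains a genuine gap that the paper's almost-stability machinery is specifically designed to circumvent. You assert that ``since $m<\infty$, there exists $R_0>0$ outside of which $\partial E$ is stable,'' and then pass this stability to the blow-downs and the limit cone. For \emph{local} functionals this is standard: $(m+1)$ disjointly supported unstable test functions would span an $(m+1)$-dimensional negative subspace, since the off-diagonal Gram entries vanish by locality. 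Nonlocally this fails: for disjointly supported $\xi_i,\xi_j$ the cross-term
\[
\mathcal{E}''(\xi_i,\xi_j)=-\iint_{A_i\times A_j}\xi_i(p)\,\xi_j(q)\,K_s(p,q)\,dV_p\,dV_q
\]
does not vanish, and there is no a priori reason it is dominated by the (possibly tiny) diagonal terms $\mathcal{E}''(\xi_i,\xi_i)<0$, so a Gram matrix with negative diagonal and nonzero off-diagonal entries need not be negative definite. The paper explicitly flags this obstruction in the paragraph before Definition \ref{almoststab}, and it is the entire reason for the quantitative almost-stability framework of Definition \ref{almoststab} and Lemmas \ref{asineq}, \ref{improvineq1}, \ref{lemasbup}.

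The correct route establishes stability of the blow-down cone $E_\infty$ \emph{directly} via Lemma \ref{StableCone}, not by inheritance from $E$. The point of Lemma \ref{StableCone} is that a stationary \emph{cone} satisfying the quantitative almost-stability hypothesis (4) of Theorem \ref{A-CBernstein} is stable outside the origin: one tests the almost-stability inequality on $m+1$ annuli $A_0,RA_0,\dots,R^mA_0$, exploits dilation invariance of the cone to collapse all the second-variation terms to a single scale-invariant quantity, and shows that the cross-term error, which carries explicit factors of $D_{i\ell}^{-(n+s)}$ and tunable weights $\lambda_{i\ell}$, tends to $0$ as $R\to\infty$. This argument has no analogue for the non-conical surface $E$, which is why one first extracts the cone (monotonicity), transfers almost-stability to it (Lemma \ref{lemasbup} or its $C^2$ analogue as in Proposition \ref{good2}), and only then gets stability. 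Two smaller remarks: (i) you cannot directly invoke ``$n<n_s^*$ classifies $E_\infty$'' without knowing $E_\infty$ is smooth outside the origin, and $n_s^*$ is defined via \emph{smooth} cones; the paper handles this with a dimension-reduction iteration whose base case is the translation-invariant cone classification of Lemma \ref{2Dcone} (property (6) of Theorem \ref{A-CBernstein}), so this input is genuinely needed; (ii) your final ``anchoring the normal direction'' worry is unnecessary — once $E$ is flat at all sufficiently large scales about a fixed point, \eqref{2formbound} gives $\|\mathrm{II}_{\partial E}\|\le C/r$ in $\B_{r/2}$ for all large $r$, so $\mathrm{II}_{\partial E}\equiv 0$ globally and $\partial E$ is a hyperplane, with no need to fix the asymptotic normal a priori.
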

Under the assumption of stability (Morse index zero) the previous theorem was established in \cite{Stable} and in the case of minimizers if follows from \cite{CRS}.

\vsp
The De Giorgi conjecture is a famous related statement about certain entire solutions to the Allen-Cahn equation being one-dimensional or equivalently about their level sets being hyperplanes in low dimensions. See \cite{AAC,AmbrC,FS,GG,Savin,Stable} for related previous results in the minimizing and stable cases. In Section \ref{DGBSection}, we show:

\begin{theorem}[Finite index nonlocal De Giorgi-type result]\label{DeGiorgi}
Let $s\in (0,1)$ and $3 \le n<n_s^*$, where $n_s^*$ is the critical dimension (see Definition \ref{critdim}).

\vsp
Then, every finite Morse index solution $u$ of $(-\Delta)^{s/2}u + W'(u)=0$ in $\R^n$ is a 1D layer solution, namely, $u(x)= \phi(e\cdot x)$ for some $ e \in \Sp^{n-1}$ and increasing function $\phi: \R \rightarrow (-1,1)$. 
\end{theorem}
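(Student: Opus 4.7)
The plan is to mimic the blow-down strategy used in the proof of Theorem \ref{BernsteinIntro}, applied to the level sets of $u$, and then to upgrade the flat blow-down to full one-dimensional symmetry of $u$ via improvement of flatness and a Liouville-type argument on tangential derivatives. Throughout I use the elementary fact that a solution of finite Morse index $m$ is automatically stable in the complement of a sufficiently large ball $\B_{R_0}(0)$.

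First, I would consider the blow-down family $u_R(x) := u(Rx)$, which solves the fractional Allen-Cahn equation on $\R^n$ with parameter $\ep_R = 1/R\to 0$ and still has Morse index at most $m$; moreover, if $u$ is stable in $\R^n\setminus \B_{R_0}(0)$, then $u_R$ is stable in $\R^n\setminus \B_{R_0/R}(0)$. Applying Theorem \ref{StrongConv} on exhausting balls, I extract a subsequence $R_j\to\infty$ along which $u_{R_j}\to \chi_E-\chi_{E^c}$ in $L^1_{\rm loc}$, where $\Sigma = \partial E$ is an $s$-minimal surface in $\R^n$ with Morse index at most $m$ and stable on $\R^n\setminus\{0\}$ (the latter by letting $r>0$ and noting that eventually $R_0/R_j<r$). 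Being a blow-down, the monotonicity formula for stationary $s$-minimal surfaces forces $\Sigma$ to be a cone; being a stable cone away from the origin in dimension $n<n_s^*$, the classification in Definition \ref{critdim} forces $\Sigma = H = \{e\cdot x = 0\}$ for some $e\in \Sp^{n-1}$.

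To pass from ``every blow-down of $u$ is a hyperplane'' to ``$u$ is a 1D layer'', I would apply the improvement-of-flatness theorem for the NMS equation (as in \cite{CRS,Moy}) to the level sets of $u$, iterated across dyadic scales. This step yields both that $H$ is independent of the extracted subsequence and that the zero level set of $u$ is contained in a slab of thickness $o(R)$ around $H$ inside each large ball $B_R(0)$. Then, for any tangential direction $e'\perp e$, the derivative $v := \partial_{e'}u$ is a bounded, stable solution of the linearized equation outside $\B_{R_0}(0)$, with a quantitative decay at infinity inherited from the preceding flatness step. A Liouville-type result for stable solutions of the linearized equation, adapted from the techniques of \cite{Stable} to the setting where stability is only known outside a compact set, would then force $v\equiv 0$ on $\R^n$, giving $u(x) = \phi(e\cdot x)$; the strict monotonicity of $\phi$ follows from the strong maximum principle applied to $\phi'$, together with the fact that $u$ necessarily connects the two wells $\pm 1$ of $W$ (as otherwise the sign of $u$ and maximum principle would force $u\equiv \pm 1$).

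The main obstacle is this last step: converting the qualitative statement that every blow-down is a hyperplane into full 1D symmetry of $u$. One has to quantify the convergence of blow-downs to $H$ sharply enough that the tangential derivative $v$ has sufficient decay, and then push through a Liouville theorem for stable solutions of the linearized equation in the regime where stability is only available outside a compact set rather than everywhere. Both ingredients are where the hypothesis $n<n_s^*$ enters in an essential way: once to classify the stable blow-down cone as a hyperplane, and once (through the analogous classification of stable solutions of the linearized equation) to force the tangential derivatives to vanish identically.
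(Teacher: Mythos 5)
The opening step---blowing down $u_R:=u(R\,\cdot\,)$ and extracting a subsequential $L^1_{\rm loc}$ limit $\chi_E-\chi_{E^c}$ via Theorem \ref{StrongConv}---coincides with the paper's proof, and so does the overall strategy of first classifying the blow-down limit and then upgrading to one-dimensional symmetry. However, there is a serious gap in how you reach stability of the limit cone: the ``elementary fact'' that a finite Morse index solution is stable outside a large ball is \emph{false} for nonlocal operators. If $\xi_1$ is a negative direction of $\mathcal E''(u)$ supported in $\B_{R_0}$ and $\xi_0$ is supported in $\R^n\setminus\B_R$ with $\mathcal E''(u)[\xi_0,\xi_0]<0$, the form on $\mathrm{span}\{\xi_0,\xi_1\}$ can still fail to be negative definite: the kernel couples $\xi_0$ and $\xi_1$ despite disjoint supports, and the cross term $\mathcal E''(u)[\xi_0,\xi_1]$ is only bounded by $C\,R^{-(n+s)}\|\xi_0\|_{L^1}\|\xi_1\|_{L^1}$, which can dominate $\sqrt{|\mathcal E''(u)[\xi_0,\xi_0]|\,|\mathcal E''(u)[\xi_1,\xi_1]|}$ when $|\mathcal E''(u)[\xi_0,\xi_0]|$ is small relative to $\|\xi_0\|^2_{L^1}$. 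What one actually gets is only \emph{almost}-stability, $\mathcal E''(u)[\xi_0,\xi_0]\geq -\Lambda(R)\|\xi_0\|^2_{L^1}$ with $\Lambda(R)\to 0$, and this is precisely what the paper formalizes in Definition \ref{almoststab}, Lemma \ref{asineq} and Lemma \ref{StableCone}; only in the blow-down limit does almost-stability improve to genuine stability of the cone outside the origin. If you want to make your sketch rigorous you must replace ``stable outside $\B_{R_0}$'' with this quantitative almost-stability and then run the full analysis of Theorem \ref{A-CBernstein}, which also addresses the fact that the blow-down cone is not a priori smooth away from $0$, so Definition \ref{critdim} cannot be invoked as directly as you do.

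The second half is also off-target, and it is where the paper takes a much shorter route. Improvement of flatness from \cite{CRS,Moy} is a statement about viscosity solutions of the nonlocal minimal surface equation; the level sets of an Allen--Cahn solution are not such surfaces, so those theorems do not apply to them. But they are not needed: once every blow-down of $u$ is a hyperplane, the Hausdorff convergence of level sets from Theorem \ref{StrongConv} gives, after rescaling (and a short compactness argument to pass from subsequences to all large scales), exactly the asymptotic flatness hypothesis of Theorem \ref{AsympFlat}---which \emph{is} the correct improvement-of-flatness theorem for the fractional Allen--Cahn equation, due to \cite{dPSV}. That theorem concludes $u=\phi(e\cdot x)$ with $\phi$ increasing directly, so your proposed Liouville argument for tangential derivatives is unnecessary, and moreover would require developing decay estimates for $\partial_{e'}u$ and a Liouville theorem for (almost-)stable solutions of the linearized equation---far more machinery than the paper's one-line appeal to \cite{dPSV}.
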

Under the assumption of stability (Morse index zero), the previous theorem was established in \cite{Stable}, and for minimizers, it followed from \cite{CRS,dPSV}.

\begin{remark}
    Recall that $n_s^*\geq 5$ for $s\in (s_0,1)$ for some universal constant $s_0 \in (0,1)$. In particular, Theorems \ref{BernsteinIntro} and \ref{DeGiorgi} hold for $n=3,4$ and $s\in (s_0,1)$.
\end{remark}

\section{The fractional setting on manifolds}\label{DefsSection}

Throughout the paper $(M,g)$ will denote a closed (i.e. compact and without boundary) Riemannian manifold of dimension $n$, unless otherwise stated. We refer to \cite{FracSobPaper} for a more detailed introduction to fractional Sobolev spaces on Riemannian manifolds, including the proofs of all of the results in this section.

\subsection{The fractional Laplacian on \texorpdfstring{$(M,g)$}{}}
Taking inspiration from the case of $\R^n$ (see \cite{Stinga} for instance), in this section we give several equivalent definitions for the fractional Laplacian on a closed Riemannian manifold $(M,g)$. 
It is natural here to define the fractional powers $(-\Delta)^{s/2}$ for any $s \in (0,2)$. On the other hand, in the rest of the paper we will always restrict to $s\in (0,1)$: the norm $H^{s/2}(M)$ used to define the fractional perimeter is only considered with $s\in(0,1)$ since the $H^{1/2}$ energy of a characteristic function is infinite.

\subsubsection{Spectral and singular integral definitions}

The fractional Laplacian $(-\Delta)^{s/2}$ can be defined as the $s/2$-th power (in the sense of spectral theory) of the usual Laplace-Beltrami operator on a Riemannian manifold, through Bochner's subordination.

\vsp
Given real numbers $\lambda >0$ and $s \in (0,2)$, the following numerical formula holds
\begin{equation}\label{numeric1}
    \lambda^{s/2} = \frac{1}{\Gamma(-s/2)} \int_{0}^{\infty} (e^{-\lambda t}-1)\frac{dt}{t^{1+s/2}} \, ,
\end{equation}
which can be proved by a simple substitution in the integral on the right-hand side. Formally applying the above relation to the operator $L=(-\Delta)$ in place of $\lambda$, one obtains the following definition for the fractional Laplacian.
\begin{definition}[\textbf{Spectral definition}]
Let $s\in(0,2)$. The fractional Laplacian $(-\Delta)^{s/2}$ is the operator that acts on regular functions $u$ by
\begin{equation}\label{boclap}
   (-\Delta)^{s/2} \, u = \frac{1}{\Gamma(-s/2)} \int_{0}^{\infty} (e^{t\Delta}u-u)\frac{dt}{t^{1+s/2}}\, .
\end{equation}
Here, the expression $e^{t\Delta}u$ is to be understood as the solution of the heat equation on $M$ at time $t$ and with initial datum $u$.
\end{definition}
\begin{remark}
On a closed Riemannian manifold, a closely related definition of the fractional Laplacian is available: if $\{\phi_k\}_{k=1}^{\infty} $ is an $L^2(M)$ orthonormal basis of eigenfunctions for $(-\Delta) $ with eigenvalues
\begin{equation*}
    0 = \lambda_1 < \lambda_2 \le \dotsc \le \lambda_k \conv{k \to \infty} +\infty
\end{equation*}
and $u \in L^2(M)$ then
\begin{equation*}
    (-\Delta)^{s/2} u = \sum_{k=1}^\infty \lambda_k^{s/2} \lp u, \phi_k\rp_{L^2(M)} \phi_k \,. 
\end{equation*}
Since the solution to the heat equation on $M$ with initial datum an eigenfunction $\phi_k$ is given by $e^{t\Delta}\phi_k = e^{-\lambda_k t}\phi_k$, the above definition is easily shown to be identical (for $u$ regular) to \eqref{boclap} by first observing that they coincide for eigenfunctions (thanks to \eqref{numeric1}), and then extending the result by approximation.
\end{remark}

The second definition for the fractional Laplacian, closely related to the spectral one, expresses it as a singular integral. It will be our working definition in a substantial portion of the article.
\begin{definition}[\textbf{Singular integral definition}]
The fractional Laplacian $(-\Delta)^{s/2}$ of order (of differentiation) $s \in (0,2)$ is the operator that acts on a regular function $u$ by
\begin{equation}\label{singintlap}
    (-\Delta)^{s/2} u(p) = \int_{M} (u(p)-u(q))K_s(p,q) \, dV_q \,,
\end{equation}
where $K_s(p,q) : M \times M \to \R$ is given by\footnote{Note that $\frac{1}{|\Gamma(-s/2)|} = \frac{s/2}{\Gamma(1-s/2)}$. }
\begin{equation*}
    K_s(p,q)=\frac{s/2}{\Gamma(1-s/2)} \int_{0}^{\infty} H_M(p,q,t) \frac{dt}{t^{1+s/2}} \,,
\end{equation*}
and where $H_M:M\times M \times (0,\infty)\to \R$ denotes the usual heat kernel on $M$.
\end{definition}
\begin{remark}
If the manifold $M$ is replaced by the Euclidean space $\R^n$, then (we refer also to \cite[Section 9.1]{AlmLieb} for the very same computation) we have
\begin{equation*}
    K_{s}(x,y)=\frac{s/2}{\Gamma(1-s/2)} \int_{0}^{\infty} H_{\R^n}(x,y,t) \frac{dt}{t^{1+s/2}} = \frac{s/2}{\Gamma(1-s/2)} \int_{0}^{\infty} \left(  \frac{1}{(4\pi t)^{\frac{n}{2}}} e^{-\frac{|x-y|^2}{4t}}\right) \frac{dt}{t^{1+s/2}} = \frac{\alpha_{n,s}}{|x-y|^{n+s}} ,
\end{equation*}
where 
\begin{equation*}
    \alpha_{n,s}= \frac{2^s \Gamma\Big(\tfrac{n+s}{2}\Big)}{\pi^{ n/2} |\Gamma(-s/2)| } = \frac{s 2^{s-1} \Gamma\Big(\tfrac{n+s}{2}\Big)}{\pi^{n/2}\Gamma(1-s/2)}.
\end{equation*}
Hence, we recover the usual form of the fractional Laplacian on $\R^n$. See also 
\end{remark}

The equivalence\footnote{With our choice of constants, this is an equality and not only an equivalence.} between the two definitions \eqref{boclap} and \eqref{singintlap} is seen by expressing the solution $e^{t\Delta}u$ to the heat equation in terms of the initial datum $u$ as 
\begin{equation*}
    (e^{t\Delta}u)(p)=\int_M u(q) H_M(p,q,t) \,dV_q \,,
\end{equation*}
using $\int_M H_M(p,q,t) \,dV_q=1$ and changing the order of integration.

\subsubsection{Properties of the kernel}\label{kernsection}

In this section, we recall some important estimates on the singular kernel $K_s(p,q)$ taken from \cite{FracSobPaper}, which are proved by means of analogous local estimates for the heat kernel.

\vsp

In all the sections, we will use the (standard) multi-index notation for derivatives. A multi-index $\alpha=(\alpha_1,\alpha_2,\dots, \alpha_n)$ will be an $n$-tuple of nonnegative integers (in other words $\alpha \in \N^n$). We define 
\[
|\alpha| : = \alpha_1+ \alpha_2 +\cdots +\alpha_n.
\]
For a function  $f: \R^n \to \R$ of class $C^\ell$ we shall use the notation
\[
\frac{\partial^{|\alpha|}}{\partial x^{\alpha}} f : = \frac{\partial^{\alpha_1 + \alpha_2 + \cdots \alpha_n}f}{(\partial x^1)^{\alpha_1} (\partial x^2)^{\alpha_2}\cdots  (\partial x^n)^{\alpha_n} }\, .
\]
For $\alpha=0$, we put $\frac{\partial^{|0|}}{\partial x^{0}} f :=f$.

\begin{proposition}[\cite{FracSobPaper}]\label{prop:kern1}
Let $(M,g)$ be a Riemannian $n$-manifold, not necessarily closed, $s\in(0,2)$ and let $p\in M$. Assume ${\rm FA_\ell}(M,g,R,p, \varphi)$ holds and denote $K(x,y): = K_s(\varphi(x), \varphi(y))$.

\vsp
Given $x\in \B_R(0)$, let
$A(x)$ denote the positive symmetric square root of the matrix $(g_{ij}(x))$ ---$g_{ij}$ being the metric in coordinates $\varphi^{-1}$--- and, for $x,z\in \B_{R/2}(0)$, define
\begin{equation*}
    k(x, z): = K(x,x+z) \quad \mbox{and}\quad  \widehat k(x,z) := k(x,z) - \frac{\alpha_{n,s}}{|A(x)z|^{n+s}}.
\end{equation*}
Then
\begin{equation}\label{remaining0}
\big|\widehat k(x,z)\big| \le  R^{-1}\frac{C(n,s)}{|z|^{n+s -1}}\quad \mbox{for all } x,z \in \B_{R/4}(0) \setminus \{0\} ,
\end{equation}
and, for every multi-indices $\alpha,\beta$  with $|\alpha|+|\beta| \le \ell$, we have
\begin{equation*}
  \bigg|\frac{\partial^{|\alpha|}}{\partial x^{\alpha}} \frac{\partial^{|\beta|}}{\partial z^{\beta}}  k(x,z)\bigg| \le \frac{C(n,s, \ell)}{|z|^{n+s+ |\beta|}}\quad \mbox{for all } x,z \in \B_{R/4}(0) \setminus \{0\} .  
\end{equation*}
The constants $C(n,s)$ and $C(n,s,l)$ stay bounded for $s$ away from $0$ and $2$.\\

Moreover, for all $x\in \B_{R/4}(0)$ and for all $q\in M\setminus \varphi(\B_R(0))$ we have
\begin{equation*}
\bigg|\frac{\partial^{|\alpha|}}{\partial x^{\alpha}}  K_s(\varphi(x),q)\bigg| \le \frac{C(n,\ell)}{R^{n+s}} \,,
\end{equation*}
and 
\begin{equation}\label{remaining3}
\int_{M\setminus \varphi(\B_R(0))}\bigg|\frac{\partial^{|\alpha|}}{\partial x^{\alpha}}  K_s(\varphi(x),q)\bigg| dV_q \le \frac{C(n,\ell)}{R^{s}}, 
\end{equation}
for every multi-index  $\alpha$ with $|\alpha|\le \ell$.

\end{proposition}

\begin{lemma}[\cite{FracSobPaper}]\label{loccomparability}
    Let $s_0 \in (0,2)$ and $s \in (s_0,2)$. Let $(M,g)$ be a Riemannian $n$-manifold and $ p \in M$. Assume that ${\rm FA}_1(M,g,p,1,\varphi)$ holds. Then 
\begin{equation*}
   c_7 \frac{\alpha_{n,s}}{|x-y|^{n+s}} \le  K_s(\varphi(x),\varphi(y)) \le  c_8 \frac{\alpha_{n,s}}{|x-y|^{n+s}} \,, 
\end{equation*}
for all $x,y \in \B_{1/2}(0)$, where $c_7,c_8>0$ depends on $n$ and $s_0$.
\end{lemma}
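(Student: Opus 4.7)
The plan is to reduce to the global comparability result Lemma \ref{globcomparability} by extending the locally defined metric $\varphi^{*}g$ to a global metric on $\R^{n}$, and then controlling the error between the original and extended singular kernels via Lemma \ref{lemaux2}.

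\textbf{Extension.} Working in the chart $\varphi$, let $\bar g_{ij}$ be a smooth Whitney-type extension of $g_{ij}|_{\B_{1}(0)}$ to $\R^{n}$, and let $\eta\in C_{c}^{\infty}(\R^{n})$ be a cutoff with $\eta\equiv 1$ on $\B_{1}(0)$ and $\mathrm{spt}(\eta)\subset \B_{L}(0)$ for some large $L$. Define
\[
\tilde g_{ij}(x) := \eta(x)\bar g_{ij}(x) + \bigl(1-\eta(x)\bigr)\delta_{ij}.
\]
Since ${\rm FA}_{1}$ provides the tight $1/100$ bounds on $\B_{1}(0)$, a direct computation shows that $\tilde g$ satisfies $|v|^{2}/4 \le \tilde g_{ij}(x)v^{i}v^{j}\le 4|v|^{2}$ and $|D\tilde g_{ij}|\le 1$ globally (and, with $L$ large, one can even arrange that $\widetilde{M}:=(\R^{n},\tilde g)$ satisfies ${\rm FA}_{1}(\widetilde M,\tilde g,1,0,{\rm Id})$). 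Let $\widetilde H$, $\widetilde K_{s}$ denote the heat and singular kernels of $\widetilde M$.

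\textbf{Global bound and heat-kernel comparison.} Lemma \ref{globcomparability} applied to $\widetilde M$ yields
\[
c_{5}\,\frac{\alpha_{n,s}}{|x-y|^{n+s}} \,\le\, \widetilde K_{s}(x,y) \,\le\, c_{6}\,\frac{\alpha_{n,s}}{|x-y|^{n+s}} \quad \forall\, x,y\in\R^{n},
\]
with $c_{5},c_{6}$ depending only on $n$. Since $\varphi^{*}g$ and $\tilde g$ coincide on $\B_{1}(0)$ and both satisfy ${\rm FA}_{1}$ at scale $1$ in their respective charts, Lemma \ref{lemaux2} (with $\ell=0$) gives
\[
|H_{M}(\varphi(x),\varphi(y),t) - \widetilde H(x,y,t)| \le C e^{-c/t} \quad \text{on } \B_{1/2}(0)^{2}\times[0,\infty),
\]
for constants $C,c$ depending only on $n$. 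Integrating this against $\tfrac{s/2}{\Gamma(1-s/2)}\,t^{-1-s/2}\,dt$ and using $\int_{0}^{\infty} e^{-c/t}t^{-1-s/2}\,dt = \Gamma(s/2)c^{-s/2}$, we obtain
\[
\bigl|K_{s}(\varphi(x),\varphi(y)) - \widetilde K_{s}(x,y)\bigr| \le C'(n,s_{0})\,\alpha_{n,s},
\]
where the factor $\alpha_{n,s}$ appears because the prefactor $\tfrac{s/2}{\Gamma(1-s/2)}$ in \eqref{kerneleq} and $\alpha_{n,s}$ itself both scale like $(2-s)$ as $s\uparrow 2$, so their ratio is bounded uniformly on $s\in(s_{0},2)$.

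\textbf{Conclusion.} For $x,y\in\B_{1/2}(0)$ we have $|x-y|\le 1$, hence $|x-y|^{-(n+s)}\ge 1$, so the bounded error $C'\alpha_{n,s}$ is absorbed into the $\alpha_{n,s}/|x-y|^{n+s}$ leading term. This immediately yields the upper bound $K_{s}(\varphi(x),\varphi(y))\le (c_{6}+C')\alpha_{n,s}/|x-y|^{n+s}$. For the lower bound, in the near-diagonal regime $|x-y|\le r_{0}(n,s_{0})$ one has $K_{s}\ge \widetilde K_{s}-C'\alpha_{n,s}\ge (c_{5}/2)\alpha_{n,s}/|x-y|^{n+s}$; in the far-from-diagonal regime $|x-y|\ge r_{0}$, strict positivity of $H_{M}$ supplies a uniform positive lower bound on $K_{s}$ on the compact set, which is comparable to the then-bounded quantity $\alpha_{n,s}/|x-y|^{n+s}$ (using again the matched $(2-s)$-scaling to absorb $\alpha_{n,s}$). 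The main technical hurdle is constructing the global extension so that Lemma \ref{lemaux2} applies cleanly, and keeping track of the uniformity of constants at both endpoints $s=s_{0}$ and $s\uparrow 2$; both are handled by the scaling observation above.
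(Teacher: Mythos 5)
Your overall strategy is the same as the paper's: interpolate the chart metric with the Euclidean one to produce a global metric on $\R^n$, invoke Lemma \ref{globcomparability} for the extended manifold, control $|K_s - \widetilde K_s|$ via Lemma \ref{lemaux2} and the identity $\int_0^\infty e^{-c/t}t^{-1-s/2}\,dt=\Gamma(s/2)c^{-s/2}$, and then absorb the $O(\alpha_{n,s})$ error into the singular leading term using the explicit formula \eqref{alphadef}. The paper does essentially this, with the cutoff metric $g'_{ij}=\eta g_{ij}+(1-\eta)\delta_{ij}$ where $\eta$ is supported in $\B_1(0)$, so $g'\equiv g$ only on $\B_{1/2}(0)$; that forces it to conclude first on $\B_{1/4}(0)$ and then invoke a covering argument. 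Your construction (with $\eta\equiv 1$ on $\B_1(0)$) makes $\tilde g\equiv\varphi^*g$ on all of $\B_1(0)$, so Lemma \ref{lemaux2} applies directly on $\B_{1/2}(0)^2$ and you avoid the final covering step. That is a small genuine gain.

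However, two points need repair. First, the Whitney extension is both unnecessary and not innocuous: a generic Whitney extension $\bar g_{ij}$ of $g_{ij}|_{\B_1}$ controls $C^1$ norms but does not automatically respect the two-sided pointwise bounds $|v|^2/4\le\bar g_{ij}v^iv^j\le 4|v|^2$ that Lemma \ref{globcomparability} demands, and the interpolant $\eta\bar g+(1-\eta)\delta$ need not even be positive definite on the annulus where $\bar g$ is uncontrolled. The clean fix is simply to interpolate $g_{ij}$ itself against $\delta_{ij}$ with a cutoff $\eta$ equal to $1$ on $\B_1(0)$ and supported in, say, $\B_2(0)$; this requires ${\rm FA}_1(M,g,2,p,\varphi)$, which is the hypothesis of the lemma up to a harmless rescaling, and then the global bounds on $\tilde g$ are immediate from the $\tfrac{1}{100}$-closeness to the identity. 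No Whitney extension is needed.

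Second, and more substantively, your treatment of the far-from-diagonal regime has a gap. You appeal to "strict positivity of $H_M$ on a compact set" to get the lower bound on $K_s$ when $|x-y|\ge r_0$, but that only yields a constant depending on the particular manifold $M$, whereas the lemma requires a constant depending only on $n$ and $s_0$. What you need instead is to push the Aronson-type lower bound on $\widetilde H$ (from Lemma \ref{globcomparability}) through the comparison $|H_M-\widetilde H|\le Ce^{-c/t}$ of Lemma \ref{lemaux2}: for $t$ in a suitable interval depending only on dimensional constants one has $\widetilde H(x,y,t)\ge c_1 t^{-n/2}e^{-|x-y|^2/(c_2t)}$ strictly larger than $2Ce^{-c/t}$, which gives $H_M\ge\widetilde H/2$ there, and integrating this over that interval produces the desired lower bound $K_s\ge c(n,s_0)\,\alpha_{n,s}/|x-y|^{n+s}$. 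As written, the far-regime lower bound is not justified.
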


The next property concerns the behavior of the kernel when the two points $p$ and $q$ are separated from each other.

\begin{proposition}[\cite{FracSobPaper}]
Let $(M,g)$ be a  Riemannian $n$-manifold and $s\in (0,2)$. Assume that for some $p,q\in M$  both ${\rm FA_\ell}(M, g, 1, p, \varphi_p)$  and ${\rm FA_\ell}(M, g, 1, q, \varphi_q)$  hold, and suppose that $\varphi_p(\B_1(0))\cap \varphi_q(\B_1(0))= \varnothing$.  Put $K_{pq}(x,y): = K_s(\varphi_p(x), \varphi_q(y))$. Then
\[
\bigg|\frac{\partial^{|\alpha|}}{\partial x^{\alpha}} \frac{\partial^{|\beta|}}{\partial y^{\beta}}   K_{pq}(x,y)\bigg| \le C(n,\ell)\quad 
\mbox{for all }|x|<\tfrac 1 2 \mbox{ and }|y|< \tfrac 1 2,
\]
whenever $|\alpha|+|\beta| \le \ell$.
\end{proposition}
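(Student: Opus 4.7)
My plan is to reduce everything to an exponential decay estimate on mixed space derivatives of the heat kernel, namely
\begin{equation*}
\bigl|\partial^\alpha_x \partial^\beta_y H_M(\varphi_p(x), \varphi_q(y), t)\bigr| \le C(n,\ell)\exp(-c/t),
\end{equation*}
uniformly for $(x,y,t)\in\B_{1/2}(0)\times\B_{1/2}(0)\times(0,\infty)$ and $|\alpha|+|\beta|\le\ell$. Given this bound, I differentiate under the integral in the representation \eqref{kerneleq} of $K_s$---which is legitimate in view of the exponential decay---and estimate
\begin{equation*}
\left|\frac{\partial^{|\alpha|}}{\partial x^\alpha}\frac{\partial^{|\beta|}}{\partial y^\beta}K_{pq}(x,y)\right|
\le C\int_0^\infty \exp(-c/t)\,\frac{dt}{t^{1+s/2}}\le C(n,s,\ell),
\end{equation*}
where the integral on the right is finite because $\exp(-c/t)$ kills the singularity at $t=0$ while $t^{-1-s/2}$ is integrable at infinity for $s>0$.

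The first step in establishing the heat-kernel estimate uses the disjointness hypothesis: for every $y\in\B_1(0)$, the point $\varphi_q(y)$ lies in $M\setminus\varphi_p(\B_1(0))$, so Lemma \ref{lemaux1} applied at $p$ (with fixed second argument $\varphi_q(y)$) gives, uniformly in $y$,
\begin{equation*}
\bigl|\partial^\alpha_x H_M(\varphi_p(x),\varphi_q(y),t)\bigr|\le C(n,\ell)\exp(-c/t),\qquad x\in\B_{1/2}(0),\ |\alpha|\le\ell,\ t>0.
\end{equation*}
The uniformity in $y$ is crucial and is inherited from Lemma \ref{lemaux1}, whose constants depend only on $n$ and $\ell$ (not on the specific point outside $\varphi_p(\B_1(0))$).

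The second step promotes this to mixed derivatives via interior parabolic regularity in $y$. Setting $G(x,y,t):=\partial^\alpha_x H_M(\varphi_p(x),\varphi_q(y),t)$ and viewing $x$ as a parameter, I observe that the heat kernel solves the heat equation in its second argument and that $x$-differentiation commutes with the $y$-heat operator; hence $G$ satisfies $\partial_t G = L^q_y G$ on $\B_1(0)\times(0,\infty)$, where $L^q_y$ is the Laplace--Beltrami operator on $M$ written in the chart $\varphi_q^{-1}$. By the flatness assumption ${\rm FA}_\ell$ at $q$, $L^q_y$ is uniformly elliptic with coefficients bounded to order $\ell$ by universal constants. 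Applying interior parabolic Schauder estimates to $G$ on the cylinder $\B_{3/4}(0)\times[t/2,2t]$ yields $|\partial^\beta_y G|\le C(1+t^{-|\beta|/2})\|G\|_{L^\infty}\le C(1+t^{-|\beta|/2})\exp(-c/(2t))$, and the elementary inequality $t^{-k}\exp(-c/t)\le C_k\exp(-c'/t)$ absorbs the polynomial factor into a slightly weaker exponential. The main obstacle is ensuring that every constant in sight depends only on $n$ and $\ell$, not on the ambient $M$; this is precisely what the flatness assumptions buy us, through the uniformity of Lemma \ref{lemaux1} at $p$ and the quantitative ellipticity/coefficient bounds on $L^q_y$ coming from ${\rm FA}_\ell$ at $q$.
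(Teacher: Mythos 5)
Your proof takes essentially the same route as the paper's: Lemma \ref{lemaux1} at $p$ (with the disjointness hypothesis guaranteeing $\varphi_q(y)\in M\setminus\varphi_p(\B_1(0))$ uniformly in $y$) gives the $x$-derivative bound, then the observation that $\partial^\alpha_x H_M(\varphi_p(x),\varphi_q(\cdot),t)$ solves the heat equation in $y$ together with interior parabolic Schauder estimates and parabolic rescaling (to absorb the $t^{-|\beta|/2}$ factor) upgrades this to the mixed-derivative bound $\le C\exp(-c/t)$, and integration against $dt/t^{1+s/2}$ concludes. One small point to tidy up: the proposition asserts $C(n,\ell)$ with no $s$-dependence, whereas your final line records $C(n,s,\ell)$; the uniformity in $s$ follows because the normalizing prefactor $\tfrac{s/2}{\Gamma(1-s/2)}\le Cs$ in \eqref{kerneleq} cancels the blow-up of $\int_0^\infty e^{-c/t}\,t^{-1-s/2}\,dt = c^{-s/2}\Gamma(s/2)\sim 2/s$ as $s\to 0^+$, and both factors stay bounded for $s$ near $2$.
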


The next results regards the behavior of the singular kernel $K_s$ when translating its arguments under the flow of a vector field.
\begin{proposition}[\cite{FracSobPaper}]\label{Ker2est} 
Let $(M,g)$ be a closed $n$-dimensional Riemannian manifold and $s\in(0,2)$. Consider any smooth vector field $X \in \mathfrak{X}( M)$, and fix points $p,q\in M$. Writing $\psi^t$ for the flow of $X$ at time $t$, then the kernel satisfies
\begin{equation*}
    \bigg| \frac{d^\ell}{dt^\ell}\bigg|_{t=0} K_s(\psi^t(p),\psi^t(q)) \bigg|\leq C(1 +K_s(p,q)).
\end{equation*}
for some constant $C=C(M,s,\ell, \max_{0\le k\le \ell} \|\nabla^k X\|_{L^\infty(M)} )$. Moreover, given $T>0$ we have that, for all $0\le t \le T$,
\begin{equation}\label{rderest}
    \left|\frac{d^\ell}{d t^\ell} K_s(\psi^{ t}(p),\psi^{ t}(q)) \right|  \leq  C_T (1+K_s(p,q))\,,
\end{equation}\
where $C_T=C_T(M,s,\ell, T,\max_{0\le k\le \ell} \|\nabla^k X\|_{L^\infty(M)} )$. Both $C$ and $C_T$ stay bounded for $s$ away from $0$ and $2$.
\end{proposition}

We also record a version of Proposition \ref{Ker2est} which depends only on local quantities:

    \begin{proposition}[\cite{FracSobPaper}]\label{sdfgsdrgs}
        Let $(M,g)$ be a closed $n$-dimensional Riemannian manifold and $s\in(0,2)$. Assume that the flatness assumption ${\rm FA}_\ell (M,g,R,p,\varphi)$ holds, and let $X\in \mathfrak{X}( M)$ be a smooth vector field supported on $\varphi(\B_{R/4})$. Writing $\psi^t$ for the flow of $X$ at time $t$, then for every $x,y \in \B_{R/4}(0)$ with $x\neq y$ we have
        \begin{equation*}
    \bigg| \frac{d^\ell}{dt^\ell}\bigg|_{t=0} K_s(\psi^t(\varphi(x)),\psi^t(\varphi(y))) \bigg|\leq C K_s(\varphi(x), \varphi(y)) \le  C\frac{\alpha_{n,s}}{|x-y|^{n+s}}\,,
\end{equation*}
for some constant $C=C(n,s, \ell,\|X \|_{C^\ell(\varphi(\B_{R/4}))})$. Moreover, given $T>0$ we have that, for all $0\le t \le T$,
\begin{equation}\label{cdsacsdc2}
    \bigg| \frac{d^\ell}{dt^\ell}  K_s(\psi^t(\varphi(x)),\psi^t(\varphi(y))) \bigg|\leq C_T K_s(\varphi(x), \varphi(y)) \le  C_T\frac{\alpha_{n,s}}{|x-y|^{n+s}}\,,
\end{equation}
where $C_T=C_T(n,s, \ell, T, \max_{0\le k\le \ell} \|\nabla^k X\|_{L^\infty(\varphi(\B_{R/4}))})$.
\end{proposition}

Proposition  \ref{Ker2est} is used to bound time derivatives of the energy of ``flown objects'' by their energy at time zero.
\begin{lemma}[\cite{FracSobPaper}]\label{enboundslemma}
Let $s \in (0,2)$ and $ v\in H^{s/2}(M)$ be a function with $|v|\leq 1$. Let $X \in \mathfrak{X} (M)$ be a smooth vector field and $v_t := v \circ \psi^{-t}$, where $\psi^t$ is the flow of $X$ at time $t$. Then, for all $T>0$ there holds
\begin{equation*}
    \sup_{0<t<T} \bigg| \frac{d^\ell}{d  t^\ell}\mathcal{E}_{M}(v_t) \bigg|\leq C\big(1+ \mathcal{E}_{M}(v)\big)\, ,
\end{equation*}
for some constant $C=C(M,s,\ell,T, \max_{0\le k\le \ell} \|\nabla^k X\|_{L^\infty(M)} )$.
\end{lemma}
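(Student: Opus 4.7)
The plan is to move all the $t$-dependence onto the kernel and a Jacobian factor via a change of variables, and then differentiate under the integral sign using the pointwise kernel derivative estimates of Proposition \ref{Ker2est}.

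Let $J_t(q) := |\det D\psi^t(q)|$ denote the Jacobian of the flow. Since $X\in\mathfrak X(M)$ is smooth and $M$ is closed, on any finite time interval $[0,T]$ the map $t\mapsto J_t$ is smooth with
\begin{equation*}
\max_{0\le k\le \ell}\sup_{t\in[0,T]}\|\partial_t^k J_t\|_{L^\infty(M)}\le C\big(M,T,\ell,\|X\|_{C^\ell(M)}\big),
\end{equation*}
and $J_t\ge c>0$ uniformly. Applying the change of variables $p\mapsto\psi^t(p)$, $q\mapsto\psi^t(q)$ to both pieces of the Allen-Cahn energy, one rewrites
\begin{equation*}
\mathcal E_M(v_t)=\frac14\iint_{M\times M}(v(p)-v(q))^2\,K_t(p,q)\,dV_p\,dV_q+\varepsilon^{-s}\int_M W(v(q))\,J_t(q)\,dV_q,
\end{equation*}
where $K_t(p,q):=K_s(\psi^t(p),\psi^t(q))\,J_t(p)\,J_t(q)$. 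Now all dependence on $t$ has been transferred into $K_t$ and $J_t$, while $v$ is frozen.

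The heart of the argument is the estimate
\begin{equation}\label{propkerb}
\sup_{t\in[0,T]}\big|\partial_t^\ell K_t(p,q)\big|\le C_T\,\big(1+K_s(p,q)\big),\qquad C_T=C(M,s,\ell,T,\|X\|_{C^\ell(M)}).
\end{equation}
To obtain this, apply Proposition \ref{Ker2est} with $\ell=1$ to the shifted pair $(\psi^t(p),\psi^t(q))$, yielding
\begin{equation*}
\left|\tfrac{d}{dt}K_s(\psi^t(p),\psi^t(q))\right|\le C\big(1+K_s(\psi^t(p),\psi^t(q))\big),
\end{equation*}
and integrate this differential inequality by Gr\"onwall's lemma on $[0,T]$ to deduce
\begin{equation*}
K_s(\psi^t(p),\psi^t(q))\le e^{CT}\big(1+K_s(p,q)\big)\quad\text{for every }t\in[0,T].
\end{equation*}
Feeding this back into Proposition \ref{Ker2est} for the full range $1\le k\le\ell$ bounds $|\partial_t^k K_s(\psi^t(p),\psi^t(q))|$ by $C_T(1+K_s(p,q))$. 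Combining with the $C^\ell_t$ bounds on $J_t$ via Leibniz's rule yields \eqref{propkerb}.

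Once \eqref{propkerb} is in hand, the estimate \eqref{propkerb} plus the bound $(v(p)-v(q))^2\le 4$ provides a $t$-uniform dominating function that is integrable against $dV_p\,dV_q$ (by $v\in H^{s/2}(M)$ and $|M|<\infty$), so one may differentiate under the integral sign. Collecting:
\begin{equation*}
\left|\tfrac{d^\ell}{dt^\ell}\mathcal E_M(v_t)\right|\le \frac{C_T}{4}\iint_{M\times M}(v(p)-v(q))^2\big(1+K_s(p,q)\big)\,dV_p\,dV_q + C_T\,\varepsilon^{-s}\int_M W(v)\,dV.
\end{equation*}
The first integral is bounded by $4|M|^2+4\,\mathcal E_M^{\rm Sob}(v)$ (using $|v|\le 1$), and the second is the potential part of $\mathcal E_M(v)$ up to a constant. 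Since the bound is $t$-uniform on $[0,T]$, we conclude
\begin{equation*}
\sup_{0<t<T}\left|\tfrac{d^\ell}{dt^\ell}\mathcal E_M(v_t)\right|\le C\big(1+\mathcal E_M(v)\big),
\end{equation*}
with $C$ of the required form.

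The only delicate point is the passage from the pointwise-at-$t=0$ statement of Proposition \ref{Ker2est} to the uniform-in-$t$ estimate \eqref{propkerb}; this is resolved cleanly by Gr\"onwall, which works precisely because the right-hand side of Proposition \ref{Ker2est} is linear in $K_s$. The rest is change of variables and dominated differentiation under the integral.
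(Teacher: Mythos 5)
Your proposal follows essentially the same route as the paper's own proof: change variables by the flow so that $v$ is frozen and all $t$-dependence sits in $K_s(\psi^t(p),\psi^t(q))J_t(p)J_t(q)$, invoke Proposition \ref{Ker2est} for the pointwise kernel derivative bound, upgrade it to a $t$-uniform bound by a Gr\"onwall-type integration (which is exactly what the paper does by referring back to the argument in Proposition \ref{sdfgsdrgs}), and then differentiate under the integral sign. The only cosmetic difference is that you make the dominated-convergence justification for exchanging $\partial_t^\ell$ with $\iint$ explicit, which the paper leaves implicit; the substance is identical.
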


Lemma \ref{enboundslemma} has a local version, which comes from applying local estimates for the kernel instead.
\begin{lemma}[\cite{FracSobPaper}]\label{enboundslemma2}
Let $M$ satisfy the flatness assumptions ${\rm FA}_\ell(M,g,R,p,\varphi)$. Let $s \in (0,1)$ and $ v\in H^{s/2}(M)$ be a function with $|v|\leq 1$. Let $X \in \mathfrak{X} (M)$ be a smooth vector field supported on $\varphi(\B_{R/2})$, and put $v_t := v \circ \psi_X^{-t}$, where $\psi_X^t$ is the flow of $X$ at time $t$. Then, for all $T>0$ there holds
\begin{equation*}
    \sup_{0< t<T} \left| \frac{d^\ell}{d  t^\ell}\mathcal{E}_{\varphi(\B_{R/2})}(v_{t}) \right|\leq C(1+\mathcal{E}_{\varphi(\B_{R/2})}(v))\, ,
\end{equation*}
for some constant $C=C(s,\ell,T, \max_{0\le k\le \ell} \|\nabla^k X\|_{L^\infty(\varphi(\B_{R/2}))} )$.
\end{lemma}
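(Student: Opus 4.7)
The approach is to mimic the proof of Lemma \ref{enboundslemma} line by line, but replace the global kernel derivative estimate (Proposition \ref{Ker2est}) with its local counterpart (Proposition \ref{sdfgsdrgs}), and use the spatial derivative estimates of Proposition \ref{prop:kern1} to handle contributions where only one of the two points is displaced by the flow. Write $\Omega := \varphi(\B_{R/2}(0))$ and $\psi^t := \psi_X^t$. Since $\mathrm{spt}(X)$ is a compact subset of the open set $\Omega$, the flow $\psi^t$ is the identity on $M\setminus\Omega$ and consequently preserves both $\Omega$ and $\Omega^c$; in particular the integration domain $(M\times M)\setminus(\Omega^c\times\Omega^c)$ appearing in $\mathcal{E}^{\rm Sob}_\Omega$ is invariant under the product flow, and the only contributions to the time derivative of the integrand come from pairs $(p,q)$ with at least one of the two points lying in $\mathrm{spt}(X)$.

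For the Sobolev part, the plan is to perform the same change of variables as in \eqref{eqcvthm} (substitute $p\mapsto \psi^t(p)$, $q\mapsto \psi^t(q)$, transferring the time-dependence from $v$ onto $K_s$ and the Jacobians) and then differentiate $\ell$ times under the integral sign to obtain
$$
\frac{d^\ell}{dt^\ell}\mathcal{E}^{\rm Sob}_\Omega(v_t) = \iint_{(M\times M)\setminus(\Omega^c\times\Omega^c)} |v(p)-v(q)|^2\, \frac{d^\ell}{dt^\ell}\!\big[K_s(\psi^t(p),\psi^t(q))\, J_t(p) J_t(q)\big]\, dV_p\, dV_q.
$$
The time derivatives of the Jacobian factors are smooth and uniformly bounded on $[0,T]$ in terms of $T$ and $\|X\|_{C^\ell(\Omega)}$. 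I would then split the integration into two regions. On region (i), $p,q\in\Omega$: a scale-adapted application of \eqref{cdsacsdc2} from Proposition \ref{sdfgsdrgs} gives the pointwise bound $|\tfrac{d^k}{dt^k}K_s(\psi^t(p),\psi^t(q))| \leq C\, K_s(p,q)$ for $0\le k\le \ell$ and $0\le t\le T$, with $C$ depending only on $s,\ell,T$ and $\|X\|_{C^\ell(\Omega)}$. On region (ii), exactly one of $p,q$ lies in $\Omega$: only one argument of $K_s$ is moved by $\psi^t$, and iterated time differentiation produces linear combinations of $(\partial^\alpha K_s)(\psi^t(p),q)$ (or its symmetric analogue) multiplied by polynomials in $X$ and its derivatives evaluated along the orbit of the moving point. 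These spatial derivatives are estimated by \eqref{remaining} of Proposition \ref{prop:kern1} when the non-moving point is still inside the chart, and by \eqref{remaining2}--\eqref{remaining3} when it is outside; combined with Lemma \ref{loccomparability} they produce either a pointwise bound by $C K_s(p,q)$ or a uniformly bounded contribution (absorbable into the ``$+1$'' on the right-hand side using $|v|\le 1$ and the integrated tail bound \eqref{remaining3}).

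Assembling these estimates yields $|\tfrac{d^\ell}{dt^\ell}\mathcal{E}^{\rm Sob}_\Omega(v_t)|\le C(1+\mathcal{E}^{\rm Sob}_\Omega(v))$, and the potential part is handled verbatim as in \eqref{potderpart}: after the same change of variables $\mathcal{E}^{\rm Pot}_\Omega(v_t) = \ep^{-s}\int_\Omega W(v(p))\, J_t(p)\, dV_p$, whose $\ell$-th time derivative is controlled by $C\,\mathcal{E}^{\rm Pot}_\Omega(v)$. The main technical point (and the only genuine departure from Lemma \ref{enboundslemma}) is region (ii): one must check that the local derivative estimates of Proposition \ref{prop:kern1} can be iterated cleanly and produce only constants depending on the advertised local data, with no hidden reliance on the global geometry of $(M,g)$. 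The compact support of $X$ strictly inside $\Omega$ is essential here, guaranteeing a positive separation $\mathrm{dist}(\mathrm{spt}(X),\partial\Omega)>0$, which prevents the moving point from approaching the kernel's singularity against a fixed point outside $\Omega$.
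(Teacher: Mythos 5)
Your proposal follows the same overall strategy as the paper: change variables to transfer the time dependence to the kernel and Jacobians, split the domain into a ``near'' region treated with Proposition~\ref{sdfgsdrgs} and a ``far'' region treated with the integrated tail estimate~\eqref{remaining3}, then handle the potential term as in~\eqref{potderpart}. You also correctly invoke~\eqref{cdsacsdc2} where the paper's text cites~\eqref{cdsacsdc1}; the former (the bound at arbitrary $t\in[0,T]$) is what is actually needed for the supremum over $t$.

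There is, however, a genuine gap in your treatment of region~(ii). The paper does \emph{not} cut at $\Omega$ versus $\Omega^c$: it introduces the intermediate radius $\B_{2R/3}$, so that the ``near'' region $\varphi(\B_{2R/3})\times\varphi(\B_{2R/3})$ is a ball that overshoots $\Omega$, and Proposition~\ref{sdfgsdrgs} is applied to \emph{all} pairs there, including those that straddle $\partial\Omega$; the ``far'' region has one point in $\Omega$ and the other outside $\varphi(\B_{2R/3})$, which gives a separation of order $R$ \emph{independently of $X$}. In your region~(ii), by contrast, a pair $(p,q)$ with $p\in\mathrm{spt}(X)$ and $q$ just outside $\partial\Omega$ can have $|p-q|$ arbitrarily small whenever $\mathrm{spt}(X)$ fills up $\Omega$. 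The spatial-derivative estimates of Proposition~\ref{prop:kern1} alone then give $|\partial^\alpha K_s(\psi^t(p),q)|\lesssim d(\psi^t(p),q)^{-(n+s+|\alpha|)}$, which is \emph{not} comparable to $K_s(p,q)\sim d(p,q)^{-(n+s)}$ near the diagonal. Your proposed fix --- the positive separation $d_0:=\mathrm{dist}(\mathrm{spt}(X),\partial\Omega)>0$ --- closes the estimate but injects a $d_0$-dependence that is not permitted by the advertised constant $C=C(s,\ell,T,\max_{k}\|\nabla^k X\|_{L^\infty})$. The correct repair is the Lipschitz cancellation $|X(\psi^t(p))|=|X(\psi^t(p))-X(q)|\le\|DX\|_{L^\infty}\,d(\psi^t(p),q)$ (valid because $X(q)=0$ for $q\notin\mathrm{spt}(X)$) applied at every order, which trades the extra $d(p,q)^{-1}$ from each $z$-derivative of the kernel for a factor controlled by $\|X\|_{C^\ell}$ only. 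This cancellation is precisely what is built into the proofs of Propositions~\ref{Ker2est} and~\ref{sdfgsdrgs}; the cleanest way to exploit it is to adopt the paper's intermediate-radius decomposition so that Proposition~\ref{sdfgsdrgs} handles the straddling pairs directly, rather than attempting to treat them with the raw spatial estimates of Proposition~\ref{prop:kern1}.
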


\subsubsection{Caffarelli-Silvestre type extension}

\begin{definition}\label{weighSobspacemfd}
    We define the weighted Sobolev space
   \begin{equation*}
       \widetilde{H}^1 (\widetilde M) = \widetilde{H}^1( M \times (0,\infty)) 
   \end{equation*}
   as the completion of $C_c^\infty( M \times [0,\infty))$ with the norm
   \begin{equation}\label{eq: norm def sob space}
       \| U\|^2_{\widetilde H^1} := \| TU \|^2_{L^2(M)} + \| \widetilde \nabla U\|^2_{L^2(\widetilde M, z^{1-s}dVdz)} ,
   \end{equation}
   where $TU = U(\cdot, 0)$ is the trace of $U$ and $\widetilde \nabla U =(\nabla U, U_z)$ denotes the gradient in $\widetilde{M}=M\times (0,+\infty)$ endowed with the natural product metric. This is a Hilbert space with the natural inner product that induces the norm above. Moreover, basically by definition, any $U \in \widetilde{H}^1 (\widetilde M) $ leaves a trace in $L^2(M\times \{0\})$.  
\end{definition}
\begin{remark}
The space $\widetilde{H}^1 (\widetilde M)$ can be concretely realized as a space of functions $U$ in $L^2_{\rm loc}(\widetilde M )$ having weak derivatives $\widetilde \nabla U$ in the same weighted space. Indeed, by the fundamental theorem of calculus and H\"older's inequality (see, for example, the proof of \cite[Lemma 3.3]{FracSobPaper}) we have
\begin{equation*}
    \int_{M\times (0,R)} |U|^2 \, dVdz \le CR \int_M |TU|^2 \, dV + CR^s \int_{M\times (0,R)} |\widetilde \nabla U|^2 z^{1-s} \, dVdz , 
\end{equation*}
for every $U \in C_c^\infty( M \times [0,\infty))$ and $R>0$. This inequality easily implies that every Cauchy sequence (with respect to \eqref{eq: norm def sob space}) of smooth functions converges to an actual function in $L^2_{\rm loc}(\widetilde M )$, and the limit is well-defined pointwise almost everywhere.
\end{remark}

\vsp 

The following essential result from \cite{FracSobPaper}, analogous to the classical one for $\R^n$ in \cite{CafSi}, shows that the fractional power of the Laplacian on $M$ can be realized as a Dirichlet-to-Neumann map via an extension problem.

\begin{theorem}[\cite{FracSobPaper}]\label{extmfd}
Let $(M^n,g)$ be a closed Riemannian manifold, and let $s\in (0,2)$ and $u\in H^{s/2}(M)$. Consider the product manifold $\widetilde{M}=M\times (0,+\infty)$ endowed with the natural product metric. Then, there is a unique solution $U:M \times (0, \infty) \to \R $ among functions in $\widetilde H^1(\widetilde M )$ to\footnote{Here $\widetilde{ {\rm div}}$ denotes the divergence operator on $\widetilde M$. }
\begin{equation}\label{caffextMfd}
    \begin{cases} \widetilde{ {\rm div}}(z^{1-s} \widetilde \nabla U) = 0  &   \mbox{in} \s \widetilde{M} \,,  \\ U(p,0) = u(p) &  \mbox{for} \s p \in \partial \widetilde{M} = M\, . \end{cases}
\end{equation}
Moreover, if $u$ is smooth then
\begin{equation*}
    \lim_{z \to 0^+} z^{1-s} \frac{\partial U}{\partial z}(p,z) =  -\beta_s^{-1} (-\Delta)^{s/2} u(p) \,, 
\end{equation*}
where the fractional Laplacian on the right-hand side is defined by either \eqref{boclap} or \eqref{singintlap} and
\begin{equation}\label{betadef}
    \beta_s = \frac{2^{s-1} \Gamma(s/2)}{\Gamma(1-s/2)}\, .
\end{equation}
\end{theorem}

\subsubsection{Modifications of definitions for the Euclidean space and other noncompact manifolds}

We observe that  \eqref{wbhtouw1} can also be used to define the fractional perimeter for any (possibly noncompact) complete manifold $N$ for which a fractional Hilbert norm $H^{s/2}(N)$ is defined. It will be clear from our proofs that, in the case of noncompact manifolds for which the equivalence of (i) and (iii) can be established,  $s$-minimal surfaces will enjoy the same (local) properties as the ones established here in the case of compact manifolds (e.g. the monotonicity formula), with almost identical proofs. See \cite{FracSobPaper} and \cite{BGS} for more details about this equivalence.

\subsection{The fractional Sobolev energy}
\begin{definition}
    We define the fractional Sobolev seminorm $[u]_{H^{s/2}(M)}$ for $s \in (0,2)$ as  
 \begin{equation}\label{sobint}
     [u]^2_{H^{{s/2}}(M)} =\iint_{M\times M} (u(p)-u(q))^2 K_s(p,q) \, dV_p dV_q \, .
\end{equation}
\end{definition}
Then, the associated functional space $H^{s/2}(M)$ is
\begin{equation*}
H^{s/2}(M)=\{u\in L^2(M)  \text{ : } [u]^2_{H^{s/2}(M)}<\infty\} \,,
\end{equation*}
and is called the \textit{fractional Sobolev space of order $s/2$}. This is a Hilbert space with norm given by 
\begin{equation*}
    \|u\|_{H^{{s/2}}(M)}^2=\|u\|_{L^2(M)}^2+[u]^2_{H^{{s/2}}(M)}\, .
\end{equation*}
A one-line computation using \eqref{singintlap} shows:
\begin{proposition}
    For a smooth function, one has that \begin{equation*}
    [u]_{H^{s/2}(M)}^2 = 2\int_{M} u(-\Delta)^{s/2} u \, dV \, .
\end{equation*}
\end{proposition}

In general, the fractional Sobolev seminorm can also be expressed using spectral or extension approaches:
\begin{proposition}[\cite{FracSobPaper}]\label{afdsgsdgh}
Let $u\in H^{s/2}(M)$. Then, with the notation in the previous sections, the fractional Sobolev seminorm \eqref{sobint} is equal to
\begin{equation*}
    [u]^2_{H^{{s/2}}(M)} = 2\sum_{k=1}^\infty \lambda_k^{s/2} \lp u, \phi_k\rp_{L^2(M)} ^2
\end{equation*}
and
\begin{equation}\label{sobext}
   [u]^2_{H^{{s/2}}(M)} = \inf_{v \in \widetilde H^1(\widetilde M)} \left\{ 2\beta_s \int_{\widetilde{M}} |\widetilde \nabla v|^2 z^{1-s} \, dVdz \, : \, v(\cdot,0)=u(\cdot) \mbox{ in } L^2(M) \right\}.
\end{equation}
Moreover, the infimum in \eqref{sobext} is attained by the unique $U \in \widetilde H^1(\widetilde M) $ given by Theorem \ref{extmfd}. In particular, we also have that
\begin{equation*}
    [u]^2_{H^{s/2}(M)}= 2\beta_s \int_{\widetilde{M}} |\widetilde{\nabla} U|^2 z^{1-s} \, dVdz \,,
\end{equation*}
where $\beta_s$ is the constant defined in \eqref{betadef}. 
\end{proposition}

We end this section by recalling two related interpolation results. The first one, after a finite covering argument, it implies in particular that the characteristic function $\chi_E$ of any set of finite perimeter $E\subset M$ is in $H^{s/2}(M)$, as long as $s\in(0,1)$.

\begin{proposition} \label{interprop} Let $s\in(0,1)$, and let $u:\B_1 \subset \R^n\to \R $ be a function of bounded variation. Then,
\begin{equation*}
    \iint_{\B_1\times \B_1} \frac{|u(x)-u(y)|^2}{|x-y|^{n+s}}\,dx\,dy \leq \frac{C(n)}{(1-s)s}[u]_{BV(\B_1)}^s\|u\|_{L^1(\B_1)}^{1-s}\, .
\end{equation*}
    
\end{proposition}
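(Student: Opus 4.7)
The plan is to exploit the truncation $|u|\leq 1$ to trade an $L^2$ oscillation for an $L^1$ oscillation, split the integral into a near-diagonal and far-diagonal piece, and then optimize a parameter.

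The key starting observation is that, since $|u|\leq 1$, we have $|u(x)-u(y)|^2\leq 2|u(x)-u(y)|$ on the near-diagonal part, while on the far-diagonal part we use $u(x)^2+u(y)^2\leq |u(x)|+|u(y)|$. So, for a parameter $r\in(0,2)$ to be chosen, I would write
\begin{align*}
\iint_{\B_1\times\B_1}\frac{|u(x)-u(y)|^2}{|x-y|^{n+s}}\,dx\,dy &\leq 2\iint_{\substack{x,y\in\B_1 \\ |x-y|<r}}\frac{|u(x)-u(y)|}{|x-y|^{n+s}}\,dx\,dy \\
&\quad {}+ 4\iint_{\substack{x,y\in\B_1 \\ |x-y|\geq r}}\frac{|u(x)|}{|x-y|^{n+s}}\,dx\,dy\, ,
\end{align*}
using symmetry in the far-diagonal part.

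For the near-diagonal part I would use the standard BV difference-quotient estimate: changing variables $h=y-x$, and letting $A_h=\{x\in\B_1:x+h\in\B_1\}$, the convexity of $\B_1$ gives $\int_{A_h}|u(x+h)-u(x)|\,dx\leq |h|\,[u]_{BV(\B_1)}$ (for $u$ smooth by the fundamental theorem of calculus, then by approximation for $u\in BV$). Integrating against $|h|^{-(n+s)}$ over $|h|<r$ yields
\begin{equation*}
2\int_{|h|<r}\frac{[u]_{BV(\B_1)}}{|h|^{n+s-1}}\,dh = \frac{2|S^{n-1}|}{1-s}\,r^{1-s}\,[u]_{BV(\B_1)}\, .
\end{equation*}
For the far-diagonal part I would simply bound the inner integral by the full exterior integral $\int_{|z|\geq r}|z|^{-n-s}\,dz=|S^{n-1}|r^{-s}/s$, obtaining
\begin{equation*}
\frac{4|S^{n-1}|}{s}\,r^{-s}\,\|u\|_{L^1(\B_1)}\, .
\end{equation*}

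Combining and optimizing in $r$ is the last step. The sum has the form $C(n)\bigl(\tfrac{r^{1-s}}{1-s}[u]_{BV}+\tfrac{r^{-s}}{s}\|u\|_{L^1}\bigr)$, and choosing $r=\|u\|_{L^1(\B_1)}/[u]_{BV(\B_1)}$ balances both terms and produces the asserted bound $C(n)\bigl(s(1-s)\bigr)^{-1}[u]_{BV(\B_1)}^s\|u\|_{L^1(\B_1)}^{1-s}$. One should handle the degenerate cases separately: if $[u]_{BV}=0$ then $u$ is constant so the left side is zero; if $\|u\|_{L^1}=0$ likewise; and if the optimal $r$ exceeds $2$, then $\B_1\times\B_1$ is contained in the near-diagonal region and the near-diagonal estimate alone suffices (with $r=2$). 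None of these steps is a real obstacle; the only mild subtlety is ensuring that the difference-quotient BV estimate genuinely applies inside $\B_1$ (which uses convexity, hence the line segment from $x$ to $x+h$ stays in $\B_1$ when both endpoints do). Everything else is routine computation.
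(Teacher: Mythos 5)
Your proof is correct. For this proposition the paper does not supply its own argument — it simply cites Proposition~4.2 of Brasco--Lindgren--Parini \cite{Brasc} — and what you have written is the standard self-contained proof of that cited interpolation inequality: use $|u|\le 1$ to linearize the squared oscillation, split at a radius $r$, control the near-diagonal piece by the $BV$ difference-quotient estimate (which requires the convexity of $\B_1$, as you correctly note), control the far-diagonal piece trivially in $L^1$, and optimize in $r$. The computations (the $\frac{r^{1-s}}{1-s}[u]_{BV}$ and $\frac{r^{-s}}{s}\|u\|_{L^1}$ terms, the choice $r=\|u\|_{L^1}/[u]_{BV}$, the handling of degenerate cases and of $r>2$) are all sound, so your argument supplies a valid replacement for the citation.
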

\begin{proof}
See, for instance, Proposition 4.2 in \cite{Brasc}. 
\end{proof}
The second interpolation result comes from relating the first one with the extension problem.
\begin{lemma}[\cite{FracSobPaper}]\label{lem:whtorwohh}
Let $s_0\in (0,1)$ and $s \in (s_0,1)$. Let $M$ satisfy flatness assumptions ${\rm FA}_1(M,g,1,p,\varphi)$. Using the ball notation in \eqref{notationballs}, let also $U : \ov{B}^+_1(p,0) \to (-1,1)$ be any function solving
\begin{equation*}
    \widetilde{\textnormal{div}}(z^{1-s} \widetilde \nabla U) =0\, ,
\end{equation*} 
and let $u$ be its trace on $B_1(p)$. Then for all $\varrho>0$, $R\ge1$, $k \in \R$ and  $q\in B_{1/2}(p)$ such that  $B_{R\varrho}(q)\subset B_{3/4}(p)$,  
\[
 \varrho^{s-n} \beta_s \int_{\widetilde B^+_{\varrho}(q,0)} z^{1-s} |\widetilde \nabla U|^2\,dVdz \leq \frac{C}{R^s} + \frac{C}{1-s}\bigg(\varrho^{-n}\int_{B_{R \varrho}(q)}|u + k|\,dV\bigg)^{1-s}  \bigg(\varrho^{1-n}\int_{B_{R \varrho}(q)} |\nabla u|\,dV\bigg)^s,
\]
where the constant $C$ depends only on $n$ and $s_0$, and $\beta_s$ is the constant defined in \eqref{betadef}.
\end{lemma}

\subsection{Monotonicity formula for stationary points of semilinear elliptic functionals and \texorpdfstring{$s$}{}-minimal surfaces}\label{GenMonFormula}

The following monotonicity formula from \cite{FracSobPaper} applies to any critical point of a semilinear elliptic functional with a nonnegative potential term, hence including the fractional Allen-Cahn energy and $s$-minimal surfaces. For $r>0$ and $p \in M $ denote
\begin{equation}\label{notationballs}
\begin{aligned}
     B_r(p) & = \big\{ q \in M \, : \, d_g(q, p) < r \big\} \,, \\[0.5mm]
     \ov{B}_{r}^+(p, 0) & = \big\{ (q,z) \in \ov{M} \, : \, d_{\ov{g}}((q,z), (p, 0) ) < r \big\} \,,  \\
    \partial \ov{B}_{r}^+(p, 0) & = \partial \left( \ov{B}_{r}^+(p, 0) \right) \, \\ \partial^+ \ov{B}_{r}^+(p, 0) & = \partial \ov{B}_{r}^+(p, 0) \cap \{ z>0 \} \,.
\end{aligned} 
\end{equation}
In the present section, we use $\nabla$ instead of $\widetilde \nabla$ to denote the gradient in  $\widetilde M$ with respect to the product metric.

\begin{theorem}[\cite{FracSobPaper}]\label{monfor}
Let $(M^n, g)$ be an $n$-dimensional, closed Riemannian manifold. Let $s \in (0,2)$ and
\begin{equation*}
    \mathcal E(v)=[v]^2_{H^{s/2}(M)}+\int_M F(v) \, dV ,
\end{equation*}
 where $F$ is any smooth nonnegative function. Let $u:M\to \R $ be stationary for $\mathcal{E}$ under inner variations, meaning that  $\mathcal E(u)<\infty$ and for any smooth vector field $X$ on $M$ there holds $\frac{d}{dt}\big|_{t=0} \mathcal{E}(u\circ\psi_X^t)=0$, where $\psi_X^t$ is the flow of $X$ at time $t$. For $(p_\circ,0) \in \ov{M} $ define 
\begin{equation*}
    \Phi(R) := \frac{1}{R^{n-s}} \left( \beta_s \int_{\widetilde{B}^+_R(p_\circ,0)} z^{1-s}| \nabla  U(p,z)|^2 \, dV_p dz +  \int_{B_R(p_\circ)} F(u) \, dV \right) , 
    \end{equation*}
where $U$ is the unique solution given by Theorem \ref{extmfd}. Then, there exist constants $C=C(n)$ and $R_{\rm max}=R_{\rm max}(M, p_\circ)>0$ with the following property: whenever $R_\circ \le R_{\rm max}$ and $K$ is an upper bound for all the sectional curvatures of $M$ in $B_{R_\circ}(p_\circ)$, then
\begin{equation*}
    R \mapsto \Phi(R)e^{C \sqrt{K} R } \s \textit{is nondecreasing for} \s R < R_\circ \,,
\end{equation*}
and the inequality
\begin{align*}
    \Phi'(R) \ge - C \sqrt{K} \Phi(R)+\frac{ s }{R^{n-s+1}} \int_{B_R(p_\circ)} F(u) \, dV + \frac{2\beta_s}{R^{n-s}} \int_{\partial^+ \ov{B}_R^+(p_\circ,0)} z^{1-s} \lp \nabla U , \nabla d \rp ^2 \, d\,\widetilde{\sigma}
\end{align*}
holds for all $R < R_0 $, with $d(\cdot) = d_{\ov{g}}((p_\circ,0), \,\cdot\, )$ the distance function on $\ov{M}$ from the point $(p_\circ, 0)$.

\vsp
Moreover, in the particular case where $M=\R^n$, $F\equiv 0$, $s\in(0,1)$, and $u=\chi_E-\chi_{E^c}$ where $E$ is an $s$-minimal surface, there holds
\begin{align*}
    \Phi'(R) = \frac{2\beta_{s}}{R^{n-s}} \int_{\partial^+ \ov{\B}_R^+(p_\circ,0)} z^{1-s} \lp \nabla U , \nabla d \rp ^2 \, dxdz \ge 0 \,,
\end{align*}
which shows that $\Phi$ is nondecreasing and that it is constant if and only if $E$ is a cone. 
\end{theorem}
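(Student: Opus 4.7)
The plan is to reduce the problem to a Pohozaev-type identity on the extension manifold $\widetilde M=M\times \R_+$ and then specialize the inner variation to a (truncated) radial dilation centered at $(p_\circ,0)$. First, I would translate the stationarity of $u$ on $M$ into a stationarity statement for $U$ on $\widetilde M$. Specifically, given any smooth vector field $Y$ on $\widetilde M$ tangent to the boundary $\{z=0\}$, with flow $\widetilde\psi^t$, the map $V_t:=U\circ\widetilde\psi^{-t}$ is an extension of $u_t:=u\circ(\widetilde\psi^t|_M)^{-1}$, so by \eqref{sobext} the weighted Dirichlet energy of $V_t$ dominates $\beta_s^{-1}[u_t]^2_{H^{s/2}(M)}$ with equality at $t=0$. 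Setting
\[
\widetilde{\mathcal E}(V):=\beta_s\!\int_{\widetilde M}\! z^{1-s}|\nabla V|^2\,dV_pdz+\int_{M}F(V(\cdot,0))\,dV,
\]
the function $t\mapsto \widetilde{\mathcal E}(V_t)-\mathcal E(u_t)$ is nonnegative with a zero at $0$; combined with the hypothesis $\frac{d}{dt}|_{t=0}\mathcal E(u_t)=0$, this forces $\frac{d}{dt}|_{t=0}\widetilde{\mathcal E}(V_t)=0$. This gives an inner-variation identity for $U$ on $\widetilde M$ for every boundary-tangent $Y$.

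Next, I would apply this identity to $Y_\tau=\eta_\tau(d)\,d\,\widetilde\nabla d$, where $d(\cdot)=d_{\widetilde g}((p_\circ,0),\cdot)$ and $\eta_\tau$ is a smooth approximation of $\chi_{[0,R]}$. At $z=0$ the product-metric distance reduces to $d_g(\cdot,p_\circ)$, so $\widetilde\nabla d$ is tangent to the boundary there, and $Y_\tau$ is an admissible vector field. Sending $\eta_\tau\to\chi_{[0,R]}$ converts the interior contribution into boundary integrals over $\partial^+\widetilde B_R^+(p_\circ,0)$ and $\partial B_R(p_\circ)$. Standard inner-variation algebra gives, for smooth $Y=d\widetilde\nabla d$,
\[
\frac{d}{dt}\Big|_{t=0}\beta_s\!\int z^{1-s}|\nabla V_t|^2
=\beta_s\!\int z^{1-s}\bigl(|\nabla U|^2\,\widetilde{\mathrm{div}}\,Y-2\langle\nabla U,\nabla_{\!Y}U\rangle\bigr)dV+\beta_s\!\int(1-s)z^{-s}Y^{z}|\nabla U|^2\,dV,
\]
together with the analogous (easier) contribution $\int F(u)\,\mathrm{div}_g(Y|_M)\,dV$ from the potential term. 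Using $Y=d\widetilde\nabla d$, one computes $\nabla_{Y}U=d\,\nabla d\cdot\nabla U$ (so the cross term combines with $|\nabla U|^2$ to produce the tangential/normal decomposition yielding the sign), and expands $\widetilde{\mathrm{div}}(d\widetilde\nabla d)=1+d\,\widetilde\Delta d$.

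The curvature enters through the Laplacian comparison theorem on $\widetilde M$: since the product metric makes sectional curvatures of $\widetilde M$ agree with those of $M$ on horizontal planes and vanish on vertical planes, a sectional-curvature bound $K$ on $B_{R_\circ}(p_\circ)$ gives $|\widetilde\Delta d - n/d|\le C\sqrt{K}$ for $R<R_\circ\le R_{\max}$ (with $R_{\max}$ smaller than the injectivity radius at $p_\circ$). Substituting the expansion of $\widetilde{\mathrm{div}}(d\widetilde\nabla d)=(n+1)+O(\sqrt K\, d)$, grouping the bulk integrals against the $R^{-(n-s)}$ factor to identify them with $-(n-s)R^{-1}\bigl(\beta_s\!\int z^{1-s}|\nabla U|^2+\int F(u)\bigr)$, and keeping on the boundary side the explicit $|\nabla U|^2$ and the square $\langle\nabla U,\nabla d\rangle^2$ (the latter coming from the $-2\langle\nabla U,\nabla_Y U\rangle$ term restricted to $\partial^+\widetilde B_R^+$), one arrives at
\[
\Phi'(R)\ge -C\sqrt K\,\Phi(R)+\frac{s}{R^{n-s+1}}\!\int_{B_R(p_\circ)}\!F(u)\,dV+\frac{2\beta_s}{R^{n-s}}\!\int_{\partial^+\widetilde B_R^+(p_\circ,0)}\!z^{1-s}\langle\nabla U,\nabla d\rangle^2\,d\widetilde\sigma,
\]
and integrating the differential inequality $\bigl(\log\Phi(R)\bigr)'\ge -C\sqrt K$ yields the monotonicity of $\Phi(R)e^{C\sqrt K R}$. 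In the Euclidean case $M=\R^n$, $F\equiv 0$, $u=\chi_E-\chi_{E^c}$, the Laplacian comparison becomes an equality and no interior sign losses occur, so the inequality above becomes an identity, and $\Phi'\equiv 0$ forces $\langle\nabla U,\nabla d\rangle\equiv 0$, i.e.\ $U$ (and therefore $E$) is $0$-homogeneous, a cone.

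The main obstacle is the bookkeeping in the Pohozaev computation: keeping track of the weight $z^{1-s}$ (whose derivative generates the term $(1-s)z^{-s}Y^z|\nabla U|^2$), of the curvature error produced by the Laplacian-comparison inequality for $\widetilde\Delta d$, and of the correct identification of the surface integrals on $\partial^+\widetilde B_R^+(p_\circ,0)$ with the derivative $\Phi'(R)$. Particular care is needed at the corner $\partial B_R(p_\circ)\times\{0\}$, where one has to verify that the cutoff limit produces no spurious boundary contribution because $Y$ is tangent to $\{z=0\}$.
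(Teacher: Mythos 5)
Your proposal is correct and follows essentially the same route as the paper: Step 1 transfers stationarity of $u$ on $M$ to stationarity of its Caffarelli--Silvestre extension $U$ on $\widetilde M$ via the minimizing property of the extension (you phrase this as a nonnegative-function-with-a-zero-at-$t=0$ argument, the paper via a two-sided $\liminf$/$\limsup$ comparison, but these are equivalent), and Step 2 is exactly the paper's inner-variation/Pohozaev computation with $Y = \eta_\tau(d)\,d\,\widetilde\nabla d$, Hessian and divergence comparison for $d$, and the cutoff limit $\eta_\tau \to \chi_{[0,R]}$. The only blemish is notational: in the Pohozaev identity the cross term should read $-2\langle \nabla_{\nabla U} Y, \nabla U\rangle$ (covariant derivative of the vector field $Y$ along $\nabla U$), not $-2\langle \nabla U, \nabla_Y U\rangle$, but your subsequent expansion makes clear you intended the correct quantity.
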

\begin{remark}\label{injradrmk}
    It follows from the proof of Theorem \ref{monfor} in \cite{FracSobPaper} that the radius $R_{\rm max}$ in Theorem \ref{monfor} can be taken to be $R_{\rm max} = \inj_{M}(p_\circ)/4$. Moreover, since $M$ is compact $R_{\rm max}$ is uniformly bounded below as $ R_{\rm max} (M,p_\circ) \ge \inj_M/4$, for all $p_\circ \in M$.
\end{remark}

\section{Existence of min-max solutions to Allen-Cahn and convergence to a limit nonlocal minimal surface}

\subsection{Existence results -- Proof of Theorem \ref{Existence1}}
\label{ExistenceSection}

\vsp
In what follows, $(M^n,g)$ will be an arbitrary closed, $n$-dimensional Riemannian manifold, and $s_0 \in(0,1)$ will be fixed. Moreover, we will use the notation $\mathcal{E}_M^\ep(v)$ for the Allen-Cahn energy from (\ref{ACenergy}) to make the parameter $\ep$ explicit in the notation.

\subsubsection{Min-max procedure}
The solutions in Theorem \ref{Existence1} are obtained using an equivariant min-max procedure, based on the construction in \cite{GG1} and the min-max theorems of \cite{Gho1}, \cite{Gho2} and \cite{LS}. Since the topology of $H^{s/2}(M)$ is trivial, this is done by exploiting the $\mathbb{Z}_2$-symmetry of the functional $\mathcal{E}^\ep_M$. Indeed, we consider the family $\mathcal{F}_\p$ of all sets $A \subset H^{s/2}(M)\backslash \{0\}$ which are continuous odd images of $\p$-spheres:
\begin{equation*}
    \mathcal{F}_\p:= \big{\{} A = f(\Sp^\p) \ : \ f \in C^0 (\Sp^p;  H^{s/2}(M)\backslash \{0\} ) \s \text{and} \s f(-x)=-f(x) \  \forall \ x\in\Sp^\p \big{\}}.
\end{equation*}
\begin{remark}
    This min-max family has been chosen for simplicity and is well known in critical point theory (see, for example, \cite{DR99}), but other min-max families can be considered; see the seminal article \cite{LS} by Lazer-Solimini, as well as the discussion in Remark 3.7 of \cite{GG1}. In particular, one can obtain solutions in Theorem \ref{Existence1}, which also satisfy lower bounds for their (extended) Morse indices, and such that the corresponding min-max families come from a topological index. We nevertheless remark that a growth for the (proper!) index of the solutions is already implied in our case, by combining the lower energy bound in Theorem \ref{Existence1} with the upper energy bounds in Theorem \ref{A-C-energy-est} (which will be proved later).
\end{remark}
For fixed $\ep$, the min-max value of the family $\mathcal{F}_\p$ is defined as
\begin{equation}\label{minmaxval}
     c_{\ep,\p}:= \inf_{A\in\mathcal{F}_\p} \sup_{u\in A}\mathcal{E}_M^{\ep}(u).
\end{equation}
Note that, defining $T(u):=\max\{-1,\min\{u,+1\}\}$ the truncation of $u$ between the values $\pm 1$, we have that $|T(u)|(x)\le 1$ for all $x\in M$ and  $\mathcal{E}_M^\ep (T(u)) \le \mathcal{E}_M^\ep (u)$. Hence \begin{equation*}
     c_{\ep,\p} = \inf_{A\in\mathcal{F}_\p} \sup_{u\in A}\mathcal{E}_M^{\ep}(u) =  \inf_{A\in \widetilde{\mathcal{F}}_\p} \sup_{u\in A}\mathcal{E}_M^{\ep}(u) \,,
\end{equation*}
where 
\begin{equation*}
    \widetilde{\mathcal{F}}_\p = \{A \in \mathcal{F}_\p : |u|\le 1 \s \textnormal{for all} \s u \in A \} .
\end{equation*}
This shows that we can consider, in the arguments that follow, that the functions in the min-max sets have absolute values pointwise bounded by one. The proof of Theorem \ref{Existence1} relies on the existence result given by the min-max scheme and the following bound on the min-max values.

\begin{theorem}\label{mmbounds}
Let $(M^n,g)$ be a compact Riemannian manifold, $s_0 \in (0,1)$ and $s\in (s_0,1)$. Then, for every $\p \in \N$ there exists $\ep_\p>0$, depending on $M$, $s$ and $\p$, such that the min-max values \eqref{minmaxval} satisfy
\begin{equation}\label{critlevbound}
    \frac{C^{-1}}{1-s} \p^{s/n} \le c_{\ep,\p} \le \frac{C}{1-s} \p^{s/n} \,, \s \textnormal{for all} \,\, \ep \in (0,\ep_\p) \,,
\end{equation}
for some constant $C=C(M,s_0)$.
\end{theorem}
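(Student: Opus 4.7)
I prove the upper and lower bounds separately.

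\textbf{Upper bound.} I construct an explicit admissible family. Let $\{\varphi_k\}_{k\ge 0}$ denote an $L^2$-orthonormal basis of Laplace--Beltrami eigenfunctions on $M$ with eigenvalues $\lambda_k$ increasing and $\varphi_0$ constant. For each $\lambda \in \Sp^\p$ set $f_\lambda := \sum_{i=0}^{\p}\lambda_i\varphi_i$ and $u_\lambda := q(f_\lambda/\ep)$, where $q: \R \to (-1,1)$ is the odd, increasing $1$D Allen--Cahn profile. Oddness of $q$ yields $u_{-\lambda}=-u_\lambda$, and continuity of $\lambda \mapsto u_\lambda$ into $H^{s/2}(M)\setminus\{0\}$ is routine. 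I bound the Sobolev part of $\mathcal{E}_M^\ep(u_\lambda)$ by the $s$-perimeter of the nodal set $\{f_\lambda=0\}$: a Courant-type argument decomposes $M$ into $O(\p)$ nodal domains each of diameter $\sim \lambda_\p^{-1/2}\sim \p^{-1/n}$, and the fractional perimeter scaling $\textnormal{Per}_s(B_r)\sim r^{n-s}/(1-s)$ applied to each domain yields a total of order $\p\cdot \p^{-(n-s)/n}/(1-s) = \p^{s/n}/(1-s)$. The potential part is supported in an $O(\ep)$-neighbourhood of $\{f_\lambda=0\}$ and contributes at most $\ep^{1-s}\cdot\textnormal{Per}(\{f_\lambda=0\})/(1-s)$, which is negligible compared to $\p^{s/n}/(1-s)$ once $\ep$ is chosen small enough in terms of $\p$.

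\textbf{Lower bound.} For any admissible $A=f(\Sp^\p)\in\mathcal{F}_\p$ I apply the Borsuk--Ulam theorem to the odd continuous map $\Pi_\p\circ f\colon \Sp^\p\to \R^\p$, where $\Pi_\p(u):=(\langle u,\varphi_1\rangle_{L^2},\ldots,\langle u,\varphi_\p\rangle_{L^2})$ projects onto the first $\p$ non-constant eigenfunctions. This produces $v_\star\in\Sp^\p$ with $\Pi_\p(f(v_\star))=0$, so that $u_\star:=f(v_\star)$ is $L^2$-orthogonal to $\varphi_1,\ldots,\varphi_\p$. The spectral formula \eqref{ghfghfg} gives $[u_\star]^2_{H^{s/2}(M)}\ge \lambda_{\p+1}^{s/2}\|u_\star-\bar u_\star\|_{L^2}^2$, and Weyl's asymptotic $\lambda_{\p+1}\sim c_n\p^{2/n}$ yields $\mathcal{E}^{\rm Sob}(u_\star)\gtrsim \p^{s/n}\|u_\star-\bar u_\star\|_{L^2}^2$. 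To promote this to an $\ep$-uniform bound with the correct $(1-s)^{-1}$ dependence, I split into cases. If $\|u_\star-\bar u_\star\|_{L^2}^2$ is bounded below by a positive constant, the spectral estimate gives the claimed lower bound after comparison with the singular-integral representation of the seminorm: for the essentially $\pm 1$-valued competitors forced by the potential, the $s$-perimeter scaling $r^{n-s}/(1-s)$ across a sharp interface supplies the extra $(1-s)^{-1}$ factor matching the upper bound. If instead $u_\star$ is nearly constant, then $W(u_\star)\approx \tfrac14$ on most of $M$ and $\mathcal{E}^{\rm Pot}(u_\star)\gtrsim \ep^{-s}$, which for the range of $\ep$ under consideration vastly exceeds the claimed bound.

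\textbf{Main obstacle.} The principal challenge is extracting the sharp $(1-s)^{-1}$ factor in the lower bound; the Borsuk--Ulam spectral inequality alone delivers only $\p^{s/n}$. The extra factor must come from the near-characteristic structure of low-energy competitors: for $\ep\downarrow 0$ they are essentially $\pm 1$-valued, and the singular-integral definition of $[\cdot]_{H^{s/2}}^2$ develops a $(1-s)^{-1}$ blow-up across the sharp interface, exactly matching the scaling appearing in the upper bound construction. Rigorously quantifying this concentration---and uniformly in the admissible family---is the most delicate part of the argument.
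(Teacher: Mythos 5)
Your proposal takes a genuinely different route from the paper for both bounds, and both of your routes have serious gaps.

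\textbf{Lower bound.} Your Borsuk--Ulam map projects onto the span of $\varphi_1,\dots,\varphi_\p$. The resulting constraint $\langle u_\star,\varphi_i\rangle_{L^2}=0$ for $1\le i\le \p$ is \emph{automatically satisfied by every constant function} (since $\int_M\varphi_i=0$ for $i\ge 1$), and more generally by any function which is nearly constant $=+1$ or $=-1$. Such functions have essentially zero Allen--Cahn energy, and your spectral inequality gives nothing because $\|u_\star-\bar u_\star\|_{L^2}$ is tiny. Your second case ("if $u_\star$ is nearly constant then $W(u_\star)\approx 1/4$") conflates nearly constant with nearly zero: $W(u)=\tfrac14(1-u^2)^2$ vanishes at $u=\pm 1$, so a competitor near $+1$ has small potential energy, not large. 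The paper's choice of test map, $u\mapsto\big(\int_{B_r(q_1)}u,\dots,\int_{B_r(q_\p)}u\big)$ for $\p$ disjoint balls of radius $r\sim\p^{-1/n}$, is exactly designed to avoid this: zero average on a ball is incompatible with being nearly $\pm 1$ there, and forces a transition of definite size in each ball. Combining this with the fractional isoperimetric inequality (Lemma~\ref{LowBound2}, which is where the $(1-s)^{-1}$ factor honestly enters) on each of the $N\sim\p$ balls gives the correct bound. Your own "main obstacle" paragraph admits you do not know how to extract the $(1-s)^{-1}$ factor from the spectral inequality; in fact the whole spectral approach fails before reaching that question.

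\textbf{Upper bound.} You build competitors from linear combinations $f_\lambda=\sum_{i\le\p}\lambda_i\varphi_i$ of eigenfunctions and need a uniform bound $\mathcal H^{n-1}(\{f_\lambda=0\})\lesssim\p^{1/n}$ over all $\lambda\in\Sp^\p$. No such estimate is available: even for a single eigenfunction $\varphi_\p$, the bound $\mathcal H^{n-1}(\{\varphi_\p=0\})\lesssim\sqrt{\lambda_\p}$ is Yau's nodal-set conjecture, known only in the real-analytic case (Donnelly--Fefferman); for arbitrary linear combinations with uniform constants the required estimate is open (and the diameter bound $\sim\p^{-1/n}$ you ascribe to Courant is false — Courant bounds only the number of nodal domains, not their size). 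The paper sidesteps this entirely by building the sweepout from explicit $\pm 1$-valued functions whose interfaces are pieces of finitely many geodesic spheres (a chain of $N\sim\p$ balls, with signs dictated by the real roots of the polynomial $P_a$), so that $\mathcal H^{n-1}(\Sigma_a)\lesssim \p^{1/n}$ holds by direct geometric accounting. This construction is available precisely because characteristic functions lie in $H^{s/2}$ for $s<1$ — as the paper points out, this is the reason the nonlocal sweepout is much simpler than its classical Allen--Cahn counterpart in \cite{GG1}, which \emph{does} need regularizations like your $q(f/\ep)$.
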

\begin{proof}
    The proof of this is contained in Subsections \ref{lowbsec} and \ref{subsecupperb} below, which deal with the lower bound and upper bound, respectively.
\end{proof}
To apply the existence result, we need the energy $\mathcal{E}^\ep_M$ to satisfy the Palais-Smale condition along appropriately bounded sequences, and this is addressed by the next lemma. We remark that the proofs of the Palais-Smale property and of the lower bound are quite similar to those of the classical Allen-Cahn equation in \cite{GG1}. 
\begin{lemma}\label{LemPS}
    Let $\ep>0$ and $s\in (0,1)$. Suppose that $(u_k)_k \subset H^{s/2}(M)$ is a sequence of functions satisfying $|u_k| \le 1$, $ | \mathcal{E}_M^{\ep}(u_k) | \le  C$, and $d{\mathcal{E}_M^{\ep}}(u_k) \to 0$ strongly in $H^{s/2}(M)$. Then, there is a subsequence of $(u_k)_k$ converging strongly in $H^{s/2}(M)$.
\end{lemma}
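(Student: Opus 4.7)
The plan is to carry out the standard Palais-Smale argument for semilinear functionals, adapted to the fractional setting. Three ingredients must be assembled: (i) an a priori $H^{s/2}(M)$ bound on $(u_k)$, (ii) the compact embedding $H^{s/2}(M)\hookrightarrow L^2(M)$ (which on a closed manifold is immediate from the spectral definition \eqref{ghfghfg} and the divergence $\lambda_k\to+\infty$), and (iii) the polarization identity for the fractional seminorm to upgrade weak convergence to strong convergence.

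First I would extract a weak limit. Since $W\ge 0$, the hypothesis $|\mathcal{E}_M^\ep(u_k)|\le C$ controls the Sobolev part of the energy and yields $[u_k]^2_{H^{s/2}(M)}\le 4C$; the pointwise bound $|u_k|\le 1$ and $|M|<\infty$ give $\|u_k\|_{L^2(M)}\le |M|^{1/2}$. Hence $(u_k)$ is bounded in $H^{s/2}(M)$ and a subsequence, still denoted $(u_k)$, converges weakly in $H^{s/2}(M)$, strongly in $L^2(M)$, and a.e.\ to some $u$ with $|u|\le 1$ a.e.

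Next, to upgrade to strong $H^{s/2}$ convergence, I would test the Palais-Smale condition on $\phi=u_k-u$, which is bounded in $H^{s/2}(M)$. Introducing the polarized seminorm
\[
B(v,\phi):=\iint_{M\times M}(v(p)-v(q))(\phi(p)-\phi(q))\,K_s(p,q)\,dV_p\,dV_q,
\]
the first variation reads $d\mathcal{E}_M^\ep(u_k)[\phi]=\tfrac{1}{2}B(u_k,\phi)+\ep^{-s}\int_M W'(u_k)\,\phi\,dV$. The left-hand side tends to zero by hypothesis (duality pairing of an $H^{-s/2}$-small element against an $H^{s/2}$-bounded one). The potential term also tends to zero since $|W'(u_k)|$ is uniformly bounded on $[-1,1]$ and $u_k-u\to 0$ in $L^1(M)$. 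Therefore $B(u_k,u_k-u)\to 0$.

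Finally I would polarize: $B(u_k,u_k-u)=[u_k]^2_{H^{s/2}(M)}-B(u_k,u)$, and weak convergence $u_k\rightharpoonup u$ together with the continuity of the linear functional $B(\cdot,u)$ gives $B(u_k,u)\to B(u,u)=[u]^2_{H^{s/2}(M)}$. Hence $[u_k]^2_{H^{s/2}(M)}\to [u]^2_{H^{s/2}(M)}$, and the identity
\[
[u_k-u]^2_{H^{s/2}(M)}=[u_k]^2_{H^{s/2}(M)}-2B(u_k,u)+[u]^2_{H^{s/2}(M)}
\]
then forces $[u_k-u]_{H^{s/2}(M)}\to 0$. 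Combined with the strong $L^2$ convergence this yields $u_k\to u$ in $H^{s/2}(M)$. The argument is essentially routine for this class of functionals; the only ingredient that actually invokes the closedness of $M$ is the Rellich-type compact embedding, and even this is automatic from the spectral definition, so no real obstacle arises.
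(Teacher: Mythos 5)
Your proof is correct and takes essentially the same approach as the paper's: both extract a weak $H^{s/2}$-limit $u$ along a subsequence that converges strongly in $L^2$, show that the potential term tested against $u_k-u$ vanishes using $|W'|\le C$ on $[-1,1]$ together with $L^1$-convergence, and then use weak $H^{s/2}$-convergence to reduce $\textnormal{dist}_{H^{s/2}}(u_k,u)\to 0$ to the Palais--Smale hypothesis. The only cosmetic difference is that the paper first records that $u$ is a critical point (so $d\mathcal{E}_M^\ep(u)[u_k-u]=0$) and subtracts, whereas you package the same cancellation via the polarization identity $B(u_k,u_k-u)=[u_k]^2_{H^{s/2}}-B(u_k,u)$ combined with $B(u_k,u)\to[u]^2_{H^{s/2}}$.
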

\begin{proof}
    The proof is an adaptation of Proposition 2.25 in \cite{PalaisSmale}. We just prove the statement for $\ep=1$, as exactly the same proof works for every fixed $\ep >0$. The boundedness of the energies $\mathcal{E}_M(u_k)$ gives the convergence
     \begin{align*}
          u_k \conv{L^2} u \s \textnormal{and} \s  u_k \wconv{H^{s/2}} u 
    \end{align*}
     of a subsequence, that we do not relabel, to some $u \in H^{s/2}(M)$. To upgrade the convergence to be in the strong sense, we use the particular form of the functional. First, note that given $v\in H^{s/2}(M)$ we have 
    \begin{align*}
        d\mathcal{E}_M^\ep(u)[v]&= \frac{1}{4}\iint (u(p)-u(q))(v(p)-v(q))\,K_s(p,q)\,dV_pdV_q + \int W'(u)v\,dV\\
        &=\lim_{k \to \infty} \frac{1}{4}\iint (u_k(p)-u_k(q))(v(p)-v(q))\,K_s(p,q)\,dV_pdV_q + \int W'(u_k)v\,dV\\[2pt]
        &= \lim_{k\to \infty} d\mathcal{E}_M^\ep(u_k)[v] =0 \,,
    \end{align*}
    where we used $|u_k|\le 1$ to pass to the limit in the term $\int W'(u_k)v$. In other words, $u$ is a critical point of $\mathcal{E}_M^\ep$. From this we deduce that
    \begin{align*}
        0 & = \lim_{k \to \infty} \left( d\mathcal{E}_M^\ep(u_k)[u_k-u]-d\mathcal{E}_M^\ep(u)[u_k-u] \right)\\
        &= \mathcal{E}^{\rm Sob}_M(u_k-u) + \int (W'(u_k)-W'(u))(u_k-u)\,dV ,
    \end{align*}
    and since the second term tends to zero, the first term must do so as well. This proves that $u_k \to u$ strongly in $H^{s/2}(M)$ and concludes the proof.
\end{proof}

\begin{theorem}\label{existencethm}
For every $\p \in \N$, there exists $\ep_\p>0$ such that: for all $\ep \in (0,\ep_\p) $ there exists $u_{\ep,\p} \in H^{s/2}(M)$ which is a critical point of $\mathcal{E}^\ep_M$ with $\mathcal{E}^\ep_M(u_{\ep,\p}) =c_{\ep,\p}$ and Morse index $m(u_{\ep,\p}) \le \p $ (see Definition \ref{MorseDef}).
\end{theorem}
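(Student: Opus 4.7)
The plan is to apply a standard $\mathbb{Z}_2$-equivariant minimax existence theorem (as in \cite{Gho1,Gho2,LS}) to the even $C^2$ functional $\mathcal{E}^\ep_M$ on the Hilbert space $H^{s/2}(M)$. The three ingredients that the theorem requires are: a Palais--Smale condition at the candidate level, finiteness of that level, and a genus-based ``linking'' property of $\mathcal{F}_\p$. The first two are already available from the excerpt (Lemma \ref{LemPS} and Theorem \ref{mmbounds}), so the main work is to reconcile the $L^\infty$-boundedness hypothesis of Lemma \ref{LemPS} with the abstract minimax and, afterwards, to upgrade the existence statement with the Morse index bound $m(u_{\ep,\p})\leq \p$.

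First, I would fix $\ep\in(0,\ep_\p)$ with $\ep_\p$ provided by Theorem \ref{mmbounds}, so that $c_{\ep,\p}\in(0,\infty)$. As observed in the excerpt, truncation $T(u):=\max\{-1,\min\{u,1\}\}$ is odd and decreases $\mathcal{E}^\ep_M$, so $c_{\ep,\p}$ is unchanged if we replace $\mathcal{F}_\p$ by the subclass $\widetilde{\mathcal{F}}_\p$ of odd continuous images of $\Sp^\p$ into $\{|u|\leq 1\}$. A minimizing-maximizing argument based on a $\mathbb{Z}_2$-equivariant pseudo-gradient deformation of $\mathcal{E}^\ep_M$, keeping the deformed set inside $\{|u|\leq 2\}$ (which is possible because $W'(u)u\geq 0$ for $|u|\geq 1$, so the pseudo-gradient points inwards), then produces a Palais--Smale sequence $u_k$ at level $c_{\ep,\p}$ with $|u_k|\leq 2$; a further truncation and standard interpolation shows that one may take $|u_k|\leq 1$ without altering the limiting energy. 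Lemma \ref{LemPS} then yields a strongly convergent subsequence whose limit $u_{\ep,\p}$ is a critical point with $\mathcal{E}^\ep_M(u_{\ep,\p})=c_{\ep,\p}$.

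For the Morse index bound, I would argue by contradiction: suppose $m(u_{\ep,\p})\geq \p+1$, so that there is a $(\p+1)$-dimensional linear subspace $V\subset C^1(M)\subset H^{s/2}(M)$ on which the quadratic form $(\mathcal{E}^\ep_M)''(u_{\ep,\p})$ is negative definite. Using the symmetric nondegenerate pairing between $\pm u_{\ep,\p}+V$ and translating into an equivariant setting, one constructs a $\mathbb{Z}_2$-equivariant homotopy that strictly decreases the maximum energy on any $A\in\widetilde{\mathcal{F}}_\p$ close to realizing $c_{\ep,\p}$; the Krasnosel'skii genus of an odd continuous image of $\Sp^\p$ is at most $\p+1$, and this genus mismatch with $\dim V=\p+1$ permits pushing the set off $u_{\ep,\p}$ while remaining inside $\mathcal{F}_\p$, contradicting the definition of the infimum $c_{\ep,\p}$. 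The main technical obstacle is producing a deformation that is simultaneously $\mathbb{Z}_2$-equivariant, continuous in the $H^{s/2}$ topology, and preserves the class $\mathcal{F}_\p$; this is exactly the content of the equivariant deformation lemma of \cite{Gho1,LS}, which I would cite rather than redevelop, so in practice the proof reduces to checking that the hypotheses of those abstract theorems (evenness, $C^{1,1}$-regularity of $\mathcal{E}^\ep_M$, PS along the bounded sequences in question, and finiteness of $c_{\ep,\p}$) hold in our setting.
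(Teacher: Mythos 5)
Your proposal takes the same overall route as the paper (cite the abstract $\mathbb{Z}_2$-equivariant min-max theorem of Ghoussoub with Morse index control, after checking its hypotheses), but it misses the one genuine technical obstruction that the paper has to resolve. Ghoussoub's theorem requires a \emph{complete} connected Banach manifold on which $\mathbb{Z}_2$ acts freely. Since the action $u\mapsto -u$ has $0$ as a fixed point, the natural ambient space is $H^{s/2}(M)\setminus\{0\}$, which is \emph{not} complete. You never address this; you simply ``fix $\ep\in(0,\ep_\p)$ with $\ep_\p$ provided by Theorem~\ref{mmbounds}'' and proceed, without explaining why a smallness condition on $\ep$ is needed at all. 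The paper's resolution is precisely the content of the last few lines of its proof: $\mathcal{E}^\ep_M(0)=\ep^{-s}W(0)\,\mathrm{vol}(M)\to\infty$ as $\ep\downarrow 0$, while the upper bound of Theorem~\ref{mmbounds} gives $c_{\ep,\p}\leq C\p^{s/n}/(1-s)$ uniformly in $\ep$. Hence for $\ep$ below a threshold $\ep_\p$ (in general smaller than the one of Theorem~\ref{mmbounds}), every min-max/Palais--Smale sequence at level near $c_{\ep,\p}$ is uniformly bounded away from $0$, so the deformation never leaves a complete region of $H^{s/2}(M)\setminus\{0\}$ and the abstract theorem applies. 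This observation is what actually produces the $\ep_\p$ in the statement, and without it your deformation argument may push sets toward the singular point $0$ and out of the admissible class $\mathcal{F}_\p$.

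Two secondary remarks. First, your plan to redevelop the Morse index bound by hand via a genus/deformation contradiction glosses over a hypothesis that the paper states explicitly: $d^2\mathcal{E}_M^\ep$ must be Fredholm at critical points. Ghoussoub's theorem gives existence \emph{and} the index bound $m(u_{\ep,\p})\leq\p$ as a single package under PS plus Fredholmness; redoing the index part from scratch would require handling possibly degenerate or non-isolated critical sets, which is exactly where the Fredholm condition enters. Second, the claim that the pseudo-gradient flow can be kept inside $\{|u|\leq 2\}$ because ``$W'(u)u\geq 0$ for $|u|\geq 1$'' is not obviously correct in the $H^{s/2}$ topology, since the $(-\Delta)^{s/2}$ part of the gradient is nonlocal; the paper sidesteps this entirely by truncating the admissible sets (replacing $\mathcal{F}_\p$ by $\widetilde{\mathcal{F}}_\p$ as you do), which leaves $c_{\ep,\p}$ unchanged, so the boundedness needed for Lemma~\ref{LemPS} is built in from the start and no $\{|u|\leq 2\}$ intermediate step is needed.
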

\begin{proof}

Since $\mathcal{E}_M^{\ep}$ satisfies the Palais-Smale condition along appropriately bounded sequences (see Lemma \ref{LemPS} above) and since $d^2\mathcal{E}_M^\ep$ is a Fredholm operator at critical points, the min-max theorems in \cite{Gho1} will imply that there exists a critical point $u_{\ep,\p}$ for $\mathcal{E}_M^{\ep}$ at energy level $c_{\ep,\p}$ and with Morse index $m(u_{\ep,\p}) \le \p $. There is only one detail that we have to address: the min-max theorems in \cite{Gho1} would apply directly to a complete, connected Banach manifold $X$ on which $\mathbb{Z}_2$ acted freely; in our case, we want to consider the space $H^{s/2}(M)$, together with the action $x\mapsto -x$ under which $\mathcal{F}_\p$ is an invariant $p$-dimensional homotopic family, but which is not free as it maps the point $0$ to itself. This is not an issue in our case: the min-max value $c_{\ep,\p}$ satisfies the upper bound in \eqref{critlevbound}, as we will prove in Subsection \ref{subsecupperb}, and this bound holds independently of $\ep$. On the other hand, the Allen-Cahn energy of the zero function tends to infinity as $\ep \searrow 0$, which shows that there is $\ep_\p>0$ such that, for all $\ep \in (0, \ep_\p)$, every min-max sequence is uniformly separated from $0 \in H^{s/2}(M)$. Hence, the min-max theorems in \cite{Gho1} hold also in our case, and this concludes the proof of existence.
\end{proof}

\begin{proof} First, note that $\widetilde{\mathcal{F}}_\p$ is an invariant $\p$-dimensional homotopic family without boundary, in the sense of Section 3 in \cite{Gho2} with $B=\varnothing$. Moreover, for every $\ep>0$, $\mathcal{E}_M^{\ep}$ satisfies the Palais-Smale condition along appropriately bounded sequences (see Lemma \ref{LemPS} above) and $d^2\mathcal{E}_M^\ep$ is a Fredholm operator on critical points. Then, \cite[Corollary 13]{Gho2} applied with $B=\varnothing$ (see also \cite[Theorem 4]{Gho2}) implies that there exists a critical point $u_{\ep,\p}$ for $\mathcal{E}_M^{\ep}$ at energy level $c_{\ep,\p}$ and with Morse index $m(u_{\ep,\p}) \le \p $. 

\vsp 
There is only one detail that we have to address: \cite[Corollary 13]{Gho2} would apply directly to a complete connected Banach manifold $X$ on which $\mathbb{Z}_2$ acted freely. In our case, we want to consider $X=H^{s/2}(M)$, together with the action $x\mapsto -x$ under which $\widetilde{\mathcal{F}}_\p$ is an invariant $\p$-dimensional homotopic family, but which is not free since it maps the point $0$ to itself. This is not an issue for the following reason. By the upper bound for the min-max values \eqref{critlevbound} (that we will prove in Subsection \ref{subsecupperb}), we have that $c_{\ep,\p} \le C(s,\p,n)$ for every $\ep \in (0,\ep_p)$, for some $\ep_\p = \ep_\p(M,s,\p)>0$. On the other hand, the Allen-Cahn energy of the zero function tends to infinity as $\ep \searrow 0$, which shows that there is $\ep_\p>0$ such that, for all $\ep \in (0, \ep_\p)$, every min-max sequence is uniformly separated from $0 \in H^{s/2}(M)$. Hence, for a small $r>0$ the min-max result \cite[Corollary 13]{Gho2} can be applied to $X=H^{s/2}(M) \setminus \{ \|u\|_{H^{s/2}} < r  \}$ on which the action $x\mapsto -x$ is free. 
\end{proof}

Hence, to obtain Theorem \ref{Existence1} we are only left with proving the lower and upper bounds in \eqref{critlevbound}. The analog bounds in the case of min-max families of hypersurfaces and their areas were first proved by Gromov in \cite{Gromov}. In \cite{Guth}, Guth gave an elegant new proof of the result by Gromov on which the proof of our bounds is partially based. See also \cite{MN} for an adaptation of the proof by Guth to the setting of
closed manifolds, as well as \cite{GG1} for the adaptation to the classical Allen-Cahn case. Nevertheless, our proof of the bounds is closer to (and also simpler than) those in \cite{Guth} or \cite{MN}, thanks to the fact that sets of finite perimeter embed naturally\footnote{Via functions of the form $\chi_E-\chi_{E^c}$.} in the same function space $H^{s/2}(M)$ as their fractional Allen-Cahn approximations, as long as $s\in(0,1)$.

\subsubsection{Lower bound}\label{lowbsec}
In the proofs of both the upper and lower bounds, we will make use of the following simple fact.
\begin{lemma}\label{coveringballlem}
Let $(M^n, g)$ be a closed, $n$-dimensional Riemannian manifold. Then, there exist positive constants $C_0, C_1, C_2$ depending only on $M$ such that: for every $\p\in \mathbb N$ there exist $N$ disjoint balls $B_r(q_1) , \dotsc B_r(q_N)$ with
\begin{gather*}
       C_1 \, \p \le N \le C_2 \, \p \,,  \\ r = C_0 \, \p^{-1/n} \,,
\end{gather*}
and 
\begin{equation*}
     \bigcup_{i \le N} B_{3r}(q_i) = M \,.
\end{equation*}
\end{lemma}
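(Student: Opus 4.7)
The plan is to use a standard maximal-packing (Vitali-type) argument combined with the fact that on a closed Riemannian manifold the volume of small geodesic balls is comparable to the Euclidean volume.

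First, I would fix a scale $r_0 = r_0(M) > 0$ (say, a small fraction of the injectivity radius and of the reciprocal of the square root of a curvature bound) such that Bishop--Gromov (or just a direct computation using the local flatness of Remark \ref{fbsvdg}) yields uniform two-sided volume estimates
\begin{equation*}
c_1 r^n \le |B_r(q)| \le c_2 r^n \qquad \text{for all } q \in M \text{ and } 0 < r \le r_0,
\end{equation*}
with $c_1, c_2 > 0$ depending only on $M$. Then I would choose $C_0 = \min\{r_0, (|M|/c_1)^{1/n}\}$ (the second term is a convenient normalization ensuring the quantities below behave as expected) and set $r := C_0 \mathfrak p^{-1/n}$, so that $r \le r_0$ for every $\mathfrak p \ge 1$.

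Next, I would pick a maximal $2r$-separated set $\{q_1, \dots, q_N\} \subset M$, i.e.\ a maximal family of points with $d_g(q_i, q_j) \ge 2r$ for $i\ne j$. By maximality, every $p \in M$ satisfies $d_g(p, q_i) < 2r$ for some $i$, so in particular $\bigcup_{i=1}^N B_{3r}(q_i) = M$. By the separation condition the balls $B_r(q_i)$ are pairwise disjoint, giving
\begin{equation*}
N \cdot c_1 r^n \;\le\; \sum_{i=1}^N |B_r(q_i)| \;\le\; |M|,
\end{equation*}
hence $N \le |M|/(c_1 r^n) = C_2 \mathfrak p$ with $C_2 = |M|/(c_1 C_0^n)$. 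For the reverse inequality, since the balls $B_{2r}(q_i)$ cover $M$,
\begin{equation*}
|M| \;\le\; \sum_{i=1}^N |B_{2r}(q_i)| \;\le\; N \cdot c_2 (2r)^n,
\end{equation*}
which gives $N \ge |M|/(c_2 (2C_0)^n) \cdot \mathfrak p =: C_1 \mathfrak p$.

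The argument is elementary; there is no real obstacle. The only mildly delicate point is the uniform two-sided volume estimate for balls of radius up to $r_0(M)$, but this follows immediately from the smoothness and compactness of $(M,g)$ (one can quote Bishop--Gromov, or simply use the normal coordinates provided by Remark \ref{fbsvdg} together with the bounds \eqref{hsohoh1}). The argument works uniformly for all $\mathfrak p \ge 1$ because the choice of $C_0$ already enforces $r \le r_0$, so one never leaves the regime in which the volume comparison holds.
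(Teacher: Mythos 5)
Your proof is correct and follows essentially the same approach as the paper's: a maximal packing argument (maximal $2r$-separated set, equivalent to the paper's maximal disjoint family of $r$-balls) combined with uniform two-sided volume estimates for small geodesic balls. The only cosmetic difference is that the paper uses $B_{3r}$ for the covering and one comparability constant $c$ for both volume bounds, whereas you use $B_{2r}$ and separate constants $c_1, c_2$; the logic is identical.
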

\begin{proof}
 Since $M$ is compact, by a comparison argument there exists a constant $c=c(M) >1$ such that
 \begin{equation}\label{biLipbound}
     c^{-1} r^n \le \mathcal{H}^n (B_r(q)) \le c \, r^n \,, \s \textnormal{for all} \,\, q \in M\,,\,\, r<\textnormal{inj}(M)\,.
 \end{equation}
 We claim that, for $ C_0 = \textnormal{inj}(M)/3  $, the statement holds. Indeed, with this choice $ r=C_0 \, \p^{-1/n} \le \textnormal{inj}(M)/3$ for all $\p \in \N$. Consider the cover $\bigcup_{q \in M} B_r(q)$ of $M$, and let  $B_r(q_1) , \dotsc , B_r(q_N)$ be a maximal disjoint collection. Then, by maximality $\bigcup_{i \le N} B_{3r}(q_i) = M $. Moreover, comparing volumes
\begin{equation*}
  c^{-1} r^n N  \le \sum_{i\le N} \mathcal{H}^n (B_{r}(q_i)) \le \vol(M) \le \sum_{i\le N} \mathcal{H}^n (B_{3r}(q_i)) \le c3^n r^n N \,,
\end{equation*}
which by the choice of $r$ implies
\begin{equation*}
    C_1 \, \p := \frac{\vol(M)}{c3^nC_0^n}\, \p  \le N \le  \frac{c\vol(M)}{C_0^n} \, \p =: C_2 \, \p \,.
\end{equation*}
\end{proof}

The proof of the lower bound, which is based on the one in \cite{Guth}, depends on the next two lemmas.

\begin{lemma}
\label{LowBound1}
Let $ \{ B_r(q_i) \}_{i=1}^\p$ be a family of $\p$ balls on $M$. Then, given any $A\in\mathcal{F}_\p$, there exists some $u \in A$ such that 
\begin{equation*}
    \int_{B_r(q_i)}u=0 \s \textnormal{for all} \s i=1, \dotsc , \p .
\end{equation*}
\end{lemma}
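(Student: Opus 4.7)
The plan is to apply the Borsuk--Ulam theorem. By definition of $\mathcal{F}_\p$, there exists a continuous odd map $f:\Sp^\p\to H^{s/2}(M)\setminus\{0\}$ with $A=f(\Sp^\p)$. We then define the continuous map
\begin{equation*}
F:\Sp^\p\longrightarrow \R^\p,\qquad F(x):=\Big( \smallint_{B_r(q_1)} f(x)\,dV,\ \dots,\ \smallint_{B_r(q_\p)} f(x)\,dV\Big).
\end{equation*}

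Continuity of $F$ is immediate since each of the $\p$ linear functionals $u\mapsto \int_{B_r(q_i)} u\,dV$ is bounded on $L^2(M)$ (as the balls $B_r(q_i)$ have finite volume) and hence on $H^{s/2}(M)\hookrightarrow L^2(M)$. Oddness is equally clear: $f(-x)=-f(x)$ implies $F(-x)=-F(x)$ for every $x\in\Sp^\p$.

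By the Borsuk--Ulam theorem, any continuous map $\Sp^\p\to\R^\p$ must identify some pair of antipodal points. Applied to $F$, this gives $x_0\in\Sp^\p$ with $F(x_0)=F(-x_0)$, which together with oddness yields $F(x_0)=-F(x_0)$, hence $F(x_0)=0$. Taking $u:=f(x_0)\in A$ gives the element with $\int_{B_r(q_i)} u\,dV=0$ for every $i=1,\dots,\p$, which is the conclusion of the lemma.

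There is no real obstacle here: the lemma is essentially a linear test-function version of Borsuk--Ulam, and the only thing to check is that the test functionals $u\mapsto \int_{B_r(q_i)}u\,dV$ are continuous on $H^{s/2}(M)$, which is immediate from $H^{s/2}(M)\subset L^2(M)$ and Cauchy--Schwarz.
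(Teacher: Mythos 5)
Your proof is correct and follows the same route as the paper: defining the odd continuous map $F = g\circ f$ where $g(u)=\big(\int_{B_r(q_1)}u,\dots,\int_{B_r(q_\p)}u\big)$ and applying Borsuk--Ulam to find a zero. You spell out the continuity and the two equivalent formulations of Borsuk--Ulam a bit more explicitly, but the argument is essentially identical.
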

\begin{proof}
This is simply a consequence of the Borsuk-Ulam theorem. Indeed, let $A$ be the continuous odd image of $f:\Sp^\p \to H^{s/2}(M)\setminus \{0\}$. Define the (odd) function
\begin{equation*}
    g:H^{s/2}(M)\to\mathbb{R}^\p \s \textnormal{by} \s g(u):=\left(\int_{B_r(q_1)}u, \dotsc,\int_{B_r(q_\p)}u \right) .
\end{equation*}
Then $g \circ f : \Sp^\p \to \mathbb{R}^\p$ is an odd, continuous map, and by the Borsuk-Ulam theorem there exists $ a\in \Sp^\p$ with $g\circ f(a)=0$. Hence, taking $u=f(a)$ finishes the proof. 
\end{proof}

For the next lemma, it will be convenient to define the local part of the Sobolev energy as follows. Recall
\begin{align*}
    \mathcal E_{\Omega}^{\rm Sob} (v) &= \frac 1 4 \iint_{M\times M\setminus \Omega^c\times \Omega^c} (v(p)-v(q))^2 K_s(p,q)dV_p dV_q \\ &= \frac{1}{4} \iint_{\Omega \times \Omega} (v(p)-v(q))^2 K_s(p,q)dV_p dV_q + \frac{1}{2} \iint_{\Omega \times \Omega^c} (v(p)-v(q))^2 K_s(p,q)dV_p dV_q \,.
\end{align*}
Then, we set
\begin{equation*}
    \mathcal{E}^{\rm Sob}|_{\Omega}(v) :=  \frac{1}{4} \iint_{\Omega \times \Omega} (v(p)-v(q))^2 K_s(p,q)dV_p dV_q \,.
\end{equation*}
Moreover, we also denote by 
\begin{equation*}
    \textnormal{Per}_s|_{\Omega}(E):= \mathcal{E}^{\rm Sob}|_{\Omega}(\chi_E-\chi_{E^c})
\end{equation*}
the local part of the nonlocal perimeter. 

\vsp
We stress that $\textnormal{Per}_s|_{\Omega}(E) \le \textnormal{Per}_s(E,\Omega)$, with equality iff $E\cap \Omega = \emptyset$ or $E^c\cap \Omega = \emptyset$.

\vsp
With this notation, let us recall the fractional isoperimetric inequality (which is implied, actually equivalent, to the fractional Sobolev inequality, for example, in \cite{FR}). 

\vsp 

Let $s \in (0,1)$. Then, there exists $c_{iso}=c_{iso}(n,s)>0$ such that for every $E \subset \B_1$ 
\begin{equation}\label{fracisoo}
  \textnormal{Per}_s|_{\B_1}(E) \ge c_{iso} \min \bigg\{ \frac{|E |}{|\B_1|} , \frac{|\B_1 \setminus E|}{|\B_1|} \bigg\}^{\frac{n-s}{n}} , 
\end{equation}
and actually if $s\in (s_0,1)$ then $c_{iso} \ge \frac{c'}{1-s}$ for some $c'=c'(n,s_0)$ \,.

\begin{lemma}\label{LowBound2}
Let $s_0 \in (0,1)$. Then, there exists a constant $c_0=c_0(n,s_0)>0$ such that the following holds:\\
Let $q \in M$, and assume that the flatness hypothesis ${\rm FA}_1(M,g,q,2R_0,\varphi)$ holds. Given $s\in (s_0,1)$, there exists $\ep_0=\ep_0(n,s)>0$ such that: for every $ r\le R_0 $ and $\ep \le \ep_0 r$, given any $u \in H^{s/2 }(M)$ with $|u|\le 1$ and $\int_{B_r(q)} u =0 $ there holds
\begin{equation*}
        \mathcal{E}^{\ep}|_{B_r(q)}(u) \ge \frac{c_0}{1-s}r^{n-s} \,.
\end{equation*}
\end{lemma}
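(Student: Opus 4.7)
The plan is to split the argument into two cases according to whether the potential or the Sobolev part of the Allen-Cahn energy dominates, after first transferring to the Euclidean setting. Using the chart $\varphi^{-1}$ provided by the flatness hypothesis ${\rm FA}_1(M,g,q,2R_0,\varphi)$ and Lemma \ref{loccomparability}, the kernel $K_s$, the metric ball $B_r(q)$, and the volume form are all comparable to their flat counterparts on $\B_r\subset \R^n$ (up to constants depending only on $n$ and $s_0$), so it suffices to prove the analogous Euclidean estimate. \emph{Case A.} If $\mathcal{E}^{\rm Pot}|_{B_r(q)}(u)\ge \tfrac{c_0}{1-s}r^{n-s}$, the bound holds trivially. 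Otherwise, from $\ep\le \ep_0 r$ one obtains $\int_{B_r(q)}W(u)\,dV<\tfrac{c_0\ep_0^s}{1-s}r^n$, and since $W(u)\ge W(\tfrac12)=9/64$ on $\{|u|\le \tfrac12\}$, choosing $\ep_0=\ep_0(n,s)$ small enough (of the form $(c_*(1-s))^{1/s}$) forces $|\{|u|\le \tfrac12\}\cap B_r(q)|\le |B_r(q)|/100$.

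\emph{Case B.} Write $E^\pm:=\{\pm u>\tfrac12\}\cap B_r(q)$, so that $|E^+|+|E^-|\ge 99|B_r(q)|/100$. The zero-mean condition, combined with $|u|\le 1$ and a short arithmetic argument (estimating $\int_{E^+}u$ from below by $|E^+|/2$ and $\int_{E^-}u$ from above by $-|E^-|/2$, with the negligible $1\%$ contribution from the transition set), forces both $|E^\pm|\ge c_1|B_r(q)|$ for some absolute $c_1>0$; in particular $\|u\|_{L^2(B_r(q))}^2\ge \tfrac14|E^+\cup E^-|\ge c\,r^n$. I then apply the fractional Poincar\'e inequality for mean-zero functions on $\B_r\subset\R^n$, with its sharp scaling in $1-s$:
\[
\|u\|^2_{L^2(\B_r)}\,\le\,C(n,s_0)\,(1-s)\,r^s\iint_{\B_r\times\B_r}\frac{(u(x)-u(y))^2}{|x-y|^{n+s}}\,dx\,dy.
\]
Combined with the lower bound $K_s\gtrsim \alpha_{n,s}/|x-y|^{n+s}$ from Lemma \ref{loccomparability} and the boundedness of $\alpha_{n,s}$ for $s\in(s_0,1)$, this yields $\mathcal{E}^{\rm Sob}|_{B_r(q)}(u)\ge \tfrac{c_0}{1-s}r^{n-s}$, closing the argument together with Case A.

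\emph{Main obstacle.} The only non-routine ingredient is the fractional Poincar\'e inequality with the precise $(1-s)$-factor on the right; it is exactly what transmits the $\tfrac{1}{1-s}$ to the final bound. This inequality can be derived from the fractional isoperimetric inequality \eqref{fracisoo} via a Cavalieri/coarea-type argument on level sets of $u$, or alternatively by combining the Bourgain--Brezis--Mironescu asymptotic $(1-s)[u]^2_{H^{s/2}(\B_1)}\to K_n\|\nabla u\|^2_{L^2(\B_1)}$ as $s\uparrow 1$ with the classical Neumann--Poincar\'e inequality on $\B_1$, plus a short compactness argument to produce a constant uniform in $s\in[s_0,1)$. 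Without the correct $s$-dependence one would obtain only the strictly weaker bound $\mathcal{E}^\ep|_{B_r(q)}(u)\ge c_0\,r^{n-s}$, which is insufficient for the min-max lower bound in Theorem \ref{mmbounds}.
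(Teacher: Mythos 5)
Your proof is correct but takes a genuinely different route than the paper's. The paper argues by contradiction and compactness: assuming the inequality fails along a sequence $v_k$ with $\ep_k\downarrow 0$, the vanishing potential forces (via Fatou) an a.e.\ limit $v_\infty=\chi_E-\chi_{E^c}$ with balanced mass, and the contradiction is supplied by the lower semicontinuity of the Sobolev seminorm together with the fractional isoperimetric inequality \eqref{fracisoo}, whose constant $c_{iso}\gtrsim 1/(1-s)$ carries all the $s$-dependence. You instead argue directly via a case split: either $\mathcal{E}^{\rm Pot}$ already gives the bound, or else $|\{|u|\le \tfrac12\}|$ is a small fraction of $|B_r(q)|$, in which case the zero-mean constraint pins down the measures of both phases $\{\pm u>\tfrac12\}$, giving a definite $L^2$ lower bound that a fractional Poincar\'e inequality with the sharp $(1-s)$ constant converts into a Sobolev lower bound. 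This trades the paper's compactness for a measure-theoretic case analysis and yields an explicit $\ep_0(n,s)\sim (c(1-s))^{1/s}$. The Poincar\'e ingredient you isolate is genuine and is available with the stated $s$-dependence by either of the routes you indicate (coarea plus \eqref{fracisoo}, or BBM asymptotics plus compactness). Ultimately both approaches hinge on the same $1/(1-s)$ divergence — the paper draws it from $c_{iso}$, you draw it from the Poincar\'e constant — and these are essentially interchangeable.

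One detail should be spelled out. After passing to coordinates via Lemma \ref{loccomparability}, the hypothesis reads $\int_{\B_r} v\sqrt{|g|}\,dx=0$, so $v$ is \emph{not} Euclidean mean-zero, and one cannot literally invoke "Poincar\'e for mean-zero functions on $\B_r$" for $v$. The fix is standard: the fractional Poincar\'e inequality controls $\min_{c\in\R}\|v-c\|_{L^2(\B_r,\,dx)}^2$, and the Riemannian mean minimizes the Riemannian variance, so by the metric comparability $\|u\|_{L^2(dV)}^2=\min_c\|u-c\|_{L^2(dV)}^2\lesssim \min_c\|v-c\|_{L^2(dx)}^2\lesssim (1-s)r^s\,\mathcal{E}^{\rm Sob}|_{B_r(q)}(u)$, and the rest of your Case~B goes through unchanged. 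This is precisely the point the paper handles by recording $\dashint_{\B_r}v\in[-1+1/C,1-1/C]$ before its compactness step.
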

\begin{proof}

Let $\psi: B_{R_0(q)} \to \R^n$ be normal coordinates at $q$, and let $g_{ij}$ denote the metric in these coordinates. It is not difficult to show that ${\rm FA}_1(M,g,q,2R_0,\varphi)$  implies
\[
\frac{1}{C} {\rm id} \le (g_{ij})(x) \le C{\rm id} \quad \mbox{in }\B_{R_0}
\]
for some $C>1$ depending only on $n$.

Let $v= u \circ \psi^{-1}$.
By assumption we have $\int_{\B_r} v \sqrt{|g|} \, dx =0$, and $|v|\le 1$. This implies 
\begin{equation*}
-1 + 1/C\le \fint_{\B_r} v  \le +1-1/C \, .
\end{equation*}
By Lemma \ref{loccomparability}
\begin{equation}\label{eqmeuc}
 \mathcal{E}^{\rm Sob}|_{B_{r}(q)}(u) + \ep^{-s} \int_{B_r(q)} W(u) \, dV \ge \frac {1}{C} \bigg(\frac{1}{4}\iint_{\B_r\times \B_r} \frac{|v(x)-v(y)|^2}{|x-y|^{n+s}}\, dxdy+ r^{-s} (\ep/r)^{-s} \int_{\B_r} W(v) \,  dx \bigg) .
\end{equation}
We claim that the right-hand side of \eqref{eqmeuc} is bounded below by $\frac{c_{iso}}{4C^2}r^{n-s}$, provided $\ep/r \le \ep_0(n,s) $, where $c_{iso}$ is the constant in the fractional isoperimetric inequality \eqref{fracisoo}.

\vsp
To prove this lower bound, by scaling invariance we may assume without loss of generality $r=1$. We argue by contradiction. Suppose there exist sequences $ \ep_k\downarrow 0 $ and $v_k \in H^{s/2}(\B_1)$ with $|v_k|\le 1$, $\fint_{\B_1} v_k \in [-1+1/C,\, 1-1/C]$, but such that
\begin{align}\label{nnnnnnn}
    \frac{1}{4}\iint_{\B_1\times \B_1} \frac{|v_k(x)-v_k(y)|^2}{|x-y|^{n+s}}\, dxdy  +  \ep_k^{-s} \int_{\B_1} W(v_k) \,  dx < \frac{c_{iso}}{4C} \,.
\end{align}
In particular, $\|v_k \|^2_{H^{s/2}(\B_1)}$ is uniformly bounded in $k$ and $\int_{\B_1} W(v_k) \to 0$. Hence, up to subsequences (that we do not relabel), we have that $v_k \wconv{} v_\infty$ weakly in $H^{s/2}(\B_1)$ and $v_k \to v$ almost everywhere. Then, by Fatou's Lemma, $\int_{\B_1} W(v_\infty) = 0$ and therefore $|v_\infty|=1$ almost everywhere. Moreover
$$\fint_{\B_1} v_\infty \in [-1+1/C,\, 1-1/C]\, .$$
This implies that  $v_\infty=\chi_E-\chi_{E^c}$ for some set $E\subset\B_1$ with $\frac{1}{|\B_1|}\min \{|E|,|\B_1 \setminus E|\}\geq \frac{1}{2C}$. By the lower-semicontinuity of the Sobolev seminorm, the fractional isoperimetric inequality \eqref{fracisoo} and \eqref{nnnnnnn} we get
\begin{equation*}
    \frac{c_{iso}}{2C} \le \textnormal{Per}_s|_{\B_1}(E) \le \frac{1}{4}\liminf_{k\to\infty} \iint_{\B_1\times \B_1} \frac{|v_k(x)-v_k(y)|^2}{|x-y|^{n+s}}\, dxdy\leq \frac{c_{iso}}{4C}\, ,
\end{equation*}
a contradiction.

\vsp
Going back to \eqref{eqmeuc}, we have therefore proved that there exists $\ep_0=\ep_0(n,s)>0$ such that, for every $\ep\leq\ep_0r$ and every $u$ as in the statement 
$$\mathcal{E}^{\ep}|_{B_{r}(q)}(u) \geq\frac{c_{iso}}{4C^2}r^{n-s}\, .$$
Since the constant $c_{iso}=c_{iso}(n,s)$ for the fractional isoperimetric inequality satisfies that $c_{iso} \ge \frac{c'(n,s_0)}{1-s}$ for some $c'=c'(n,s_0)>0$, we conclude the desired result.
\end{proof}

We can now give the proof of the lower bound.
\begin{proof}[Proof of Theorem \ref{mmbounds} (part 1).] The lower bound in \eqref{critlevbound} follows, in a simple manner, from the lemmas above: Given $\p \in \N $, by Lemma \ref{coveringballlem} find $N \ge C_1 \p $ disjoint balls $\{B_r(q_i) \}_{i=1}^N$ in $M$ with radius $r=C_0 \, \p^{-1/n}$. Moreover, up to taking $C_1$ bigger and $C_0$ smaller, we can also assume that $r<R_0$, where $R_0$ is such that ${\rm FA}_1(M,g,q,2R_0,\varphi)$ holds for all $q\in M$ (see Remark \ref{fbsvdg}). Given any $A\in\mathcal{F}_\p$, by Lemma \ref{LowBound1} there exists $u \in A$ such that
\begin{equation*}
    \int_{B_r(q_i)}u=0 \s \textnormal{for all} \s i=1, \dotsc , N .
\end{equation*}
Then, by Lemma \ref{LowBound2}, for $\ep \le \ep_0 r $ we have
\begin{equation*}
    \mathcal{E}^{\ep}|_{B_{r}(q_i)}(u) \geq \frac{c_0}{1-s}r^{n-s} \s \textnormal{for all} \s i=1, \dotsc N , 
\end{equation*}
which by the choice of $r$ implies
\begin{equation*}
    \mathcal{E}_M^{\ep}(u) \geq \sum_{i=1}^N \mathcal{E}^{\ep}|_{B_r(q_i)}(u) \geq N \frac{c_0}{1-s}r^{n-s} \ge \frac{C^{-1}}{1-s}\p^{s/n}\, ,
\end{equation*}
for some constant $C$ that depends only on $M$ and $s_0$. Since we have found such a $u\in A$ given any $A\in\mathcal{F}_p$, we deduce the desired lower bound.
\end{proof}

\subsubsection{Upper bound}\label{subsecupperb}

While the lower bound required finding a function with high energy inside every admissible set $A\in\mathcal{F}_\p$, the upper bound requires finding a single admissible set $A$ such that all its elements have "low" energy. We will explicitly construct a continuous odd map $ f :\Sp^\p \to H^{s/2}(M)\setminus \{0\}$ so that all the elements in $A=f(\Sp^\p)$ have controlled energy. These functions will be of the form $\chi_{E}-\chi_{E^c}$, for some set  $E\subset M$, and our task is then to bound the fractional perimeter of these sets.

\vsp 
The proof of the upper bound in \eqref{critlevbound} goes as follows. 

\begin{proof}[Proof of Theorem \ref{mmbounds} (part 2).] By Lemma \ref{coveringballlem} (with $\p$ replaced by $k$), for every $k \ge 1$ there exist $N\le C_2 k$ disjoint balls $B_r(q_1), \dotsc , B_r(q_N)$ of radius $r=C_0k^{-1/n}$ and with $\bigcup_{i\le N} B_{3r}(q_i) =M$. Moreover, recalling the proof of Lemma \ref{coveringballlem}, the bounds \eqref{biLipbound} hold for such an $r$.    

\vsp
Now, given $(a_0, a_1, \dotsc, a_\p) \in \Sp^\p$ consider the polynomial $P_a(z)=a_0+a_1 z +\dotsc +a_p z^\p$ and name $\{ \alpha_1, \dotsc, \alpha_\ell \} $ its real roots in increasing order, so that $\ell \le \p$. In $\R^n$ consider the set
\begin{equation*}
    E:= \bigcup_{i=1}^N \B_{3r}(3r(2i+1), 0, \dotsc, 0) \,;
\end{equation*}
these are $N$ aligned balls, tangent to each other, with centers on the $x_1$-axis. Now we split the set $E$ into two disjoint subsets $E=E^+ \cup E^-$. Given the real roots $\{ \alpha_1, \dotsc, \alpha_\ell \} $, assign the set $E \cap \{x_1\le \alpha_1\}$ to $E^+$ if $P_a(z)\ge 0$ for all $z \in (-\infty, \alpha_1 ]$, and else assign it to $E^-$ if $P_a(z)\le 0$ for all $z \in (-\infty, \alpha_1 ]$. Then, analogously assign $E\cap \{\alpha_1 < x_1 \le \alpha_2\}$ to $E^+$ if $P_a(z)\ge 0$ for all $z\in(\alpha_1,\alpha_2]$, and assign it to $E^-$ if $P_a(z) < 0$ for $z\in(\alpha_1,\alpha_2]$. Repeat this procedure until $E$ is divided into the two subsets $E^+$ and $E^-$. Note that there are at most $\p$ transitions\footnote{This corresponds to the case when $P_a(z)$ has $\p$ distinct real roots in the interval $(0, 6rN)$.} between $E^+$ and $E^-$, and thus $E^+$ has perimeter at most $|\partial E^+ | \le N|\partial \ov{B}_{3r}| + (6r)^{n-1}\p$. Now, basically we want to do the same on the balls $\{B_{3r}(q_i)\}_{i=1}^N$ on $M$ identifying $B_{3r}(q_i)$ with $\B_{3r}(3r(2i+1), 0, \dotsc, 0)$ via the exponential map, that we consider as a map 
$$ \exp_{q_i} : \B_{3r}(3r(2i+1), 0, \dotsc, 0) \to B_{3r}(q_i) \,.$$ 
In order to do so, we first have to make the covering $\{B_{3r}(q_i)\}_{i=1}^N$ of $M$ disjoint. For this purpose, for all $1\le i\le N$ we consider
\begin{equation*}
    Q_i := B_{3r}(q_i) \setminus \bigcup_{j \le i-1} B_{3r}(q_j) \,,
\end{equation*}
and note that $\{Q_i\}_{i=1}^N$ is a disjoint partition of $M$ with $Q_i \subset B_{3r}(q_i)$. Let $u_a:M \to \{+1,-1\}$ be defined as  
\begin{align*}
    u_a(q) &:= 
  \begin{cases}
  +1 &  \textnormal{if} \s q \in Q_i \,\, \textnormal{and} \,\, (\exp_{q_i})^{-1}(q) \in E^+ , \\
    -1 &  \textnormal{if} \s q \in Q_i\,\, \textnormal{and} \,\, (\exp_{q_i})^{-1}(q) \in E^- . 
  \end{cases}
\end{align*}
 Set $\Sigma_a := \partial \{u_a=1 \}$. By interpolation (use for example Proposition \ref{interprop} applied to a covering of $M$ with small enough balls) there exists $C=C(M,s_0)$ so that
\begin{align}\label{ub1}
    \mathcal{E}^{\rm Sob}_{M}(u_a) &= \frac{1}{4} [u_a]_{H^{s/2}(M)}^2 = \textnormal{Per}_s( \{u_a=1\} ) \nonumber \\ & \le \frac{C}{1-s} \mathcal{H}^{n-1} (\Sigma_a)^s \, \, \vol (\{u_a=1 \}) \le \frac{C}{1-s} \mathcal{H}^{n-1}(\Sigma_a)^s .
\end{align}
Moreover, by \eqref{biLipbound} we have 
\begin{align*}
      \mathcal{H}^{n-1} (\Sigma_a) &\le C(N|\partial\B_{3r}|+r^{n-1} \p) \\ & \le C(k r ^{n-1}+\p r^{n-1}) = C(k^{1/n}+\p k^{-1+1/n}) \,,
\end{align*}
for all $k\ge 1$, with $C$ that depends only on $M$. Clearly, the optimal value of the right-hand side is attained for $k=\p$ and gives $\mathcal{H}^{n-1} (\Sigma_a) \le C \p^{1/n}$. This, together with \eqref{ub1} and the fact that $\mathcal{E}^{\rm Pot}_M(u_a)=0$, implies 
\begin{equation}\label{upperper}
   \mathcal{E}^\ep_M(u_a) =  \mathcal{E}^{\rm Sob}_M(u_a) \le \frac{C}{1-s} \p^{s/n} \,,
\end{equation}
for every $a \in \Sp^{\p}$ and $\ep >0$, with $C$ depending only on $M$ and $s_0$.

From the definition of $E^{\pm}$, it is clear that $u_{-a}(x)=-u_a(x)$. Hence, to conclude that $ a \mapsto u_a $ is an element of $\mathcal{F}_\p$ we are left to show that this map is continuous for the strong $H^{s/2}(M)$ topology. This easily follows by interpolation. Indeed, for every $a,b \in \Sp^\p$, by Proposition \ref{interprop} (again applied to a finite covering of $M$ with small balls) we have 
\begin{align*}
    [u_a-u_b]^2_{H^{s/2}(M)} & \le \frac{C}{1-s} [u_a-u_b]_{BV(M)}^s \|u_a-u_b\|_{L^1(M)}^{1-s} \\ & \le \frac{C}{1-s} \Big(\mathcal{H}^{n-1} (\Sigma_a) + \mathcal{H}^{n-1} (\Sigma_b) \Big)^s \|u_a-u_b\|_{L^1(M)}^{1-s} \le \frac{C \p^{s/n}}{1-s}  \|u_a-u_b\|_{L^1(M)}^{1-s} , 
\end{align*}
where we have used that $\sup_{a\in \Sp^\p} \mathcal{H}^{n-1} (\Sigma_a) \le C \p^{1/n}$. From here, continuity follows since $ a \mapsto u_a $ is continuous in $L^1(M)$ by construction. 

Together with the bound \eqref{upperper}, which holds for all $a\in\Sp^\p$, this concludes the proof.

\end{proof}

One should compare the simplicity of this construction, with the one in \cite{GG1} for the classical Allen-Cahn equation and the classical perimeter, In that case, to define the $\p$-sweepouts with the correct interface one has to consider functions that are compositions of:

\begin{enumerate}[label=\textit{(\roman*)}]

\item The solution to the $1$-dimensional Allen-Cahn equation with parameter $\ep >0$.

\item  A ``modified" distance function, measuring the distance to hyperplanes $\{x_1\leq c\}$ (which play a similar role to those in our construction) but also accounting for the complex parts of the roots of the polynomials in order to smooth out the cancellations of the leaves.
\end{enumerate}
Using this composition of functions is necessary in the classical Allen-Cahn case in order to regularize the construction, as characteristic functions of sets of finite perimeter do not belong to $H^1(M)$, while they do belong to $H^{s/2}(M)$ for $s<1$.
\begin{remark}
Notice that, for every fixed $\p$, both the proofs of the lower bound and the upper bound in \eqref{critlevbound} rely on the fact that there is the same "critical scale" $r=C\p^{-1/n}$ in the construction.  This is given by dividing $M$ in $ N \sim  \p$ disjoint patches of volume of order $ \sim 1/\p $. The lower bound shows that, given any $A \in \mathcal{F}_\p$, there is one element of $A$ that has zero average - see Lemma \ref{LowBound1} - in each of these patches, and in particular this element has energy uniformly bounded from below of order $\p^{s/n}$. On the other hand, the upper bound shows that this configuration, i.e. making the transitions take place in $ N \sim \p$ disjoint patches that cover $M$, is also (of the order of) the best that one can achieve.
\end{remark}
As a consequence of the results above, we deduce our complete existence result.
\begin{proof}[Proof of Theorem \ref{Existence1}]
The statement follows from combining the existence result of Theorem \ref{existencethm} and the bounds on the min-max values given by Theorem \ref{mmbounds}.
\end{proof}

\subsection{Estimates for Allen-Cahn solutions with bounded Morse index}\label{EstimatesSection}

By Theorem \ref{extmfd}, notice that $u$ is a solution to the Allen-Cahn equation 
\begin{equation*}
    (-\Delta)^{s/2}u + \ep^{-s}W'(u)=0 \, \s \textnormal{on} \,\, M \,,
\end{equation*}
if and only if the Caffarelli-Silvestre extension $U$, i.e. the unique solution to \eqref{caffextMfd}, solves  
\begin{equation}
    \begin{cases}
        \widetilde{\textnormal{div}}(z^{1-s} \nabla U) = 0 \, , &  \mbox{in } \ov{M} \label{extprob}\\ \lim_{z \to 0^+} z^{1-s} U_z(\,\cdot\,,z) = \beta_{s}^{-1} \ep^{-s}W'(u(\cdot)) \,, & \mbox{on } M .
    \end{cases}
\end{equation}

Recall the definition of finite Morse index solutions of Definition \ref{MorseDef}.
\subsubsection{Finite Morse index and almost stability}

For critical points of local functionals, it is well known that having Morse index bounded by $m$ implies stability in one out of every $m+1$ disjoint open sets. In the nonlocal case this is not the case anymore, but in one of the sets we will be able to obtain a weaker, quantitative lower bound on the second derivative which we will refer to as \textit{almost stability}.
\begin{definition}[\textbf{Almost stability}]\label{almoststab}
Let $\Omega\subset M$ open and $u:M\to(-1,1)$ be a critical point of $\mathcal E_{\Omega}$. Given $\Lambda\in\R$, we say that $u$ is \textit{$\Lambda$-almost stable in $\Omega$} if 
\begin{equation*}
    \mathcal{E}_{\Omega}''(u)[\xi,\xi] \ge -\Lambda\|\xi \|^2_{L^1(\Omega)} \,\quad \forall \, \xi \in C_c^1(\Omega) \,.
\end{equation*}
\end{definition}

\begin{lemma}[Finite Morse index and almost stability]\label{asineq}
Let $u:M\to (-1,1)$ be a solution of the Allen-Cahn equation $(-\Delta)^{s/2}u + \ep^{-s}W'(u)=0$ on $M$ with Morse index at most $m$ (see Definition \ref{MorseDef}, with $\Omega=M$). Consider a collection $\mathcal{U}_1,\dotsc, \mathcal{U}_{m+1} \subset M$ of $(m+1)$ disjoint open sets at positive distance from each other, and set
\begin{equation*}
\Lambda := m\max_{i\neq j} \sup_{\mathcal{U}_i\times \mathcal{U}_j} K_s(p,q) <+\infty .    
\end{equation*}
Then, there is (at least) one set $\mathcal{U}_k$ among $\mathcal{U}_1, \dotsc , \mathcal{U}_{m+1}$ such that $u$ is \textit{$\Lambda$-almost stable in $\mathcal{U}_k$}, that is 
\begin{equation*}
    \mathcal{E}''(u)[\xi,\xi] \ge -\Lambda\|\xi \|^2_{L^1(\mathcal{U}_k)} \,\quad \forall \, \xi \in C_c^{1}(\mathcal{U}_k) \,.
\end{equation*}
\end{lemma}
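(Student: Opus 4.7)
The plan is a contradiction argument. Suppose $u$ fails to be $\Lambda$-almost stable on every $\mathcal U_k$, $k=1,\ldots,m+1$. Then for each $k$ there exists a test function $\xi_k\in C^1_c(\mathcal U_k)$ with
\begin{equation*}
\mathcal E''(u)[\xi_k,\xi_k] < -\Lambda\,\|\xi_k\|_{L^1(\mathcal U_k)}^2.
\end{equation*}
Since both sides are $2$-homogeneous in $\xi_k$, I rescale so that $\|\xi_k\|_{L^1(\mathcal U_k)}=1$ for every $k$, so $\mathcal E''(u)[\xi_k,\xi_k]<-\Lambda$. The goal is to show that $\mathcal E''(u)$ is negative definite on the $(m{+}1)$-dimensional subspace $V:=\mathrm{span}(\xi_1,\ldots,\xi_{m+1})\subset C_c^1(M)$, contradicting the Morse index bound.

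For $\xi=\sum_k a_k\xi_k\in V$, I evaluate the formula \eqref{2ndvar} using that the supports $\{{\rm spt}(\xi_k)\}$ are pairwise disjoint. First, $\xi_i\xi_j\equiv 0$ for $i\neq j$ gives a purely diagonal potential part,
\begin{equation*}
\varepsilon^{-s}\int_M W''(u)\xi^2\,dV = \sum_k a_k^2\,\varepsilon^{-s}\int W''(u)\xi_k^2\,dV.
\end{equation*}
For the nonlocal part, disjoint supports give $\xi_i(p)\xi_j(p)=\xi_i(q)\xi_j(q)=0$ for $i\neq j$, and using the symmetry of $K_s$ one computes that the $(i,j)$ cross integrand satisfies
\begin{equation*}
\iint \bigl(\xi_i(p)-\xi_i(q)\bigr)\bigl(\xi_j(p)-\xi_j(q)\bigr)K_s(p,q)\,dV_p dV_q = -2\iint \xi_i(p)\xi_j(q)K_s(p,q)\,dV_p dV_q.
\end{equation*}
Expanding $\tfrac{1}{4}\iint|\xi(p)-\xi(q)|^2 K_s$ and gathering diagonal and off-diagonal terms yields the decomposition
\begin{equation*}
\mathcal E''(u)[\xi,\xi]=\sum_k a_k^2\,\mathcal E''(u)[\xi_k,\xi_k] \,-\, \sum_{i<j} a_i a_j\,B_{ij},\qquad B_{ij}:=\iint \xi_i(p)\xi_j(q)K_s(p,q)\,dV_p dV_q.
\end{equation*}

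The final step is a simple estimate of the residual coupling $B_{ij}$. Since ${\rm spt}(\xi_i)\subset\mathcal U_i$, ${\rm spt}(\xi_j)\subset\mathcal U_j$ and $\|\xi_k\|_{L^1}=1$,
\begin{equation*}
|B_{ij}|\le \sup_{\mathcal U_i\times\mathcal U_j}K_s \le \frac{\Lambda}{m} \qquad\text{for every } i\neq j.
\end{equation*}
Combining this with $\sum_{i<j}|a_i a_j|\le \tfrac{1}{2}\bigl[(\sum_k|a_k|)^2-\sum_k a_k^2\bigr]\le \tfrac{m}{2}\sum_k a_k^2$ (Cauchy--Schwarz on $m{+}1$ terms) and with $\mathcal E''(u)[\xi_k,\xi_k]<-\Lambda$, I obtain
\begin{equation*}
\mathcal E''(u)[\xi,\xi] < -\Lambda\sum_k a_k^2 \,+\, \frac{\Lambda}{2}\sum_k a_k^2 \,=\, -\frac{\Lambda}{2}\sum_k a_k^2,
\end{equation*}
which is strictly negative for every $\vec a\neq 0$. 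Hence $V$ is an $(m{+}1)$-dimensional subspace on which $\mathcal E''(u)$ is negative definite, contradicting $m(u)\le m$.

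Only one step is really substantive: the explicit cancellation identity for the cross term of the nonlocal Sobolev quadratic form, which uses in an essential way the pairwise disjointness of the supports and the symmetry of $K_s$. The rest is Cauchy--Schwarz bookkeeping, and the specific choice $\Lambda=m\max_{i\neq j}\sup_{\mathcal U_i\times\mathcal U_j} K_s$ is exactly what is needed (in fact slightly more than needed) to absorb the cross term.
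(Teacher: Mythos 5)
Your proof is correct and follows essentially the same route as the paper: expand $\mathcal E''(u)[\sum_k a_k\xi_k]$ using the pairwise disjointness of the supports (so the potential part is diagonal and the nonlocal cross-terms reduce to $-\iint\xi_i(p)\xi_j(q)K_s\,dV_pdV_q$), bound those coupling terms by $\sup_{\mathcal U_i\times\mathcal U_j}K_s$, and conclude that if almost-stability failed in every $\mathcal U_k$ then $\mathcal E''(u)$ would be negative definite on the $(m{+}1)$-dimensional span $\mathrm{span}(\xi_1,\dots,\xi_{m+1})$, contradicting $m(u)\le m$. The only (cosmetic) difference is that you treat general $m$ directly via the discrete Cauchy--Schwarz bound $\sum_{i<j}|a_ia_j|\le\tfrac{m}{2}\sum_k a_k^2$, whereas the paper writes out $m=1$ with Young's inequality and asserts the general case is identical.
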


\begin{proof}
We prove the Lemma just for $m=1$ for the sake of clarity, the proof goes on exactly the same for general $m$. Let $\xi_1\in C_c^{\infty}(\mathcal{U}_1)$ and $\xi_2\in C_c^{\infty}(\mathcal{U}_2)$. Testing the second variation of the Allen-Cahn energy, explicitly in (\ref{2ndvar}), with linear combinations of $\xi_1$ and $\xi_2$ gives
\begin{align*}
    \mathcal{E}''(u)[a\xi_1+b\xi_2] =\,
    & a^2\mathcal{E}''(u)[\xi_1,\xi_1] +b^2\mathcal{E}''(u)[\xi_{2},\xi_{2}] -2ab\iint_{\mathcal{U}_1 \times \mathcal{U}_2} \xi_1(p)\xi_2(q) K_s(p,q) \,.
\end{align*}
Since $K_s(p,q) \le \Lambda$ for all $(p,q)\in \mathcal{U}_1\times \mathcal{U}_2$, the interaction term can be bounded as
\begin{equation*}
\begin{split}
   - 2ab\iint_{\mathcal{U}_1 \times \mathcal{U}_2}\xi_1(p)\xi_2(q) K_s(p,q) & \leq 2ab \Lambda \|\xi_1\|_{L^1(\mathcal{U}_1)}\|\xi_2\|_{L^1(\mathcal{U}_2)} \\
     &\leq a^2 \Lambda\|\xi_1\|_{L^1(\mathcal{U}_1)}^2+b^2 \Lambda\|\xi_2\|_{L^1(\mathcal{U}_2)}^2 . 
\end{split}
\end{equation*}
Hence
\begin{equation}\label{fmias}
    \mathcal{E}''(u)[a\xi_1+b\xi_2]  \le a^2 \Big( \underbrace{\mathcal{E}''(u)[\xi_1,\xi_1]+\Lambda\|\xi_1\|_{L^1(\mathcal{U}_1)}^2}_{=:F_1(\xi_1)}\Big)+b^2\Big( \underbrace{\mathcal{E}''(u)[\xi_2,\xi_2]+\Lambda\|\xi_2\|_{L^1(\mathcal{U}_2)}^2}_{=:F_2(\xi_2)}\Big).
\end{equation}
We want to show that either $F_1(\xi_1) \ge 0$ for all $\xi_1 \in C_c^{\infty}(\mathcal{U}_1) $ or $F_2(\xi_2) \ge 0$ for all $\xi_2 \in C_c^{\infty}(\mathcal{U}_2) $. Suppose neither of these two held, then there would exist $\xi_1 \in  C_c^{\infty}(\mathcal{U}_1)$, $\xi_2 \in C_c^{\infty}(\mathcal{U}_2)$ such that $F_1(\xi_1)<0$ and $F_2(\xi_2)<0$. This would imply, however, that \eqref{fmias} is negative for all $(a,b)\neq (0,0)$, thus contradicting that the Morse index of $u$ is at most one.
\end{proof}

\subsubsection{Control of \texorpdfstring{$\mathcal{E}^{\rm Pot}$}{} by \texorpdfstring{$\mathcal{E}^{ \rm Sob}$}{}}

For the next results, recall the notation for balls in the extended manifold \eqref{notationballs}.  We begin with an auxiliary lemma.
\begin{lemma}\label{cutoffestlem}
    Let $s\in (0,1)$, $(M,g)$ satisfies the flatness assumption ${\rm FA}_2(M,g,2,p,\varphi) $, and let $\eta \in C_c^{2}(\B_{3/4}(0))$ be a cutoff function with $\eta =1$ in $\B_{1/2}(0)$. Define $\eta_0 =\varphi \circ \eta $ and let $\widetilde{\eta}$ solve
    \begin{equation*}
    \begin{cases} \widetilde{\textnormal{div}}(z^{1-s} \widetilde \nabla \ov{\eta}) =0  & \textnormal{in} \s \ov{B}_1^+(p,0) \,, \\[2pt] \ov{\eta}=0 & \textnormal{on} \s \partial^+\ov{B}_1(p,0) \,,   \\[2pt] \ov{\eta} = \eta_0  &   \textnormal{on} \s B_1(p) \times \{0\} \, . \end{cases}
\end{equation*} 
Then, for all $q\in B_{3/4}(p)$ there holds that
\begin{equation}\label{rrrrrrr}
       \beta_s \left| \big(-z^{1-s}\partial_z \ov{\eta}\big)( q, 0^+) \right| \le C  \s \s \textit{and} \s\s   \beta_s  \int_{\widetilde B_{1}^+(p,0)} z^{1-s} | \widetilde \nabla \ov{\eta}|^2 dVdz \le  C \,,
\end{equation}
for some dimensional $C=C(n)>0$, where $\beta_s$ is the constant defined in \eqref{betadef}.
\end{lemma}
\begin{proof}
    Let $U_0 \in \widetilde{H}^1(M \times (0,\infty))$---see Definition \ref{weighSobspacemfd}---be the unique Caffarelli-Silvestre extension of $\eta_0$ (considered on $M$ extended by zero outside $B_1(p)$), in the sense of Theorem \ref{extmfd}. Since $U_0$ and $\widetilde \eta$ are two different solutions of $\widetilde{ {\rm div}}(z^{1-s} \widetilde \nabla U) = 0$ with the same trace on $B_1(p)$, by Lemma 3.12 in \cite{FracSobPaper} (rescaled) we have that $ \beta_s \,z^{1-s}| \widetilde \nabla (U_0-\widetilde \eta)| \le C $ in $\widetilde B^+_{3/4}(p)$ for some dimensional $C$. Hence in $B_{3/4}(p)$ there holds
    \begin{equation*}
        \beta_s \left| \big(-z^{1-s}\partial_z \ov{\eta}\big)( \, \cdot \,, 0^+) \right| \le \beta_s \left| \big(-z^{1-s}\partial_z U_0 \big)( \, \cdot \,, 0^+) \right| + C = \big|(-\Delta)^{s/2} \eta_0 \big| + C \,,
    \end{equation*}
    where we have used Theorem \ref{extmfd} in the last equality. Now, a dimensional bound for the $\big|(-\Delta)^{s/2} \eta_0 \big|$ follows, for $q \in B_{3/4}(p)$, by writing 
 \begin{equation*}
     (-\Delta)^{s/2} \eta_0 (q) = \int_{B_{1}(p)} (\eta_0(q)-\eta_0(r))K_s(q,r) \, dV_r + \int_{M\setminus B_{1}(p)} (\eta_0(q)-\eta_0(r))K_s(q,r) \, dV_r 
 \end{equation*}
 and using Lemma \ref{loccomparability} and \eqref{remaining3} of Proposition \ref{prop:kern1} respectively to bound these two integrals. This concludes the proof of the first estimate in \eqref{rrrrrrr}. 

 \vsp
 The second estimate follows from the first one just integrating by parts and using ${\rm FA}_2(M,g,1,p,\varphi) $.
\end{proof}

\begin{lemma}
\label{sob-contr-pot}
Let $R \in (0,1]$, and assume $M$ satisfies flatness assumption ${\rm FA}_2(M,g,2R,p,\varphi)$. Let $\ep>0$, $s\in (0,1)$ and $u : M \to (-1,1)$ be a solution of the Allen-Cahn equation
\begin{equation}\label{pdech4}
    (-\Delta)^{s/2} u +  \ep^{-s}W'(u) =0 
\end{equation}
in $B_R(p)$, that is $\Lambda$-almost stable in $B_R(p)$, in the sense of Definition \ref{almoststab}, for $\Lambda \le  \Lambda_0 / R^{n+s}$. Then, there exists a positive constant $C=C(n, \Lambda_0)$ such that, for all $a\in (0,1]$
\begin{equation*}
R^{s-n} \mathcal E^{\rm Pot}_{B_{R/2}(p)}(u) \le C \left(\frac{\beta_s  }{a} R^{s-n}\int_{\widetilde B_{R}^+(p,0)} z^{1-s}|\widetilde \nabla U|^2 + a + (\ep/R)^s R^{s-n}\mathcal E^{\rm Pot}_{B_{R}(p)}(u) \right) , 
\end{equation*}
where $\beta_s$ is the constant defined in \eqref{betadef}. In particular, since $|u| \le 1$ for $a=1$ we have 
\begin{equation*}
  R^{s-n} \mathcal E^{\rm Pot}_{B_{R/2}(p)}(u) \le C \left( \beta_s  R^{s-n} \int_{\widetilde B_{R}^+(p,0)} z^{1-s}|\widetilde \nabla U|^2  +1 \right).
\end{equation*}
\end{lemma}
\begin{proof}
Notice that, as $\ep>0$ is arbitrary, the statement is scaling invariant. Indeed, if $u : B_R(p) \to (-1,1) $ is a $\Lambda$-almost stable solution of \eqref{pdech4} with parameter $\ep$ and $\Lambda$, then, on the rescaled manifold $(M, R^{-2}g)$, $u$ is an $(R^{n+s}\Lambda)$-almost stable solution of \eqref{pdech4} with parameter $\ep$ replaced by $\ep/R$, and $\Lambda$ replaced by $R^{n+s}\Lambda \le \Lambda_0$ since $R\le 1$ by hypothesis. Hence, we can assume $R=1$.

\vsp
In what follows, $C$ denotes a general constant that depends only on $n$. To compare the potential energies on $M$ with the Sobolev energies in the extended manifold, we need a well-chosen cutoff function $\ov{\eta}$ defined on the extended manifold $\widetilde{M}$. To this aim, let $\widetilde{\eta}$ solve
\begin{equation*}
    \begin{cases} \widetilde{\dive}(z^{1-s} \widetilde \nabla \ov{\eta}) =0  & \textnormal{in} \s \ov{B}_1^+(p,0) \,, \\[2pt] \ov{\eta}=0 & \textnormal{on} \s \partial^+\ov{B}_1(p,0) \,,   \\[2pt] \ov{\eta} = \eta_0  &   \textnormal{on} \s B_1(p) \times \{0\} \, , \end{cases}
\end{equation*}
where $\eta_0 = \varphi \circ \eta \in C_c^2(B_1(p))$ and $\eta$ is a fixed cutoff function with $ \eta = 1 $ in $\B_{2/3}(0)$ and $\eta =0$ outside $\B_{3/4}(0)$. First, since ${\rm FA}_2(M,g,2,p,\varphi)$ holds, by the estimates of Lemma \ref{cutoffestlem} we have for all $q\in B_1(p)$
\begin{equation*} 
      \beta_s \left| \big(-z^{1-s}\partial_z \ov{\eta}\big)( q, 0^+) \right| \le C \s\s \textnormal{and} \s\s  \beta_s  \int_{\widetilde B_{1}^+(p,0)} z^{1-s} | \widetilde \nabla \ov{\eta}|^2 \le C \,,
\end{equation*}
 for some dimensional constant $C=C(n)$. Note also that $|\ov{\eta}| \le 1$. Then
\begin{align*}
    \mathcal{E}^{\rm Pot}_{B_{1/2}(p)} &= \frac{\ep^{-s}}{4} \int_{B_{1/2}(p)} (1-u^2)^2 \le \frac{\ep^{-s}}{4} \int_{B_{1}(p)} (1-u^2)^2 \eta_0^2 \\&= \frac{1}{4} \left( \ep^{-s}\int_{B_{1}(p)} u^2(1-u^2)^2 \eta_0^2 + \ep^{-s} \int_{B_{1}(p)} (1-u^2)^3 \eta_0^2  \right) =: \frac{1}{4}(I+ J) \,.
\end{align*}
On the one hand by \eqref{pdech4} and the divergence theorem
\begin{align*}
I &\leq  \int_{B_{1}(p)} \ep^{-s} u^2(1-u^2) \eta_0^2 = \int_{B_{1}(p)} u\eta_0^2(-\Delta)^{{s/2}}u  \\ & = \beta_{s} \int_{B_{1}(p)} u \eta_0^2\,\big(-z^{1-s} U_z \big)( \,\cdot\,, 0^+) \\ &= \beta_{s} \int_{\widetilde B_{1}^+(p,0)} \widetilde{\dive}(z^{1-s} \widetilde \nabla  U\, U\ov{\eta}^2)
\\ &= \beta_{s} \int_{\widetilde B_{1}^+(p,0)} z^{1-s} (|\widetilde \nabla  U|^2\ov{\eta}^2 +2\ov{\eta} U \widetilde \nabla  \ov{\eta} \cdot \widetilde \nabla  U) \\
&\le \beta_{s} \int_{\widetilde B_{1}^+(p,0)} z^{1-s}\bigg( \big( 1+\tfrac{1}{a}\big)|\widetilde \nabla  U|^2\ov{\eta}^2 + aU^2|\widetilde \nabla  \ov{\eta}|^2\bigg) 
\\ &\le \frac{C\beta_s}{a} \int_{\widetilde B_{1}^+(p,0)} z^{1-s} |\widetilde \nabla  U|^2 + Ca \,,
\end{align*}
where $\beta_s$ is the constant defined in \eqref{betadef} (see also Proposition \ref{afdsgsdgh}) and we have used \eqref{extprob} and Young's inequality in the second to last line. 

\vsp
On the other hand, since $W''(u)=3u^2-1$ and $u$ is $\Lambda$-almost stable (recall Definition \ref{almoststab} and the form of the second variation \eqref{2ndvar} for the Allen-Cahn equation), testing almost stability with $\xi=(1-u^2)\eta_0$ we have
{
\allowdisplaybreaks
\begin{align*}
J &=  \int_{B_{1}(p)} \ep^{-s} (1-3u^2+2u^2)\big((1-u^2) \eta_0\big)^2 \\ & = -\ep^{-s}\int_{B_1(p)} W''(u) \big((1-u^2) \eta_0\big)^2 + 2 \ep^{-s} \int_{B_1(p)} u^2(1-u^2)^2 \eta_0^2  \\ & \le \mathcal{E}^{\rm Sob}_{B_1(p)}((1-u^2)\eta_0) + \Lambda\bigg(\int_{B_{1}(p)} \big|(1-u^2) \eta_0\big|\bigg)^2 + 2  I
\\ &\le \underbrace{\frac{\beta_s}{4} \int_{\widetilde B_{1}^+(p,0)} z^{1-s}\big|\widetilde \nabla  \big((1-U^2)\ov{\eta}\big)\big|^2 }_{=:J_1} 
+ C \underbrace{\int_{B_{1}(p)} (1-u^2)^2 \eta_0^2}_{=:J_2}
+2 I .
\end{align*}
}
Here we have bounded $\mathcal{E}^{\rm Sob}((1-u^2)\eta_0)$ by $J_1$ since the former is the infimum of $\frac{\beta_s}{4} \int z^{1-s}|\widetilde \nabla  V|^2$ over all the extensions $V$ of $(1-u^2)\eta$, and $(1-U^2) \widetilde \eta $ is one such extension. Now, since $\ov{\eta} \equiv 0$ on $\partial^+ \ov{B}_{1}(p,0)$ and $\widetilde{\dive}(z^{1-s}\widetilde \nabla  \ov{\eta})=0$ we have 
{ \allowdisplaybreaks
\begin{align*}
J_1 &= \frac{\beta_s}{4} \int_{\widetilde B_{1}^+(p,0)} z^{1-s}\bigg( 4U^2 |\widetilde \nabla  U|^2\ov{\eta}^2 + \tfrac 1 2 \widetilde \nabla  \big((1-U^2)^2\big) \cdot\widetilde \nabla (\ov{\eta}^2) + (1-U^2)^2 |\widetilde \nabla  \ov{\eta}|^2\bigg) \\ &= \frac{\beta_s}{4} \bigg(
4\int_{\widetilde B_{1}^+(p,0)} z^{1-s} U^2 |\widetilde \nabla  U|^2\ov{\eta}^2 - \int_{B_1(p)} z^{1-s}(1-u^2)^2 \eta_0 \partial_z \ov{\eta} +  \\ & \quad \qquad - \int_{\widetilde B_{1}^+(p,0)} (1-U^2)^2 \widetilde{\dive} (z^{1-s}\ov{\eta} \widetilde \nabla \ov{\eta}) + \int_{\widetilde B_{1}^+(p,0)} (1-U^2)^2|\widetilde \nabla  \ov{\eta}|^2 \bigg) \\ &=
C \beta_s \int_{\widetilde B_{1}^+(p,0)} z^{1-s} U^2 |\widetilde \nabla  U|^2\ov{\eta}^2 + \int_{B_{1}(p)}  (1-u^2)^2 \eta_0\,\big(-\beta_s z^{1-s}\partial_z \ov{\eta}\big)(\,\cdot\,, 0^+)\\
& \le C\beta_s \int_{\widetilde B_{1}^+(p,0)} z^{1-s} |\widetilde \nabla  U|^2 + C\ep^s\mathcal E^{\rm Pot}_{B_{1}(p)} \,, 
\end{align*} }
and also
\[
J_2 = \int_{B_{1}(p)} (1-u^2)^2 \eta_0^2 \le C\ep^s\mathcal E^{\rm Pot}_{B_{1}(p)}.
\]
Thus 
\begin{align*}
    J=J_1+ C J_2+2I \le C \left( \frac{\beta_s }{a} \int_{\widetilde B_{1}^+(p,0)} z^{1-s} |\widetilde \nabla  U|^2 + a + \ep^s\mathcal E^{\rm Pot}_{B_{1}(p)} \right) ,
\end{align*}
for some $C=C(n, \Lambda_0)$. Putting together the estimates above gives the result.
\end{proof}

\subsubsection{BV estimate -- Proof of Theorem \ref{BVEst}}\label{BVSection}

The aim of this subsection is to prove  Theorem \ref{BVEst}.

\begin{lemma}\label{ext2derbound}
Assume $M$ satisfies the local flatness assumption ${\rm FA}_2(M,g,1,p,\varphi)$. Let $X$ be a vector field with ${\rm spt}\, X \subset B_{3/4}(p)$, and denote by $\xi : \B_1(0) \to \R^n$ the pull back $\xi := \varphi^* X $ of $X$ via the chart $\varphi^{-1}$. Let $\ep>0$, $s\in (0,1)$ and $u : M \to (-1,1)$ be a solution of the Allen-Cahn equation $(-\Delta)^{s/2}u+\ep^{-s}W'(u)=0$ in $B_1(p)$. Then 
\begin{equation*}
    \mathcal{E}''(u)[\nabla_X u,\nabla_X u] \le C \bigg( \beta_s \int_{\widetilde B^+_{3/4}(p,0)} |\widetilde \nabla U|^2 z^{1-s}dV dz +\int_{B_{3/4}(p)} \ep^{-s}W(u)dV\bigg)\,,
\end{equation*}
where $C= C\big(n, \|\xi\|_{C^2(\B_1)}\big)>0$, $U$ is the extension of $u$, and $\beta_s$ is the constant defined in \eqref{betadef}. 
\end{lemma}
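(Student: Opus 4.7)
The plan is to rewrite the left-hand side as the second derivative of an inner variation on $M$, and then dominate it by a companion inner variation on the extended manifold $\widetilde M$, thereby avoiding any need to control second derivatives of $U$ (which would be badly behaved as $z\downarrow 0$ due to the degenerate weight).

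\textbf{Step 1 (Inner variation on $M$).} Let $\psi^t$ be the flow of $X$ and set $v_t := u\circ\psi^{-t}$. Then $\partial_t v_t|_{t=0}=-\nabla_X u$ and $\partial_t^2 v_t|_{t=0}=X^2u$ are both supported in $\mathrm{spt}(X)\subset B_{3/4}(p)\subset B_1(p)$, so since $u$ is critical for $\mathcal E$ in $B_1(p)$, a Taylor expansion yields
\[
\frac{d^2}{dt^2}\bigg|_{t=0}\mathcal E(v_t)=2\,\mathcal E^{\rm Sob}(\nabla_X u)+\ep^{-s}\!\int W''(u)|\nabla_X u|^2\ \ge\ \mathcal E''(u)[\nabla_X u,\nabla_X u],
\]
the last inequality because $\mathcal E^{\rm Sob}(\nabla_X u)\ge 0$. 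This reduces the lemma to bounding the above second derivative.

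\textbf{Step 2 (Lift to the extension).} Pick $\widetilde X(q,z):=\eta(z)X(q)$ with a smooth cutoff $\eta$ satisfying $\eta(0)=1$ and supported in a sufficiently small interval $[0,\delta]$, chosen so that $\mathrm{spt}(\widetilde X)\Subset \widetilde B^+_{3/4}(p,0)$. Since $\widetilde X^z\equiv 0$, the field $\widetilde X$ is tangent to $M\times\{0\}$ and its flow $\widetilde\psi^t$ preserves the $z$-coordinate, so $V_t:=U\circ\widetilde\psi^{-t}$ is an extension of $v_t$ with $V_0=U$. Introduce the ``upper'' functional
\[
\mathcal F(V):=\frac{\beta_s}{4}\int_{\widetilde M} z^{1-s}|\widetilde\nabla V|^2\,dV\,dz+\ep^{-s}\!\int_M W(V|_{z=0})\,dV,
\]
which, by Proposition \ref{afdsgsdgh}, satisfies $\mathcal F(V)\ge \mathcal E(V|_{z=0})$ with equality at the minimizing extension. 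Thus $f(t):=\mathcal F(V_t)-\mathcal E(v_t)$ is nonnegative and vanishes at $t=0$, so $f''(0)\ge 0$ and
\[
\frac{d^2}{dt^2}\bigg|_{t=0}\mathcal F(V_t)\ \ge\ \frac{d^2}{dt^2}\bigg|_{t=0}\mathcal E(v_t)\ \ge\ \mathcal E''(u)[\nabla_X u,\nabla_X u].
\]

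\textbf{Step 3 (Change of variables).} It remains to estimate $\frac{d^2}{dt^2}|_{t=0}\mathcal F(V_t)$ by a direct change-of-variables calculation in the spirit of Lemma \ref{enboundslemma2}. Setting $\widetilde q=\widetilde\psi^{-t}(\widetilde p)$ and using that the weight $z^{1-s}$ is untouched by $\widetilde\psi^t$,
\[
\mathcal F(V_t)=\frac{\beta_s}{4}\int_{\widetilde M}z^{1-s}\bigl|(D\widetilde\psi^t)^{-T}\widetilde\nabla U(\widetilde q)\bigr|^2\,\widetilde J_t(\widetilde q)\,dV_{\widetilde q}\,dz+\ep^{-s}\!\int_M W(u(q))J_t(q)\,dV_q.
\]
Differentiating twice in $t$ only acts on the smooth matrix factor $(D\widetilde\psi^t)^{-T}$ and the Jacobians $\widetilde J_t,J_t$; their second derivatives at $t=0$ are bounded in $L^\infty$ by $C(n,\|\xi\|_{C^2(\B_1)})$ (using ${\rm FA}_2(M,g,1,p,\varphi)$ to control the geometry in the chart), and the integrands are supported inside $\mathrm{spt}(\widetilde X)\subset \widetilde B^+_{3/4}(p,0)$ and $\mathrm{spt}(X)\subset B_{3/4}(p)$ respectively. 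Collecting these bounds and combining with Step 2 yields the claim.

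\textbf{Main obstacle.} The delicate part is ensuring the change of variables never produces integrals of $|\widetilde\nabla^2 U|^2$: this is precisely why we work with $V_t=U\circ\widetilde\psi^{-t}$ and the variational inequality $\mathcal F(V_t)\ge \mathcal E(v_t)$, rather than try to compete directly with $\widetilde X(U)$ as a (non-optimal) extension of $\nabla_X u$. The other point requiring attention is the geometric fit: the cutoff $\eta$ must be chosen with $\delta$ small (in terms of $\mathrm{dist}(\mathrm{spt}(X),\partial B_{3/4}(p))$) so that $\mathrm{spt}(\widetilde X)$ is a compact subset of $\widetilde B^+_{3/4}(p,0)$ --- this is where the strict containment $\mathrm{spt}(X)\subset B_{3/4}(p)$ is used, and the resulting constant $C$ depends only on $n$ and $\|\xi\|_{C^2(\B_1)}$ as claimed.
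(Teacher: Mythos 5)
Your proposal takes essentially the same route as the paper: reduce $\mathcal{E}''(u)[\nabla_X u,\nabla_X u]$ to the second $t$-derivative of the Allen--Cahn energy along the flow (using stationarity of $u$ in $B_1(p)$), compare the minimal extension of $v_t$ with the competitor $V_t=U\circ\widetilde\psi^{-t}$ obtained by flowing $U$ in the extended manifold with a tangential lift $\widetilde X$ of $X$, and compute the second $t$-derivative after a change of variables so that derivatives land on the flow Jacobian rather than on $U$. Your Step~2 packages the comparison as ``$f\ge 0$, $f(0)=0 \Rightarrow f''(0)\ge 0$'' for $f(t)=\mathcal F(V_t)-\mathcal E(v_t)$; the paper uses the equivalent second-difference quotient $\lim_{t\to 0}t^{-2}(g(t)+g(-t)-2g(0))$ for $g=\frac{\beta_s}{4}I-\mathcal E^{\rm Sob}(u_{\cdot})$. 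These are the same argument. Your Step~1, which identifies $\tfrac{d^2}{dt^2}\big|_{t=0}\mathcal{E}(v_t)=2\mathcal{E}^{\rm Sob}(\nabla_X u)+\ep^{-s}\int W''(u)|\nabla_X u|^2$ and then drops the nonnegative $\mathcal{E}^{\rm Sob}(\nabla_X u)$ to dominate the quadratic form in \eqref{2ndvar}, is in fact slightly more careful than \eqref{hghg}, which asserts equality (the factor on the Sobolev term does not match \eqref{2ndvar}, though the discrepancy is harmless here since only a one-sided bound with an unspecified constant is claimed).

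One point deserves more care. Your extension $\widetilde X(q,z)=\eta(z)X(q)$ needs $\eta$ supported in $[0,\delta]$ with $\delta$ comparable to $\mathrm{dist}(\mathrm{spt}\,X,\partial B_{3/4}(p))$ in order that $\mathrm{spt}\,\widetilde X\Subset \widetilde B^+_{3/4}(p,0)$. But then $\|\eta\|_{C^2}\sim\delta^{-2}$, and the $z$-derivatives of the flow (which enter the Jacobian-type factor $F^{kl}$ through $\partial_z\phi_{-t}^k$) carry this $\delta^{-2}$ into the final constant. Your closing remark that ``the resulting constant $C$ depends only on $n$ and $\|\xi\|_{C^2(\B_1)}$ as claimed'' does not follow from what precedes it: $C$ also depends on $\delta$. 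The paper has the same imprecision (it even describes $\phi$ as the flow of $(\xi,0)$, i.e.\ no $z$-cutoff, which contradicts the requirement that $\mathrm{spt}\,\widetilde X\Subset\widetilde B^+_{3/4}(p,0)$). In the actual application (Lemma~\ref{bdfsggwe}), $X$ is supported in a fixed compact subset of $B_{3/4}(p)$, so $\delta$ is universal and the estimate holds with a dimensional constant. Your proof is thus correct for the uses the paper makes of this lemma, but if you intend the estimate literally for all $X$ with $\mathrm{spt}\,X\subset B_{3/4}(p)$, the dependence of $C$ on $\mathrm{dist}(\mathrm{spt}\,X,\partial B_{3/4}(p))$ should be made explicit.
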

\begin{proof}
Denote by $\psi_X^t$ the flow of $X$ at time $t$ and $u_t := u \circ \psi_X^{-t}$, then by vanishing of the first variation
\begin{align}\label{hghg}
    \mathcal{E}''(u)[\nabla_X u,\nabla_X u] = \frac{d^2}{dt^2}\bigg |_{t=0} \mathcal{E}(u_t) = \lim_{t\to 0} \frac{\mathcal{E}(u_t)+\mathcal{E}(u_{-t}) -2\mathcal{E}(u)}{t^2}\,.
\end{align}

Let $\widetilde X$ be any smooth  extension of $X$ in  $ \widetilde B^+_{3/4}(p,0) $ with support compactly contained in $\widetilde B^+_{3/4}(p,0)$, in other words $\widetilde X$ vanishes in a neighborhood of $\partial^+ \widetilde B^+_{3/4}(p,0)$, and such that $\widetilde{X}^{\, n+1} \equiv 0$. This last condition implies, if $\widetilde{\psi}^{\,t}$ is the flow of $\widetilde{X}$, that $\widetilde{\psi}^{\,t}$ leaves invariant the $z$-component in the extended manifold $\ov{M}$.

\vsp
To bound the above we split the energy $\mathcal{E}$ in its Sobolev part and potential part. For the Sobolev part, by the minimality of the extension in the energy space 
\begin{equation*}
    \mathcal E^{\rm Sob} (u_t) = \frac{ \beta_s}{4} \int_{\ov{M}} z^{1-s}|\widetilde \nabla \ov{u_t} |^2 \, dVdz \le \frac{\beta_s}{4} \int_{\ov{M}} z^{1-s}|\widetilde \nabla U_t|^2 \, dVdz \,,
\end{equation*}
where $\beta_s$ is the constant in Theorem \ref{extmfd}. Here $\ov{u_t}$ is the extension of $u_t$ and $U_t := U \circ \widetilde{\psi}^{-t}$. We emphasize that, with our current notation, $U_t$ is not the extension of $u_t$, but instead the translation of $U$ via $\widetilde{\psi}^{\,t}$ in the extended manifold $\ov{M}$. Denote
\begin{equation*}
    I (t): = \int_{\widetilde M} |\widetilde \nabla U_t|^2 z^{1-s} dV  dz \,.
\end{equation*}
We then have 
\begin{align*}
    \lim_{t\to 0}  \frac{ \mathcal{E}^{\rm Sob}(u_t)+\mathcal{E}^{\rm Sob}(u_{-t}) -2\mathcal{E}^{\rm Sob}(u)}{t^2} &  \le \frac{\beta_s}{4} \left( \lim_{t\to 0} \frac{I(t)+I(-t) -2I(0)}{t^2} \right) \\ &=  \frac{\beta_s}{4} \, \frac{d^2}{dt^2}\bigg |_{t=0} I(t) \\ &= 
\frac{\beta_s}{4} \, \frac{d^2}{dt^2}\bigg |_{t=0} \int_{\widetilde B^+} |\widetilde \nabla U_t|^2 z^{1-s}dV  dz \,,
\end{align*}
Now, since $M$ satisfies local flatness assumption ${\rm FA}_2(M,g,1,p,\varphi)$, setting $\ov{\varphi}(x,z)=(\varphi(x),z)$, $\widetilde{\Omega} :=\ov{\varphi}^{-1}( \widetilde B^+_{3/4}(p,0))$, $\phi_t := \ov{\varphi}^{-1} \circ \ov{\psi}^{\, t} \circ \ov{\varphi}$, and $\widetilde U : = U \circ \ov{\varphi}$, we have
\begin{align*}
I(t)&=\int_{\widetilde B^+_{3/4}(p,0)} |\widetilde \nabla (U\circ \widetilde{\psi}^{-t})|^2 z^{1-s}dV  dz = \int_{\widetilde{\Omega}} \ov{g}^{ij} \partial_i(\widetilde U\circ \phi_{-t}) \partial_j (\widetilde U\circ \phi_{-t}) z^{1-s}\sqrt{|g|}\, dxdz \\
&=\int_{\widetilde{\Omega}}\ov{g}^{ij} ((\partial_k\widetilde U)\circ \phi_{-t}) \partial_i \phi_{-t}^k  \,((\partial_l\widetilde U)\circ \phi_{-t}  )\partial_j \phi_{-t}^l  z^{1-s} \sqrt{|{g}|} \,dxdz
\nonumber \\
& = \int_{\phi_{-t}(\widetilde{\Omega})}
 (\partial_k {U}) (\partial_l {U}) \bigg(
 \ov{g}^{ij}\partial_i     \phi_{-t}^k  \partial_j \phi_{-t}^l \sqrt{|{g}|} \bigg) \circ \phi_t  (\phi_t^{n+1})^{1-s} \,d\phi^1_t\wedge\dots\wedge  d\phi^{n+1}_t
\nonumber \\
&=\int_{\widetilde{\Omega}}
 (\partial_k U) (\partial_l U)z^{1-s} \bigg(
 \ov{g}^{ij}\,(\partial_i     \phi_{-t}^k  \partial_j \phi_{-t}^l \sqrt{|{g}|} \bigg) \circ \phi_t \,|D \phi_t| \,dxdz \nonumber
\end{align*}
Hence 
\[
I''(0) =\int_{\widetilde{\Omega}}
 (\partial_k U) (\partial_l U) z^{1-s} \frac{\partial ^2F^{kl}}{\partial t^2} (0,x,z) \,dxdz \,,
\]
where
\[
F^{kl}(t,\, \cdot\,,\, \cdot\,):=  \bigg(
 \ov{g}^{ij}\partial_i     \phi_{-t}^k  \partial_j \phi_{-t}^l \sqrt{|\ov{g}|} \bigg) \circ \phi_t \, |D \phi_t| .
\]
Since $\phi: [0, \infty ) \times\widetilde{\Omega}\to \R^{n+1}$ is the flow of $(\xi,0)$, together with the flatness assumption, a direct computation shows that 
\begin{equation*}
    \left \| \frac{\partial ^2F^{kl}}{\partial t^2} (0,\,\cdot\, ) \right \|_{L^\infty(\widetilde{\Omega})} \le C \big(n,\|\xi\|_{C^2(\B_1)} \big) .
\end{equation*}
Thus
\begin{align*}
    I''(0)  \le C \int_{\widetilde{\Omega}}
 \sum_{k=1}^{n+1} |\partial_k U|^2 z^{1-s} \,dxdz \le C\int_{\widetilde B^+_{3/4}(p,0)}  |\widetilde \nabla U|^2 z^{1-s} \, dV  dz \,,
\end{align*}
where $C= C\big(n, \|\xi\|_{C^2(\B_1)}\big)$ and we have used the flatness assumption to compare the Euclidean metric on $\R^{n+1}$ to the one on $\widetilde{M}$.

\vsp
Similarly, for the potential part of the energy 
\begin{align*}
    \lim_{t\to 0}  \frac{ \mathcal{E}^{\rm Pot}(u_t)+\mathcal{E}^{\rm Pot}(u_{-t}) -2\mathcal{E}^{\rm Pot}(u)}{t^2} & = \frac{d^2}{dt^2} \mathcal{E}^{\rm Pot}(u_t) \,.
\end{align*}
Arguing as in the last part of the proof of Lemma \ref{enboundslemma} (the one regarding the potential part of the energy, with $\ell=2$) we have
\begin{equation*}
    \frac{d^2}{dt^2} \mathcal{E}^{\rm Pot}(u_t) \le C \mathcal{E}^{\rm Pot}_{B_{3/4}(p)}(u) = C \int_{B_{3/4}(p)} \ep^{-s}W(u) \, dV \,,
\end{equation*}
where $C>0$ depends only on $\|\xi\|_{C^2(\B_1)}$ since by direct computation for the Jacobian
\begin{equation*}
    \left \| \frac{\partial^2}{\partial t^2} \left( \sqrt{|g|}|D\psi_t| \right)(0,\, \cdot \,) \right \|_{L^\infty} \le C \big( \|\xi\|_{C^2(\B_1)} \big) \,.
\end{equation*}
This, together with \eqref{hghg} and the bound for $I''(0)$, concludes the proof.
\end{proof}

\begin{proposition}[Almost stability $\Rightarrow$ BV]\label{BVest}
Let $p \in M$, $s_0 \in (0,1)$, $s \in (s_0,1)$ and assume that $M$ satisfies the flatness assumption ${\rm FA}_2(M,g,1,p,\varphi)$. Let $u :M\to (-1,1)$ be a solution of $(-\Delta)^{{s/2}} u + \ep^{-s}W'(u)=0$ which is $\Lambda$-almost stable in $ B_1(p)\subset M$ (see Definition \ref{almoststab}). 

\vspace{1pt}
Then, there exist constants $\Lambda_0$ and $ C$, depending only on $n$ and $s_0$, such that: if $\Lambda \le \Lambda_0$ then
\begin{equation*}
\int_{B_{1/4}(p)}|\nabla u| \, dV \leq \frac{C}{1-s} \,.
\end{equation*}
\end{proposition}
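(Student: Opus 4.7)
The plan is to combine the upper bound on the second inner variation from Lemma \ref{ext2derbound}, applied to coordinate-derivative vector fields, with the lower bound furnished by the $\Lambda$-almost stability; from this two-sided bound I extract an estimate on $\int|\nabla u|$ and close the argument by absorbing the $\Lambda$-correction once $\Lambda_0$ is chosen small enough. First I would work in the chart $\varphi\colon\B_1(0)\to B_1(p)$ provided by ${\rm FA}_2$, where the metric is $\tfrac{1}{100}$-close to Euclidean in $C^2$ so that coordinate directional derivatives behave essentially as in $\R^n$. Fix $\eta\in C_c^\infty(\B_{3/4})$ with $\eta\equiv 1$ on $\B_{1/2}$ and $\|\eta\|_{C^2}\le C(n)$, and for $i=1,\ldots,n$ define $X_i:=\varphi_*(\eta\,e_i)\in\mathfrak X_c(B_{3/4}(p))$; in coordinates $\nabla_{X_i}u=\eta\,\partial_i u$ modulo metric-perturbation terms of order $|u|\le 1$.

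Since $u$ is a critical point of $\mathcal E$, the inner second variation $\tfrac{d^2}{dt^2}\big|_{t=0}\mathcal E(u\circ\psi_{X_i}^{-t})$ coincides with $\mathcal E''(u)[\nabla_{X_i}u,\nabla_{X_i}u]$, and Lemma~\ref{ext2derbound} yields
\begin{equation*}
\mathcal E''(u)[\nabla_{X_i}u,\nabla_{X_i}u]\le C(n)\,\mathcal I,\qquad \mathcal I:=\beta_s\!\!\int_{\widetilde B_{3/4}^+(p,0)}\!\!z^{1-s}|\widetilde\nabla U|^2 +\int_{B_{3/4}(p)}\ep^{-s}W(u).
\end{equation*}
Iterating Lemma~\ref{sob-contr-pot} on a chain of shrinking balls $B_{r_k}(p)$ with $r_k\searrow 1/2$ closes the self-referential dependence on $\mathcal E^{\rm Pot}$ and gives $\mathcal I\le C/(1-s)$; the factor $(1-s)^{-1}$ reflects the $s$-dependence of the interpolation constants underlying the Sobolev--potential control.

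The lower bound from almost stability at $\xi=\nabla_{X_i}u$ reads
\begin{equation*}
\mathcal E''(u)[\nabla_{X_i}u,\nabla_{X_i}u]\ge -\Lambda\|\nabla_{X_i}u\|_{L^1(B_1(p))}^2\ge -\Lambda\Big(\int_{B_{3/4}(p)}|\nabla u|\Big)^2,
\end{equation*}
and the hard part will be to complement this with a \emph{coercive} reverse estimate of the form
\begin{equation*}
c_n\!\int_{B_{1/2}(p)}\!|\nabla u|\,dV \;\le\; \sum_{i=1}^n\mathcal E''(u)[\nabla_{X_i}u,\nabla_{X_i}u] \;+\; C\,\mathcal I,
\end{equation*}
which I would obtain by expanding \eqref{2ndvar} with $\xi=\eta\,\partial_i u$, differentiating the Euler--Lagrange equation in the direction $e_i$ via the Caffarelli--Silvestre extension (the metric-derivative source terms thus produced are controlled by ${\rm FA}_2$), and handling the commutator between the nonlocal Dirichlet form and the cutoff $\eta$ with the kernel estimates of Proposition~\ref{prop:kern1}. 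Coercivity cannot be read off the potential term alone since $W''(u)$ changes sign: one splits the integration region into the wells $\{|u|\ge 1-c\}$, where $W''(u)\ge c>0$ gives direct coercivity on $\eta^2\xi^2$, and the transition set, where the nonlocal Dirichlet form yields the coercive control on $|\nabla u|$ with the correct $s$-scaling.

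Combining the previous three displays gives
\begin{equation*}
c_n\!\int_{B_{1/2}(p)}|\nabla u|\,dV \;\le\; \frac{C}{1-s}\;+\;C\Lambda\Big(\int_{B_{3/4}(p)}|\nabla u|\,dV\Big)^2,
\end{equation*}
and a standard iteration on nested balls $B_{r_k}(p)$ with $r_k\searrow 1/4$, paired with the choice of $\Lambda_0$ small enough (depending only on $n$ and $s_0$) so that $C\Lambda_0\big(\int_{B_{1/2}(p)}|\nabla u|\big)\le c_n/2$, absorbs the quadratic term on the right into the left and yields $\int_{B_{1/4}(p)}|\nabla u|\le C/(1-s)$. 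The main obstacle is precisely the coercivity extraction above: because $W''(u)$ is not sign-definite, the coercivity must come jointly from the nonlocal Dirichlet form and the wells, and the commutator errors generated by $\eta$ must be arranged carefully, via the kernel estimates of Proposition~\ref{prop:kern1}, so that the resulting estimate carries the correct weighting $1/(1-s)$.
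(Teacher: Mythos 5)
Your proposal shares some ingredients with the paper's argument---in particular the upper bound $\mathcal E''(u)[\nabla_X u,\nabla_X u]\le C\mathcal I$ from Lemma~\ref{ext2derbound}, and Lemma~\ref{sob-contr-pot} to control the potential term---but the central mechanism is different, and the difference introduces two genuine gaps.

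The first gap is the ``coercive reverse estimate''
\[
c_n\int_{B_{1/2}(p)}|\nabla u|\,dV\;\le\;\sum_{i=1}^n\mathcal E''(u)[\nabla_{X_i}u,\nabla_{X_i}u]\;+\;C\,\mathcal I\,,
\]
which you flag as the hard part but do not prove. There is no reason for the second inner variation of a critical point to be coercive in $\|\nabla u\|_{L^1}$; in fact $\mathcal E''(u)[\nabla_X u,\nabla_X u]$ is itself of the same size as $\mathcal I$ (this is exactly what Lemma~\ref{ext2derbound} gives), so subtracting $C\mathcal I$ can make the right-hand side arbitrarily small or negative, while $\int|\nabla u|$ stays of order one. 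The paper does \emph{not} extract $\int|\nabla u|$ from such a coercivity. Instead it tests almost-stability with $\xi=|\nabla_X u|$ (the absolute value, not $\nabla_{X_i}u$), and exploits the elementary identity
\[
\mathcal E''(u)[\,|\nabla_X u|,|\nabla_X u|\,]=\mathcal E''(u)[\nabla_X u,\nabla_X u]-4\iint(\nabla_X u)_+(p)(\nabla_X u)_-(q)\,K_s(p,q)\,dV_p\,dV_q.
\]
Together with the local lower bound $K_s\ge c_0>0$ from Lemma~\ref{loccomparability}, this isolates the \emph{product} $\|(\nabla_X u)_+\|_{L^1}\|(\nabla_X u)_-\|_{L^1}$. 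A separate elementary identity via the divergence theorem and $|u|\le 1$ shows that the \emph{difference} $\|(\nabla_X u)_+\|_{L^1}-\|(\nabla_X u)_-\|_{L^1}$ is uniformly bounded. Knowing both the product and the difference controls the sum, i.e.\ $\|\nabla_X u\|_{L^1}$. This is where the $L^1$ control on $\nabla u$ actually comes from, and there is no analogue of it in your argument.

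The second gap is that your closing step is circular. Even granting the quadratic inequality $c_n\int|\nabla u|\le C/(1-s)+C\Lambda\bigl(\int|\nabla u|\bigr)^2$, absorbing the right-hand side requires $C\Lambda_0\int|\nabla u|\le c_n/2$, i.e.\ choosing $\Lambda_0$ \emph{in terms of} the unknown quantity $\int|\nabla u|$. The statement requires $\Lambda_0$ to depend only on $n$ and $s_0$. The paper avoids this by arranging the iterative estimate (Lemma~\ref{bdfsggwe}) to be \emph{linear} in $\|\nabla u\|_{L^1(B_R)}$---with a small coefficient $\delta$ supplied by a Young inequality after the extension-energy interpolation Lemma~\ref{lem:whtorwohh}---and then applying L.~Simon's covering Lemma~\ref{LSlemma}, which is designed precisely to close this kind of superlinear-free absorption. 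The threshold $\Lambda_0=\tfrac{1}{2c_0}$ comes out of the kernel lower bound, not out of any a priori size of $\int|\nabla u|$. As written, your proof does not produce a $\Lambda_0$ depending only on $(n,s_0)$.

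In summary: the upper-bound side of your argument is fine and overlaps with the paper, but the coercivity claim is not justified and would not be straightforward to justify, and the absorption is circular. The missing idea is testing almost-stability against $|\nabla_X u|$, which trades the hoped-for coercivity for a quadratic identity $\bigl(\|(\nabla_X u)_+\|_{L^1}+\|(\nabla_X u)_-\|_{L^1}\bigr)^2=\bigl(\|(\nabla_X u)_+\|_{L^1}-\|(\nabla_X u)_-\|_{L^1}\bigr)^2+4\|(\nabla_X u)_+\|_{L^1}\|(\nabla_X u)_-\|_{L^1}$, each of whose terms is controllable.
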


\begin{remark}
We emphasize that in Theorem \ref{BVest} above, $C$ does not depend on $\ep$.
\end{remark}
\begin{remark}
The blow up rate $(1-s)^{-1}$ as $s \nearrow 1$ is not expected to be sharp, but $(1-s)^{-1/2}$ is;  see Remark 2.3 in \cite{Stable}.  
\end{remark}
To prove Proposition \ref{BVest} we will need the following lemma.

\begin{lemma}\label{bdfsggwe}
Let $n\ge 2$, $p \in M$, $s_0 \in (0,1)$, $s \in (s_0,1)$ and assume that $M$ satisfies the flatness assumption ${\rm FA}_2(M,g,R,p,\varphi)$. Then, there exist $\Lambda_0$ and $C_0$, depending only on $n$ and $s_0$, such that the following property holds. Let $u :M\to (-1,1)$ be a solution of $(-\Delta)^{{s/2}} u + \ep^{-s}W'(u)=0$ which is $\Lambda$-almost stable in $ B_R(p)\subset M$ for $\Lambda \le \Lambda_0/R^{n+s}$ (see Definition \ref{almoststab}). Then, for every $\delta>0$
\begin{equation*}
      R^{1-n} \int_{B_{R/2}(p)} |\nabla  u |  dV \le \frac{C_0}{(1-s)\delta} + \delta  R^{1-n} \int_{B_{R}(p)} |\nabla  u | dV \,.
\end{equation*}
\end{lemma}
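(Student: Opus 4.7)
\textbf{Proof plan for Lemma~\ref{bdfsggwe}.}

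By Remark~\ref{flatscalingrmk}(b), rescaling the metric by $R^{-2}$ and the Allen--Cahn parameter $\ep \mapsto \ep/R$ reduces matters to the case $R=1$, with the rescaled almost-stability constant bounded by $\Lambda_0$. The plan is to combine the upper bound of Lemma~\ref{ext2derbound} with the $\Lambda_0$-almost-stability inequality, applied to test functions of the form $\xi = \nabla_X u$ for a family of cutoff-localized vector fields $X$, and then extract an $L^1$-type gradient estimate from the resulting fractional Sobolev bound on $\nabla u$ via interpolation.

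Concretely, fix $\eta \in C_c^2(\varphi(\B_{3/4}(0)))$ with $\eta\equiv 1$ on $\varphi(\B_{1/2}(0))$ and $\|\eta\|_{C^2}\le C(n)$, and for each coordinate direction $e_k$ of the chart $\varphi$ let $X_k$ be the push-forward of $\eta e_k$ via $\varphi$. Lemma~\ref{ext2derbound} gives the upper bound
\[
\mathcal{E}''(u)[\nabla_{X_k} u,\nabla_{X_k} u] \le C\Bigl(\beta_s \int_{\widetilde B^+_1(p,0)} z^{1-s}|\widetilde\nabla U|^2 + \int_{B_1(p)} \ep^{-s}W(u)\Bigr),
\]
while the almost-stability gives the matching lower bound $\mathcal{E}''(u)[\nabla_{X_k} u,\nabla_{X_k} u] \ge -\Lambda\|\nabla_{X_k} u\|_{L^1}^2$. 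Expanding $\mathcal{E}''(u)[\eta\partial_k u,\eta\partial_k u]$ via the explicit formula \eqref{2ndvar}, splitting the Gagliardo increment as
\[
(\eta_p\partial_k u_p - \eta_q\partial_k u_q)^2 = \eta_p\eta_q(\partial_k u_p - \partial_k u_q)^2 + (\text{error involving }(\eta_p-\eta_q)^2(\partial_k u_p)^2),
\]
and bounding the error and the potential term $\ep^{-s}\int W''(u)\eta^2(\partial_k u)^2$ (using $|W''|\le 2$ and the Cauchy--Schwarz absorption of $\ep^{-s}\int(\partial_k u)^2$ against the Sobolev part of the energy), one derives, after summing over $k$ and invoking Lemma~\ref{sob-contr-pot} to control $\mathcal{E}^{\rm Pot}$ by $\mathcal{E}^{\rm Sob}$,
\[
\iint_{B_{1/2}(p)\times B_{1/2}(p)}\frac{|\nabla u(p)-\nabla u(q)|^2}{|p-q|^{n+s}}\,dV_p\,dV_q \le C\bigl(1+\mathcal{E}^{\rm Sob}_{B_1(p)}(u)\bigr),
\]
where the flatness assumption ${\rm FA}_2$ together with Proposition~\ref{prop:kern1} is used to identify the Gagliardo kernel with the Euclidean one modulo controllable errors.

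With $\nabla u$ controlled in a fractional Sobolev sense, the desired $L^1$-bound is obtained by standard interpolation. Bounding $\mathcal{E}^{\rm Sob}_{B_1(p)}(u)$ using Proposition~\ref{interprop} by $\tfrac{C}{1-s}[u]_{BV(B_1(p))}^s$ (here using $|u|\le 1$), and interpolating $\int_{B_{1/2}}|\nabla u|$ between the fractional Sobolev seminorm above and $\int_{B_1}|\nabla u|$ yields an estimate of the form
\[
\int_{B_{1/2}(p)}|\nabla u|\,dV \le \frac{C}{(1-s)^{1/2}}\Bigl(1+\textstyle\int_{B_1(p)}|\nabla u|\,dV\Bigr)^{\alpha}
\]
for some $\alpha\in(0,1)$ with $\alpha\to 1$ as $s\uparrow 1$. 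Young's inequality $x^{\alpha}\le \delta x + C(1-s,\delta)$, tuned so that the additive constant matches the required $\tfrac{C_0}{(1-s)\delta}$, produces the claimed Caccioppoli-type inequality. Unwinding the scaling restores the $R$-dependence.

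The main obstacle is the Step extracting the fractional Sobolev bound on $\nabla u$: the identification of $\iint\eta^2(\partial_k u_p - \partial_k u_q)^2 K_s$ as a positive, exploitable piece inside $\mathcal{E}''(u)[\eta\partial_k u,\eta\partial_k u]$ requires careful handling of the kernel commutators (arising because $(M,g)$ is curved rather than Euclidean) and of the indefinite potential term $\ep^{-s}\int W''(u)\eta^2(\partial_k u)^2$; the hypothesis $\Lambda_0$ small and $R^{n+s}\Lambda\le \Lambda_0$ is precisely what is needed so that the almost-stability ``loss'' $\Lambda\|\xi\|_{L^1}^2$ can be absorbed into the other bounds without spoiling the $(1-s)^{-1}$ scaling of the final estimate.
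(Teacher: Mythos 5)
Your proposal correctly identifies the scaling reduction to $R=1$ and correctly invokes Lemma~\ref{ext2derbound}, Lemma~\ref{sob-contr-pot}, and Lemma~\ref{lem:whtorwohh} as ingredients, but the central step is genuinely wrong. You test almost-stability with $\xi = \eta\,\partial_k u$ and try to read off a fractional Sobolev bound $\iint_{B_{1/2}\times B_{1/2}} |\nabla u(p)-\nabla u(q)|^2 K_s(p,q) \lesssim 1 + \mathcal E^{\rm Sob}_{B_1}(u)$ from the second-variation inequality, absorbing the indefinite potential term $\ep^{-s}\int W''(u)\eta^2(\partial_k u)^2$ ``against the Sobolev part.'' This absorption is impossible: for a solution with a transition layer of width $\ep$, one has $\ep^{-s}\int (\partial_k u)^2 \sim \ep^{-1-s}$ and (by the interpolation in Proposition~\ref{interprop} applied to $\partial_k u$) $[\nabla u]_{H^{s/2}(B_{1/2})}^2 \sim \ep^{-s}$, both of which diverge as $\ep\to 0$ while the right-hand side $1+\mathcal E^{\rm Sob}_{B_1}(u)$ stays bounded. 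So the claimed intermediate estimate is false, and no amount of ``careful handling of kernel commutators'' will fix it; the obstruction is scaling, not bookkeeping. The downstream interpolation step (``interpolating $\int_{B_{1/2}}|\nabla u|$ between the fractional Sobolev seminorm above and $\int_{B_1}|\nabla u|$'') is also ill-posed even granting the previous bound.

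The missing idea is that the paper does \emph{not} test with $\nabla_X u$ but with $|\nabla_X u|$, which eliminates the problematic potential term and the singular Sobolev seminorm simultaneously. The algebraic identity $\mathcal E''_B(u)[|\xi|,|\xi|] = \mathcal E''_B(u)[\xi,\xi] - 4\iint (\xi)_+(p)(\xi)_-(q)K_s(p,q)\,dV_p dV_q$ (with $\xi=\nabla_X u$; note the potential term is invariant under $\xi\mapsto|\xi|$) converts the almost-stability lower bound plus the \emph{upper} bound on $\mathcal E''_B(u)[\nabla_X u,\nabla_X u]$ from Lemma~\ref{ext2derbound} into an upper bound on the \emph{interaction} $\iint (\nabla_X u)_+(p)(\nabla_X u)_-(q)K_s$. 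The lower kernel bound $K_s \ge c_0 > 0$ on $B\times B$ (Lemma~\ref{loccomparability}) then bounds $\|(\nabla_X u)_+\|_{L^1}\|(\nabla_X u)_-\|_{L^1}$, while the divergence structure gives $|\,\|(\nabla_X u)_+\|_{L^1} - \|(\nabla_X u)_-\|_{L^1}\,|\le C$ directly. Combining product-plus-difference control of the two $L^1$ norms yields $\|\nabla_X u\|_{L^1}^2 \lesssim \beta_s\int_{\widetilde B^+}z^{1-s}|\widetilde\nabla U|^2 + 1 + \Lambda\|\nabla_X u\|_{L^1}^2$, and the smallness $\Lambda\le\Lambda_0$ absorbs the last term; then Lemma~\ref{lem:whtorwohh} and Young give the claimed inequality. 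In short: your test function choice forces you to confront a genuinely uncontrollable potential term; the paper's use of $|\nabla_X u|$ is precisely the device that sidesteps it.
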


\begin{proof}
     Since the statement is scaling invariant, as the constant $C$ does not depend on $\ep$, we can assume $R=1$. See the beginning of the proof of Lemma \ref{sob-contr-pot} for details on the scaling.

     \vsp 
     We show that there exists a constant $C_0=C_0(n,s_0)>0$ such that, for any given $\delta>0$, there holds 
\begin{equation*}
      \|\nabla  u \|_{L^1(B_{1/2}(p))} \le \frac{C}{(1-s)\delta} + \delta  \|\nabla u \|_{L^1(B_1(p))} \,.
\end{equation*}
In particular, this $C$ does not depend on $\ep$. 

\vsp
Let $X$ be a vector field compactly supported in $B_{3/4}(p)$ to be chosen later, and let us denote $B:=B_1(p)$ during this proof. Let also $\nabla_X u := \lp X, \nabla u \rp$. Since the second variation \eqref{2ndvar} is continuous with respect to the $H^{s/2}(M)$ topology, by density we can test the almost stability inequality with any $\xi \in H^{s/2}(M)$ (and in particular, for any $\xi$ Lipschitz). Testing the almost stability inequality with $\xi = |\nabla_X u|$ gives
\begin{equation*}
    0 \le \mathcal{E}_B''(u)[|\nabla_X u|,|\nabla_X u|] + \Lambda\| \nabla_X u \|^2_{L^1(B)} \, .
\end{equation*}
On the other hand, 
\begin{align*}
    \mathcal{E}_B''(u)[|\nabla_X u|,|\nabla_X u|] = \mathcal{E}_B''(u)[\nabla_X u,\nabla_X u] - 4 \int_B \int_B (\nabla_X u)_+(p) (\nabla_X u)_-(q) K_s(p,q) dV_p dV_q \,,
\end{align*} 
thus we find that
\begin{equation*}
    4 \int_B \int_B (\nabla_X u)_+(p) (\nabla_X u)_-(q) K_s(p,q) dV_p dV_q \le \mathcal{E}_B''(u)[\nabla_X u,\nabla_X u] + \Lambda\| \nabla_X u \|^2_{L^1(B)} \,.
\end{equation*}
Moreover, by Lemma \ref{ext2derbound} and Lemma \ref{sob-contr-pot} respectively, we have
\begin{align*}
   \mathcal{E}_B''(u)[\nabla_X u,\nabla_X u] &\le C  \left( \beta_s \int_{\widetilde B^+_{3/4}(p,0)} |\widetilde \nabla U|^2 z^{1-s}dV dz + \mathcal{E}^{\rm Pot}_{B_{3/4}(p)}(u) \right) \\ & \le C\left( \beta_s \int_{\widetilde B^+_{1}(p,0)} |\widetilde \nabla U|^2 z^{1-s}dV dz +1 \right),
\end{align*}
for some $C=C(n, \|\xi \|_{C^2(\B_1(0))}, \Lambda_0)$, where $\xi^i = X^i \circ \varphi $ and $\Lambda_0$ will be chosen shortly depending only on $n$ and $s_0$. Hence
\begin{equation*}
    4 \int_B \int_B (\nabla_X u)_+(p) (\nabla_X u)_-(q) K_s(p,q) dV_p dV_q \le C \left( \beta_s \int_{\widetilde B^+} |\widetilde \nabla U|^2 z^{1-s}dV dz +1\right) + \Lambda\| \nabla_X u \|^2_{L^1(B)} \,.
\end{equation*}
Now, since by Lemma \ref{loccomparability} there holds $K_s(p,q) \ge c_0  >0$ for all $(p,q) \in B \times B$, for some constant $c_0=c_0(n, s_0)>0$, we have
\[
\begin{split}
       \|(\nabla_X u)_+ \|_{L^1(B)} \|(\nabla_X u)_- \|_{L^1(B)} &=  \int_B \int_B (\nabla_X u)_+(p) (\nabla_X u)_-(q) dV_p dV_q
     \\
     & \le  \frac{1}{c_0} \int_B \int_B (\nabla_X u)_+(p) (\nabla_X u)_-(q) K(p,q) dV_p dV_q \,.
\end{split}    
\]
Also
\begin{align*}
     \|(\nabla_X u)_+ & \|_{L^1(B)} - \|(\nabla_X u)_- \|_{L^1(B)} = \int_B \nabla_X u \, dV= \int_B \lp \nabla u, X \rp \, dV \\&= \int_B \dive(uX)-u\dive(X) \, dV = \int_{\partial B} u\lp X,N \rp \, d\sigma -\int_B u\dive(X) \, dV \,,
\end{align*}
where $N$ is the outer unit normal vector field to $\partial B$. Then, since $|u|\le 1$
\begin{align*}
    \left|  \|(\nabla_X u)_+  \|_{L^1(B)} - \|(\nabla_X u)_- \|_{L^1(B)} \right| \le \|X \|_{L^\infty(B)} +  \|{\rm div}(X) \|_{L^{\infty}(B)} \le C\left(\|\xi\|_{C_1(\B_1(0))} \right) 
\end{align*}
Hence,  we get
\begin{align*}
    \|\nabla_X  u \|_{L^1(B)}^2 &= \left( \|(\nabla_X u)_+  \|_{L^1(B)} + \|(\nabla_X u)_- \|_{L^1(B)}\right)^2 \\ &= \left( \|(\nabla_X u)_+  \|_{L^1(B)} - \|(\nabla_X u)_- \|_{L^1(B)}\right)^2 +4 \|(\nabla_X u)_+  \|_{L^1(B)} \|(\nabla_X u)_- \|_{L^1(B)} \\ &\le C \beta_s \int_{\widetilde B^+} |\widetilde \nabla U|^2 z^{1-s}dV dz + C+ \frac{\Lambda}{c_0}\| \nabla_X u \|^2_{L^1(B)}  \,.
\end{align*}
Thus, for $\Lambda \le \frac{1}{2c_0} =:\Lambda_0$ we obtain 
\[
    \|\nabla_X  u \|^2_{L^1(B)}  \le 
    C \beta_s \int_{\widetilde B^+} |\widetilde \nabla U|^2 z^{1-s}dV dz + C .
\]
Moreover, by Lemma \ref{lem:whtorwohh} with $R=1$, $k=0$ we have
\begin{align*}
       \beta_s \int_{\widetilde B^+} |\widetilde \nabla U|^2 z^{1-s}dV dz \le \frac{C}{1-s}(1+\|\nabla u \|_{L^1(B)}) .
\end{align*}
Hence, for every $\delta>0$ by Young's inequality
\begin{align*}
    \|\nabla_X  u \|_{L^1(B)} &  \le C + C \sqrt{\frac{1}{1-s} (1 + \|\nabla u \|_{L^1(B)})} \\ &\le \frac{C}{(1-s)\delta} + \delta \|\nabla u \|_{L^1(B)} .
\end{align*} 

Now, let $\eta$ be a smooth cutoff compactly supported in $B_{3/4}(p)$ and with $\eta \equiv 1$ on $B_{1/2}(p)$. Choosing $X=\eta \frac{\partial }{\partial x^i} $ above and summing up from $i=1$ to $i=n$, together with \eqref{hsohoh1}, gives 
\begin{equation*}
      \|\nabla  u \|_{L^1(B_{1/2}(p))} \le \frac{C}{(1-s)\delta} + \delta  \|\nabla u \|_{L^1(B_1(p))} ,
\end{equation*}
for some $C=C(n,s_0)$, and this concludes the proof.
     
\end{proof}

Before giving the proof of Proposition \ref{BVest}, we recall a useful covering lemma by L. Simon \cite{Simon}.

\begin{lemma}[\cite{Simon}]\label{LSlemma}
Let $\beta \in \R$, $M_0>0$, $\rho >0$ and $\mathcal{S} : \mathfrak{B} \to [0, +\infty)$ be a nonnegative function defined on the family $\mathfrak{B}$ of open balls contained in the Euclidean ball $\B_\rho(0) \subset \R^n$ that is subadditive for finite unions, meaning that whenever $B \subset \bigcup_i B_i$ a finite union then $\mathcal{S}(B) \le \sum_i \mathcal{S}(B_i)$. Then, there exists a constant $\delta = \delta(n, \theta, \beta)>0$ such that, if 
\begin{equation*}
    r^{\beta} \mathcal{S}(\B_{\theta r}(x_0)) \le \delta r^{\beta} \mathcal{S}(\B_r(x_0)) + M_0 \, \quad \textnormal{whenever} \s B_{r}(x_0) \subset \B_\rho(0) ,
\end{equation*}
then 
\begin{equation*}
    \mathcal{S}(\B_{\rho/2}(0)) \le CM_0 \,,
\end{equation*}
for some constant $C=C(n, \beta, \rho)>0$.
\end{lemma}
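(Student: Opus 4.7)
The result is the classical covering lemma of L.\ Simon; the argument combines an iteration of the contractive hypothesis at a fixed center with the subadditivity of $\mathcal{S}$ to close a bootstrap.

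First, I would iterate the hypothesis at a fixed center. Setting $b_k := (\theta^k R)^\beta \mathcal{S}(\B_{\theta^k R}(x_0))$ and applying the hypothesis at radius $\theta^k R$, multiplied by $(\theta^{k+1} R)^\beta$, one obtains the recursion $b_{k+1} \le (\delta \theta^\beta)\, b_k + \theta^\beta M_0$. Provided $\delta$ is chosen small enough that $\delta \theta^\beta < 1$, the geometric sum yields
\[
\mathcal{S}(\B_{\theta^k R}(x_0)) \le \delta^k\, \mathcal{S}(\B_R(x_0)) + C(\beta,\theta,\delta)\, M_0\, (\theta^k R)^{-\beta},
\]
valid whenever $\B_R(x_0)\subset\B_\rho(0)$.

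Next, I would introduce the maximal quantity $T := \sup\{\mathcal{S}(\B_{\rho/2}(x)) : x\in \overline{\B_{\rho/2}(0)}\}$ and turn the above into a global bound via a finite subadditive cover. For a parameter $k$ to be chosen, cover $\B_{\rho/2}(0)$ by $N_k \le C(n)\,\theta^{-kn}$ balls $\B_{\theta^k\rho/2}(y_i)$ with centers $y_i\in\B_{\rho/2}(0)$. Since $\B_{\rho/2}(y_i) \subset \B_\rho(0)$, Step~1 applied with $R = \rho/2$ and subadditivity give
\[
\mathcal{S}(\B_{\rho/2}(0)) \le \sum_{i} \mathcal{S}(\B_{\theta^k\rho/2}(y_i)) \le N_k \delta^k\, T + C\, N_k\, M_0\, \rho^{-\beta}.
\]
The same covering argument recentered at any $x_0 \in \B_{\rho/2}(0)$ produces the analogous bound for $\mathcal{S}(\B_{\rho/2}(x_0))$, so taking the supremum gives the self-referential inequality
\[
T \le N_k\, \delta^k\, T + C\, N_k\, M_0\, \rho^{-\beta}.
\]

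Finally, I would close the bootstrap by choosing $\delta$ so small (depending on $n,\theta,\beta$) that $N_k\delta^k \le \tfrac{1}{2}$ for some $k$; concretely, any $\delta$ with $\delta\theta^{-n}<1$ works once $k$ is taken sufficiently large. The $T$ on the right is then absorbed, giving $T \le C(n, \beta, \rho)\, M_0$ and hence the desired conclusion $\mathcal{S}(\B_{\rho/2}(0))\le T \le CM_0$. The main subtlety is the absorption step, which requires $T<\infty$ a priori. This is the one non-algebraic point: it is handled using the blanket finiteness $\mathcal{S}:\mathfrak{B}\to[0,\infty)$ together with the observation that the sup can equivalently be taken over a slightly smaller closed ball (on which $x\mapsto \mathcal{S}(\B_{\rho/2}(x))$ is dominated by $\mathcal{S}$ of a single fixed enclosing ball), ensuring $T$ is finite before the bootstrap is invoked.
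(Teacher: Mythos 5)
Lemma~\ref{LSlemma} is cited to [Simon] and not proved in the paper, so what follows assesses your reconstruction against the standard argument. Step~1 (the iteration at a fixed center) is correct. The gap is in the recentering. When you recenter the cover at $x_0\in\B_{\rho/2}(0)$ near $\partial\B_{\rho/2}(0)$, the covering balls $\B_{\theta^k\rho/2}(z_i)$ have centers with $|z_i|$ up to $|x_0|+\rho/2\approx\rho$; then $\B_{\rho/2}(z_i)\not\subset\B_\rho(0)$, so Step~1 cannot be applied with $R=\rho/2$, and moreover $z_i\notin\overline{\B_{\rho/2}(0)}$, so the bound $\mathcal{S}(\B_{\rho/2}(z_i))\le T$ is not available. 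Thus the self-referential inequality $T\le N_k\delta^kT+C N_k M_0(\theta^k\rho/2)^{-\beta}$ is not justified, and the bootstrap does not close as written. (Incidentally, the finiteness of $T$ is immediate: each $\B_{\rho/2}(x)\subset\B_\rho(0)\in\mathfrak{B}$, so $T\le\mathcal{S}(\B_\rho(0))<\infty$ by subadditivity applied to the one-ball cover; the detour via a slightly smaller closed ball is unnecessary.)

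The standard fix, in the spirit of Simon's argument, is to take the supremum over pairs that leave room rather than over centers at a fixed radius. Set $T:=\sup\{\mathcal{S}(\B_\sigma(y)):\B_{2\sigma}(y)\subset\B_\rho(0)\}$. If $\B_{2\sigma}(y)\subset\B_\rho(0)$ and $z\in\overline{\B_\sigma(y)}$, then $\B_\sigma(z)\subset\B_{2\sigma}(y)\subset\B_\rho(0)$, so Step~1 applies with $R=\sigma/2$ at center $z$, and $(z,\sigma/2)$ is an admissible pair for $T$. Covering $\B_\sigma(y)$ by $N_k\le C(n)\theta^{-kn}$ balls $\B_{\theta^k\sigma/2}(z_i)$, summing, and using that $\sigma\le\rho/2$ (so the error term $(\theta^k\sigma/2)^{-\beta}$ is uniformly bounded when $\beta\le0$, the case $\beta=1-n$ used in the paper), one gets $T\le N_k\delta^kT+CN_kM_0(\theta^k\rho/4)^{-\beta}$. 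Choosing $\delta<\theta^n$ and $k$ large absorbs the $T$ on the right, and since $(0,\rho/2)$ is admissible, $\mathcal{S}(\B_{\rho/2}(0))\le T\le CM_0$. An alternative that avoids introducing $T$ altogether is to iterate a geometric chain of radii $\rho_m=(1-2^{-m-1})\rho\uparrow\rho$ with increasing $k_m$, letting the super-exponential damping $2^{-m(m+1)/2}$ dominate the geometric growth of the errors. Finally, a bookkeeping point: your error term should read $CN_kM_0(\theta^k\rho/2)^{-\beta}$, not $CN_kM_0\rho^{-\beta}$; harmless for fixed $k$, but worth writing correctly so the dependence of $C(n,\beta,\rho)$ on $k$ is visible.
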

We can now give the proof of Proposition \ref{BVest}.

\begin{proof}[Proof of Proposition \ref{BVest} ]
Let $\Lambda_0$ and $C_0$ be the constants given by Lemma \ref{bdfsggwe}. Fix any Euclidean ball $\B_r(x) \subset \B_{3/4}(0)$. Consider the subadditive function (defined on the family of Euclidean balls contained in $\B_{3/4}(0)$)
\begin{equation*}
    \mathcal{S}(\B_r(x)) := \int_{\varphi(\B_r(x))} |\nabla u| \, dV \,.
\end{equation*}
Notice that ${\rm FA}_2(M,g,1,p,\varphi)$ implies $B_{4r/5}(\varphi(x)) \subset \varphi(\B_r(x))$ and $\varphi(\B_{1/8}(x))  \subset B_{r/5}(\varphi(x))$. Hence, by Lemma \ref{bdfsggwe} applied with $R=4r/5$, for every $\delta>0$ and $\B_r(x) \subset \B_{3/4}(0)$ we have 
\begin{equation*}
    r^{1-n} \mathcal{S}(\B_{r/8}(x)) \le \delta r^{1-n} \mathcal{S}(\B_{r}(x)) + \frac{C}{(1-s)\delta} \,,
\end{equation*}
 for some $C=C(n,s_0)>0$. Using Lemma \ref{LSlemma}, taking $\delta $ the one given by the lemma with $\beta=1-n$, $\rho=3/4$, $\theta=1/8$, we find that 
\begin{equation*}
   S(\B_{3/8}(0)) = \int_{\varphi(\B_{3/8}(0))} |\nabla u| \, dV \le \frac{C}{1-s} \,,
\end{equation*}
where $C$ depends only on $n$ and $s_0$. In particular, since $B_{1/4}(p) \subset \varphi(\B_{3/8}(0))$ this concludes the proof. 
\end{proof}

Now, we will prove the full $BV$ estimate by iteratively reducing to the almost-stable case thanks to a covering lemma, which is inspired by the proof of Proposition 2.6 in \cite{FZ}. 

\vsp
In the following lemma we denote by $\Qb_r(x) \subset \R^n$ the (hyper)cube of center $x$ and side $r$.  

\begin{lemma}\label{morsecovering}
    Let $n\ge 1$, $m\ge 0$, $\theta \in (0,1)$, $D_0>0$ and $\beta>0$. Let $\mathcal{S} : \mathfrak{B} \to [0, +\infty)$ be a subadditive\footnote{Meaning subadditive for finite unions of (hyper)cubes.} function defined on the family $\mathfrak{B}$ of the (hyper)cubes contained in $\Qb_1(0) \subset \R^n$, such that 
    \begin{itemize} 
        \item[(i)] $\displaystyle\sup_{\{x\,:\,\Qb_r(x)\in\mathfrak{B}\}}\mathcal{S}(\Qb_r(x)) \to 0$ \hspace{0.2cm} as $r\to 0$. 
        \item[(ii)] Whenever $\Qb_{r}(x_0), \Qb_r(x_1), \dotsc , \Qb_r(x_m) \subset \Qb_1(0) $ are $(m+1)$ disjoint cubes of the same side at pairwise distance at least $D_0r $, then 
    \begin{equation*}
        \exists \,i \in \{0,1,\dotsc,m \} \s \textit{such that} \s \mathcal{S}(\Qb_{\theta r}(x_i)) \le r^\beta M_0 \,.
    \end{equation*}
    \end{itemize}
     Then 
    \begin{equation*}
        \mathcal{S}(\Qb_{1/2}(0)) \le CM_0 \,, 
    \end{equation*}
    for some $C=C(n,\theta, m , \beta, D_0) >0$. 

\end{lemma}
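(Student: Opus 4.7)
The plan is to prove the lemma by combining a single-scale covering argument with induction on $m$.

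The starting point is a pigeonhole consequence of hypothesis (ii). Defining the \emph{bad set at scale $r$} as
\begin{equation*}
B_r := \{x : \Qb_r(x)\in\mathfrak{B},\ \mathcal{S}(\Qb_{\theta r}(x)) > r^\beta M_0\},
\end{equation*}
hypothesis (ii) forbids $B_r$ from containing $m+1$ points at pairwise distance $\ge (1+D_0)r$, so a standard greedy extraction shows $B_r$ is covered by at most $m$ balls of radius $(1+D_0)r$. Informally, at every scale the cubes violating the natural bound cluster in at most $m$ localized regions.

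With this at hand, I would decompose any cube $\Qb_\rho(y) \subset \Qb_1(0)$ at scale $r = \delta\rho$, with $\delta$ small (and, in particular, $\delta\le 1/(m(1+D_0)+1)$ so that $m+1$ cubes of side $r$ actually fit inside $\Qb_\rho$ at pairwise distance $\ge D_0 r$). Partitioning $\Qb_\rho(y)$ into $(\rho/(\theta r))^n$ sub-cubes of side $\theta r$, the number of bad sub-cubes is bounded by $C_3:=m((1+D_0)/\theta)^n$, while each of the remaining good sub-cubes contributes at most $r^\beta M_0$ to $\mathcal{S}(\Qb_\rho(y))$. Subadditivity yields the recursion
\begin{equation*}
F(\rho) \le C_4\, \rho^\beta M_0 + C_3\, F(\theta\delta\rho), \qquad C_4=\theta^{-n}\delta^{\beta-n},
\end{equation*}
where $F(s) := \sup\{\mathcal{S}(\Qb_s(z)) : \Qb_s(z)\in\mathfrak{B}\}$. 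Choosing $\delta$ small enough that $\lambda := C_3(\theta\delta)^\beta < 1$ (possible since $C_3,\theta,\beta,D_0$ are fixed), dividing by $\rho^\beta$ and iterating yields, for $G:=F(\cdot)/(\cdot)^\beta$ and $\sigma:=\theta\delta$,
\begin{equation*}
G(\rho) \le \frac{C_4 M_0}{1-\lambda} + \lambda^K G(\sigma^K \rho).
\end{equation*}

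To absorb the remainder $\lambda^K G(\sigma^K\rho)$ independently of the specific $\mathcal{S}$, I would induct on $m$. The base case $m=0$ is direct: $(ii)$ then says every sub-cube is good, and a single-scale covering immediately gives $F(\rho)\le C_0\, \rho^\beta M_0$, i.e.\ polynomial decay with rate $\beta$. In the inductive step, I would analyze the largest scale $r^*$ at which $(ii)$ with parameter $m-1$ \emph{fails}: at $r^*$ there must exist $m$ maximal, well-separated bad cubes, and $(ii)_m$ then guarantees that every other cube of side $r^*$ at distance $\ge D_0 r^*$ from them is good. One then covers $\Qb_{1/2}(0)$ by the $m$ ``bad zones'' of size $O(r^*)$ and a complementary good region, bounds the good region directly via the $(ii)_0$-type covering, and handles each bad zone by rescaling and invoking the inductive hypothesis $P_{m-1}$ after arguing that the restricted $\mathcal{S}$ effectively satisfies $(ii)$ with parameter $m-1$ inside the zone.

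The main obstacle is precisely this scale-interaction step: one needs that, after restricting to a bad zone and rescaling, $(ii)$ with parameter $m-1$ holds at \emph{every} scale, not only at $r^*$. The key point is that any $m$ well-separated bad cubes at any scale $r$ lying inside a bad zone must, by the global pigeonhole from $(ii)_m$, overlap one of the previously-identified bad balls, effectively reducing the parameter to $m-1$ inside the zone. Hypothesis (i) enters qualitatively to ensure that the iteration eventually terminates at a scale below which no bad cubes exist. This delicate book-keeping, tracking how the $m$ bad balls at one scale interact with bad sets at smaller scales, is the technical heart of the proof and closely follows the strategy of Proposition~2.6 in \cite{FZ}.
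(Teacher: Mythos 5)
Your recursion
\[
F(\rho) \le C_4\,\rho^\beta M_0 + C_3\, F(\theta\delta\rho),
\]
with $F(s) := \sup\{\mathcal{S}(\Qb_s(z))\}$, is a reasonable start, but, as you suspect, it does not close on its own, and the fix you propose has a genuine gap. Writing $\sigma = \theta\delta$, iterating gives $F(\rho) \le \frac{C_4 M_0}{1-\lambda}\rho^\beta + C_3^K F(\sigma^K\rho)$; the tail is $C_3^K F(\sigma^K\rho)$ with $C_3 > 1$. Hypothesis (i) is purely qualitative (it gives $F(s)\to 0$ with no rate), so it cannot defeat the exponential factor $C_3^K$, and for a general subadditive $\mathcal{S}$ satisfying (i) and (ii) the quantity $G(s) = F(s)/s^\beta$ need not be bounded as $s\to 0$. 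The deeper reason is that your recursion estimates each of the $\le C_3$ bad sub-cubes by the global supremum $F$ and then recurses \emph{inside} every one of them; after $K$ steps you are paying for $C_3^K$ bad cubes, whereas (ii) guarantees that globally there are only $O(m)$ bad clusters \emph{at every scale}. The recursion on the supremum forgets this spatial information.

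The induction-on-$m$ step you sketch to repair this does not go through. Property (ii) is a single-scale statement: it says that at each fixed side length $r$, the bad cubes cannot contain $m+1$ pairwise $D_0 r$-separated ones. It says nothing about how the $m$ bad clusters at scale $r$ relate to the $m$ bad clusters at a different scale $r' < r^*$. So inside a "bad zone" $Z_i = \B_{Cr^*}(z_i)$ you can perfectly well have $m$ (not $m-1$) pairwise well-separated bad cubes at a smaller scale $r$: their witnesses do not need to lie outside $Z_i$, and there is no ``global pigeonhole'' forcing one of them out. Your sentence ``any $m$ well-separated bad cubes at any scale $r$ lying inside a bad zone must $\ldots$ overlap one of the previously-identified bad balls'' mixes scales in a way that (ii) does not support, and the inductive hypothesis therefore cannot be invoked inside a bad zone as stated.

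The paper's proof avoids all of this by tracking the bad cubes \emph{spatially} within one global exhaustion rather than recursing on a supremum. At scale $\rho^k$ it only repartitions the bad cubes of the previous level (there are at most $N_0 = m(2D_0 + 3\sqrt n)^n$ of them, since (ii) holds for cubes at every scale inside $\Qb_1(0)$, not just at the first level). So the number of cubes in play at level $k\ge 2$ is $\le N_0\rho^{-n}$, the good contribution at level $k$ is $\le N_0 M_0 \rho^{k\beta-n}$, and the series converges by a plain geometric estimate, with no need to choose the scale factor small. The bad contribution at level $k$ is $\le N_0\cdot\sup\{\mathcal{S}(\Qb)\,:\,\Qb \text{ of side } \theta\rho^k\}$, which does tend to zero by (i) \emph{because the prefactor $N_0$ is level-independent}. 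Replacing your recursion on $F$ by this one-pass exhaustion with controlled bad-cube count is the missing idea.
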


\begin{proof}
Let $\rho=2^{-k}$, for a fixed integer $k>1$, and consider the regular partition  of $\Qb_{\theta}(0)$ into $2^{kn}$ cubes of sidelength $\theta\varrho$. Let us call $\mathfrak{F}_1 := \{ \Qb_i^1 \}$ the family of cubes in this partition. In this way, clearly
$ \#\mathfrak{F}_1 \le  \rho^{-n}$. Let $x_i^1$ denote the center of the cube $\Qb_i^1$ and, for every $\lambda>0$ and cube $\Qb$ of side $r$, let $\lambda \Qb$ be the cube with the same center and side $\lambda r$. 

\vsp
Now, we split the family $\mathfrak{F}_1$ as $\mathfrak{F}_1 = \mathfrak{G}_1 \cup \mathfrak{B}_1 $ into the families of \textit{good} and \textit{bad} cubes as follows. Start by considering $\Qb_1^1$, if there holds
\begin{equation}\label{goodballsineq}
    \mathcal{S}( \Qb_1^1) \le M_0 \rho^\beta 
\end{equation} 
then it is considered a good cube, we assign it to $\mathfrak{G}_1$ and we remove it from $\mathfrak{F}_1$. On the other hand, if $\Qb_1^1$ does not satisfy \eqref{goodballsineq}, then we assign it to the bad cubes $\mathfrak{B}_1$ and remove it from $\mathfrak{F}_1$. Moreover, if this happens, also all the cubes $\Qb \in \mathfrak{F}_1$ such that the distance of $\tfrac{1}{\theta}\Qb $ from $\tfrac{1}{\theta} \Qb_1^1 $ is less than $D_0\rho$ are considered bad as well, so they are assigned to $\mathfrak{B}_1$ and removed from $\mathfrak{F}_1$. Importantly, this last rule (of labeling the neighbouring cubes as also bad) is applied only to the cubes that are still in $\mathfrak{F}_1$. Once a cube is classified as good and placed in $\mathfrak{G}_1$, it is no longer reclassified in later steps.

\vsp 
By a simple count, there are at most $(2+2 D_0 +4\sqrt{n}/\theta)^n$ such cubes. We continue this procedure of splitting $\mathfrak{F}_1$ in good cubes and bad cubes until there are no cubes left. 

\vsp
By property $(ii)$, we may
have assigned cubes to the bad set $\mathfrak{B}_1$ at most at $m$ steps. Since at each of these steps we removed at most $(2+2 D_0 +4\sqrt{n}/\theta)^n$ cubes, this means that $ \# \mathfrak{B}_1 \le m (2+2 D_0 +4\sqrt{n}/\theta)^n =: N_0 $.  

\vsp
Regarding the good set $\mathfrak{G}_1$, we know it contains at most $ \# \mathfrak{F}_1 \le \rho^{-n}$ cubes since this is just the total number of
cubes in the cover. Moreover, by construction in every $ \Qb \in \mathfrak{G}_1$ we have
\begin{equation*}
    \mathcal{S}( \Qb) \le M_0\rho^{\beta} \,.
\end{equation*}
Hence 
\begin{align*}
    \mathcal{S}( \Qb_{\theta}(0) ) & \le \sum_{\Qb \in \mathfrak{G}_1} \mathcal{S}( \Qb) + \sum_{\Qb \in \mathfrak{B}_1} \mathcal{S}( \Qb)  \le  M_0 \rho^{\beta-n} + \sum_{\Qb \in \mathfrak{B}_1} \mathcal{S}(\Qb) \,.
\end{align*}
The argument continues iteratively under the same scheme, on the union of the at most $N_0$ bad cubes that are in $\mathfrak{B}_1$. Consider the partition $\mathfrak{F}_2 := \{\Qb_i^2 \}$ of the cubes in $\mathfrak{B}_1$ obtained splitting each cube into $2^{kn}$ smaller cubes of side $ \theta \rho^2$. Notice that  $\# \mathfrak{F}_2 \le N_0 \rho^{-n}$. Now assign cubes in $\mathfrak{F}_2$ to the good cubes $\mathfrak{G}_2$ or bad cubes $\mathfrak{B}_2$ as before: starting from $\Qb_1^2$, assign it to $\mathfrak{G}_2 $ if 
\begin{equation*}
    \mathcal{S}( \Qb_1^2) \le M_0 \rho^{2\beta} \,,
\end{equation*}
and then remove it from $\mathfrak{F}_2$. Else, if this is not the case we assign $\Qb_1^2$ to the bad cubes $\mathfrak{B}_2$ and remove it, together with all the cubes $\Qb \in \mathfrak{F}_2$ such that $\tfrac{1}{\theta}Q$ is at distance less than $D_0\rho^2$ from $\tfrac{1}{\theta} \Qb_1^2 $. Continue the procedure until there are no cubes left in $\mathfrak{F}_2$. By property $(ii)$ again, exactly the same argument as in the first part shows that $ \mathfrak{F}_2 $ contains at most $ N_0 = m (2+2 D_0 +4\sqrt{n}/\theta)^n $ cubes assigned to the bad set, that is $\# \mathfrak{B}_2 \le N_0$. This produces a partition $\mathfrak{F}_2 = \mathfrak{G}_2 \cup \mathfrak{B}_2$, and we get
\begin{align*}
      \sum_{\Qb \in \mathfrak{B}_1} \mathcal{S}( \Qb) & \le \sum_{\Qb \in \mathfrak{G}_2} \mathcal{S}( \Qb) + \sum_{\Qb \in \mathfrak{B}_2} \mathcal{S}( \Qb)  \le N_0 M_0 \rho^{2\beta-n} + \sum_{\Qb \in \mathfrak{B}_2} \mathcal{S}( \Qb) \,.  
\end{align*}
Iterating this argument, after $k$ steps we have always $\# \mathfrak{B}_k \le N_0$, and in particular by $(i)$ and subadditivity 
\begin{equation*}
    \mathcal{S} (\mathfrak{B}_k) \le \sum_{\Qb \in \mathfrak{B}_k} \mathcal{S}(\Qb) \to 0 \,,
\end{equation*}
since each $\Qb \in \mathfrak{B}_k$ has side $\theta \rho^{k} \to 0 $. Thus, the set of the points belonging to infinitely many bad families is $\mathcal{S}$-negligible. Hence\footnote{Note that we could also have stopped the exhaustion process when the error in the tail of \eqref{fff} is less than the constant on the right-hand side, and we would have obtained the estimate with two times this constant.}
\begin{align}
\mathcal{S}(  \Qb_{\theta} (0) ) & \le M_0 \rho^{\beta-n} + N_0 M_0 \rho^{2\beta-n}  + N_0 M_0 \rho^{3\beta-n} + \dotsc \nonumber \\ \label{fff} &  \le N_0 M_0  \rho^{\beta-n} \sum_{j \ge 0} \rho^{j\beta}  =  \frac{N_0}{\rho^n(\rho^{-\beta}-1)} M_0  \,.
\end{align} 
Now notice that $\Qb_{1/2}(0) $ can be covered, for some $\xi=\xi_n$ dimensional constant, by $\xi_n  \theta^{-n}$ many cubes of side $\theta/10$ such that the cube with the same center and side $1/10$ still is contained in $\Qb_1(0)$. Since property $(ii)$ is translation invariant, covering $\Qb_{1/2}(0)$ in such a way gives
\begin{equation*}
    \mathcal{S}(\Qb_{1/2}(0)) \le \frac{\xi_n \theta^{-n} N_0}{\rho^n(\rho^{-\beta}-1)} M_0  = \frac{\xi_n \theta^{-n} m (2 D_0 +3\sqrt{n}/\theta)^n}{\rho^n(\rho^{-\beta}-1)} M_0 \,,
\end{equation*}
and as this holds for every $\rho=2^{-k}$, just choosing any fixed $k$ gives the desired estimate.

\end{proof}

\begin{theorem}\label{BVunif1}
Suppose that $M$ satisfies the flatness assumption ${\rm FA}_2(M,g,1,p,\varphi)$, in the sense of Definition \ref{flatnessassup}. Let $s_0 \in (0,1)$, $s\in (s_0,1)$ and $u:M\to (-1,1)$ be a solution of the Allen-Cahn equation (\ref{restrictedeq}) in $B_{1}(p)\subset M$ with parameter $\ep$, and with Morse index $m_{B_{1}(p)}(u)\leq m$. Then 
\begin{equation*}
\int_{B_{1/2}(p)}|\nabla u|dx \leq \frac{C}{1-s},
\end{equation*}
for some constant $ C= C(n, s_0 ,m )$. 
\end{theorem}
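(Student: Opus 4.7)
The plan is to combine Proposition \ref{BVest} (the $BV$ bound from $\Lambda$-almost stability) with Lemma \ref{asineq} (finite Morse index implies $\Lambda$-almost stability in one out of $m+1$ disjoint regions) and the covering Lemma \ref{morsecovering} to upgrade from the almost-stable case to the bounded Morse index case.

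First I would work inside the chart $\varphi$ and consider the subadditive set function
\[
\mathcal{S}(\Qb_r(x)) := \int_{\varphi(\Qb_r(x))} |\nabla u|\, dV
\]
on cubes contained in some $\Qb_{\rho_0}(0) \subset \B_1(0)$ chosen so that $\varphi(\Qb_{\rho_0/2}(0)) \supset B_{1/2}(p)$. Hypothesis (i) of Lemma \ref{morsecovering} follows from absolute continuity (after arguing via truncation/approximation that $\mathcal{S}$ is finite to begin with). The substance of the argument is the verification of hypothesis (ii): given $(m+1)$ disjoint cubes $\Qb_r(x_0),\dots,\Qb_r(x_m) \subset \Qb_{\rho_0}(0)$ at pairwise Euclidean distance $\geq D_0 r$, the kernel estimates of Propositions \ref{prop:kern1}--\ref{prop:kern2} yield $K_s(p,q) \leq C (D_0 r)^{-(n+s)}$ whenever $p,q$ lie in distinct patches $\mathcal{U}_i := \varphi(\Qb_r(x_i))$. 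A local adaptation of Lemma \ref{asineq} --- its proof uses only test functions supported in the pairwise disjoint $\mathcal{U}_i \subset B_1(p)$, so only the local Morse index matters --- then produces an index $i$ for which $u$ is $\Lambda$-almost stable in $\mathcal{U}_i$ with $\Lambda \leq m C (D_0 r)^{-(n+s)}$.

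Fixing $D_0 = D_0(n, s_0, m)$ large enough that $\Lambda r^{n+s} \leq \Lambda_0$, where $\Lambda_0(n, s_0)$ is the threshold from Proposition \ref{BVest}, and rescaling that proposition via Remark \ref{flatscalingrmk} to the cube of side $r$, one obtains
\[
\mathcal{S}(\Qb_{\theta r}(x_i)) \leq \frac{C}{1-s}\, r^{n-1}
\]
for a universal $\theta \in (0,1)$ (arising from the ratio $1/4$ in Proposition \ref{BVest} together with the comparison between inscribed balls and circumscribed cubes). This is precisely property (ii) of Lemma \ref{morsecovering} with $\beta = n-1$ and $M_0 = C(n,s_0,m)/(1-s)$. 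Applying the covering lemma then yields $\mathcal{S}(\Qb_{\rho_0/2}(0)) \leq C M_0$, which translates to the desired bound on $\int_{B_{1/2}(p)} |\nabla u|\, dV$.

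The main obstacle is the bookkeeping: one must check that the unit-scale flatness assumption on $M$ passes, after rescaling, to the flatness assumption needed by Proposition \ref{BVest} on each sub-patch (this follows from Remark \ref{flatscalingrmk}(c)); that the almost-stability constant $\Lambda$ scales correctly against the threshold $\Lambda_0/r^{n+s}$; and, crucially, that the constant produced by Lemma \ref{morsecovering} depends only on $n$, $\theta$, $m$, $\beta$, $D_0$ and is independent of $s$, so that the $(1-s)^{-1}$ factor already present in $M_0$ is the only source of blow-up as $s \uparrow 1$.
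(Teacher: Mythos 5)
Your proposal is correct and follows essentially the same route as the paper: feed the subadditive BV-type function into the covering Lemma \ref{morsecovering}, verify hypothesis (ii) by combining the kernel bounds with Lemma \ref{asineq} (which, as you correctly note, only uses test functions supported in the disjoint patches and hence applies under the local index bound) and the rescaled Proposition \ref{BVest}, then conclude. One small bookkeeping slip: it is not possible to choose $\rho_0$ with both $\Qb_{\rho_0}(0)\subset \B_1(0)$ and $\varphi(\Qb_{\rho_0/2}(0))\supset B_{1/2}(p)$; the paper instead shrinks the cube by a factor $\tfrac{1}{2\sqrt n}$ inside the definition of $\mathcal S$ (so the relevant cubes stay where Lemma \ref{loccomparability} applies) and recovers the bound on all of $B_{1/2}(p)$ by a standard covering argument at the end, which is the fix you should adopt.
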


\begin{proof} For a set $E\subset \R^n$ denote by $\lambda E := \{\lambda y \, : \, y \in E \}
$. Consider the subadditive function\footnote{The factor $\frac{1}{2\sqrt{n}}$ inside $\varphi \big( \frac{1}{2\sqrt{n}}\Qb \big)$ is needed to have $\frac{1}{2\sqrt{n}}\Qb \subset \B_{1/2}(0)$ for $\Qb \subset \Qb_1(0)$ in order to apply Lemma \ref{loccomparability}. } 
\begin{equation*}
    \mathcal{S}(\Qb) := \int_{\varphi \big(\frac{1}{2\sqrt{n}}\Qb \big)} |\nabla u| \, dV \,,
\end{equation*}
 defined on the cubes $\Qb \subset \Qb_1(0)$. 
 
 \vsp 
 \textbf{Claim.} $\mathcal{S}$ satisfies properties $(i)$ and $(ii)$ of Lemma \ref{morsecovering} with $M_0=C/(1-s)$, $\beta=n-1$, $\theta=1/8$, and $D_0$ depending only on $n$, $s_0$ and $m$.

 \vsp \noindent
 \textit{Proof of the claim.} The first property is clear from the definition of $\mathcal{S}$, since $u$ is smooth. The second property is a consequence of the Morse index of $u$ being at most $m$: 

\vsp
Indeed, let $\Qb_{r}(x_0), \Qb_r(x_1), \dotsc , \Qb_r(x_m) \subset \Qb_1(0) $ be $(m+1)$ disjoint cubes of the same side at pairwise distance at least $D_0r $, and let $q_i := \varphi(x_i)$. Then, since $\B_{r/2}(x_i) \subset \Qb_r(x_i)$ by Lemma \ref{asineq} and Lemma \ref{loccomparability} for at least one $\ell \in \{1, \dotsc m \}$, the inequality
\begin{equation*}
    \mathcal{E}''(u)[\xi,\xi] \ge - \frac{C_1 m}{(D_0r/2)^{n+s}} \|\xi \|^2_{L^1(B_{r/2}(q_\ell))} 
\end{equation*}
holds for all $\xi \in C_c^\infty(B_{r/2}(q_\ell))$, for some $C_1=C_1(n,s_0)$. That is, $u$ is a $\Lambda$-almost stable solution (in the sense of Definition \ref{almoststab}) in $B_{r/2}(q_\ell)$ with $\Lambda= \frac{C_1 m}{(D_0r/2)^{n+s}} $. Note that, in this case, on the rescaled manifold $ \widehat M := (M, (2/r)^2 g ) $ we have that $u$ is a $\Lambda  (r/2)^{n+s}$-almost stable solution of $(-\Delta)^{{s/2}} u + (2\ep/r)^{-s}W'(u) = 0 $ in $ \widehat B_{1}(q_\ell)$, and the flatness assumption ${\rm FA}_2(M,  (2/r)^2 g, 1 , q_\ell , \varphi_{x_\ell,r/2}) $ holds. 

\vsp
Let $\Lambda_0$ be the constant given by Proposition \ref{BVest}. Then, there exists $D_0 = D_0(n,s_0,m)>0$ so that $u$ is a $\Lambda$-almost stable solution of the Allen-Cahn equation in $\widehat B_1(q_\ell)$ with $\Lambda = \frac{C_1 m}{(D_0r/2)^{n+s}} \le \Lambda_0$, for $D_0$ sufficiently large. Hence, by Proposition \ref{BVest} we get 
\begin{equation*}
    \int_{\widehat B_{1/4}(q_\ell)} | \nabla u|_{\widehat g} \, d\widehat V \le \frac{C}{1-s} \,,
\end{equation*}
and, since $\varphi \big(\Qb_{\frac{r}{16\sqrt{n}}}(x_\ell) \big) \subset B_{r/8}(q_\ell)$, scaling back this inequality on $M$ gives
\begin{equation*}
    \mathcal{S}(\Qb_{r/8}(x_\ell)) \le \int_{ B_{r/8}(q_\ell)} |\nabla u| \, d V \le \frac{C}{1-s} r^{n-1} \,,
\end{equation*}
for some $C=C(n,s_0,m)$, and this concludes the proof of the claim.

\vsp
Hence, by Lemma \ref{morsecovering} 
\begin{equation*}
    \mathcal{S}(\Qb_{1/2}(0)) = \int_{\varphi \big(\Qb_{\frac{1}{4\sqrt{n}}}(0) \big)} |\nabla u| \, dV  \le \frac{C}{1-s} \,.
\end{equation*}
Now, the fact that the $BV$ estimate holds in $B_{1/2}(p)$ follows by ${\rm FA}_2(M,g,1,p,\varphi)$ and a standard covering argument, and this concludes the proof.
\end{proof}

As a corollary, simply by scaling we immediately get Theorem \ref{BVEst}.

\begin{proof}[Proof of Theorem \ref{BVEst}] Since ${\rm FA}_2(M,g,R,p,\varphi)$ holds, then the rescaled manifold $ \widehat{M} := (M,R^{-2}g)$ satisfies ${\rm FA}_2(M,R^{-2}g,1,p,\varphi_{0,R})$. Hence, Theorem \ref{BVunif1} gives
\begin{equation*}
    \int_{B_{1/2}(p)} |\nabla u|_{\widehat g} \, d\widehat V\le \frac{C}{1-s} \,,
\end{equation*}
for some $C=C(n,s_0,m)$. Scaling back this inequality on $M$ gives the result.

\end{proof}

\subsubsection{Density estimates -- Proof of Proposition \ref{DensEst}}\label{DensEstSection}

\begin{proof}[Proof of Proposition \ref{DensEst}]
Since the statement is scaling-invariant, we prove the result just for $R=1$. In what follows, $C,c>0$ denote constants depending only on $n$,$s$, and $m$ that can change from line to line, and, in general, $C$ is big and $c$ is small.

\vsp 
We argue by contradiction, suppose that $ \int_{B_1(p)} |1+u_\ep| \le \omega_0$ and that $\{u_{\ep} \ge -\tfrac{9}{10}\} \cap B_{1/2}(p) \neq \varnothing $, for some $1 \ge C_0 \ep$. The constant $C_0 =C_0(n,s,m)>0$ that will be chosen during the proof.

\vsp 
First, by continuity of $u_\ep$ and by taking $\omega_0< |B_{1/2}(p)|$, there will be a point
$q \in B_{1/2}(p)$  for which $|u_\ep(q)|\le \tfrac{9}{10}$.\\
{\bf Step 1.} Density lower bound.\\
 We first show:
 \textbf{Claim.} There exists $\alpha=\alpha(n,s) \in (0,1)$ such that 
 \begin{equation*}
     [u_\ep]_{C^\alpha(B_{\ep/3}(q)) } \le C \ep^{-\alpha} , \s \mbox{for all } \ep\le 1/10 . 
 \end{equation*}
Indeed, let $\eta \in C^{\infty}_c(\B_2(0))$ be a cutoff function with $\chi_{\B_{3/2}(0)}\le \eta \le \chi_{\B_{2}(0)}$. Then, the function $\widetilde u(x) : = u_\ep(\varphi(\varphi^{-1}(q) + \ep x))\eta(x)$ is well defined on the whole $\R^n$, since $\widetilde u$ depends only on the values of  $u_\ep$ in  $\varphi(\B_{2\ep}(\varphi^{-1}(q)))\subset \varphi(B_1) $. Now, by the flatness assumption ${\rm FA}_2(M,g,1,p,\varphi)$ we have that $\widetilde u $ satisfies $|L\widetilde u|\le C$ in $\B_1(0)$, where the kernel of $L$ satisfies \eqref{cdcdcd} by Proposition \ref{prop:kern1}---see in particular inequality \eqref{remaining0}. Hence, by Lemma \ref{fracholest} we have $ [\widetilde u ]_{C^\alpha(\B_{1/2}(0))}  \le C $, thus $[u_\ep]_{C^\alpha(B_{\ep/3}(q)) } \le C \ep^{-\alpha}$ as desired. 

\vsp
By the claim, for all $\ep$ sufficiently small (depending on $n$ and $s$), we have $ |u_\ep|\leq \tfrac{19}{20}$ in the ball $B_{\ep}(q)$. Using that $W(u)=\frac{1}{4}(1-u^2)^2$, we deduce
\begin{equation*}
  \ep^{s-n} \cdot \ep^{-s}\int_{B_{\ep}(q)} W(u_\ep) \,dV \geq c_n > 0
\end{equation*}
for some dimensional constant $ c_n $.

\vsp
Let now $U$ be the Caffarelli-Silvestre extension of $u_\ep$ in $ \ov{M} = M\times (0, \infty)$. The previous lower bound on the potential energy in $B_{\ep}(q)$ leads to 

\begin{equation*}
     \ep^{s-n} \widetilde{\mathcal E}_{\widetilde B^+_{\ep}}(U)  =\ep^{s-n}\left(\frac{\beta_{s}}{2}\int_{\widetilde B^+_{\ep}(q,0)}z^{1-s}|\nabla U|^2 \,dVdz + \ep^{-s}\int_{B_{\ep}(q)} W(u_\ep) \, dV \right) \geq c_n .
\end{equation*}
{\bf Step 2.} Monotonicity and almost-stable annulus.\\
By the monotonicity formula of Theorem \ref{monfor}, for $\lambda \in (0,1)$, we deduce that
\begin{equation*}
   \frac{\widetilde{\mathcal E}_{\widetilde B^+_{\rho}}(U)}{\rho^{n-s}} \ge  c \frac{\widetilde{\mathcal E}_{\widetilde B^+_{\lambda \rho}}(U)}{( \lambda \rho)^{n-s}} , \s \s \forall \, \rho\in (0, R_{\rm mon}) , 
\end{equation*}
where $R_{\rm mon}$ is the radius given by the monotonicity formula and can be taken to be $R_{\rm mon} = \inj_M(q)/4$---see Remark \ref{injradrmk}---and thus by hypothesis is $R_{\rm mon} \ge 1/8 $. In particular, subtracting $\rho^{s-n}\widetilde{\mathcal E}_{\widetilde B^+_{\lambda \rho}}(U)$ in both sides and using Step 1 gives
\begin{equation}\label{eq: mon for annulus lower bound}
   \frac{\widetilde{\mathcal E}_{\widetilde B^+_{\rho} \setminus \widetilde B^+_{\lambda \rho}}(U)}{\rho^{n-s}} \ge  c \big(1-c^{-1}\lambda^{n-s} \big) \frac{\widetilde{\mathcal E}_{\widetilde B^+_{\lambda \rho}}(U)}{( \lambda \rho)^{n-s}} \ge \frac{c c_n}{2} , 
\end{equation}
provided $\ep \le \lambda \rho$, $\rho <1/8$, and $c^{-1}\lambda^{n-s}\le  \frac{1}{2}$. Define $\lambda_0=\min\{(c/2)^{4/(n-s)},\frac{1}{4}\}$; in particular, $c^{-1}\lambda_0^{(n-s)/4}\le  \frac{1}{2}$.\\
\vsp

In other words, we also have a lower density bound on (appropriate) annuli instead of balls. We want to find an almost stable one: Let $\ell \ge 0$ be an integer that will be chosen later sufficiently large, depending only on $m$, $n$, and $s$. For $k  \in \{ \ell, \ell+1 \dotsc , \ell +  m \}$, consider the annuli 
\begin{equation*}
    A_k := B_{\lambda_0^{2k+3}} (q) \setminus B_{\lambda_0^{2k+4}} (q) , \s \mbox{ and } \s  \widetilde A_k^+ := \widetilde B^+_{\lambda_0^{2k+3}} (q,0) \setminus \widetilde B^+_{\lambda_0^{2k+4}} (q,0) . 
\end{equation*}
These are $(m+1)$ disjoint annuli at pairwise distance at least $\lambda_0^{2(m+\ell)+3}$. Since $u$ has Morse index at most $ m$ by hypothesis, by Lemma \ref{asineq} there is one of these annuli, say $A_k$, where $u$ is $\Lambda$-almost stable in $A_k$ with 
\begin{equation*}
    \Lambda = m \max_{i\neq j} \sup_{A_i \times A_j} \frac{C}{d(x,y)^{n+s}} \le C(m,\lambda_0).
\end{equation*}
{\bf Step 3.} Conclusion.\\
Set $\rho_k:= \lambda_0^{2k+3} $, i.e. the outer radius of $A_k$. Observe that ${B_{\lambda_0^{1/4} \rho_k } \setminus B_{\lambda_0^{1/2} \rho_k }}\subset A_k$. By Lemma \ref{sob-contr-pot}---applied to a suitable finite cover of ${B_{\lambda_0^{1/4} \rho_k } \setminus B_{\lambda_0^{1/2} \rho_k }}$ by balls--- for every $a \in (0,1)$ we have 
\begin{align*}
    \rho_k^{s-n} \mathcal{E}^{\rm Pot}_{B_{\lambda_0^{1/4} \rho_k } \setminus B_{\lambda_0^{1/2} \rho_k } }(u_\ep)  & \le \frac{C}{a} \rho_k^{s-n} \int_{\widetilde A^+_k}z^{1-s}|\nabla U|^2 \,dVdz + Ca + C\ep^s \rho_k^{-n} \mathcal{E}^{\rm Pot}_{A_k}(u_\ep) \\ & \le \frac{C}{a} \rho_k^{s-n} \int_{\widetilde A^+_k}z^{1-s}|\nabla U|^2 \,dVdz + Ca + C\ep^s . 
\end{align*}
By \eqref{eq: mon for annulus lower bound} applied with $\lambda = \lambda_0^{1/4}$ and $\rho=\lambda_0^{1/4}\rho_k$, and using that 
\begin{equation*}
    \widetilde B^+_{\lambda_0^{1/4}\rho_k } \setminus \widetilde B^+_{\lambda_0^{1/2}\rho_k }  \subset \widetilde A_k^+ , 
\end{equation*}
this implies
\begin{align*}
    c & \le  \big( \lambda_0^{1/4}\rho_k \big) ^{s-n} \widetilde{\mathcal E}_{\widetilde B^+_{\lambda_0^{1/4}\rho_k } \setminus \widetilde B^+_{\lambda_0^{1/2}\rho_k }} (U) \\ & \le C \rho_k^{s-n} \int_{\widetilde A^+_k}z^{1-s}|\nabla U|^2 \,dVdz  + C  \rho_k^{s-n} \mathcal{E}^{\rm Pot}_{B_{\lambda_0^{1/4}\rho_k } \setminus B_{\lambda_0^{1/2}\rho_k} }(u_\ep) \\ &  \le C \rho_k^{s-n} \int_{\widetilde A^+_k}z^{1-s}|\nabla U|^2 \,dVdz +  \frac{C}{a} \rho_k^{s-n} \int_{\widetilde A^+_k}z^{1-s}|\nabla U|^2 \,dVdz + Ca + C\ep^s  \\ &  \le C  \rho_k^{s-n} \int_{\widetilde A^+_k}z^{1-s}|\nabla U|^2 \,dVdz + \frac{c}{4} +  \frac{c}{4} , 
\end{align*}
provided we take $a$ and $\ep$ sufficiently small (depending only on $m,n$ and $s$). Hence, absorbing the last two terms to the left and using Lemma \ref{lem:whtorwohh} with $k=1$, we get 
\begin{align*}
    \frac{c}{2} & \le C  \rho_k^{s-n} \int_{ \widetilde B^+_{\rho_k} } z^{1-s}|\nabla U|^2 \,dVdz \\ & \le  \frac{C}{r^s} + C \bigg(\rho_k^{-n}\int_{B_{  r \rho_k}(q)} |1+u_\ep| \bigg)^{1-s}  \bigg( \rho^{1-n}\int_{B_{  r \rho_k}(q)} |\nabla u_\ep|\bigg)^s , 
\end{align*}
for all $r\ge 1$ provided $ 2 r \rho_k \le 1/2$.

\vsp 
Choosing $r = (c/4C)^{-1/s} $, we can absorb the first term to the left in the last inequality. After doing so, here we have to choose $\ell$ large (depending only on $m,n$ and $s$) in order to have, with the previous choice of $r$, that
\begin{equation*}
    r \rho_k \le   r\lambda_0^{2l+3} \le 1/2 . 
\end{equation*}
With these choices 
\begin{equation*}
    \frac{c}{4 } \le C \omega_0^{1-s} \bigg( \int_{B_{1/2}(q)} |\nabla u_\ep|\bigg)^s ,
\end{equation*}
and by the BV-estimate of Theorem \ref{BVEst} we reach a contradiction if the density $\omega_0$ is too small. This concludes the proof.  
\end{proof}

From the proof we can extract an additional auxiliary result.
\begin{proposition}
    Let $u:M\to (-1,1)$ be a solution of (\ref{restrictedeq}) in $B_{R}(p)\subset M$ with Morse index $m_{B_{R}(p)}(u)\leq m$, and suppose that $M$ satisfies the flatness assumption ${\rm FA}_2(M,g,R,p,\varphi)$. Then, there exist positive constants $C_0$ and $\ep_0$, depending only on $n$, $s$, and $m$, such that the following holds: whenever $\ep \le \ep_0$ and $R \ge C_0\ep$, if for some $q\in B_{R/2}(p)$ we have $|u_\ep(q)|\le \tfrac{9}{10}$ then
\begin{equation}\label{revBV}
    \int_{B_{R/2}(q)} |\nabla u_\ep| \, dV \ge c_0 R^{n-1} \,,
\end{equation}
for some $c_0=c_0(n,s,m)$. This fact will be useful in the proof of Proposition \ref{PotBound} below.
\end{proposition}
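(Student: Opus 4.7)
The plan is to extract this estimate directly from the proof of Proposition~\ref{DensEst}, where all the essential ingredients already appear. By scaling, I may assume $R=1$ throughout, so the hypothesis becomes $\ep\le 1/C_0$, and I fix a point $q\in B_{1/2}(p)$ with $|u_\ep(q)|\le 9/10$.

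First I would produce a lower bound on the potential energy at the smallest scale $\ep$. A standard Hölder estimate for nonlocal equations (using the fractional Hölder estimate in Lemma~\ref{fracholest} applied to the rescaled function $\widetilde u(x)=u_\ep(\varphi(\varphi^{-1}(q)+\ep x))\eta(x)$, whose ``nonlocal Laplacian'' is uniformly bounded thanks to the kernel estimates in Proposition~\ref{prop:kern1}) yields $[u_\ep]_{C^\alpha(B_{\ep/3}(q))}\le C\ep^{-\alpha}$ for some $\alpha=\alpha(n,s)$. Hence, provided $\ep\le\ep_1(n,s)$, I obtain $|u_\ep|\le 19/20$ throughout $B_\ep(q)$, and consequently
\[
\ep^{-s}\int_{B_\ep(q)} W(u_\ep)\,dV\ge \theta\,\ep^{n-s}
\]
for some absolute $\theta>0$. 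Writing $\widetilde{\mathcal E}_\rho(U)$ for the combined Sobolev-plus-potential energy on the extended half-ball of radius $\rho$, this gives $\ep^{s-n}\widetilde{\mathcal E}_\ep(U)\ge \theta$.

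Next, I would apply the monotonicity formula of Theorem~\ref{monfor} to propagate this lower bound upward in scale. Since the flatness assumption at scale $1$ controls the injectivity radius from below (Remark~\ref{injradrmk}), the monotonicity is valid up to a scale $R_{\mathrm{mon}}\ge 1/8$, producing
\[
\rho^{s-n}\widetilde{\mathcal E}_\rho(U)\ge \theta/C_1\qquad\text{for all }\ep\le \rho\le 1/8,
\]
with $C_1=C_1(n)$. Fix now any $\rho=c_*(n,s,m)\in(0,1/8)$ small enough to be chosen shortly, and note the requirement $\ep\le\rho$ is equivalent to $\ep_0$ being sufficiently small and $R\ge C_0\ep$ being sufficiently big, which enters into the final choice of $\ep_0$ and $C_0$.

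With the Sobolev-controls-potential estimate of Lemma~\ref{sob-contr-pot}, choosing $a=\theta/(4CC_1)$ and $\ep^s\le \theta/(4CC_1)$ (which dictates a further smallness condition on $\ep_0$), I absorb the potential part and the $a$-term into $\theta/(2C_1)$, obtaining
\[
\rho^{s-n}\int_{\widetilde B_{2\rho}^+(q,0)} z^{1-s}|\widetilde\nabla U|^2\,dV\,dz\ge \frac{\theta}{2CC_1}.
\]
Finally I would invoke Lemma~\ref{lem:whtorwohh} with $k=1$ to pass from the extended Dirichlet energy to the BV norm on $M$:
\[
\rho^{s-n}\int_{\widetilde B_{2\rho}^+}z^{1-s}|\widetilde\nabla U|^2\le \frac{C}{r^s}+C\Bigl(\rho^{-n}\int_{B_{4r\rho}(q)}|1+u_\ep|\Bigr)^{1-s}\Bigl(\rho^{1-n}\int_{B_{4r\rho}(q)}|\nabla u_\ep|\Bigr)^{s},
\]
valid for any $r\ge 1$ with $4r\rho\le 1/2$. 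Now I fix $r=r(n,s,m)$ large enough that $C/r^s\le \theta/(4CC_1)$, then fix $\rho=1/(8r)$, and use the trivial bound $|1+u_\ep|\le 2$ to estimate $\rho^{-n}\int_{B_{4r\rho}(q)}|1+u_\ep|\le C(n)$. The resulting inequality rearranges to
\[
\int_{B_{1/2}(q)}|\nabla u_\ep|\,dV\ge c_0(n,s,m)>0,
\]
and scaling back to radius $R$ produces the claimed $c_0 R^{n-1}$. The main obstacle is the bookkeeping of constants so that the choice of $\rho$, $r$, $\ep_0$, and $C_0$ is self-consistent and depends only on $n,s,m$; once this is organised, the argument is essentially the same sequence of estimates that appears in the proof of Proposition~\ref{DensEst}, simply read ``from the other end''.
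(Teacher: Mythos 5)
Your overall strategy is exactly the paper's: the paper proves this proposition by the one-line instruction to ``repeat the proof of Proposition~\ref{DensEst} from the point where $q$ is found to the last line, using $|u_\ep|\le 1$ to bound the density instead of $\omega_0$.'' That is precisely what you do, and the final rearrangement step (replace $\omega_0^{1-s}$ by a dimensional constant and read the last inequality as a lower bound on the $BV$ norm) is correct.

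However, there is a genuine gap in your reproduction of the intermediate chain. After invoking Lemma~\ref{sob-contr-pot} with $a=\theta/(4CC_1)$, the right-hand side contains the term $C(\ep/\rho)^s\rho^{s-n}\mathcal E^{\rm Pot}_{B_{2\rho}}(u_\ep)$. You claim this is absorbed simply by requiring $\ep^s\le\theta/(4CC_1)$, but that term \emph{does not} shrink with $\ep$: unwinding the definitions gives
\[
(\ep/\rho)^s\,\rho^{s-n}\,\mathcal E^{\rm Pot}_{B_{2\rho}}(u_\ep)
= \rho^{-n}\int_{B_{2\rho}(q)} W(u_\ep)\,dV \le C(n),
\]
a quantity bounded by a dimensional constant that in general stays of order one as $\ep\to 0$. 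So the inequality $\rho^{s-n}\int_{\widetilde B_{2\rho}^+}z^{1-s}|\widetilde\nabla U|^2 \ge \theta/(2CC_1)$ does not follow from a single application of Lemma~\ref{sob-contr-pot}. The proof of Proposition~\ref{DensEst} handles this by applying Lemma~\ref{sob-contr-pot} a \emph{second} time, with $a=1$ at the next dyadic scale, to rewrite $\rho^{s-n}\mathcal E^{\rm Pot}_{B_{2\rho}}$ as $C\big(\rho^{s-n}\int_{\widetilde B_{4\rho}^+}z^{1-s}|\widetilde\nabla U|^2 + 1\big)$; only after multiplying by the prefactor $(\ep/\rho)^s$ does one obtain a genuinely small remainder $C(\ep/\rho)^s$, which can then be absorbed by taking $\ep_0$ small. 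Note that this is also why the extended ball radius on the left must become $4\rho$, not $2\rho$ as you wrote --- the mismatch is a symptom of the missing step. With that second application restored, the rest of your argument (Lemma~\ref{lem:whtorwohh}, the choice $r=(\theta/(4CC_1))^{-1/s}$ and $\rho=1/(8r)$, the trivial density bound, and scaling back to $R$) goes through as intended.
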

\begin{proof}
    It follows by simply repeating the proof of Proposition \ref{DensEst} above, from when we found a point $q\in B_{1/2}(p)$ with $|u_\ep(q)|\le \tfrac{9}{10}$ to the very last line, and using that $|u_\ep|\le 1 $ to estimate the density $\int_{B_{1/2}(q)} |1+u_\ep|$ from above instead of using the bound $\omega_0$.
\end{proof}

\subsubsection{Decay of \texorpdfstring{$\mathcal{E}^{\rm Pot}$}{} -- Proof of Theorem \ref{A-C-energy-est}}\label{PotDecSection}

\begin{lemma}\label{lemdecay}
Let $s \in (0, 1)$, $p \in M$, and assume that ${\rm FA}_2(M,g, p,R,\varphi)$ holds; recall Definition \eqref{flatnessassup}. Let $u_\ep:M \to (-1,1)$ be a solution of (\ref{restrictedeq}) in $B_R(p)$. Then, there exist positive constants $C=C(n,s)$ such that, if $ \ep < 1 $ and $1-|u_\ep| \le \tfrac{1}{10} $ in $\varphi(\B_R(0))$, then
\[
0\le 1-|u_\ep| \le C (\ep/R)^s \quad \mbox{in } \, \varphi (\B_{R/2}(0)).
\] 
\end{lemma}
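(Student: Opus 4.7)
The plan is a barrier-and-comparison argument. Using the scaling property in Remark \ref{flatscalingrmk} (replacing $(M,g)$ by $(M, R^{-2}g)$ and $\ep$ by $\ep/R$), I first reduce to $R=1$: it suffices to show $1 - |u_\ep| \le C\ep^s$ in $\varphi(\B_{1/2}(0))$ under ${\rm FA}_2(M, g, 1, p, \varphi)$. By the symmetry $W(-u) = W(u)$, I assume $u_\ep \ge 9/10$ in $\varphi(\B_1(0))$ (the opposite case being identical), and set $v := 1 - u_\ep$, so $v \in [0, 1/10]$ there and $v \in [0, 2]$ on $M$. Since $-W'(u) = u(1-u)(1+u) = (1-v)(2-v)\,v \ge c_0\,v$ with $c_0 := \tfrac{171}{100}$ whenever $v \in [0,1/10]$, the Allen--Cahn equation rewrites as
\[
\mathcal{L} v \;:=\; (-\Delta)^{s/2} v + c_0\,\ep^{-s}\,v \;\le\; 0 \qquad \text{in } \varphi(\B_1(0)).
\]
Thus $v$ is a subsolution of $\mathcal{L}$ inside, globally bounded by $2$, and the task is to dominate it by a supersolution of $\mathcal{L}$ of size $\sim \ep^s$ in $\varphi(\B_{1/2}(0))$.

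To construct one, I would fix once and for all a smooth radial profile $\eta$ on $\B_1(0)$ with $\eta \equiv 0$ on $\B_{3/5}(0)$ and $\eta \equiv 2$ on $\B_1(0) \setminus \B_{7/10}(0)$, and define $\psi : M \to [0,2]$ by $\psi \circ \varphi = \eta$ on $\varphi(\B_1(0))$ and $\psi \equiv 2$ on $M \setminus \varphi(\B_1(0))$; the two definitions match (both equal to $2$) in a neighborhood of $\partial \varphi(\B_1(0))$, so $\psi \in C^\infty(M)$. The key claim is that the flatness assumption together with the kernel estimates of Proposition \ref{prop:kern1} and Lemma \ref{loccomparability} yields
\[
|(-\Delta)^{s/2} \psi(p)| \;\le\; C_1(n,s) \qquad \text{for all } p \in \varphi(\B_{3/4}(0)),
\]
by splitting the defining integral into a local piece over $\varphi(\B_{4/5}(0))$ (bounded by the standard $C^2$-estimate for the fractional Laplacian of a smooth compactly supported function, using $K_s \lesssim |x-y|^{-n-s}$) and a tail piece over $M \setminus \varphi(\B_{4/5}(0))$ (bounded by \eqref{remaining3} and $|\psi| \le 2$). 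Setting $\bar v := A\ep^s + \psi$ with $A := C_1/c_0$, a direct computation gives
\[
\mathcal{L}\bar v \;=\; (-\Delta)^{s/2}\psi + c_0 A + c_0 \ep^{-s}\psi \;\ge\; -C_1 + c_0 A \;\ge\; 0 \qquad \text{in } \varphi(\B_{3/4}(0)),
\]
while $\bar v \ge 2 \ge v$ on $M \setminus \varphi(\B_{3/4}(0))$.

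To conclude, I would invoke the comparison principle for $\mathcal{L}$: at any positive interior maximum $p_0 \in \varphi(\B_{3/4}(0))$ of $v - \bar v$, both $(-\Delta)^{s/2}(v-\bar v)(p_0) \ge 0$ (from the singular-integral representation, since $v-\bar v$ attains its maximum at $p_0$) and $c_0 \ep^{-s}(v-\bar v)(p_0) > 0$ hold, yielding $\mathcal{L}(v-\bar v)(p_0) > 0$ and contradicting $\mathcal{L} v - \mathcal{L} \bar v \le 0$. Hence $v \le \bar v$ in $\varphi(\B_{3/4}(0))$, and since $\psi \equiv 0$ on $\varphi(\B_{1/2}(0))$, we obtain $1 - u_\ep \le A\ep^s$ there, giving the bound with $C = A$ depending only on $n$ and $s$. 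The main technical obstacle is to keep the barrier constant $C_1$ depending only on $n$ and $s$ rather than on the ambient $(M, g)$: this is exactly what the flatness assumption ${\rm FA}_2$ delivers, through Proposition \ref{prop:kern1}.
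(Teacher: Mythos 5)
Your proof is correct and follows essentially the same strategy as the paper's: reduce to $R=1$ and $u_\ep\ge 9/10$, observe that $v=1-u_\ep$ is a subsolution of $(-\Delta)^{s/2}+\,c\,\ep^{-s}$ inside the chart, build a barrier of the form (smooth cutoff equal to $0$ near the center and to $2$ near the boundary) $+\,C\ep^s$ whose fractional Laplacian is controlled uniformly via the flatness assumption and the kernel estimates, and conclude by the maximum principle. The only cosmetic differences are the choice of cutoff radii and the explicit constant $c_0$; the paper uses $\xi$ with $\chi_{\B_{4/5}}\le 1-\xi\le\chi_{\B_1}$ and the cruder lower bound $u(1+u)\ge 1/2$, and it is in fact slightly sketchier than you are about where the bound on $(-\Delta)^{s/2}$ of the barrier holds (it states the kernel estimate only for $q\in\varphi(\B_{1/2})$ but compares in $\varphi(\B_1)$), so your more careful treatment of that point is if anything an improvement.
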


\begin{proof}
Since the statement is scaling-invariant, we assume $R=1$. Suppose in addition that $\tfrac{9}{10} \le u_\ep \le 1$ in $\varphi(\B_1(0))$; the case $-1 \le u_\ep \le -\tfrac{9}{10}$ can be reduced to the previous by the even symmetry of $W$ (i.e., replacing $u_\ep$ by $-u_\ep$). Then, since $u_\ep$ solves \eqref{restrictedeq} we see that $v: = 1-u_\ep$ satisfies 
\begin{equation*}
    L v := (-\Delta)^{s/2} v + \frac{1}{2\ep^s} v \le 0 , \s \mbox{in }\,\varphi(\B_{1}(0)).
\end{equation*}

Now we simply build a barrier from above for $v$. Fix a smooth function $\xi_\circ \in C^{\infty}(\R^n) $ such that $\chi_{\B_{1/2}(0)} \le 1-\xi_\circ \le \chi_{\B_{3/4}(0)}$ and consider the function $ \xi := \xi_\circ \circ \varphi^{-1}$ defined on $M$, considered to be identically $1$ outside $\varphi(\B_{1}(0))$. Since ${\rm FA}_2(M,g, p,1,\varphi)$ holds, we have that ${\rm FA}_2(M,g, q,1/10, \varphi_{\varphi^{-1}(q), 1/10})$ holds for every $q\in \varphi(\B_{3/4}(0))$ (see $(c)$ in Remark \ref{flatscalingrmk}). Then, for every $q \in \varphi(\B_{3/4}(0))$ we have
\begin{align*}
\big|(-\Delta)^{s/2} \xi   \big|(q) & \le \int_{M} | \xi (q) - \xi(p)  | K_s(p,q) \, dV_p   \\ & \le C \int_{B_{1/10}(q)} {\rm dist}(p,q) K_s(p,q) \, dV_p + 2 \int_{M\setminus B_{1/10}(q)}  K_s(p,q) \, dV_p \le C_0 ,
\end{align*}
for some $C_0 >0$ that depends only on $n$ and $s$. The last estimate follows, respectively, from Lemma \ref{loccomparability} and Theorem \ref{prop:kern1} (in particular, from \eqref{remaining3}). Then, using that $\xi  \ge 0$, we have 
\begin{equation*}
    L (\xi  +2C_0 \ep^s) = (-\Delta)^{s/2} \xi  + \frac{1}{2\ep^s}(\xi  + 2C_0 \ep^s ) \ge -C_0+C_0=0 , \s \mbox{in } \, \varphi(\B_{3/4}(0)).
\end{equation*}
Since $\xi  +2C_0\ep^s > 1 \ge v$ in 
$M\setminus \varphi(\B_{3/4}(0))$ we get, by the maximum principle, that $v \le \xi  +2C_0\ep^s $ in $\varphi(\B_{3/4}(0)) $. Hence, using that $ \xi \equiv 0$ in $\varphi(\B_{1/2}(0))$, we have shown that $ 1-u_\ep \le 2C_0 \ep^s  $ in $\varphi(\B_{1/2}(0))$, as desired.
\end{proof}

The following proposition shows the quantitative convergence to zero, as $\ep \searrow 0$, of the potential energy of finite index solutions to the A-C equation \eqref{restrictedeq}. The statement and proof are inspired by the ones of Proposition 6.2 in \cite{CCS}, which deals with stable solutions of the fractional Allen-Cahn equation in $\R^n$. We moreover simplify the proof in \cite{CCS}, by using the lower bound \eqref{revBV} on the $BV$ norm that we have obtained as a byproduct of (the proof of) Proposition \ref{DensEst}.

\begin{proposition}
\label{PotBound} Let $s \in (0, 1)$, $p \in M$, and assume that the flatness assumption ${\rm FA}_2(M,g, p, R,\varphi)$ holds. Let $u_\ep:M\to(-1,1)$ be a solution of \eqref{restrictedeq} in $B_{R}(p)\subset M$ with Morse index $m_{B_{R}(p)}(u_\ep) \leq m$. Then, there exist constants $C$ and $\ep_0$, depending only on $n$, $s$, and $m$, such that for all $\ep \le \ep_0$:
\[
 \ep^{-s}\int_{B_{R/2}(p)} W(u_\ep)\,dV  \le C R^{n-s} (\ep/R)^{\beta} \,,
\]
where  $\beta := \min\big(\frac{1-s}{2}, s \big) >0$.
\end{proposition}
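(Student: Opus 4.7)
The plan is to reduce to $R=1$ by scaling and then split $B_{1/2}(p)$ into a thin ``transition'' layer around the zero set of $u_\ep$ and its complement, estimating each contribution to the potential energy separately. By the rescaling properties in Remark \ref{flatscalingrmk} it suffices to prove the statement for $R=1$, i.e.\ to show $\ep^{-s}\int_{B_{1/2}(p)} W(u_\ep)\,dV \le C\ep^{\beta}$. Set $T := \{q \in B_{3/4}(p) : |u_\ep(q)|\le \tfrac{9}{10}\}$ and $d(q) := \textnormal{dist}_g(q, T)$, and let $C_0 = C_0(n,s,m)$ be the density-estimate constant from Proposition \ref{DensEst}.

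Two ingredients enter. First, pointwise decay of $W(u_\ep)$ away from $T$: for any $q\in B_{1/2}(p)$ with $d(q)\ge 2C_0\ep$ the ball $B_{d(q)}(q)$ lies in $\{|u_\ep|>\tfrac{9}{10}\}$, so a unit-radius rescaling of Lemma \ref{lemdecay} gives $1-|u_\ep(q)|\le C(\ep/d(q))^s$ and hence $W(u_\ep)(q)\le C(\ep/d(q))^{2s}$. Second, a measure bound on the $\rho$-neighborhood of $T$: combining the BV estimate $\int_{B_1(p)}|\nabla u_\ep|\,dV\le C$ of Theorem \ref{BVEst} with the reverse lower bound \eqref{revBV}, which after rescaling is available at each scale $\rho\ge 2C_0\ep$ around every $q'\in T$ (the Morse-index bound transports to subdomains directly from Definition \ref{MorseDef}, and the ${\rm FA}_2$ hypothesis transports under rescaling by Remark \ref{flatscalingrmk}(c)), a standard Vitali covering yields $|\{q\in B_{1/2}(p):d(q)\le\rho\}|\le C\rho$ for every $\rho\ge 2C_0\ep$; in particular $|T^\ast|\le C\ep$, where $T^\ast := \{d\le 2C_0\ep\}$.

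Putting these together, on $T^\ast$ the trivial bound $W\le C$ yields $\ep^{-s}\int_{T^\ast}W(u_\ep)\le C\ep^{1-s}$, while on the complement the pointwise decay together with layer-cake and the linear growth $|\{d\le t\}|\le Ct$ gives
\[
\ep^{-s}\int_{\{d>2C_0\ep\}}W(u_\ep)\,dV \;\le\; C\ep^s\int_{\{d>2C_0\ep\}}d(q)^{-2s}\,dq \;\le\; C\ep^s\int_{2C_0\ep}^1 t^{-2s}\,dt \;\le\; C\max(\ep^s,\ep^{1-s}),
\]
modulo a logarithmic correction at the borderline value $s=\tfrac12$. Combining the two contributions yields $\ep^{-s}\int W(u_\ep)\le C(\ep^{1-s}+\ep^{\min(s,1-s)})$, which is stronger than and hence implies the stated bound with $\beta=\min((1-s)/2,s)$; the looser exponent $(1-s)/2$ is what conveniently absorbs the log blow-up at $s=\tfrac12$. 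The main technical point to verify is the Vitali/\eqref{revBV} measure bound on the sublevel sets of $d$, which requires checking that both the Morse-index and flatness hypotheses survive rescaling on each subball $B_\rho(q')$ appearing in the cover; both checks are immediate, and the remainder of the argument is a straightforward combination of the decay, the covering, and layer-cake integration.
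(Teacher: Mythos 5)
Your proof is correct and uses essentially the same ingredients and structure as the paper's: the distance to the transition set $\{|u_\ep|\le\tfrac{9}{10}\}$ (the paper's $r_q$ is precisely this, capped above by $R/16$ and below by $C_0\ep$), the decay estimate of Lemma~\ref{lemdecay}, and a Vitali-type covering whose cardinality is controlled by combining the BV estimate of Theorem~\ref{BVEst} with the reverse lower bound~\eqref{revBV}. The only difference is organizational rather than conceptual --- you phrase the counting as a distribution-function bound $|\{d\le t\}|\le Ct$ and integrate by layer-cake, whereas the paper dyadically decomposes the scales into $X_k=\{r_q\in(2^kR,2^{k+1}R]\}$ and sums a geometric series, deliberately using $\alpha=\min\big(\tfrac{1+s}{2},2s\big)<1$ in place of $2s$ so the series converges without logarithms; as you note, the continuous version in fact yields the marginally sharper exponent $\min(s,1-s)$ away from $s=\tfrac12$.
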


\begin{proof}
Given $q\in B_{R/2}(p)$, let 
\[ 
r_q : = \max ( \min\big( \tfrac{R}{16} , \tfrac 1 {2} {\rm dist} (q, \{|u|\le \tfrac {9}{10}\})),  C_0\ep),
\] 
where $C_0>0$ is a large constant, depending only on $n$ and $s$, to be chosen later. 

Observe that, if $ \tfrac{R}{16} \le C_0\ep$ then
\[
(\ep/R)^{-s} \int_{B_{R/2}(p)}  W(u_\ep)\,dV  \le (16 C_0)^s  ({\textstyle \max_{[-1,1]} }W )   |B_{R/2}(p)|   \le  C R^n  \le C  C_0^{\beta}   (\ep/R)^{\beta}R^n.
\]
Thus, we may (and do) assume that $ \tfrac{R}{16} > C_0\ep$. In particular, $r_q\in \big[C_0\ep, \tfrac{R}{16} \big]$ for all $q\in B_{R/2}(p)$. 

\vspace{3pt}

\textit{Claim.} For some constant $c=c(n,s,m)>0$, there holds 
\begin{equation} \label{whtoiwhoewih}
\int_{B_{4r_q} (q)} |\nabla u_\ep| \ge c  (r_q)^{n-1} \quad \mbox{whenever } r_q< \tfrac{R}{16}  .
\end{equation}
Indeed, if $r_q < \tfrac{R}{16}$ then necessarily $ \tfrac{R}{16} \ge  \tfrac 1 {2} {\rm dist} (q, \{|u|\le \tfrac {9}{10}\}) $, otherwise we would obtain 
\begin{equation*}
   r_q  = \max ( \min\big( \tfrac{R}{16} , \tfrac 1 {2} {\rm dist} (q, \{|u|\le \tfrac {9}{10}\})),  C_0\ep) = \max ( \tfrac{R}{16},  C_0\ep) \ge \tfrac{R}{16} , 
\end{equation*}
which contradicts that $r_q < \tfrac{R}{16} $. Hence $ \tfrac{R}{16} \ge  \tfrac 1 {2} {\rm dist} (q, \{|u|\le \tfrac {9}{10}\}) $ and 
\begin{equation*}
   r_q  =  \max ( \tfrac 1 {2} {\rm dist} (q, \{|u|\le \tfrac {9}{10}\})),  C_0\ep) \ge \tfrac 1 {2} {\rm dist} (q, \{|u|\le \tfrac {9}{10}\})) . 
\end{equation*}
Thus, since also $q\in B_{R/2}(p)$, there exists $q' \in \{|u_\ep| \le \tfrac{9}{10} \} \cap B_{\frac{R}{2} + \frac{R}{16}}(p)$ such that ${\rm dist}(q,q') \le 2r_q$. Then, choosing $\ep_0$ small and $C_0$ big (depending only on $n,s$ and $m$) according to the constants in Proposition \ref{DensEst}, by \eqref{revBV} we have 
\begin{equation*}
    \int_{B_{4r_q}(q)} |\nabla u_\ep| \ge \int_{B_{2r_q}(q')} |\nabla u_\ep| \ge c(2r_q)^{n-1} ,
\end{equation*}
and the claim is proved.

\vsp 
We now produce a covering of $B_{R/2}(p)$ by some of the balls  $\{B_{r_q}(q) \}_{q \in B_{R/2}(p)}$  as follows. Given $k \le -5$, let $X_k := \{q\in B_{R/2}(p) \,: \,r_q \in (2^kR, 2^{k+1}R]\}$ and let $ \mathcal J_k $ be a discrete index set such that $ \{ q_j^k \}_{j \in \mathcal J_k} $ forms a maximal subset of $ X_k $ with the property that the balls $ B_{r(q_j^k)/4}(q_j^k) $ are pairwise disjoint, where we denote $ r(q_j^k) := r_{q_j^k}$. By construction of $ X_k $, it follows that  
\begin{equation*}
    X_k \subset \bigcup_{j\in \mathcal J_k} B_{r(q_j^k)}(q_j^k) , 
\end{equation*}
and that the family of enlarged balls
\begin{equation*}
    \big\{ B_{4r(q_j^k)}(q_j^k) \big\}_{j\in \mathcal J_k}
\end{equation*} 
has  (dimensional) finite overlapping.\footnote{That is, every point $ q \in \big\{ B_{4r(q_j^k)}(q_j^k) \big\}_{j\in \mathcal J_k} $ belongs to at most $N=N(n)$ of these balls. This is easy to check: if $q$ belongs to $N$ of such balls, we would have the existence of $N$ points $q_j^k$ in $B_{4 \cdot R 2^{k+1}}(p)$ such that the balls $B_{\frac{1}{4} R 2^k}(q_j^k)$ are disjoint and contained in $B_{9\cdot R 2^{k}}(p)$. Then, comparing the volumes and using that ${\rm FA}_2(M,g, p, R,\varphi)$ holds gives a dimension bound on $N$.} 
Note also that since $R/16 > C_0\ep $ we have $\lfloor\log_2 (C_0\ep/R)\rfloor \le -5$ and, by construction, the union of the sets $X_k$ when $k$ runs on  $\{\lfloor\log_2 (C_0\ep/R)\rfloor \le k \le -5\}$ covers all of $B_{R/2}(p)$.

\vsp
Now, on the one hand,  by the BV-estimate of Theorem \ref{BVEst} we have $\int_{B_{3R/4}(p)} |\nabla u_\ep| \le CR^{n-1}$, and this yelds 
\begin{equation} \label{whtoiwhoewih2}
\#\mathcal J_k \le C 2^{-k(n-1)} \,,
\end{equation}
for all $k \le -5$. Indeed, this follows using the fact that the balls  $ \big\{ B_{ 4 r(q_j^k)}(q_j^k) \big\}_{j \in \mathcal{J}_k}$  have finite overlap and are all contained in $B_{3/4R}(p)$. Indeed, when $k<-5$  then $r(q_j^k) < \frac{R}{16}$ and hence all the balls satisfy \eqref{whtoiwhoewih} and are strictly contained in $B_{3/4R}(p)$ by construction, while for $k=-5$ the radius of the balls is at least $\tfrac{R}{16}$ so their number must be bounded.

\vsp
On the other hand, for any given $\alpha \in [0,2s]$, we claim that Lemma \ref{lemdecay} yields
\[
\fint_{B_{r_q}(q)}  W(u_\ep) \,dV = \fint_{B_{r_q}(q)}  \frac{1}{4}(1-u_\ep^2)^2 \,dV \le C  \Big(\frac{\ep}{r_q}\Big)^\alpha  , 
\]
where $\fint $ denotes the integral average. Indeed, note that if $r_q= C_0 \ep$ the previous estimate is trivial, while if $r_q> C_0\ep$ then $r_q = \tfrac 1 2 {\rm dist} (q, \{|u|\le \tfrac {9}{10}\})$ and hence we may apply Lemma \ref{lemdecay}
(recall that $r_q\ge C_0\ep\ge \ep$) in $B_{2r_q}(q)$ to get the desired bound.
 
\vsp 
Therefore, choosing $\alpha := \min\big( \frac{1+s}{2}, 2s\big)\in (0,1)$ we obtain---using \eqref{whtoiwhoewih2}---that
\[
\begin{split}
(\ep/R)^{-s} \int_{B_{R/2}(p)}  W(u_\ep)\,dV &\le C \sum_{k= \lfloor\log_2 (C_0\ep/R)\rfloor}^{-5}  \sum_{j\in\mathcal J_k} (\ep/R)^{-s} \int_{B_{r(q_j^k)}(q_j^k)} W(u_\ep) \,dV
\\
& \le C  \sum_{k= \lfloor\log_2 (C_0\ep/R)\rfloor}^{-5}  \sum_{j\in\mathcal J_k} (\ep/R)^{-s} \Big(\frac{\ep}{r_{q_j^k}}\Big)^\alpha r_{q_j^k}^n
\\
& \le C \sum_{k= \lfloor\log_2 (C_0\ep/R)\rfloor}^{-5} (\ep/R)^{-s} \Big(\frac{\ep}{2^kR}\Big)^\alpha (2^{k+1}R)^n (\#\mathcal J_k)
\\
& \le C \sum_{k= \lfloor \log_2 (C_0\ep/R)\rfloor}^{-5} (\ep/R)^{\alpha-s} R^n 2^{k(n-\alpha)} 2^{-k(n-1)}
\\
&\le CR^n (\ep/R)^{\alpha-s}\sum_{k= -\infty}^{-5}  (2^k)^{1-\alpha} 
\\
& \le CR^n(\ep/R)^{\beta}, 
\end{split}
\]
as we wanted to show.
\end{proof}

With the above estimates at hand, the proof of Theorem \ref{A-C-energy-est} is straightforward.

\begin{proof}[Proof of Theorem \ref{A-C-energy-est}] The bound on the Sobolev part of the energy is a direct consequence of the $BV$ estimate of Theorem \ref{BVEst} and the interpolation inequality Proposition \ref{interprop}, and the bound on the Potential part is exactly the statement of Proposition \ref{PotBound} above.
\end{proof}

\subsection{Strong convergence to a limit interface -- Proof of Theorem \ref{StrongConv}}\label{ConvSection}

With the estimates for Allen-Cahn solutions of Section \ref{EstimatesSection} at hand, we can finally prove Theorem \ref{StrongConv}.
\begin{proof}[Proof of Theorem \ref{StrongConv}] We split the proof according to the different statements in the Theorem.

\vsp \textbf{Step 1.} \label{fasdfad} Convergence in $H^{s/2}(M)$.

\vsp \noindent
    Since $M$ is compact, there is a small radius $R=R(M)>0$ so that the flatness assumption ${\rm FA}_2(M,g,R,p,\varphi_p)$ holds for every $p \in M$; see Remark \ref{fbsvdg}. We can then apply the $BV$ estimate of Theorem \ref{BVEst} to get a bound on the BV norm $[u_{\ep_j}]_{BV(B_{R/2}(p))}$ independently of $p\in M$. For any $\sigma\in (0,1)$, the interpolation result of Proposition \ref{interprop} together with the comparability between $K_{\sigma}(\varphi_p(x),\varphi_p(y))$ and $\frac{1}{|x-y|^{n+\sigma}}$ (see Lemma \ref{loccomparability}) gives then the bound
    \begin{equation*}
        \iint_{B_{R/2}(p)\times B_{R/2}(p)} |u_{\ep_j}(p)-u_{\ep_j}(q)|^2K_{\sigma}(p,q)\,dV_p\,dV_q \leq C(n,\sigma)\, ,
    \end{equation*}
    valid for any $p\in M$. Combining this with \eqref{remaining3} of Theorem \ref{prop:kern1} (with $\alpha=0$), we see that
    \begin{equation*}
        \iint_{B_{R/4}(p)\times M} |u_{\ep_j}(p)-u_{\ep_j}(q)|^2K_\sigma(p,q)\,dV_p\,dV_q \leq C(n,\sigma)\, ,
    \end{equation*}
    which after covering $M$ with finitely many such balls of radius $R/4$ shows that 
    \begin{equation*}
    \| u_{\ep_j} \|_{H^{\sigma/2}(M)} \le C(M, \sigma)\, .
    \end{equation*}
    In particular, we can choose some fixed $\sigma>s$. Then, the (standard) compactness of the inclusion\footnote{The compactness of this inclusion is well known on balls of $\R^n$. This immediately gives one way of showing it for compact manifolds as well, after covering them with a finite number of small coordinate balls and using the same estimations for the kernel as in the present proof.} $H^{\sigma/2}(M) \hookrightarrow H^{s/2}(M)$ shows that a subsequence converges strongly in $H^{s/2}(M)$ to a limit function $u_0 \in H^{s/2}(M) $. Moreover, after extracting a further subsequence (that we do not relabel), we also assume that the convergence holds almost everywhere on $M$.

\vsp \textbf{Step 2.} Convergence of the Potential energies $\mathcal{E}_{M}^{\rm Pot}(u_{\ep_j})$ to zero and structure of $u_0$.

\vsp \noindent
    Again as in Step 1, covering $M$ with a finite number of balls of radius $R$ so that ${\rm FA}_2(M,g,R,p,\varphi_p)$ holds for all $p \in M$, applying Proposition \ref{PotBound} to each ball of the covering we get (for $j$ large) that
    \begin{equation}\label{egest22}
    \mathcal E^{\rm Pot}_{M}(u_{\ep_j}) \le C(M,s,m) \ep_j^\beta \, ,
    \end{equation}
    which shows that $\mathcal{E}^{\rm Pot}_M(u_{\ep_j}) \to 0 $ as $j\to \infty$ (since then $\ep_j\to 0$).
    
    \vsp
    The fact that the limit function is of the form $u_0=\chi_{E}-\chi_{E^c}$ for a set $E\subset M$ follows: since we just proved that $ \ep_j^{-s}\int_{M} W(u_{\ep_j})\to 0$ as $j\to \infty$, of course $\int_{M} W(u_{\ep_j})\to 0$ as well. By Fatou's Lemma, we deduce that $\int_{M} W(u_0) = 0$, which shows that the limit $u_0$ can only take the values $\pm 1$. Hence $u_0=\chi_{E}-\chi_{E^c}$ for some measurable set $E \subset M$, which is actually a set of finite perimeter since the $u_{\ep_j}$ satisfy uniform BV estimates. The fact that  \eqref{cdcd}-\eqref{ababab} hold, after choosing the representative of  $E$ for which every point of $E$ with density $1$ belongs to its interior and every point of density $0$ belongs to its complement, follows from the convergence in $L^1(M)$ and the density estimate of Proposition \ref{DensEst}.
    
    \vsp \textbf{Step 3.}\label{asdfvasdf} Convergence of the level sets to $\partial E$ in the Hausdorff distance.

\vsp
    
   \vsp \noindent 
   This is a direct consequence of Lemma \ref{lemdecay} and the density estimate in Proposition \ref{DensEst}. Fix $c \in (-1,1)$. Arguing by contradiction, assume one could find points $p_j \in \{ u_{\ep_j} \ge  c \} $ and $ q_j \in E $ with $d(p_j, q_j) \ge r > 0 $ and $B_{r/2}(p_j) \cap E =\emptyset $, for some small $r >0$. By compactness for a subsequence there is $p_\circ $ such that $p_{j}\to p_\circ$, and in particular $B_{r/4}(p_\circ) \subset E^c$. This implies (up to subsequences, that we do not relabel) that $\lim_{j \to \infty} u_{\ep_j} =-1$ a.e. in $B_{r/4}(p_\circ)$. By the density estimate of Proposition \ref{DensEst} then $u_{\ep_j} \le -\tfrac{9}{10}$ in $B_{r/8}(p_\circ)$ for all $j$ sufficiently large, and with Lemma \ref{lemdecay} this implies $u_{\ep_j} \le -1 +  C(\ep_j/r)^s$. This contradicts $ u_{\ep_j}(p_j) \ge c >-1 $ for $j$ large, and concludes the proof.
    
    \vsp \textbf{Step 4.}  The limit set $E$ is stationary for the fractional perimeter.

    \vsp
    \textit{Claim.} Let $X$ be a vector field of class $C^\infty$ on $M$, and let $ \psi^t := \psi_X^t$ denote its flow at time $t$. Putting $u_{\ep_j,t}(p):=u_{\ep_j}(\psi^{-t}(p))$, then
    \begin{equation}\label{q5etgqwa4e5}
        \frac{d^\ell}{dt^\ell}\mathcal E^{\rm Sob}_M(u_{\ep_j,t})\to \frac{d^\ell}{dt^\ell}\text{Per}_s(\psi^t(E)) \s\, \textnormal{and} \s\, \frac{d^\ell}{dt^\ell}\mathcal E^{\rm Pot}_M(u_{\ep_j,t})\to 0 \,, \quad \mbox{as } j\to\infty\, .
    \end{equation}
    \begin{proof}[Proof of the Claim]
        Changing variables with the flow $\psi^t$, and denoting its Jacobian at time $t$ as $J_t$, gives that
        \begin{align}
        \frac{d^\ell}{d t^\ell}\mathcal E^{\rm Sob}_M(u_{\ep_j,t})&=\frac{d^\ell}{d t^\ell}\iint |u_{\ep_j}(\psi^{-t}(p))-u_{\ep_j}(\psi^{-t}(q))|^2 K_s(p,q)   dV_p\, dV_q \nonumber\\
        &=\iint |u_{\ep_j}(p)-u_{\ep_j}(q)|^2 \frac{d^\ell}{d t^\ell}\Big[K_s(\psi^{ t}(p),\psi^{ t}(q)) J_{ t}(p)J_{ t}(q)\Big]  dV_p\, dV_q \label{eqcvthm}\, ,
        \end{align}
        and likewise
        \begin{equation}\label{eq: per derivative under integral}
        \frac{d^\ell}{d t^\ell}\text{Per}_s(\psi^t(E))=\iint |u_{0}(p)-u_{0}(q)|^2\frac{d^\ell}{d t^\ell}\Big[K_s(\psi^{ t}(p),\psi^{ t}(q)) J_{ t}(p)J_{ t}(q)\Big]  dV_p\, dV_q\, .
        \end{equation}
    The passage of the derivatives under the integral sign in these lines can be justified as follows. For every $p\neq q$, which is a set of full measure in $M\times M$, the derivative $ \frac{d^\ell}{d t^\ell}\Big[K_s(\psi^{ t}(p),\psi^{ t}(q)) J_{ t}(p)J_{ t}(q)\Big]$ exists for all $t>0$. Moreover, by \eqref{rderest} we see that 
    \begin{equation}\label{eq: bound K flowed}
        \sup_{t\in (0,T)} \left| \frac{d^\ell}{d t^\ell}\Big[K_s(\psi^{ t}(p),\psi^{ t}(q)) J_{ t}(p)J_{ t}(q)\Big] \right| \le C_T K_s(p,q), \s \forall \, T>0 . 
    \end{equation}
Hence, uniformly for $t\in (0,T)$, the integrand in \eqref{eqcvthm} is bounded by (a constant times) $|u_{\ep_j}(p)-u_{\ep_j}(q)|^2 K_s(p,q)$, which is a function in $L^1(M\times M)$ since $u_{\ep_j} \in H^{s/2}(M)$. The dominated convergence theorem easily implies then that we can pass the derivative under the integral sign. The same argument applies to \eqref{eq: per derivative under integral}.

        \vsp 
        Now, we can rewrite the first expression as
        $$
        \frac{d^\ell}{d t^\ell}\mathcal E^{\rm Sob}_M(u_{\ep_j,t})=\iint |u_{\ep_j}(p)-u_{\ep_j}(q)|^2K_s(p,q)\frac{\frac{d^\ell}{d t^\ell}\Big[K_s(\psi^{ t}(p),\psi^{ t}(q)) J_{ t}(p)J_{ t}(q)\Big]}{K_s(p,q)}  dV_p\, dV_q\, .
        $$
        Since $u_{\ep_j}\to u_{0}$ in $H^{s/2}(M)$ by Step 1, we immediately see that
        $$
        A_j:=|u_{\ep_j}(p)-u_{\ep_j}(q)|^2K_s(p,q)\to |u_{0}(p)-u_{0}(q)|^2K_s(p,q)=:A ,  \quad \mbox{in } L^1(M\times M)\, .$$ 
        On the other hand, by the bound in \eqref{eq: bound K flowed} the fixed function $B:=\frac{1}{K_s(p,q)} \cdot \frac{d^\ell}{d t^\ell}\Big[K_s(\psi^{ t}(p),\psi^{ t}(q)) \,J_{ t}(p)J_{ t}(q)\Big]$ belongs to $L^\infty(M\times M)$. Therefore, $A_jB\to AB$ in $L^1(M\times M)$ as well, which gives the first part of the claim.

\vsp
        For the second part of the claim, which regards the derivatives of the potential energy, we change variables once again with the flow $\psi^t$, finding that 
        \begin{equation*}
\frac{d^\ell}{d t^\ell}\mathcal{E}^{\rm Pot}_{M}({v_t}) =\frac{d^\ell}{d t^\ell}\int \ep^{-s} W(v(\psi^{- t}(p)) dV_p\, =  \int \ep^{-s} W(v(p))\frac{d^\ell}{d t^\ell}\,J_{ t}(p)  dV_p\, .
\end{equation*}
Bounding the derivatives of the Jacobian (in absolute value) by a constant, we deduce that
\[
\left| \frac{d^\ell}{d t^\ell}\mathcal{E}^{\rm Pot}_{M}({v_t}) \right| \le C\mathcal{E}^{\rm Pot}_{M}({v})\, .
\]
Combining this with \eqref{egest22}, we conclude that
        $$\frac{d^\ell}{dt^\ell}\mathcal E^{\rm Pot}_M(u_{\ep_j,t})\to 0 \quad \mbox{as } j\to\infty$$
as desired.
\end{proof}
    Now that we have shown the claim, the fact that $E$ is stationary for the fractional perimeter follows from considering a vector field $X$ of class $C^\infty$ on $M$, and applying the claim (with $ \ell=1$ and $t=0$) and the stationarity of $u_{\ep_j}$ for the Allen-Cahn energy:
    \begin{align*}
        \frac{d}{dt}\bigg|_{t=0}\text{Per}_s(\psi^t(E))=\lim_{j\to\infty}\frac{d}{dt}\bigg|_{t=0}\Big[\mathcal E^{\rm Sob}_M(u_{\ep_j,t})+\mathcal E^{\rm Pot}_M(u_{\ep_j,t})\Big]=0\, .
    \end{align*}

    \vsp
\textbf{Step 5.} $E$ has Morse index at most $m$ (recall Definition \ref{WeakMorseDef}).

\vsp
\vsp
To check this, consider $(m+1)$ vector fields $X_0,...,X_m$ of class $C^\infty$ on $M$. Letting $a:=(a_0,a_1,...,a_m)\in\R^{m+1}$ and $X[a]=a_0X_0+...+a_mX_m$, we can define the quadratic form $Q_{\ep_j}(a):=\frac{d^2}{dt^2}\Big|_{t=0}\mathcal E (u_{\ep_j}\circ \psi^{-t}_{X[a]})$, which we can write as $Q_{\ep_j}(a)=Q_{\ep_j}^{kl}a_ka_l$ for some coefficients $Q_{\ep_j}^{kl}$. From \eqref{q5etgqwa4e5} and the polarization identity for a quadratic form, it is immediate to see that $Q_{\ep_j}^{kl}\to Q_{0}^{kl}$ as $j\to \infty$, where 
\begin{equation*}
    Q_0(a):=\frac{d^2}{dt^2}\Big|_{t=0}\textnormal{Per}_s(\psi^{t}_{X[a]}(E))=Q_{0}^{kl}a_ka_l . 
\end{equation*}

Now, since $u_{\ep_j}$ have Morse index $\leq m$ (with respect to the standard notion of index in Definition \ref{MorseDef}), by definition we know that for every $j$ there must exist some $a^{j}\in\Sp^{m}$ such that \begin{equation*}
    Q_{\ep_j}(a^{j})=\frac{d^2}{dt^2}\Big|_{t=0}\mathcal E (u_{\ep_j}\circ \psi^{-t}_{X[a^j]})\geq 0 . 
\end{equation*}
Then, the convergence of the coefficients $Q_{\ep_j}^{kl} \to Q_0^{kl}$ immediately shows that $Q_0(a)\geq 0$ for some $a\in\Sp^m$ as well.
\end{proof}

The rest of this section is devoted to proving that the sets constructed as limits of solutions to the Allen-Cahn equation (and which were shown to be critical points of the fractional perimeter under inner variations) are actually viscosity solutions to the NMS equation.

\begin{lemma}[Palatucci-Savin-Valdinoci \cite{PSV}]\label{lem:hoihwrhiohtiow}
There exists a unique increasing function $v_{\circ}: \R \to (-1,1)$ with $v_\circ(0)=0$ that solves $(-\Delta)^s(v_\circ) + W'(v_\circ)=0$ in $\R$. 
\end{lemma}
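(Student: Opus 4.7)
The plan is to construct $v_\circ$ as the minimizer of the 1D fractional Allen-Cahn energy over a suitable class of ``heteroclinic profiles,'' and then derive strict monotonicity and uniqueness from rearrangement and sliding arguments.

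First I would set up the variational problem on $\mathbb R$. Consider the energy
\[
\mathcal{E}(u) := \frac{c_s}{2}\iint_{\mathbb R \times \mathbb R} \frac{(u(x)-u(y))^2}{|x-y|^{1+2s}}\,dx\,dy + \int_{\mathbb R} W(u)\,dx
\]
on the admissible class $\mathcal{A}$ of measurable functions $u:\mathbb R\to[-1,1]$ with $1+u\in L^1((-\infty,0))$ and $1-u\in L^1((0,+\infty))$ for which $\mathcal{E}(u)<\infty$. Existence of a finite-energy element of $\mathcal{A}$ is immediate (take a smooth profile equal to $\pm 1$ outside a bounded interval). I would take a minimizing sequence $u_n$ and, exploiting translation invariance of $\mathcal{E}$, shift each $u_n$ so that $u_n(0)=0$; this is possible since $u_n$ is close to $\pm 1$ near $\pm\infty$. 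The uniform energy bound together with the interpolation estimate (Proposition \ref{interprop}) yields a uniform bound in $H^s_{\rm loc}$, so up to a subsequence $u_n\to v_\circ$ in $L^1_{\rm loc}$ and weakly in $H^s_{\rm loc}$. Lower semicontinuity and Fatou imply $\mathcal{E}(v_\circ)\le\liminf\mathcal{E}(u_n)$, so $v_\circ$ is a minimizer with $v_\circ(0)=0$. Computing the first variation gives $(-\Delta)^s v_\circ+W'(v_\circ)=0$ on $\mathbb R$.

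Next I would show monotonicity. Consider the increasing rearrangement $v_\circ^*$ of $v_\circ$. By the fractional Pólya--Szegő inequality (Almgren--Lieb/Frank--Seiringer), the Sobolev part of the energy satisfies $[v_\circ^*]_{H^s}^2 \le [v_\circ]_{H^s}^2$, while the potential part is unchanged since $W$ depends only on the distribution of $u$. Hence $v_\circ^*$ is also a minimizer and satisfies the same Euler--Lagrange equation, and by applying the strong maximum principle for $(-\Delta)^s$ to the nonnegative difference $v_\circ^*-v_\circ$ (which solves a linearized fractional equation on the regions where $v_\circ<v_\circ^*$, combined with a rigidity argument for equality in Pólya--Szegő), one concludes $v_\circ=v_\circ^*$, i.e.\ $v_\circ$ is non-decreasing. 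The strong maximum principle applied to translates $v_\circ(\cdot+h)-v_\circ(\cdot)\ge 0$ then upgrades this to strict monotonicity, and using the energy bound together with $W(u)\ge c(1-|u|)^2$ near $\pm 1$ one shows $v_\circ(x)\to\pm 1$ as $x\to\pm\infty$.

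Finally, for uniqueness I would apply the classical sliding method. Given two solutions $v_\circ$ and $\tilde v_\circ$ as above, for large $t>0$ the translate $v_\circ^t(x):=v_\circ(x+t)$ satisfies $v_\circ^t>\tilde v_\circ$ on $\mathbb R$ (since $v_\circ^t\to +1$ uniformly on compacts while $\tilde v_\circ<1$, and $v_\circ^t>0$ on $[-t,\infty)$). Let $t_0:=\inf\{t\ge 0: v_\circ^t\ge \tilde v_\circ \text{ on }\mathbb R\}$. If $t_0>0$ then by continuity and the behavior at $\pm\infty$ there exists a contact point $x_0\in\mathbb R$ where $v_\circ^{t_0}(x_0)=\tilde v_\circ(x_0)$; the difference $v_\circ^{t_0}-\tilde v_\circ\ge 0$ satisfies a linear fractional equation with a bounded zeroth-order term, and the strong maximum principle forces it to vanish identically, giving $\tilde v_\circ(\cdot)\equiv v_\circ(\cdot+t_0)$, which together with $\tilde v_\circ(0)=v_\circ(0)=0$ and strict monotonicity yields $t_0=0$, a contradiction. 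Hence $t_0=0$ and $v_\circ\ge\tilde v_\circ$; symmetry gives equality.

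The main obstacle I expect is justifying the monotonicity step: the rigidity case of the fractional Pólya--Szegő inequality is delicate, and one may need a slightly different route. A robust alternative would be to bypass rearrangement and prove monotonicity directly by sliding $v_\circ$ against itself: if $v_\circ$ were not monotone, one could find a smallest $\tau>0$ for which $v_\circ(\cdot+\tau)\ge v_\circ(\cdot)$ touches but does not coincide, and the strong maximum principle again produces a contradiction. Either route reduces the whole proof to well-established ingredients (fractional strong maximum principle, $L^1_{\rm loc}$-compactness, and lower semicontinuity of $\mathcal E$).
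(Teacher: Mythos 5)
The paper does not prove this lemma: it is cited verbatim from Palatucci--Savin--Valdinoci \cite{PSV} (see also Cabr\'e--Sire \cite{CSi} and Cabr\'e--Sol\`a-Morales \cite{C-SM} for the monotonicity and uniqueness parts), so there is no internal argument to compare yours against. Your outline --- direct minimization over heteroclinic competitors with a translation normalization, monotonicity via increasing rearrangement (or sliding the profile against itself), and uniqueness by the sliding method --- is exactly the route taken in that literature, and the main ingredients (P\'olya--Szeg\H o, the fractional strong maximum principle, lower semicontinuity) are the right ones.

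The one step I would not accept as written is the initialization of the sliding for uniqueness. You claim that for $t$ large $v_\circ(\cdot+t)>\tilde v_\circ$ everywhere, invoking uniform convergence to $+1$ on compacts plus $v_\circ(\cdot+t)>0$ on $[-t,\infty)$. This says nothing about $(-\infty,-t)$, where both profiles tend to $-1$ only polynomially, like $|x|^{-2s}$ (the fractional layer is fat-tailed, unlike the exponentially decaying local Allen--Cahn layer), so no finite translate makes the ordering obvious there. Closing this requires either the precise two-sided tail asymptotics $1+v_\circ(x)\sim c\,|x|^{-2s}$ as $x\to-\infty$ with a \emph{universal} constant $c$ (so the two tails can be compared after translation), which is a nontrivial estimate established in the cited works, or a different device such as a Kato-type energy comparison between $\max(v_\circ^t,\tilde v_\circ)$ and $\min(v_\circ^t,\tilde v_\circ)$. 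Two smaller remarks: the normalization $u_n(0)=0$ is not meaningful for a merely measurable minimizing sequence when $s\le 1/2$, so you should normalize by an integral condition (e.g. equal $L^1$ mass of $1+u_n$ on $(-\infty,0)$ and of $1-u_n$ on $(0,\infty)$) and recover the pointwise normalization from regularity of the limit; and note that, given the conventions used in the rest of the paper (kernel $|x-y|^{-(n+s)}$ and the factor $\ep^{-s}$ in Remark~\ref{rem:shioghsoihs}), the operator in the lemma is meant to be $(-\Delta)^{s/2}$, so the Gagliardo exponent in your energy should read $1+s$ rather than $1+2s$; you followed the literal statement, which appears to carry over the notation of \cite{PSV} where $(-\Delta)^s$ has order $2s$.
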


\begin{remark}\label{rem:shioghsoihs}
The following fact will be useful: Let $A$ be any symmetric positive definite matrix with $A_{in} = \delta_{in}$, $1\le i\le n$.
Defining $v_{\ep,\tau}(x) = v_\circ\big( \ep^{-1}(x_n-\tau)\big)$  (where $v_\circ$ is the function of Lemma \ref{lem:hoihwrhiohtiow}) we still have that
\[
\alpha_{n,s} \, \int_{\R^n} \frac{\big(v_{\ep,\tau}(x)- v_{\ep,\tau}(y)\big)}{ |A(x-y)|^{n+s}}  |A| dy  + \ep_j^{-s} W'(v_{\ep,\tau}) =0,
\]
where $|A|$ denotes the determinant of $A$. In other words, $v_{\ep,\tau}$ is also an Allen--Cahn solution ``in this new metric''.
\end{remark}

\begin{remark}
   We will implicitly use the following fact many times. Let $\varphi : \B_{r_\circ}(0) \to M $ be a diffeomorphism onto its image with $\varphi(0)=p$, and let $F \subset M$ be a measurable set. Then, for $s\in (0,1)$ the limit
   \begin{equation*}
\lim_{r\downarrow 0} \int_{M\setminus B_r(p)} (\chi_F -\chi_{F^c})(q) K_s(p,q) \,dV_q 
\end{equation*}
exists if and only if the limit
\begin{equation*}
\lim_{r\downarrow 0} \int_{M\setminus \varphi(\B_r(0))} (\chi_F -\chi_{F^c})(q) K_s(p,q) \,dV_q 
\end{equation*}
exists, and if they do exist they coincide. This is not due to cancellations and can be seen as follows: for $r$ sufficiently small (so that ${\rm FA}_1(M,g,p,r,\varphi)$ holds), by Lemma \ref{loccomparability} we can estimate 
\begin{align*}
    \bigg|\int_{M\setminus B_r(p)} (\chi_F -\chi_{F^c})(q) &K_s(p,q) \,dV_q - \int_{M\setminus \varphi(\B_r(0))} (\chi_F -\chi_{F^c})(q) K_s(p,q) \,dV_q  \bigg| \\ & \le C \int_{ B_r(p) \Delta \varphi(\B_r(0))} \frac{1}{d(q,p)^{n+s}} \, dV_q \le \frac{C}{r^{n+s}} \vol \big(B_r(p)  \triangle   \varphi(\B_r(0)) \big) = O(r^{1-s}) \to 0 , 
\end{align*}
as $r \to 0^+$, since $ \vol \big(B_r(p)  \triangle   \varphi(\B_r(0)) \big) = O(r^{n+1})$ for small $r$. 
\end{remark}

\begin{proposition}
\label{prop:viscosity}
Assume that $u_{\ep_j}$ are solutions to the  A-C equation \eqref{restrictedeq} on $M$, with parameters $\ep_j\to 0$ and Morse index $m(u_{\ep_j})\leq m$, and that moreover
$u_{\ep_j}\to u_0:
=\chi_E-\chi_{E^c}$ in $H^{s/2}(M)$. Then $\partial E$ is a viscosity solution of the NMS equation in the following sense: whenever  $p\in \partial E$, and $\varphi: \B_{\varrho_\circ}(0) \to V$ is a diffeomorphism from $\B_{\varrho_\circ}$ to an open neighborhood $V\subset M$ of $p$ 
satisfying  $\varphi(0)=p$ and $V^+:= \varphi(\B_{\varrho_\circ}^+)\subset E$ (where we denote $\B_{r}^+: = \B_{r}\cap \{x_n>0\}$) we have
\begin{equation*}
\lim_{r\downarrow 0} \int_{M\setminus B_r(p)} (\chi_F -\chi_{F^c})(q) K_s(p,q) \,dV_q\le 0, 
\qquad \mbox{for } F= V^+ \cup (E\setminus V).
\end{equation*}
\end{proposition}

\begin{proof}
We suppose by contradiction that for some $p$ and  $\varphi: \B_{\varrho_\circ}\to V$ as in the statement of the proposition we had
\begin{equation}\label{whothiowhw}
\lim_{r\downarrow 0} \int_{M\setminus B_r(p)} (\chi_F -\chi_{F^c})(q) K_s(p,q) \,dV_q\ge  2\delta >0, 
\qquad \mbox{for } F= V^+ \cup (E\setminus V).
\end{equation}
Our goal is now to obtain a contradiction. 

\vsp
Let us make the following useful observation that we will use several times throughout the proof. 
Let $\psi: = \B_{\varrho} \to W\subset V$ be another diffeomorphism with $\psi(0)=p$ such that $\varphi( \B_{\varrho_\circ}^+) \cap W \subset \psi(\B_\rho^+)$. Put $G=  \psi(\B_{\varrho}^+) \cup (E\setminus \psi( \B_\varrho))$; then, 
\[
(\chi_G -\chi_{G^c})(q)\ge(\chi_F -\chi_{F^c})(q)\quad \mbox{for all }q\in M.
\]
Hence, the integral \eqref{whothiowhw} only grows when replacing $F$ by $G$.
In particular, this applies to ``restrictions of domain'', such as $\psi = \varphi|_{\B_\varrho}$ for any $\varrho < \varrho_\circ$.

\vsp
\textbf{Step 1.} Setting $x=(x',x_n)$, we claim that we can replace $F$ by 
\[
F_t: = \varphi(\{x\in \B_{\varrho_\circ} : x_n>t|x'|^2\}) \cup (E\setminus V)
\]
in  \eqref{whothiowhw}, for $t>0$ sufficiently small, provided that we also replace $2\delta$ by $\delta$. In fact, this is a consequence of
\[
f(t) := \lim_{r\downarrow 0} \int_{M\setminus B_r(p)} (\chi_{F_t} -\chi_{F_t^c})(q) K_s(p,q) \,dV_q
\]
being continuous in $t$: Since $f(0)\ge 2\delta$, for $t >0$ sufficiently small, we will still have $f(t)\ge \delta >0$. We now prove that $f$ is continuous indeed: 

Fix $0 < \sigma < t $, $\varrho$ sufficiently small so that ${\rm FA}_1(M,g,p, 2\varrho, \varphi) $ holds (here we use the observation at the beginning of the proof regarding the domain restriction) and $r \ll \varrho$. Let 
\begin{equation*}
    S:= F_\sigma \setminus F_t \subset V = \varphi(\B_{\varrho}). 
\end{equation*}
We have 
\begin{align}
    \Bigg| \int_{M\setminus B_r(p)} & (\chi_{F_\sigma} -\chi_{{F_\sigma}^c})(q) K_s(p,q) \,dV_q  - \int_{M\setminus B_r(p)} (\chi_{F_t} -\chi_{{F_t}^c})(q) K_s(p,q) \,dV_q \Bigg| \nonumber \\ &= 2 \int_{V\setminus B_r(p)} \chi_{S}(q) K_s(p,q) \, dV_q \nonumber \\ &= 2  \int_{ \B_{\varrho} \setminus \varphi^{-1}(B_r(p))} \chi_{\varphi^{-1}(S)}(z) K_s(p,\varphi(z))  |J \varphi| \, dz   \nonumber \\ & \le C  \int_{ \B_{\varrho} \setminus \B_{r/2} }  \frac{\chi_{\varphi^{-1}(S)}(z)}{|z|^{n+s}} \, dz , \label{eq: f cont in t 1} 
\end{align}
where we have computed the integral in coordinates $\varphi^{-1}$ and we have used Lemma \ref{loccomparability} to estimate the kernel $K_s(p,\varphi(z))=K_s(\varphi(0),\varphi(z))$. By the very definition of $S$, for $0<R<\varrho$, it follows that $\mathcal{H}^{n-1}(\varphi^{-1}(S) \cap \partial \B_R) \le CR^{n-2} \cdot C|t-\sigma| R^2 = C|t-\sigma|R^n$. Hence, by polar coordinates
\begin{align*}
    \int_{ \B_{\varrho} \setminus \B_{r/2} }  \frac{\chi_{\varphi^{-1}(S)}(z)}{|z|^{n+s}} \, dz & = \int_{r/2}^\varrho \frac{1}{R^{n+s}} \mathcal{H}^{n-1}(\varphi^{-1}(S) \cap \partial \B_R) \, dR \\ & \le C|t-\sigma| \int_{r/2}^\varrho R^{-s} \, dR = \frac{C}{1-s}|t-\sigma| (\varrho^{1-s} - (r/2)^{1-s}) .
\end{align*}
Thus, letting $r \to 0^+$ in \eqref{eq: f cont in t 1} gives
\begin{equation*}
    |f(t)-f(\sigma)| \le C|t-\sigma| \varrho^{1-s} . 
\end{equation*}
In particular, $f$ is continuous and this concludes Step 1.

\vspace{0.3 cm}

Next, fix $t=t_\circ>0$ small and choose ``Fermi coordinates'' adapted to the hypersurface $\Gamma: = \varphi(\{x_n= t_\circ |x'|^2\})$ around $p$. More precisely: there exists a diffeomorphism $\psi : \B_{\varrho_1}\to W = \psi(\B_{\varrho_1})$,  with $\psi(0)=p$ and $W\subset V$  open neighborhood of $p$, such that for all $x\in \B_{\varrho_1}$,
\[
d(\psi(x) , \Gamma) = 
\begin{cases}
 x_n \quad &\mbox{if }x_n\ge 0\\
-x_n \quad &\mbox{if }x_n\le 0
\end{cases} 
\]
and $\psi(\B_{\varrho_1}^+)=W\cap \varphi( \B_{\varrho_\circ} \cap \{ x_n>t|x'|^2\})$.

Moreover, since  $ G:= \psi( \B_{\varrho_1}^+ )\cup (E\setminus W)$ contains $F_t$ we have 
\begin{equation}\label{whothiowhw2}
\lim_{r\downarrow 0} \int_{M\setminus B_r(p)} (\chi_G -\chi_{G^c})(q) K_s(p,q) \,dV_q\ge  \delta >0\, .
\end{equation}
Also, by construction we have
\begin{equation*}
\psi( \B_{\varrho_1} \cap \{x_n \ge -c|x'|^2\})\subset E,
\end{equation*}
where $c>0$ depends on $t_\circ$.

\vspace{3pt}
\textbf{Step 2.} We now perform a key computation in coordinates. 
Let us now choose a smooth cutoff function $\eta: \R^n \to \R_+$ satisfying $\chi_{\B_1}\le \eta \le \chi_{\B_2}$ and put:

\begin{equation}\label{whtiohwiohw}
\eta_\varrho(x)  := \eta(x/\varrho) \quad \mbox{and}\quad 
\bar \eta_\varrho : = \eta_\varrho \circ\psi^{-1}.
\end{equation}

Let $K(x,y) = K_s(\varphi(x), \varphi(y))$ be the expression in coordinates $\psi^{-1}$ of the kernel $K_s(p,q)$ for $p, q\in W$, that is, for $x,y\in \B_1$. Let $g_{ij}: \B_{\varrho_1}\to \R^{n^2}$ denote the components of the metric in the coordinates $\psi^{-1}$.  Since $\psi^{-1}$ are Fermi coordinates, we have 
\begin{equation}\label{wjhtiohwohwioh}
g_{ni} = g_{in} = \delta_{ni}, \quad 1\le i \le n . 
\end{equation}
This will be crucially used later.

Fix $\varrho\in (0,\varrho_1/2)$ small to be chosen later. By \eqref{whothiowhw2}, we have for $G_\varrho:= \psi(\B_{\varrho}^+) \cup (E\setminus \psi(\B_\varrho))$ and $H : = \{x_n>0\}\subset \R^n$ 
\[
\begin{split}
\lim_{r\downarrow 0}  \int_{\B_{2\varrho}\setminus \B_r} (\chi_H -\chi_{H^c})(y) K(0,y)\eta_\varrho(y) \sqrt{|g|}(y)\,dy &= \lim_{r\downarrow 0} \int_{M\setminus B_r(p)} (\chi_{G_\varrho} -\chi_{G_\varrho^c})(q) K_s(p,q)\bar \eta_\varrho(q) \,dV_q
\\
&\ge \delta - \int_{M\setminus \psi(\B_{\varrho})} (\chi_E -\chi_{E^c})(q) K_s(p,q)( 1-\bar \eta_\varrho)(q) \,dV_q.
\end{split}
\]
Notice that $\B_r(0)$ is not the same as $\varphi^{-1}(B_r(p))$, however as it will become clear from the proof below, the limits as $r\downarrow 0$ of the corresponding integrals give the same value.

Let us also write 
\[
K(x,y)\sqrt{|g|}(y) = \frac{\alpha_{n,s}}{|A(x)(x-y)|^{n+s}} \sqrt{|g|}(x) + \widehat K(x,y),
\]
where  $A(x)$ is the nonnegative definite symmetric square root of the matrix $(g_{ij}(x))$. Notice that, thanks to  \eqref{wjhtiohwohwioh}, we have $A_{ni}(x) = A_{in}(x) = \delta_{ni}$ for all $1\le i\le n$.
Also, by Proposition \ref{prop:kern1} the kernel $ \widehat  K(x,y)$ is {\rm not} singular, in the sense that
\[ \big| \widehat  K(x,y)\big| \le C ( 1+ |x-y|^{-n -s+1}).\]

We thus have
\begin{equation}\label{wgeuitguiwgw}
\begin{split}
\alpha_{n,s} \lim_{r\downarrow 0}  \int_{\B_{\varrho_1}\setminus \B_r} \frac{(\chi_H -\chi_{H^c})\eta_\varrho (y)}{ |A(0)(0-y)|^{n+s}}  \sqrt{|g|}(0) dy  &\ge \delta - \int_{M\setminus \psi(\B_{\varrho})} (\chi_E -\chi_{E^c})(q) K_s(p,q)( 1-\bar \eta_\varrho)(q) \,dV_q \\
& \hspace{25mm} - \int_{\B_{\varrho_1}} (\chi_H -\chi_{H^c})(y)  \widehat  K(0,y) \eta_\varrho(y) \, dy
\end{split}
\end{equation}

Let us now recall the assumption that $u_{\ep_j} \to u_0 = \chi_E-\chi_{E^c}$,
and let us define for $x$ in a neighbourhood of $0$
\[
f_j(x) = -\int_{M\setminus \psi(\B_{\varrho})} u_{\ep_j}(q) K(\psi(x),q)( 1-\bar \eta_\varrho)(q) \,dV_q - \int_{ \B_{2\varrho}} (u_{\ep_j}\circ \psi)(y)  \widehat  K(x ,y)\eta_\varrho(y)  \, dy
\]
and 
\[
f_\infty (x) = -\int_{M\setminus \psi( \B_{\varrho})} (\chi_E-\chi_{E^c})(q) K(\psi(x),q)( 1-\bar \eta_\varrho)(q) \,dV_q - \int_{\B_{2\varrho}} (\chi_E -\chi_{E^c})(\psi(y))  \widehat  K(x ,y) \eta_\varrho(y) \, dy
\]
Define also
\[
I(\varrho) := \int_{ \B_{2\varrho}} (\chi_E -\chi_{E^c})(\psi(y)) \widehat K(0,y) \eta_\varrho(y) \, dy - \int_{ \B_{2\varrho}} (\chi_H -\chi_{H^c})(y)  \widehat K(0 ,y) \eta_\varrho(y) \, dy
\]

Fixing $\varrho>0$ small enough, we will have $|I(\varrho)|\le \delta /4$.
Then, is not difficult to show (using the kernel bounds of Proposition \ref{prop:kern1} and of Lemma \ref{loccomparability}) that $f_j(x)\to f_\infty(x)$ uniformly for all $x$ in a neighborhood of $0$, and that $f_\infty$ is continuous in a neighborhood of $0$. 
As a consequence, we have $|f_j(x)-f_\infty (0)| < \delta/4$ for all $x\in \B_{r_\circ}(0)$ and $j \ge j_\circ$, for some $j_\circ$.

On the other hand, recall that  $(-\Delta)^s u_{\ep_j} +\ep_j^{-s} W'(u_{\ep_j}) =0$ in $M$. Hence, in particular
\[
\lim_{r\downarrow 0} \int_{M\setminus B_r(\psi(x))} \big(u_{\ep_j}(\psi(x)) -u_{\ep_j}(q)\big) K_s(p,q) \,dV_q + \ep_j^{-s}W'(u_{\ep_j}(\psi(x))) =0
\]
for all $x\in \B_{r_\circ}(0)$. Proceeding similarly the previous equation can be rewritten as 
\begin{equation}\label{wjtiohwoiwh1}
\alpha_{n,s}  \int_{ \B_{\varrho}} \frac{\big((u_{\ep_j}\circ \psi)(x)- (u_{\ep_j}\circ \psi)(y)\big) \eta_\varrho(y)}{|A(x)(x-y)|^{n+s}}  \sqrt{|g|}(x) \, dy  + \ep_j^{-s}W'(u_{\ep_j}(\psi(x))) = f_j(x) \,.
\end{equation}
Notice also that \eqref{wgeuitguiwgw} can be rewritten as
\[
\begin{split}
\alpha_{n,s} \lim_{r\downarrow 0}  \int_{ \B_{2\varrho}\setminus \B_r} \frac{(\chi_H -\chi_{H^c}) \eta_\varrho (y)}{ |A(0)(0-y)|^{n+s}}  \sqrt{|g|}(0) \, dy  &\ge \delta + f_\infty(0) + I(\varrho)\, .
\end{split}
\]

We now define $v_{\ep,\tau}(x) = v_\circ\big( \ep^{-1}(x_n-\tau)\big)$, where $v_\circ: \R \to (-1,1)$ is the function from  Lemma \ref{lem:hoihwrhiohtiow}. In view of Remark \ref{rem:shioghsoihs}, we have for $x\in \B_{r_\circ}(0)$,  $j$ large, and $|\tau|$ sufficiently small,
\[
\begin{split}
\alpha_{n,s} \, \int_{ \B_{2\varrho}} \frac{\big(v_{\ep_j,\tau}(x)- v_{\ep_j,\tau}(y)\big)\eta_\varrho (y)}{ |A(x)(x-y)|^{n+s}}  \sqrt{|g|}(x) \, dy  + \ep_j^{-s} W'(v_{\ep_j,\tau}) &\le \frac{\delta}{4}.
\end{split}
\]

This implies that  whenever $x\in \B_{r_\circ}$, $j$ sufficiently large, and $|\tau|$ sufficiently small
\begin{equation}\label{wjtiohwoiwh2}
\begin{split}
\alpha_{n,s} \, \int_{ \B_{2\varrho}} \frac{\big(v_{\ep_j,\tau}(x)- v_{\ep_j,\tau}(y)\big)\eta_\varrho (y)}{ |A(x)(x-y)|^{n+s}}  \sqrt{|g|}(x) \, dy  + \ep_j^{-s} W'(v_{\ep_j,\tau}) &\le f_j(x) - \frac{\delta}{4}.
\end{split}
\end{equation}
In other words, we have shown that $v_{\ep_j,\tau}$ is a strict subsolution of \eqref{wjtiohwoiwh1}.

\vspace{3pt}
\textbf{Step 3.} We now reach the desired contradiction. 
Fix now $\theta\in \left( 0, \tfrac{1}{100} \right)$ sufficiently small (to be chosen) and let 
\[
\xi_\theta (t) : = 
\begin{cases}
-1+\theta\quad & \mbox{if } t\in [-1,-1+\theta] ,\\
t& \mbox{if } t\in [-1+\theta,1-\theta] , \\
1-\theta\quad & \mbox{if } t\in [1-\theta,1].
\end{cases} 
\]

By the Hausdorff convergence of the level sets of $u_{\ep_j}$ which we have proved in Step 3 at page \pageref{asdfvasdf}, for any $t\in [-1+\theta, 1-\theta]$ the set $\{x\in \B_{2\varrho} : (u_{\ep_j}\circ \varphi) \ge t\}$ converges in Hausdorff distance towards  $\psi^{-1}(E)\supset \{x\in \B_{2\varrho} \,\, : \,\,  x_n \le -c|x'|^2\}$. 
 
Hence,  for every fixed $\tau >0$ we have, for all $j$ sufficiently large,
\begin{equation}\label{wtiowhiopwjh}
\xi_\theta\circ u_{\ep_j}\circ\psi \ge \xi_\theta\circ v_{\ep_j,\tau}  \quad \mbox{ in }  \overline{\B_{2\varrho}} \,.
\end{equation}
Let us define
\[
\tau_j : = \min \big\{\tau\in \R \ | \ \mbox{\eqref{wtiowhiopwjh} holds for } j \,\big\}\, .
\]
Notice that by definition of $\tau_j$ there is $x_j\in \overline{ \B_{2\varrho}} \cap \{|u_{\ep_j}|\le 1-\theta\}\cap\{|v_{\ep_j,\tau}|\le 1-\theta\}$. By the previous Hausdorff convergence property of level sets, it must be $x_j\to 0$ and $\tau_j \to 0$ as $j\to \infty$. 
 
\vsp
Let us show that, if $\theta$ is chosen sufficiently small, we have 
\begin{equation}\label{wjthiowhoiughw246}
u_{\ep_j}\circ\psi\ge v_{\ep_j,\tau_j}\quad \mbox{in } \B_{r_\circ/2}\, .
\end{equation}

Indeed, thanks to \eqref{wjtiohwoiwh1}-\eqref{wjtiohwoiwh2} the difference 
\[w: = u_{\ep_j}\circ\psi- v_{\ep_j,\tau_j}\] 
satisfies
\begin{equation}\label{whiowhoiwh1}
\mathcal L w(x) : = \alpha_{n,s}  \int_{ \B_{\varrho}} \frac{\big(w(x)- w(y)\big) \eta_\varrho(y)}{|A(x)(x-y)|^{n+s}}  \sqrt{|g|}(x) \, dy  \ge \frac{\delta}{4} +  \ep_j^{-s}\big(W'(v_{\ep_j,\tau_j})- W'(u_{\ep_j}\circ\psi)\big)(x) \quad \mbox{in } \B_{r_\circ}.
\end{equation}
Notice that  since \eqref{wtiowhiopwjh} holds for $\tau=\tau_j$  we have $w = v_{\ep_j,\tau_j} -(u_{\ep_j}\circ\varphi) \ge -\theta$ in $\B_{2\varrho}$.

\vsp
Assume now by contradiction that $\inf_{\B_{r_\circ/2}} w <0$. 
Recall \eqref{whtiohwiohw} and define  
\[
\overline \eta_t =  -\theta + t\eta_{r_\circ/2} \,. 
\]
and let $t_*\in [0,\theta)$ be the supremum of the $t\ge 0$ such that $w\ge \overline \eta_t$ in $\B_{2\varrho}$. By construction there exists $x_* \in \B_{r_\circ}$ such that 
\[
(w- \overline \eta_{t_*}) (x_*) =0 \quad \mbox{while}\quad    w- \overline \eta_{t_*}\ge 0 \quad \mbox{in } \B_{2\varrho}. 
\]

Now evaluating the integro-differential operator $\mathcal L$ (whose kernel is supported in $B_{2\varrho}$; see \eqref{whiowhoiwh1}) at the point $x_*$ we obtain 
\[
C\theta \ge \mathcal L \overline \eta_{t_*}(x_*) \ge \mathcal L w(x_\circ) \ge \frac{\delta}{4} +  \ep_j^{-s}\big(W'(v_{\ep_j,\tau_j})- W'(u_{\ep_j}\circ\varphi)\big)(x_\circ) \ge \frac{\delta}{4},
\]
Notice that  $W''>0$ in the interval $[u_{\ep_j}\circ\varphi(x_\circ), v_{\ep_j,\tau_j}(x_\circ)]$ because \eqref{wtiowhiopwjh} holds for $\tau=\tau_j$, and hence either $u_{\ep_j}\circ\varphi(x_\circ)\ge 1-\theta$ or $v_{\ep_j,\tau_j}(x_\circ)\le -1+\theta$. Therefore, choosing $\theta>0$ sufficiently small  so that $C\theta < \delta/4$ we reach a contradiction. Hence, we have proved that $w\ge 0$ and \eqref{wjthiowhoiughw246} holds.

\vsp
Finally, take $j$ large so that $x_j\in \B_{r_\circ/4}$ (recall that $x_j\to 0$ as $j\to \infty$).
Using that $w\ge 0$ in $ \B_{r_\circ/2}$, $w(x_j)=0$, and $w\ge -\theta$ in $ \B_{2\varrho}\setminus \B_{r_\circ}$ and evaluating $\mathcal Lw$ at the point $x_j\in \B_{r_\circ/4}$ we obtain, similarly as before
\[
C(r_\circ) \theta \ge - \alpha_{n,s}  \int_{ \B_{\varrho}} \frac{w(y) \eta_\varrho(y)}{|A(x_j)(x_j-y)|^{n+s}}  \sqrt{|g|}(x_j) \, dy  = \mathcal L w(x_j) \ge \frac{\delta}{4}.
\]
Choosing $\theta>0$ sufficiently small, we obtain a contradiction, and this completes the proof.
\end{proof}

Theorem \ref{StrongConv} motivates the definition of $\mathcal{A}_m(M)$ given in the introduction, see Definition \ref{AClimits}.

\vsp
The ``surfaces" $\Sigma$ belonging to the class $\mathcal{A}_m(M)$ enjoy some properties additionally to those already described in Theorem \ref{StrongConv} and Proposition \ref{prop:viscosity}. We record them in the remark below.

\begin{remark}\label{remprop}
    Every $\Sigma = \partial E \in \mathcal{A}_m(M)$ also satisfies that if ${\rm FA}_2(M,g,R,p,\varphi)$ is satisfied, then the following hold:
\begin{itemize}
    \item[$(1)$] \textbf{BV and energy estimate.} For some $C=C (n,s,m) > 0 $ there holds
\begin{equation*}
    \textnormal{Per}(E;B_{R/2}(p)) \leq CR^{n-1} \quad \text{and}\quad  \textnormal{Per}_s(E;B_{R/2}(p)) \leq CR^{n-s}.
\end{equation*}
    \item[$(2)$] \textbf{Density estimate.} For some positive constant $\omega_0$, which depends only on $n$, $s$ and $m$, we have that if $R^{-n}|E \cap B_R(p)| \leq \omega_0$ then $|E \cap B_{R/2}(p)|=0$.
\end{itemize}

   Indeed, by Definition \ref{AClimits} of $\mathcal{A}_m(M)$ we can find a sequence $u_{\ep_j}$, made of A-C solutions with Morse index $\leq m$ and parameters $\ep_j\to 0$, converging to $E$ in $L^1(M)$, and also in $H^{s/2}(M)$ thanks to Theorem \ref{StrongConv}. Then, property $(1)$ follows from the lower semicontinuity of the $BV$ norm under $L^1$ convergence and the convergence of Sobolev energies under strong $H^{s/2}$ convergence, together with the fact that the $u_{\ep_j}$ satisfy uniform $BV$ and Sobolev estimates themselves by Theorems \ref{BVEst} and \ref{A-C-energy-est}. Similarly, property $(2)$ follows from the $L^1$ convergence and the density estimates of Proposition \ref{DensEst} satisfied by the $u_{\ep_j}$ themselves.

\end{remark}

\subsection{The Yau conjecture for nonlocal minimal surfaces -- Proof of Theorem \ref{FracYau3}}\label{YauSection}

We can now combine the existence and convergence results in the previous sections to prove the Yau conjecture for nonlocal minimal surfaces.
    
\begin{proof}[Proof of Theorem \ref{FracYau3}]
Fix $\p\in \mathbb{N}$. Theorem \ref{Existence1} gives the existence, for all $\ep\in(0,\ep_\p)$, of a solution $u_{\ep,\p}$ to the fractional Allen-Cahn equation with Morse index $m(u_{\ep,\p}) \leq \p$ and energy bounds
\begin{equation}\label{ACgrowth}
    C^{-1} \p^{s/n} \leq (1-s)\mathcal{E}_M^\ep(u_{\ep,\p}) \leq C \p^{s/n}.
\end{equation}
Thanks to the convergence result in Theorem \ref{StrongConv}, we can find a subsequence $\{\ep_j\}_{j }$ such that the $u_{\ep_j, \p}$ converge in $H^{s/2}(M)$ to a limit function
\begin{equation*}
u_{0,\p}=\chi_{E^\p}-\chi_{ M \setminus E^\p }  ,   
\end{equation*}
where $\partial E^\p$ is an $s$-minimal surface and $\partial E^\p\in\mathcal{A}_\p(M)$ by definition. Moreover, by \eqref{ACgrowth} and the strong convergence of the Allen-Cahn energies stated in Theorem \ref{StrongConv}, we deduce that the fractional perimeter of $E^\p$ satisfies the bounds
\begin{equation*}
    C^{-1} \p^{s/n} \leq (1-s)\textnormal{Per}_s(E^\p) \leq C \p^{s/n}.
\end{equation*}
In particular, the fractional perimeter of the $E^\p$ goes to infinity as $\p\to\infty$, thus we conclude that the family $\{E^\p \}_{\p \in \N}$ is infinite. 
\end{proof}

\begin{remark}
    We emphasise that, unlike in \cite{GG1} or \cite{MN}, due to the {\bf strong} convergence as $\ep\to 0$ there is no multiplicity phenomenon. We have used the following in the previous proof: Consider two sets $E^\p$ and $E^{\p'}$ as in the proof of Theorem \ref{FracYau3} above, corresponding to respective limits of the sequences $u_{\ep,\p}$ and $u_{\ep,\p'}$ as $\ep \to 0$, and assume that 
\begin{equation*}
    \lim_{\ep \to 0}(1-s)\mathcal{E}_M^\ep(u_{\ep,\p}) \neq \lim_{\ep \to 0}(1-s)\mathcal{E}_M^\ep(u_{\ep,{\p'}})\,; 
\end{equation*}
then, they are necessarily distinct sets. Hence, their boundaries correspond to geometrically distinct $s$-minimal surfaces. We note that this does not prevent, however, that $E^\p = E^{\p+1}$ for some value of $\p$, since it could be that $(1-s)\textnormal{Per}_s(E^\p) = (1-s)\textnormal{Per}_s(E^{\p+1})$; nevertheless, as $\p\to\infty$ 
\end{remark}

\section{Regularity and rigidity results}

\subsection{Blow-up procedure}\label{BlowUpSection}
The goal of this subsection is to explicitly show how to perform blow-ups of (sequences of) $s$-minimal surfaces around points with flatness assumptions, proving strong convergence results for the blow-up sequence to a Euclidean limit surface in a manner similar to Section \ref{ConvSection}.
\begin{definition}[\textbf{Blow-up sequence}]\label{blowupdef}
Let $(M_j,g^{(j)})$ be a sequence of closed manifolds of dimension $n$, and let $p_j\in M_j$ be points such that $M_j$ satisfies the flatness assumption ${\rm FA}_3(M_j,g^{(j)},1,p_j,\varphi_j)$. Suppose in addition that $g_{kl}^{(j)}(0) = \delta_{kl}$, i.e. that the metric of $M_j$ with respect to the chart $\varphi_j^{-1}$ at the point $0$ is the Euclidean metric.

For each $j$, let $\partial E_j$ be an $s$-minimal surface in $M_j$, satisfying uniform BV estimates in the sense that there is some $C_0$ independent of $j$ such that
$$
\text{Per}(\varphi_j^{-1} (E_j); B_{r}(x))\leq C_0 r^{n-1} \quad \mbox{for all } x\in \B_{1/2} \mbox{ and } r\in (0,1/4)\, ,
$$
where we put $\varphi_j^{-1} (E_j):=\{y\in \B_1: \varphi_j(y)\in E_j\}$.\\
Given $r_j\searrow 0$, a sequence of subsets of $\R^n$ of the form
\[
F_j : =  \frac{1}{r_j}\varphi_j^{-1} (E_j)  \subset \B_{1/r_j} \subset \R^n
\]
(for some $M_j, p_j, E_j$ as above) is called a blow-up sequence.
\end{definition}

\begin{remark}\label{rescblowdef}
    $F_j$ is a blow-up sequence if and only if there exist $(\widehat M_j,\widehat g^{(j)})$, $\widehat p_j\in\widehat M_j$, and $R_j\nearrow\infty$ such that
    \begin{itemize}
        \item $\textnormal{Per}(F_j;B_r(x))\leq C_0 r^{n-1} \quad \mbox{for all } x\in \B_{R_j/2} \mbox{ and } r\in (0,R_j/4)$;
        \item ${\rm FA}_3(\widehat M_j,\widehat g^{(j)},R_j,\widehat p_j,\widehat\varphi_j)$ holds and $\widehat g_{kl}^{(j)}(0)=\delta_{kl}$, where $\widehat g_{kl}^{(j)}=\widehat g^{(j)}((\widehat\varphi_j)_*(e_k),(\widehat\varphi_j)_*(e_l))$ denotes the metric in coordinates;
        \item For each $j$ there is an $s$-minimal surface $\partial \widehat E_j$ in $\widehat M_j$ such that $F_j=\widehat\varphi_j^{-1}(E_j)$.
    \end{itemize}
\end{remark}
\begin{proof}[Proof of the remark]
    This follows from putting $\widehat M_j= M_j$, $\widehat p_j = p_j$, $\widehat g^{(j)}=\frac{1}{r_j^2}g^{(j)}$ and $R_j=\frac{1}{r_j}$ in Definition \ref{blowupdef} and considering the scaling properties stated in Remark \ref{flatscalingrmk}.
\end{proof}

We record some auxiliary results. The notation $K_{\widehat M_j}$ will be used instead of $K_s$ when we want to explicit which manifold the kernel $K_s$ is being considered on.
\begin{proposition}\label{eucproperties1}
    Let $F_j\subset\R^n$ be a blow-up sequence, with 
    associated $(\widehat{M}_j,\widehat g^{(j)})$, $\widehat E_j\subset \widehat M_j$ and $R_j\to \infty$ as in Remark \ref{rescblowdef}. The following hold:
    \begin{itemize}
        \item[\textit{(i)}] The components $\widehat g_{kl}^{(j)}$ of the metric of $\widehat M_j$ (using the chart parametrization $\widehat \varphi_j$) converge locally uniformly to the Euclidean ones, in the sense that given $R_0>0$,
        $$\sup_{x\in\B_{R_0}} \big|\widehat g_{kl}^{(j)}(x)-\delta_{kl} \big|\to 0 \quad \mbox{as } j\to\infty.
        $$
        \item[\textit{(ii)}] The kernel $K_{\widehat M_j}$ converges locally uniformly to the Euclidean one, in the sense that given $R_0>0$,
        $$\sup_{(x,y)\in\B_{R_0}\times\B_{R_0}}\left|\frac{K_{\widehat M_j}(\widehat \varphi_j(x),\widehat\varphi_j(y))}{\frac{\alpha_{n,s}}{|x-y|^{n+s}}}-1\right|\to 0 \quad \mbox{as } j\to\infty.
        $$ 

    \end{itemize} 
\end{proposition}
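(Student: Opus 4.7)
The plan is to prove the two statements independently, deriving (i) directly from the flatness assumption and then combining (i) with the kernel expansion of Proposition \ref{prop:kern1} to obtain (ii).

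For (i), I would use that ${\rm FA}_3(\widehat M_j, \widehat g^{(j)}, R_j, \widehat p_j, \widehat\varphi_j)$ holds with $R_j\to\infty$. Applying \eqref{hsohoh2} with $|\alpha|=1$ gives
\[
\Big|\tfrac{\partial}{\partial x^m} \widehat g^{(j)}_{kl}(x)\Big| \leq \tfrac{1}{100\, R_j} \quad \mbox{for all } x\in\B_{R_j}(0).
\]
Since $\widehat g^{(j)}_{kl}(0)=\delta_{kl}$, integrating along the segment from $0$ to any $x\in\B_{R_0}$ (which is contained in $\B_{R_j}$ once $j$ is large enough) yields
\[
\sup_{x\in\B_{R_0}}\big|\widehat g^{(j)}_{kl}(x)-\delta_{kl}\big| \leq \tfrac{R_0}{100\, R_j} \xrightarrow{j\to\infty} 0.
\]

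For (ii), fix $R_0>0$ and take $j$ large so that $R_j/4 > R_0$. In coordinates, set $k_j(x,z):=K_{\widehat M_j}(\widehat\varphi_j(x),\widehat\varphi_j(x+z))$ and let $A_j(x)$ denote the positive symmetric square root of $(\widehat g^{(j)}_{kl}(x))$. By Proposition \ref{prop:kern1} applied to $(\widehat M_j,\widehat g^{(j)})$ with $R=R_j$, we can decompose
\[
k_j(x,z) = \alpha_{n,s}\,\frac{\det(A_j(x))}{|A_j(x)z|^{n+s}} + \widehat k_j(x,z),
\]
with the remainder bounded by
\[
|\widehat k_j(x,z)| \leq \frac{C(n,s)}{R_j\,|z|^{n+s-1}}, \quad \mbox{for all } x,z\in\B_{R_j/4}(0).
\]
By (i), $A_j(x)\to I$ uniformly on $\B_{R_0}$, hence $\det(A_j(x))\to 1$ and $|A_j(x)z|/|z|\to 1$ uniformly on $\B_{R_0}\times(\B_{2R_0}\setminus\{0\})$. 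Consequently,
\[
\sup_{x\in\B_{R_0},\ z\in\B_{2R_0}\setminus\{0\}}\left|\frac{\det(A_j(x))|z|^{n+s}}{|A_j(x)z|^{n+s}}-1\right| \xrightarrow{j\to\infty} 0.
\]

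Combining the two contributions, for $(x,y)\in\B_{R_0}\times\B_{R_0}$ with $z=y-x$ (so $|z|\leq 2R_0$), we have
\[
\frac{K_{\widehat M_j}(\widehat\varphi_j(x),\widehat\varphi_j(y))}{\alpha_{n,s}/|x-y|^{n+s}} = \frac{\det(A_j(x))|z|^{n+s}}{|A_j(x)z|^{n+s}} + \frac{|z|^{n+s}\,\widehat k_j(x,z)}{\alpha_{n,s}}.
\]
The first term converges to $1$ uniformly by the previous paragraph, while
\[
\left|\frac{|z|^{n+s}\,\widehat k_j(x,z)}{\alpha_{n,s}}\right| \leq \frac{C(n,s)\,|z|}{\alpha_{n,s}\,R_j} \leq \frac{2C(n,s)\,R_0}{\alpha_{n,s}\,R_j} \xrightarrow{j\to\infty} 0
\]
uniformly. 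This yields (ii). The argument is almost entirely bookkeeping once one has Proposition \ref{prop:kern1}; the only substantive point is that the remainder $\widehat k_j$ is controlled by $R_j^{-1}$, which is precisely what allows the scaling $R_j\to\infty$ to convert the local flatness assumption into convergence of the kernel to its Euclidean counterpart.
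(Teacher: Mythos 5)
Your proof is correct and takes the same approach as the paper's (which is very terse and essentially just says ``(i) follows from the flatness assumptions and $R_j\to\infty$; (ii) follows from plugging $R=R_j$, $z=y-x$ into \eqref{remaining0}''); you have simply filled in the details. One cosmetic slip: to invoke Proposition \ref{prop:kern1} you need both $x$ and $z=y-x$ in $\B_{R_j/4}$, and since $|z|<2R_0$ you should require $R_j/4>2R_0$ rather than $R_j/4>R_0$; similarly the gradient bound in (i) yields a dimensional constant in front of $R_0/R_j$ — neither affects the conclusion.
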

\begin{proof} The first part follows from the definition of the flatness assumptions and the fact that  $R_j\to\infty$.

\vsp
As for part \textit{(ii)}, it is a consequence of Proposition \ref{prop:kern1}. Precisely, it follows from putting $R=R_j$ and $z=y-x$ in \eqref{remaining0} of Proposition \ref{prop:kern1}.
\end{proof}

\begin{lemma}\label{lemrescen}
    Let $F_j\subset\R^n$ be a blow-up sequence, with 
    associated $(\widehat{M}_j,\widehat g^{(j)})$, $\widehat E_j\subset \widehat M_j$ and $R_j\to \infty$ as in Remark \ref{rescblowdef}. Put
    $$
    K_j(x,y):=K_{\widehat M_j}(\widehat\varphi_j(x),\widehat\varphi_j(y))
    $$
    and (for a fixed $\rho<R_j/4$)
    $$
    \textnormal{Per}_s^{(j)}(F_j;\B_\rho):=\frac{1}{4}\iint_{(B_{R_j}\times B_{R_j})\setminus (B_\rho^c\times B_\rho^c)} |u_j(x)-u_j(y)|^2 K_j(x,y)\sqrt{g^{(j)}}(x)\sqrt{g^{(j)}}(y)\,dx\,dy\, ,
    $$
    where $u_j:=\chi_{F_j}-\chi_{F_j^c}$.\\
    Given a vector field $X\in C_c^\infty(B_\rho;\R^n)$, define $X_j:=(\widehat \varphi_j)_* X$ and extend it by zero to a vector field on $\widehat M_j$. The following hold:
    \begin{itemize}
        \item[$(1)$] Let $ 0\le\ell\le 3$. Then
        $$\bigg|\frac{d^\ell}{dt^\ell} \Big(\textnormal{Per}_s^{\widehat M_j}(\psi_{X_j}^t(\widehat E_j);\widehat\varphi_j(\B_\rho))-\textnormal{Per}_s^{(j)}(\psi_{X}^t(F_j);\B_\rho)\Big)\bigg|\leq \frac{C_X}{R_j^s}\, .$$

        \item[$(2)$] If $\chi_{F_j}\to\chi_{F}$ in $H^{s/2}_{\rm loc}(\R^n)$, then for $0\le\ell\le 2$
        $$
        \frac{d^\ell}{dt^\ell}\textnormal{Per}_s^{(j)}(\psi_{X}^t(F_j);\B_\rho)\to \frac{d^\ell}{dt^\ell}\textnormal{Per}_s^{\R^n}(\psi_{X}^t(F);\B_\rho)\, .
        $$
    \end{itemize}
\end{lemma}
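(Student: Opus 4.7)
The strategy for part $(1)$ is a change of variables together with a careful bookkeeping of the ``tail contribution'' coming from points far away in $\widehat M_j$.

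First I would write
\[
\textnormal{Per}_s^{\widehat M_j}(\widehat E_j;\widehat\varphi_j(\B_\rho))
= \mathcal{I}_j^{\rm near}(\widehat E_j)+\mathcal{I}_j^{\rm tail}(\widehat E_j),
\]
where $\mathcal{I}_j^{\rm near}$ corresponds to the piece of the double integral contained in $\widehat\varphi_j(\B_{R_j})\times\widehat\varphi_j(\B_{R_j})$, and $\mathcal{I}_j^{\rm tail}$ collects the cross terms with one point in $\widehat\varphi_j(\B_\rho)$ and the other in $\widehat M_j\setminus\widehat\varphi_j(\B_{R_j})$. Pulling back by $\widehat\varphi_j$ turns $\mathcal{I}_j^{\rm near}(\psi_{X_j}^t(\widehat E_j))$ into exactly $\textnormal{Per}_s^{(j)}(\psi_X^t(F_j);\B_\rho)$. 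Since the vector field $X_j$ is supported in $\widehat\varphi_j(\B_\rho)$, the flow leaves the complement $\widehat M_j\setminus\widehat\varphi_j(\B_\rho)$ invariant, so the $t$-derivative of $\mathcal{I}_j^{\rm tail}$ reduces to integrating the $t$-derivatives of $K_s(\psi_{X_j}^t(\cdot),q)$ against $|\chi_{\widehat E_j}(\cdot)-\chi_{\widehat E_j}(q)|^2$ for $(\cdot,q)\in\widehat\varphi_j(\B_\rho)\times(\widehat M_j\setminus\widehat\varphi_j(\B_{R_j}))$. Applying \eqref{remaining3} of Proposition \ref{prop:kern1} (with $R=R_j$ and derivatives up to order $\ell\le 3$), and using that $|\chi_E-\chi_{E^c}|^2\le 4$, I obtain
\[
\bigg|\frac{d^\ell}{dt^\ell}\mathcal{I}_j^{\rm tail}(\psi_{X_j}^t(\widehat E_j))\bigg|\le C_X\,\vol(\widehat\varphi_j(\B_\rho))\cdot\frac{C(n,\ell)}{R_j^s}\le\frac{C_X}{R_j^s},
\]
which gives $(1)$.

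For part $(2)$, I would proceed as in Step 4 of the proof of Theorem \ref{StrongConv}, with a minor modification to account for the varying kernels $K_j$. Changing variables via the flow $\psi_X^t$ (which is a diffeomorphism of $\R^n$ supported away from $\partial \B_\rho$) gives
\[
\frac{d^\ell}{dt^\ell}\textnormal{Per}_s^{(j)}(\psi_{X}^t(F_j);\B_\rho)
=\frac{1}{4}\iint_{\mathcal{D}_\rho} |u_j(x)-u_j(y)|^2\,G_j^{(\ell)}(t,x,y)\,dx\,dy,
\]
where $\mathcal{D}_\rho:=(\R^n\times\R^n)\setminus(\B_\rho^c\times\B_\rho^c)$ and
\[
G_j^{(\ell)}(t,x,y):=\frac{d^\ell}{dt^\ell}\Big[K_j(\psi_X^t(x),\psi_X^t(y))\,\sqrt{g^{(j)}}(\psi_X^t(x))\sqrt{g^{(j)}}(\psi_X^t(y))\,J_t(x)J_t(y)\Big].
\]
Write the analogous expression for the Euclidean limit with kernel $\alpha_{n,s}/|x-y|^{n+s}$, call it $G_\infty^{(\ell)}$. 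By Proposition \ref{eucproperties1} together with Proposition \ref{sdfgsdrgs} (applied uniformly for $|x|,|y|\le\rho+T\|X\|_{L^\infty}$), the ratio $G_j^{(\ell)}/G_\infty^{(\ell)}$ converges to $1$ uniformly on compact subsets of $\mathcal{D}_\rho$, and both are bounded by $C_{X,\rho}\,|x-y|^{-n-s}$. The key step is now to split
\[
|u_j(x)-u_j(y)|^2 G_j^{(\ell)}-|u(x)-u(y)|^2 G_\infty^{(\ell)}
=\big(|u_j|^2_{xy}-|u|^2_{xy}\big)G_j^{(\ell)} + |u|^2_{xy}\big(G_j^{(\ell)}-G_\infty^{(\ell)}\big),
\]
with the shorthand $|v|^2_{xy}:=|v(x)-v(y)|^2$. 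The first term is estimated by the $H^{s/2}$-convergence $u_j\to u$ on a neighborhood of $\B_\rho$, while the second is controlled by the uniform convergence of $G_j^{(\ell)}/G_\infty^{(\ell)}$ to $1$ and the fixed integrability of the majorant $|u|^2_{xy}/|x-y|^{n+s}$.

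The main technical point is justifying these passages to the limit in spite of the singular nature of the kernel at the diagonal; this is exactly where the hypothesis $\chi_{F_j}\to\chi_F$ in $H^{s/2}_{\rm loc}$ is used. I do not expect serious difficulty: the computations parallel those in Lemma \ref{enboundslemma} and in the convergence argument of Theorem \ref{StrongConv}, modulo replacing the fixed kernel by the family $K_j$ whose uniform convergence is controlled by Proposition \ref{eucproperties1}.
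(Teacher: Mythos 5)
Part (1) of your proposal matches the paper's proof: split off the tail integral over $\widehat\varphi_j(\B_\rho)\times(\widehat M_j\setminus\widehat\varphi_j(\B_{R_j}))$, note that after pulling back by $\widehat\varphi_j$ the remainder is exactly $\textnormal{Per}_s^{(j)}$, and estimate the tail's $t$-derivatives using \eqref{remaining3} with $R=R_j$. This is correct and is the route the paper takes.

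For part (2), however, your argument has a genuine gap. You claim that $G_j^{(\ell)}/G_\infty^{(\ell)}\to 1$ uniformly on compact subsets for $0\le\ell\le 2$, citing Proposition \ref{eucproperties1} and Proposition \ref{sdfgsdrgs}. But for $\ell\ge 1$, $G_j^{(\ell)}$ involves \emph{spatial} derivatives of $K_j$ (pulled back along the flow), whereas Proposition \ref{eucproperties1}(ii) only gives pointwise convergence of the kernel $K_j$ itself, with no control on the convergence of its derivatives; and Proposition \ref{sdfgsdrgs} provides \emph{bounds} on time derivatives of $K_s(\psi^t\cdot,\psi^t\cdot)$, not convergence to the Euclidean counterpart. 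Nothing in the cited results establishes that the derivatives of $K_j$ converge to those of $\alpha_{n,s}|x-y|^{-n-s}$, so the ratio of $\ell$-th derivatives cannot be asserted to converge to $1$. The paper sidesteps this entirely: using Proposition \ref{sdfgsdrgs} it proves \emph{uniform bounds} on $\tfrac{d^\ell}{dt^\ell}f_j(t)$ for $0\le\ell\le 3$, then proves \emph{pointwise} convergence $f_j(t)\to g(t)$ (which needs only the $\ell=0$ convergence of kernels from Proposition \ref{eucproperties1}(ii), together with the hypothesis $\chi_{F_j}\to\chi_F$ in $H^{s/2}_{\rm loc}$), and concludes convergence of the first and second $t$-derivatives by an Arzel\`a--Ascoli/equicontinuity argument. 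Your approach would require proving a convergence statement for kernel derivatives that is not available from the cited propositions. There is also a secondary, fixable slip: you integrate over $\mathcal{D}_\rho=(\R^n\times\R^n)\setminus(\B_\rho^c\times\B_\rho^c)$, but $\textnormal{Per}_s^{(j)}$ is by definition an integral over $(\B_{R_j}\times\B_{R_j})\setminus(\B_\rho^c\times\B_\rho^c)$; the paper handles this with a cutoff at an intermediate scale $R$, showing the contribution from $\B_\rho\times\widehat\varphi_j(\B_{R_j})\setminus\widehat\varphi_j(\B_R)$ is $O(R^{-s})$ uniformly in $j$ and sending $R\to\infty$ after $j\to\infty$.
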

\begin{proof}
    We begin by proving (1). Let $v_j^t:=\chi_{\psi_{X_j}^t(\widehat E_j)}-\chi_{\psi_{X_j}^t(\widehat E_j^c)}$ and $u_j^t:=\chi_{\psi_{X}^t(F_j)}-\chi_{\psi_{X}^t(F_j^c)}$.\\
    By splitting the domain of the corresponding integral and then passing to coordinates, we can write
\begin{align*}
    \textnormal{Per}_s^{\widehat M_j}(\psi_{X_j}^t(\widehat E_j);\widehat\varphi_j(\B_\rho))&= \frac{1}{4}\iint_{(\widehat\varphi_j(\B_{R_j})\times \widehat\varphi_j(\B_{R_j})) \setminus (\widehat\varphi_j(\B_{\rho})^c\times \widehat\varphi_j(\B_{\rho})^c)} |v_j^t(p)-v_j^t(q)|^2\, K_{\widehat M_j}(p,q)\,dV_p\,dV_q \\
    &\hspace{0.6 cm} + \frac{1}{2}\iint_{\widehat\varphi_j(\B_{\rho})\times \widehat\varphi_j(\B_{R_j})^c} |v_j^t(p)-v_j^t(q)|^2\, K_{\widehat M_j}(p,q)\,dV_p\,dV_q\\
    &=\frac{1}{4}\iint_{(\B_{R_j}\times \B_{R_j}) \setminus (\B_{\rho}^c\times \B_{\rho}^c)}|u_j^t(x)-u_j^t(y)|^2 K_j(x,y)\sqrt{g^{(j)}(x))}\sqrt{g^{(j)}(y))}\,dx\,dy \\
    &\hspace{0.6 cm} + \frac{1}{2}\iint_{\widehat\varphi_j(\B_{\rho})\times \widehat\varphi_j(\B_{R_j})^c} |v_j^t(p)-v_j^t(q)|^2\, K_{\widehat M_j}(p,q)\,dV_p\,dV_q\\
    &=\textnormal{Per}_s^{(j)}(\psi_X^t(F_j);\B_\rho)+ \frac{1}{2}\iint_{\widehat\varphi_j(\B_{\rho})\times \widehat\varphi_j(\B_{R_j})^c} |v_j^t(p)-v_j^t(q)|^2\, K_{\widehat M_j}(p,q)\,dV_p\,dV_q\, .
\end{align*}
From this computation, changing variables with the flow as in \eqref{eqcvthm} and then passing to coordinates in the first variable we can compute

\begin{align*}
    \bigg|\frac{d^\ell}{dt^\ell} \Big(\textnormal{Per}_s^{\widehat M_j}&(\psi_{X_j}^t(\widehat E_j);\widehat\varphi_j(\B_\rho))-\textnormal{Per}_s^{(j)}(\psi_{X}^t(F_j);\B_\rho)\Big)\bigg|=\\
    &=\frac{1}{2}\bigg|\frac{d^\ell}{dt^\ell}\iint_{\widehat\varphi_j(\B_{\rho})\times \widehat\varphi_j(\B_{R_j})^c} |v_j^t(p)-v_j^t(q)|^2\, K_{\widehat M_j}(p,q)\,dV_p\,dV_q\bigg|\\
    &=\frac{1}{2}\bigg|\iint_{\widehat\varphi_j(\B_{\rho})\times \widehat\varphi_j(\B_{R_j})^c} |v_j(p)-v_j(q)|^2\, \frac{d^\ell}{d t^\ell}\Big[K_{\widehat M_j}(\psi_{X_j}^{ t}(p),q) \,J_{ t}(p)\Big]\,dV_p\,dV_q\bigg|\\
    &\leq C\iint_{\B_{\rho}\times \widehat\varphi_j(\B_{R_j})^c} \bigg|\frac{d^\ell}{d t^\ell}\Big[K_{\widehat M_j}(\widehat\varphi_j(\psi_{X}^{t}(x)),q) \,J_{ t}(p)\Big]\bigg|\,dx\,dV_q\, .
\end{align*}
Bounding the derivatives in time of the Jacobian $J_{ t}(p)$ by a constant, and using \eqref{remaining3} with $R=R_j$ to bound the integral in $q$, we conclude the result in (1).

\vsp
To see (2), let $R>\rho$ and put $f_j(t):=\textnormal{Per}_s^{(j)}(\psi_X^t(F_j);\B_\rho)$. Changing variables with the flow (as above) and splitting the domain of the integral, for $R<R_j$ we can write 

\begin{align*}
    &\frac{d^\ell}{dt^\ell}\textnormal{Per}_s^{(j)}(\psi_X^t(F_j);\B_\rho) \\
    &=\frac{1}{4}\iint_{(\B_{R}\times \B_{R}) \setminus (\B_{\rho}^c\times \B_{\rho}^c)}|u_j(x)-u_j(y)|^2 \frac{d^\ell}{dt^\ell}\Big[K_j(\psi_X^t(x),\psi_X^t(y))\sqrt{g^{(j)}}(\psi_X^t(x))\sqrt{g^{(j)}}(\psi_X^t(y))J_t(x)J_t(y)\Big]\,dx\,dy\\
    &\hspace{0.6cm} +\frac{1}{2}\iint_{\B_\rho\times(\widehat\varphi_j(\B_{R_j})\setminus \widehat\varphi_j(\B_{R}))}|u_j(x)-u_j(\varphi^{-1}(q))|^2\frac{d^\ell}{dt^\ell}\Big[K_{\widehat M_j}(\widehat\varphi_j(\psi_X^t(x)),q)\sqrt{g^{(j)}}(\psi_X^t(x))J_t(x)\Big]\,dx\,dV_q\, .
\end{align*}

Let $0\leq \ell \leq 3$. Thanks to the flatness assumptions and \eqref{remaining3} of Proposition \ref{prop:kern1}, we can bound

\begin{align}
\bigg|\iint_{\B_\rho\times(\widehat\varphi_j(\B_{R_j})\setminus \widehat\varphi_j(\B_{R}))} & |u_j(x)-u_j(\varphi^{-1}(q))|^2\frac{d^\ell}{dt^\ell}\Big[K_{\widehat M_j}(\widehat\varphi_j(\psi_X^t(x)),q)\sqrt{g^{(j)}}(\psi_X^t(x))J_t(x)\Big]\,dx\,dV_q\bigg| \nonumber \\
&\leq C\iint_{\B_\rho\times(\widehat M_j\setminus \widehat\varphi_j(\B_{R}))}\bigg|\frac{d^\ell}{dt^\ell}\Big[K_{\widehat M_j}(\widehat\varphi_j(\psi_X^t(x)),q)\sqrt{g^{(j)}}(\psi_X^t(x))J_t(x)\Big]\bigg|\,dx\,dV_q \nonumber \\
&\leq \frac{C}{R^s} \, . \label{iobound}
\end{align}

On the other hand, by the flatness assumptions and \eqref{cdsacsdc2} in Proposition \ref{sdfgsdrgs} we have that, for $t\in(-T,T)$ and $j$ large enough so that $R<R_j/4$,
{\fontsize{10pt}{10pt}\selectfont
\begin{align*}
     \iint_{(\B_{R}\times \B_{R}) \setminus (\B_{\rho}^c\times \B_{\rho}^c)} & |u_j(x)-u_j(y)|^2 \bigg|\frac{d^\ell}{dt^\ell}\Big[K_j(\psi_X^t(x),\psi_X^t(y))\sqrt{g^{(j)}}(\psi_X^t(x))\sqrt{g^{(j)}}(\psi_X^t(y))J_t(x)J_t(y)\Big]\bigg|\,dx\,dy \\
    &\leq C_T\iint_{\B_{R}\times \B_{R}}|u_j(x)-u_j(y)|^2 \frac{\alpha_{n,s}}{|x-y|^{n+s}}\,dx\,dy\\
    &\leq C_T\, ,
\end{align*}
}
where in the last line we combined the fact that $F_j$ has bounded classical perimeter in $B_{R_j/4}$ with the interpolation result in Proposition \ref{interprop}.

\vsp
This shows that the functions $\frac{d^\ell}{dt^\ell}f_j(t)$ are locally uniformly bounded for $0\leq \ell \leq 3$; in particular, for $0\leq \ell \leq 2$ we deduce that the $\frac{d^\ell}{dt^\ell}f_j(t)$ are locally uniformly bounded and moreover have a uniform modulus of continuity, thus by Arzel\`a-Ascoli they subsequentially converge locally uniformly. By standard single-variable calculus, to conclude our desired result it then suffices to show that $f_j(t)$ converges pointwise to $g(t):=\textnormal{Per}_s^{\R^n}(\psi_X^t(F);\B_\rho)$, since then the first two derivatives of the limit function $g(t)$ will be the limits of the derivatives of the $f_j(t)$. We shall now prove the pointwise convergence result.

\vsp
Denote $u^t:=\chi_{\psi_X^t(F)}-\chi_{\psi_X^t(F^c)}$. We can then write
\begin{align*}
    g(t)&=\textnormal{Per}_s^{\R^n}(\psi_X^t(F);\B_\rho)\\
    &=\frac{1}{4}\iint_{(\B_{R}\times \B_{R}) \setminus (\B_{\rho}^c\times \B_{\rho}^c)}|u(x)-u(y)|^2\frac{\alpha_{n,s}}{|\psi_X^t(x)-\psi_X^t(y)|^{n+s}}J_t(x)J_t(y)\,dx\,dy\\
    &\hspace{0.6cm} +\frac{1}{2}\iint_{\B_\rho\times\B_{R}^c}|u(x)-u(y)|^2\frac{\alpha_{n,s}}{|\psi_X^t(x)-y|^{n+s}}J_t(x)\,dx\,dy\, .
\end{align*}

Clearly
$$\iint_{\B_\rho\times\B_{R}^c}|u(x)-u(y)|^2\frac{\alpha_{n,s}}{|\psi_X^t(x)-y|^{n+s}}J_t(x)\,dx\,dy\to 0 \quad \mbox{as } R\to\infty\, ,$$
since the integrand is absolutely integrable by \eqref{remaining3} in Proposition \ref{prop:kern1}. Together with \eqref{iobound}, given $\ep>0$, we deduce that there exists an $R>\rho$ (depending only on $\rho$ and $\ep$) such that the aforementioned terms are both smaller than $\ep/2$ for all $j$ large enough. From this fact and a simple triangle inequality, we find that 
\begin{align*}
    \bigg|&\textnormal{Per}_s^{(j)}(\psi_X^t(F_j);\B_\rho)-\textnormal{Per}_s^{\R^n}(\psi_X^t(F);\B_\rho)\bigg| \\
    & \leq \frac{1}{4}\iint_{(\B_{R}\times \B_{R}) \setminus (\B_{\rho}^c\times \B_{\rho}^c)}|u_j(x)-u_j(y)|^2 \cdot \\ & \hspace{2cm} \cdot  \left|K_j(\psi_X^t(x),\psi_X^t(y))\sqrt{g^{(j)}}(\psi_X^t(x))\sqrt{g^{(j)}}(\psi_X^t(y)) -\frac{\alpha_{n,s}}{|\psi_X^t(x)-\psi_X^t(y)|^{n+s}}\right|J_t(x)J_t(y)\,dx\,dy \\
    &\hspace{0.4cm} +\frac{1}{4}\Big|\iint_{(\B_{R}\times \B_{R}) \setminus (\B_{\rho}^c\times \B_{\rho}^c)}\Big(|u_j(x)-u_j(y)|^2-|u(x)-u(y)|^2\Big) \frac{\alpha_{n,s}}{|\psi_X^t(x)-\psi_X^t(y)|^{n+s}}J_t(x)J_t(y)\,dx\,dy\Big|\\
    &\hspace{0.4 cm}+\ep\, .
\end{align*}
Regarding the first term, thanks to Proposition \ref{eucproperties1} and \eqref{cdsacsdc2} in Proposition \ref{sdfgsdrgs} it can be bounded as follows:
\begin{align*}
    &\iint_{(\B_{R}\times \B_{R}) \setminus (\B_{\rho}^c\times \B_{\rho}^c)}|u_j(x)-u_j(y)|^2 \cdot \\ & \hspace{2cm} \cdot \left| K_j(\psi_X^t(x),\psi_X^t(y))\sqrt{g^{(j)}}(\psi_X^t(x))\sqrt{g^{(j)}}(\psi_X^t(y))-\frac{\alpha_{n,s}}{|\psi_X^t(x)-\psi_X^t(y)|^{n+s}}\right| J_t(x)J_t(y)\,dx\,dy\\
    &= \iint_{(\B_{R}\times \B_{R}) \setminus (\B_{\rho}^c\times \B_{\rho}^c)}|u_j(x)-u_j(y)|^2 \frac{\alpha_{n,s}}{|\psi_X^t(x)-\psi_X^t(y)|^{n+s}} \, \cdot \\ & \hspace{4cm} \cdot \left|\frac{K_j(\psi_X^t(x),\psi_X^t(y))\sqrt{g^{(j)}}(x)\sqrt{g^{(j)}}(y)}{\frac{\alpha_{n,s}}{|\psi_X^t(x)-\psi_X^t(y)|^{n+s}}}-1\right|J_t(x)J_t(y)\,dx\,dy\\
    &\leq o_j(1) \iint_{B_{R}\times \B_{R}}|u_j(x)-u_j(y)|^2 \frac{\alpha_{n,s}}{|\psi_X^t(x)-\psi_X^t(y)|^{n+s}}\,dx\,dy\\
    &\leq o_j(1) \, C_T\iint_{\B_{R}\times \B_{R}}|u_j(x)-u_j(y)|^2 \frac{\alpha_{n,s}}{|x-y|^{n+s}}\,dx\,dy\, ,
\end{align*}
where $o_j(1)\to 0$ as $j\to\infty$. This implies that the whole expression goes to zero since the factor $\iint_{\B_{R}\times \B_{R}}|u_j(x)-u_j(y)|^2 \frac{\alpha_{n,s}}{|x-y|^{n+s}}\,dx\,dy$ can be bounded by a constant independent of $j$: indeed, for $j$ large enough so that $R<R_j/4$, the $F_j$ satisfy uniform perimeter estimates in $\B_R$ by assumption (see Remark \ref{rescblowdef}), and thus also uniform fractional energy estimates by interpolation (see Proposition \ref{interprop}).

\vsp
As for the second term, we can write
\begin{align}
&\bigg|\iint_{(\B_{R}\times \B_{R}) \setminus (\B_{\rho}^c\times \B_{\rho}^c)}\Big(|u_j(x)-u_j(y)|^2-|u(x)-u(y)|^2\Big) \frac{\alpha_{n,s}}{|\psi_X^t(x)-\psi_X^t(y)|^{n+s}}J_t(x)J_t(y)\,dx\,dy\bigg|= \nonumber \\ \nonumber 
&=\bigg|\iint_{(\B_{R}\times \B_{R}) \setminus (\B_{\rho}^c\times \B_{\rho}^c)}\Big(|u_j(x)-u_j(y)|^2 - |u(x)-u(y)|^2\Big) \frac{\alpha_{n,s}}{|x-y|^{n+s}} \, \cdot \\ & \hspace{7cm} \cdot \frac{\frac{\alpha_{n,s}}{|\psi_X^t(x)-\psi_X^t(y)|^{n+s}}J_t(x)J_t(y)}{\frac{\alpha_{n,s}}{|x-y|^{n+s}}}\,dx\,dy\bigg|\, . \label{2termrew}
\end{align}
Since $u_j\to u$ in $H^{s/2}_{\rm loc}(\R^n)$ by assumption, one immediately sees that
$$A_j(x,y):=\Big(|u_j(x)-u_j(y)|^2-|u(x)-u(y)|^2\Big)\frac{\alpha_{n,s}}{|x-y|^{n+s}}\to 0 \quad \mbox{in } L^1_{\rm loc}\, .$$
On the other hand,
$$B(x,y):=\frac{\frac{\alpha_{n,s}}{|\psi_X^t(x)-\psi_X^t(y)|^{n+s}}J_t(x)J_t(y)}{\frac{\alpha_{n,s}}{|x-y|^{n+s}}}$$
is a fixed function in $L^\infty_{\rm loc}$ by \eqref{cdsacsdc2} in Proposition \ref{sdfgsdrgs}. Thus $A_jB\to 0$ in $L^1_{ \rm loc}$, and this means that \eqref{2termrew} goes to $0$ as $j\to\infty$ as well.

\vsp
Putting everything together, we deduce that
\begin{align*}
    \limsup_{j\to\infty}\Big|\textnormal{Per}_s^{(j)}(\psi_X^t(F_j);\B_\rho)-\textnormal{Per}_s^{\R^n}(\psi_X^t(F);\B_\rho)\Big|&\leq \ep\, ;
\end{align*}
since $\ep$ was arbitrary, we conclude that 
$$f_j(t)=\textnormal{Per}_s^{(j)}(\psi_X^t(F_j);\B_\rho)\to\textnormal{Per}_s^{\R^n}(\psi_X^t(F);\B_\rho)=g(t)\, .$$
As explained before, this gives the desired result.
\end{proof}

The main result of this section is the following:
\begin{theorem}[Convergence to a limit]\label{BlowUpConv}
Let $F_j\subset\R^n$ be a blow-up sequence. Then, there exists a Euclidean $s$-minimal surface $F\subset \R^n$ such that a subsequence of the $v_j:=\chi_{F_j}-\chi_{(\R^n\setminus F_j)}$ converges to $v:=\chi_F-\chi_{(\R^n\setminus F)}$ in $H^{s/2}_{\rm loc}(\R^n)$.
\end{theorem}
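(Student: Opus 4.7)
The plan is a three-step strategy that closely mirrors the proof of Theorem \ref{StrongConv}.

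\textbf{Step 1 (Compactness in $L^1_{\rm loc}$).} By Remark \ref{rescblowdef}, the sequence $v_j := \chi_{F_j} - \chi_{F_j^c}$ satisfies a uniform $BV$ estimate on every fixed ball $\B_R \subset \R^n$ (for $j$ large enough that $R < R_j/4$). Standard $BV$ compactness and a diagonal argument then extract a subsequence converging in $L^1_{\rm loc}(\R^n)$ to some $v_\infty$. Since each $v_j$ takes values in $\{-1, +1\}$, so does $v_\infty$, so $v_\infty = \chi_F - \chi_{F^c}$ for a measurable $F \subset \R^n$ of locally finite perimeter.

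\textbf{Step 2 (Upgrading to $H^{s/2}_{\rm loc}$).} Fix any $\sigma \in (s, 1)$. Apply the interpolation inequality of Proposition \ref{interprop} on each Euclidean ball $\B_{2R}$ (using that $\|v_j\|_{L^\infty} \le 1$ and the uniform $BV$ bound) to obtain
\[
\iint_{\B_{2R} \times \B_{2R}} \frac{|v_j(x)-v_j(y)|^2}{|x-y|^{n+\sigma}} \, dx \, dy \le C(n, \sigma, R).
\]
The ``tail'' contribution $\iint_{\B_R \times (\R^n \setminus \B_{2R})} \tfrac{|v_j(x)-v_j(y)|^2}{|x-y|^{n+\sigma}}\, dx\, dy$ is trivially bounded since $|v_j|\le 1$ and $\tfrac{1}{|x-y|^{n+\sigma}}$ is integrable on that domain. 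Hence $\|v_j\|_{H^{\sigma/2}(\B_R)}$ is bounded independently of $j$, and the compact embedding $H^{\sigma/2}(\B_R) \hookrightarrow H^{s/2}(\B_R)$ combined with the $L^1_{\rm loc}$ convergence from Step 1 (used to identify the limit) upgrades a further subsequence to converge in $H^{s/2}(\B_R)$. A diagonal argument over $R \to \infty$ gives $v_j \to v_\infty$ in $H^{s/2}_{\rm loc}(\R^n)$.

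\textbf{Step 3 ($s$-minimality of $F$).} Fix any $X \in C_c^\infty(\B_\rho; \R^n)$ and let $X_j := (\widehat\varphi_j)_* X$ (extended by zero), which is supported in $\widehat\varphi_j(\B_\rho)$ for $j$ large. Since $\partial \widehat E_j$ is $s$-minimal in $\widehat M_j$ (Definition \ref{def-fracminsurface2}),
\[
\frac{d}{dt}\Big|_{t=0} \textnormal{Per}_s^{\widehat M_j}\big(\psi_{X_j}^t(\widehat E_j);\, \widehat\varphi_j(\B_\rho)\big) = 0.
\]
Part (1) of Lemma \ref{lemrescen} gives
\[
\Big|\tfrac{d}{dt}\big|_{t=0} \textnormal{Per}_s^{\widehat M_j}\big(\psi_{X_j}^t(\widehat E_j);\, \widehat\varphi_j(\B_\rho)\big) - \tfrac{d}{dt}\big|_{t=0} \textnormal{Per}_s^{(j)}\big(\psi_X^t(F_j);\B_\rho\big)\Big| \le \tfrac{C_X}{R_j^s} \to 0.
\]
Part (2) of Lemma \ref{lemrescen}, which crucially requires the $H^{s/2}_{\rm loc}$ convergence established in Step 2, yields
\[
\tfrac{d}{dt}\big|_{t=0} \textnormal{Per}_s^{(j)}\big(\psi_X^t(F_j);\B_\rho\big) \to \tfrac{d}{dt}\big|_{t=0} \textnormal{Per}_s^{\R^n}\big(\psi_X^t(F);\B_\rho\big).
\]
Combining these, $\tfrac{d}{dt}\big|_{t=0} \textnormal{Per}_s^{\R^n}(\psi_X^t(F); \B_\rho) = 0$ for every $\rho > 0$ and every $X \in C_c^\infty(\B_\rho;\R^n)$, i.e.\ $F$ is an $s$-minimal surface in $\R^n$ in the sense of Definition \ref{def-fracminsurface2}.

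\textbf{Main obstacle.} The delicate point is Step 2, the upgrade from the easy $L^1_{\rm loc}$ compactness to $H^{s/2}_{\rm loc}$: this stronger convergence is indispensable, because part (2) of Lemma \ref{lemrescen} (and hence the passage to the limit in the Euler--Lagrange equation) breaks down under mere $L^1_{\rm loc}$ convergence. Fortunately, the interpolation bound of Proposition \ref{interprop}, combined with the decay of $\tfrac{1}{|x-y|^{n+\sigma}}$ at infinity that makes the cross-term tame, is exactly enough to conclude via the compact embedding $H^{\sigma/2} \hookrightarrow H^{s/2}$ on bounded sets.
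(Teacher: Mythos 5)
Your proof is correct and follows essentially the same route as the paper: uniform $BV$ bounds plus interpolation and the compact embedding $H^{\sigma/2}\hookrightarrow H^{s/2}$ to extract a subsequence converging in $H^{s/2}_{\rm loc}$ to a characteristic function, and then Lemma \ref{lemrescen} (parts (1) and (2)) to pass to the limit in the first variation. Your explicit splitting into $L^1_{\rm loc}$ compactness followed by the $H^{s/2}_{\rm loc}$ upgrade merely unpacks what the paper compresses by reference to Step 1 of Theorem \ref{StrongConv}.
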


\begin{proof} We divide the proof in two steps.

    \vsp
    \textbf{Step 1.} Convergence to a limit set $F$.

    \vsp \noindent
    Fix a radius $R$. For $j$ large enough so that $R<R_j/4$, the $F_j$ satisfy a uniform BV estimate in $\B_R$, as indicated in the third bullet of Proposition \ref{eucproperties1}. As in Step $1$ of the proof of Theorem \ref{StrongConv} (see page \pageref{fasdfad}), a bound on the BV norm implies that a subsequence of the $v_j=\chi_{F_j}-\chi_{F_j^c}$ converges strongly in $H^{s/2}(\B_R)$ norm. Iterating the same reasoning on increasingly large balls and using a diagonal selection argument, we can find a subsequence (still denoted by $v_j$) converging in each of the norms $H^{s/2}(\B_k)$, $k\in \N$, to a limit function $v=\chi_F-\chi_{F^c}$.

    \vsp
\textbf{Step 2.} Proof that $F$ is stationary for the fractional perimeter.

 \vsp \noindent
Fix an arbitrary Euclidean vector field $X \in C_c^\infty(\B_\rho)$, for some $\rho>0$, and let $\psi_X^t$ denote its flow at time $t$.\\
Since the $F_j$ are a blow-up sequence, let $\widehat E_j\subset \widehat M_j$ and $R_j\to\infty$ be those given by Remark \ref{rescblowdef}. For $j$ large enough so that $\rho<R_j$, define $X_j=(\widehat\varphi_j)_*(X)$; extending it by $0$, we obtain a vector field $X_j$ defined on all of $\widehat M_j$. Since $\partial\widehat E_j$ is an $s$-minimal surface in $\widehat M_j$, $\frac{d}{d  t}\big|_{t=0}\textnormal{Per}_s^{\widehat M_j}(\psi_{X_j}^{t}(\widehat E_j);\widehat\varphi_j(\B_{\rho}))=0$. Lemma \ref{lemrescen} gives then that 
$$\left|\frac{d}{d  t}\bigg|_{t=0}\textnormal{Per}_s^{\R^n}(\psi_{X}^t(F);\B_\rho)\right|=\lim_{j\to\infty}\left|\frac{d}{d  t}\bigg|_{t=0}\textnormal{Per}_s^{(j)}(\psi_{X}^t(F_j);\B_\rho)\right|\leq \lim_{j\to\infty}\frac{C_X}{R_j^s}=0$$
as desired.
\end{proof}

We will next prove that the convergence in the theorem also holds in the Hausdorff distance sense. First, we show that the assumptions in Definition \ref{blowupdef} imply uniform density estimates.
\begin{lemma}\label{densityest222}
Let $(M,g)$ be a closed manifold of dimension $n$, satisfying the flatness assumption ${\rm FA}_3(M,g,R,p,\varphi)$. Suppose in addition that $g_{kl}^{(j)}(0) = \delta_{kl}$, i.e. the metric of $M$ with respect to the chart $\varphi^{-1}$ at the point $0$ is the Euclidean metric. Let $E$ be an $s$-minimal surface in $M$, satisfying a uniform BV estimate in the sense that there is some $C_0$ such that
$$
\textnormal{Per}(\varphi^{-1} (E); \B_{r}(x))\leq C_0 r^{n-1} \quad \mbox{for all } x\in \B_{R/2} \mbox{ and } r\in (0,R/4)\, .
$$
Then there exists a positive constant $\omega_0=\omega_0(n,s, C_0)$ such that if
$$
r^{-n}|E\cap B_r(q)|\leq \omega_0
$$
for some $q\in \varphi(\B_{R/2})$ and $r\in (0,R/8)$, then
$$
|E\cap B_{r/2}(q)|=0\, .
$$
\end{lemma}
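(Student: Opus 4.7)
The plan is to argue by contradiction via a blow-up procedure. Suppose the conclusion fails: then there exist $\omega_j\to 0$ and sequences $(M_j,g^{(j)},p_j,E_j,q_j,r_j)$ satisfying the hypotheses of the lemma with $r_j^{-n}|E_j\cap B_{r_j}(q_j)|\leq \omega_j$ but $|E_j\cap B_{r_j/2}(q_j)|>0$. Using the flatness assumption ${\rm FA}_3(M_j,g^{(j)},R,p_j,\varphi_j)$ and the scaling properties of Remark \ref{flatscalingrmk}, I will rescale the metric by $r_j^{-2}$ around $q_j$ and re-parametrize (after a rotation so that the rescaled metric equals the Euclidean one at the origin, which is possible since $q_j\in \varphi_j(\B_{R/2})$) to obtain a blow-up sequence $F_j\subset\R^n$ in the sense of Definition~\ref{blowupdef}. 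The uniform BV bound on $\varphi_j^{-1}(E_j)$ translates into uniform BV bounds for the $F_j$ on each fixed ball, and moreover $|F_j\cap \B_1|\leq \omega_j\to 0$ while $|F_j\cap \B_{1/2}|>0$.

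By Theorem~\ref{BlowUpConv}, up to a subsequence $F_j\to F$ in $H^{s/2}_{\rm loc}(\R^n)$ (hence also in $L^1_{\rm loc}$), where $F\subset\R^n$ is a Euclidean stationary $s$-minimal surface. The $L^1_{\rm loc}$ convergence combined with $|F_j\cap \B_1|\to 0$ gives $|F\cap \B_1|=0$, and choosing the good representative of $F$ (Remark~\ref{top-ess bdry rmk}, in which every point has density $0$ or $1$) yields $F\cap \B_1=\varnothing$ and $\partial F\cap \B_1=\varnothing$.

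The central (and most delicate) step, which is the main obstacle of the proof, is to deduce from this that $F_j\cap \B_{1/2}=\varnothing$ for all $j$ large, since mere $L^1$ convergence gives only $|F_j\cap \B_{1/2}|\to 0$, compatible with the assumption $|F_j\cap \B_{1/2}|>0$. To this end I will show that $\partial F_j\cap \B_{3/4}=\varnothing$ for all sufficiently large $j$. If not, along a subsequence there would exist $y_j\in \partial F_j\cap \B_{3/4}$. Denoting by $U_j$ the Caffarelli-Silvestre extension of $u_j:=\chi_{F_j}-\chi_{F_j^c}$, I set
\[
\Phi^{(j)}(\rho;y_j):=\frac{\beta_s}{\rho^{n-s}}\int_{\widetilde \B_\rho^+(y_j,0)} z^{1-s}|\widetilde\nabla U_j|^2\, .
\]
Since the sectional curvatures of the rescaled manifolds associated to $F_j$ are $o_j(1)$ on balls of large radius, the monotonicity formula (Theorem~\ref{monfor}) gives that $\rho\mapsto \Phi^{(j)}(\rho;y_j)\cdot e^{C\sqrt{K_j}\,\rho}$ is nondecreasing with $e^{C\sqrt{K_j}\rho}\to 1$ uniformly on bounded intervals. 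A density lower bound for stationary $s$-minimal surfaces at boundary points---obtained by blowing up $F_j$ around $y_j$, noting that the resulting limit is a nontrivial cone and that its $\Phi$ is bounded below by the (positive) half-space density $c_0=c_0(n,s)$ by the fractional isoperimetric property of cones---gives $\lim_{\rho\downarrow 0}\Phi^{(j)}(\rho;y_j)\geq c_0$. By approximate monotonicity, this forces $\Phi^{(j)}(1/8;y_j)\geq c_0/2$ for all $j$ large.

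On the other hand, I claim $\Phi^{(j)}(1/8;y_j)\to 0$ as $j\to\infty$. By a local Sobolev-to-extension bound analogous to those used in the proof of Theorem~\ref{BVEst}, the quantity $\int_{\widetilde \B_{1/8}^+(y_j,0)} z^{1-s}|\widetilde\nabla U_j|^2$ is controlled by the local fractional Sobolev seminorm $[u_j]^2_{H^{s/2}(\B_{1/4}(y_j))}$ (plus tail contributions vanishing as the ambient curvature scale tends to zero). Applying the interpolation inequality of Proposition~\ref{interprop} to $\chi_{F_j}$, together with the uniform BV bound and $|F_j\cap \B_1|\leq \omega_j$, one obtains
\[
[u_j]^2_{H^{s/2}(\B_{1/4}(y_j))}\leq C\,\textnormal{Per}(F_j;\B_{1/4}(y_j))^s\cdot |F_j\cap \B_{1/4}(y_j)|^{1-s}\leq C\,\omega_j^{1-s}\longrightarrow 0\, .
\]
The resulting contradiction $c_0/2\leq \Phi^{(j)}(1/8;y_j)\to 0$ yields $\partial F_j\cap \B_{3/4}=\varnothing$ for all $j$ large. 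Since $F_j\cap \B_{3/4}$ (in the good representative) is then both open and closed in the connected ball $\B_{3/4}$, and has measure at most $\omega_j<|\B_{3/4}|$, it must be empty, contradicting $|F_j\cap \B_{1/2}|>0$ and completing the proof.
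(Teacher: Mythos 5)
Your overall strategy mirrors the paper's: the heart of the argument is a uniform lower bound on the extended energy density at every point of the topological boundary (in the good representative), combined with the monotonicity formula and the interpolation estimate of Lemma~\ref{lem:whtorwohh} to force a contradiction with the small-volume hypothesis. You package this as a compactness argument with $\omega_j\to 0$ followed by a blow-up, which is valid but an unnecessary detour: the paper argues directly with a fixed $\omega_0$, choosing it once a monotonicity radius and an interpolation scale have been fixed, and obtains the same quantitative dependence $\omega_0(n,s,C_0)$. Relatedly, your claim that $\Phi^{(j)}(1/8;y_j)\to 0$ is slightly overstated: the tail term $C/R^s$ in Lemma~\ref{lem:whtorwohh} does not vanish for a fixed interpolation scale $R$, and sending $R\to\infty$ forfeits the $L^1$-smallness (only $|F_j\cap\B_1|$ is controlled by $\omega_j$, not $|F_j\cap\B_R|$). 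This is fixable exactly as in the proof of Proposition~\ref{DensEst}: take $R$ large but fixed so that $C/R^s<c_0/4$, and then $\omega_j^{1-s}\to 0$ kills the remaining term.

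The genuine gap is in the justification of the density lower bound at $y_j\in\partial F_j$. You assert that the blow-up cone at $y_j$ is nontrivial (correct, in the good representative) and that its $\Phi$-density is $\geq c_0(n,s)$ ``by the fractional isoperimetric property of cones.'' But the fractional isoperimetric inequality \eqref{fracisoo} bounds $\textnormal{Per}_s(C;\B_1)$ below in terms of $\min\{|C\cap\B_1|,|\B_1\setminus C|\}^{(n-s)/n}$, which for a nontrivial cone can be arbitrarily small (a thin cone has arbitrarily small volume fraction), so it does not yield a uniform constant depending only on $n$ and $s$. The statement you want is true, but the right way to get it --- and the way the paper proceeds --- is to start from \emph{reduced} boundary points, where De Giorgi's structure theorem forces the blow-up to be a \emph{half-space}, not merely a cone, so the extended energy density there equals the explicit half-space constant $c(n,s)$; the bound is then propagated to every topological boundary point (in the good representative) using that the reduced boundary is dense in the essential boundary and that the extended energy density is upper semicontinuous, a direct consequence of the monotonicity formula of Theorem~\ref{monfor}. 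This is precisely Step~1 of the paper's proof and constitutes the real substance of the lemma; it cannot be replaced by the one-line isoperimetric appeal as written.
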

\begin{proof}
    Notice that since the statement is scaling-invariant, it suffices to prove it for $R=1$. We also recall that the stationarity of $E$ implies that it satisfies the monotonicity formula of Theorem \ref{monfor} (with potential $F\equiv 0$). Observe also that up to modifying $E$ on a set of measure zero we can assume that its topological boundary coincides with its essential boundary. We then proceed as follows:
    
    \vsp
    \textbf{Step 1.} Positive density of the extended energy at every boundary point.

     \vsp \noindent
    Since $\varphi^{-1}(E)$ is a set of finite perimeter in $\B_{1/2}$, De Giorgi’s structure theorem for sets of finite perimeter gives that if $x\in\partial \varphi^{-1}(E)\cap \B_{1/2}$ is in the reduced boundary, given $r_j\to 0$ the sequence of sets $H_j=\frac{1}{r_j}(\varphi^{-1}(E)-x)$ converges in $L^1_{\rm loc}(\R^n)$ to a half-space $H$ passing through $0$.

    \vspace{1pt}
    For a fixed $x$ as above, defining $M_j=M$, $E_j=E$, $p_j=\varphi(x)$, $r_j=\frac{1}{j}$, and $\varphi_j(y)=\varphi(x+A(x)y)$, where $A(x)$ is a matrix chosen so that the metric of $M$ is the identity at $0$ in the coordinates given by $\varphi_j$, the associated $F_j:=\frac{1}{r_j}\varphi_j^{-1}(E)\subset\R^n$ are a blow-up sequence (in the sense of Definition \ref{blowupdef}). Thus, by Theorem \ref{BlowUpConv} they converge in $L^1_{\rm loc}$ to a limit $F$. On the other hand, $F_j=A(x)^{-1}\frac{1}{r_j}(\varphi^{-1}(E)-x)=A(x)^{-1}H_j$, so that in fact $F=A(x)^{-1}H$ and thus it is also a hyperplane passing through $0$.

    \vsp
    Let $N_j$ denote the rescaled manifold $(M,\frac{1}{r_j^2}g)$, and write $u_j=\chi_{E}-\chi_{E^c}$, viewed as a function on $N_j$. Write $U_j$ for its Caffarelli-Silvestre extension to $N_j\times \R_+$, and $V$ for the Caffarelli-Silvestre extension of $\chi_F-\chi_{F^c}$ to $\R^n\times\R_+$. By the lower semicontinuity of the extended Sobolev energy under a blow-up, seen for example arguing as in Step 2 in the proof of Lemma \ref{2Dcone},
    \begin{equation}\label{lbext}
    \liminf_{j \to \infty}\int_{\widetilde B_1^{N_j}(p_j,0)}|\widetilde \nabla U_j(p,z)|^2\, dV_p\,z^{1-s}dz\geq \int_{\widetilde \B_1} |\widetilde D V(x,z)|^2dx\,z^{1-s}dz=c(n,s)>0\, .
    \end{equation}
    If $U$ denotes the Caffarelli-Silvestre extension of $u=\chi_E-\chi_{E^c}$ (viewed as a function on $M$) to $M\times\R_+$, by scaling (recall that $N_j$ is just $(M,\frac{1}{r_j^2}g)$) the inequality \eqref{lbext} can be written as
    $$
    \liminf_{j \to \infty} \frac{1}{r_j^{n-s}}\int_{\widetilde B_{r_j}^{M}(\varphi(x),0)}|\widetilde \nabla U(p,z)|^2dV_p\,z^{1-s}dz\geq c(n,s)>0\, .
    $$
    In words, we have found that $E$ has extended energy density uniformly bounded from below by a constant $c(n,s)$ at $p=\varphi(x)$, for every reduced boundary point $p$ as above. On the other hand, the reduced boundary is dense in the essential boundary, as one can see, for example, by the isoperimetric inequality. Then, since we have shown that the above lower bound holds at all reduced boundary points $p\in\partial E$, by the upper semicontinuity of the extended energy density (proved as in case of classical minimal surfaces, using the monotonicity formula of Theorem \ref{monfor}) it actually holds at every $p\in\partial E$.

    \vsp
    \textbf{Step 2.} Conclusion.

     \vsp \noindent
    Assume that there are $q\in\varphi(\B_{1/2})$ and $r\in(0,1/8)$ such that $r^{-n}|E\cap B_r(q)|\leq \omega_0$ but $|E\cap B_{r/2}(q)|>0$; if $\omega_0$ is small enough, then automatically also $|E^c\cap B_{r/2}(q)|>0$. By the isoperimetric inequality, this implies that $\partial E\cap B_{r/2}(q)\neq \emptyset$ as well.

    \vspace{2pt}
    Let $p\in\partial E\cap B_{r/2}(q)$; we can now argue as in the proof of Proposition \ref{DensEst}, which showed density estimates in the case of solutions of the Allen-Cahn equation. First, a uniform lower bound on the density holds in our case for all $0\leq\rho\leq R_{\rm mon}$, thanks to combining Step 1 and the monotonicity formula. We then apply the interpolation Lemma \ref{lem:whtorwohh}, after which the $BV$ estimate assumption allows us to conclude the argument as in the proof of Proposition \ref{DensEst}.
\end{proof}

\begin{proposition}\label{hausconvprop}
    In the conclusions of Theorem \ref{BlowUpConv}, the convergence also holds locally in the Hausdorff distance sense.
\end{proposition}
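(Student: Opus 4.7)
The plan is to exploit the uniform density estimates afforded by Lemma \ref{densityest222} together with the $L^1_{\rm loc}$ convergence of $F_j$ to $F$, which follows from the $H^{s/2}_{\rm loc}$ convergence established in Theorem \ref{BlowUpConv} combined with the uniform $BV$ bounds in the definition of a blow-up sequence.

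First I would verify that each $F_j$ satisfies the density estimates of Lemma \ref{densityest222} with uniform constants. Since $F_j = \widehat\varphi_j^{-1}(\widehat E_j)$ is the pullback of an $s$-minimal surface in $\widehat M_j$ and the flatness assumption ${\rm FA}_3(\widehat M_j, \widehat g^{(j)}, R_j, \widehat p_j, \widehat\varphi_j)$ holds with $R_j \to \infty$, Lemma \ref{densityest222} applies to $\widehat E_j$ with constants depending only on $n$, $s$, and the uniform $BV$ constant $C_0$. By the scaling behavior recorded in Remark \ref{flatscalingrmk}(a) this gives a uniform $\omega_0 = \omega_0(n,s,C_0) > 0$ such that, for every $q \in \partial F_j$ lying in a fixed compact set and every $r$ smaller than some fixed $r_0 > 0$, one has $|F_j \cap B_r(q)| \ge \omega_0 r^n$ for all $j$ large enough. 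Since $\textnormal{Per}_s(E) = \textnormal{Per}_s(E^c)$, the same argument applied to $\widehat E_j^c$ gives the symmetric bound $|F_j^c \cap B_r(q)| \ge \omega_0 r^n$. After passing to the representatives for which every point of the topological boundary has density strictly between $0$ and $1$ (Remark \ref{top-ess bdry rmk}), the bounds hold at every $q \in \partial F_j$; the analogous lower bounds hold at every $q \in \partial F$.

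Next I would deduce the local Hausdorff convergence by proving the two standard inclusions. For the first, suppose $p_j \in \partial F_j$ converge to some $p$ lying in a compact set $K \subset \R^n$. For any $r < r_0$, the uniform density estimates give $|F_j \cap B_r(p_j)| \ge \omega_0 r^n$ and $|F_j^c \cap B_r(p_j)| \ge \omega_0 r^n$; passing to the limit using the $L^1_{\rm loc}$ convergence (and the fact that $B_r(p_j) \subset B_{r+o(1)}(p)$) yields $|F \cap B_r(p)|, |F^c \cap B_r(p)| \ge \tfrac{1}{2}\omega_0 r^n$, forcing $p \in \partial F$. For the reverse inclusion, fix $p \in \partial F \cap K$ and suppose for contradiction that, along a subsequence, $\partial F_j \cap B_\varepsilon(p) = \varnothing$ for some fixed $\varepsilon > 0$. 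Then, up to the negligible set allowed by the choice of representative, each $F_j \cap B_\varepsilon(p)$ would be either empty or all of $B_\varepsilon(p)$; passing to the $L^1_{\rm loc}$ limit this forces either $|F \cap B_\varepsilon(p)| = 0$ or $|F^c \cap B_\varepsilon(p)| = 0$, contradicting the density lower bounds for $F$ at $p$.

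The main obstacle is making the density estimates genuinely uniform across the blow-up sequence: one needs to check carefully that the constants in Lemma \ref{densityest222} descend to constants for the rescaled sets $F_j \subset \R^n$ that do not depend on $j$, using that the flatness scale $R_j \to \infty$ guarantees that for every fixed compact $K \subset \R^n$ the hypotheses of the lemma are eventually satisfied with quantitative parameters that are stable under the blow-up rescaling. Once that is in place, the argument is a standard interplay between $L^1$ and Hausdorff convergence, following the scheme already used at the end of the proof of Theorem \ref{StrongConv}.
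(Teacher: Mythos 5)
Your proposal is correct and follows the same route as the paper's own proof: apply Lemma \ref{densityest222} together with the flatness assumption (and $R_j\to\infty$) to get density estimates for the $F_j$ that are uniform on compact sets, then deduce local Hausdorff convergence from the $L^1_{\rm loc}$ convergence by the standard two-inclusion contradiction argument. The paper states this very tersely; your write-up simply fills in the details of the same argument, including the observation (which the paper leaves implicit) that the estimate for $F_j^c$ follows by symmetry.
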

\begin{proof}
    Let $E_j$ be as in Remark \ref{rescblowdef}, so that $F_j=\widehat\varphi_j^{-1}(E_j)$. Applying Lemma \ref{densityest222} and the flatness assumption on the metric we find that the $F_j$ satisfy density estimates in $\B_{R_j/16}$, with $R_j\to\infty$. The local convergence in the Hausdorff distance follows then arguing by contradiction, simply due to local $L^1$ convergence to $F$ and the density estimates.
\end{proof}

\subsection{Properties of blow-ups of Allen-Cahn limits}\label{ACpropsect}
We define the class of all surfaces which are blow-up limits of sets in $\mathcal{A}_m$ (recall Definitions \ref{AClimits} and \ref{blowupdef}).
\begin{definition}\label{BlowUpClass}
    A set $\partial F\subset\R^n$ is said to be in the class $\mathcal{A}_{m}^{Blow-up}$ if it is a blow-up limit of sets in $\mathcal{A}_m$. This means that there exist $\Sigma_j=\partial E_j\in\mathcal{A}_m(M_j)$ and $r_j\to 0$ such that the associated $F_j= r_j^{-1}\varphi_j^{-1}(E_j)$ are a blow-up sequence converging to $F$ in $L^1_{\rm loc}(\R^n)$ as $j\to\infty$ (by Theorem \ref{BlowUpConv} and Proposition \ref{hausconvprop}, the convergence can then be upgraded to be in $H^{s/2}_{\rm loc}(\R^n)$ and locally in the Hausdorff distance sense).
\end{definition}
\begin{remark}
Since $\partial E_j\in\mathcal{A}_m(M_j)$, the assumption in Definition \ref{blowupdef} that the sets $F_j$ satisfy the classical perimeter estimates is automatically satisfied if the rest of assumptions are, thanks to (1) in Remark \ref{remprop}.
\end{remark}

We now prove a precise almost-stability inequality for sets in $\mathcal{A}_{m}^{Blow-up}$, which will be used in the next section. We begin by showing its counterpart for Allen-Cahn solutions.
\begin{lemma}\label{improvineq1} Assume that $M$ satisfies the flatness assumptions ${\rm FA}_1(M,g,1, p, \varphi)$, and let $u_\ep:M\to\R$ be a solution to the Allen-Cahn equation in $\varphi(\B_1)$ with Morse index at most $m$. Let $A_1,...,A_{m+1}\subset \varphi(\B_{1/2})$ be $(m+1)$ open sets, with pairwise distances denoted by $D_{ij}:=\textnormal{dist}(A_i,A_j)$, and for every $1\leq i<j \leq m+1$ choose any positive weights $\lambda_{ij}>0$.  Then, in at least one of the $A_i$ there holds that
\begin{equation*}
\begin{split}
    \mathcal{E}^{''}(u_{\ep})[\xi ,\xi ]\geq -C\|\xi \|_{L^1(A_i)}^2 \left( \sum_{j<i} \frac{1}{\lambda_{ji}} D_{ij}^{-(n+s)}
    +\sum_{j>i} \lambda_{ij} D_{ij}^{-(n+s)} \right) \s \forall \, \xi \in C_c^1(A_i) \,,
\end{split}
\end{equation*}
for some $C=C(n,s,m)$. 
\end{lemma}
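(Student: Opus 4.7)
The plan is to run essentially the same contradiction argument as in Lemma \ref{asineq}, but to replace the single uniform interaction bound $\Lambda := m\sup_{i\neq j}\sup_{A_i\times A_j} K_s$ by a sharper, asymmetric Young-type estimate that records each pairwise distance $D_{ij}$ separately, distributing it across the $(m+1)$ sets via the prescribed weights $\lambda_{ij}$.

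First I would fix arbitrary test functions $\xi_i \in C_c^\infty(A_i)$ for $i=1,\dots,m+1$ and expand the second variation of a linear combination $\sum_i a_i \xi_i$, with $a\in\R^{m+1}$. Since the $A_i$ are pairwise disjoint, \eqref{2ndvar} gives
\begin{equation*}
\mathcal{E}''(u_\ep)\!\left[\sum_i a_i\xi_i, \sum_i a_i\xi_i\right] = \sum_{i=1}^{m+1} a_i^2\, \mathcal{E}''(u_\ep)[\xi_i,\xi_i] - 2\!\sum_{i<j}\! a_i a_j \!\iint_{A_i\times A_j}\! \xi_i(p)\xi_j(q) K_s(p,q)\, dV_p\, dV_q.
\end{equation*}
By Lemma \ref{loccomparability} together with ${\rm FA}_1(M,g,1,p,\varphi)$ (which makes geodesic distances on $\varphi(\B_{1/2})$ quasi-isometric to Euclidean distances in the chart), one has the pointwise bound $K_s(p,q) \le C(n,s)\, D_{ij}^{-(n+s)}$ for all $(p,q)\in A_i\times A_j$, hence
\begin{equation*}
\Bigl|2a_i a_j \iint_{A_i\times A_j} \xi_i\,\xi_j\, K_s\,dV_pdV_q\Bigr| \le 2 C(n,s)\, D_{ij}^{-(n+s)}\, |a_i|\,\|\xi_i\|_{L^1(A_i)}\cdot |a_j|\,\|\xi_j\|_{L^1(A_j)}.
\end{equation*}

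The key step is then to apply the asymmetric Young inequality $2xy \le \lambda_{ij}\, x^2 + \lambda_{ij}^{-1}\, y^2$ with $x = |a_i|\,\|\xi_i\|_{L^1(A_i)}$ and $y = |a_j|\,\|\xi_j\|_{L^1(A_j)}$. This splits each cross term into one piece carrying $a_i^2\,\|\xi_i\|_{L^1(A_i)}^2\, \lambda_{ij}$ and one carrying $a_j^2\,\|\xi_j\|_{L^1(A_j)}^2/\lambda_{ij}$. Summing over the pairs $i<j$ and collecting the coefficient of each $a_i^2$, one arrives at the diagonal upper bound
\begin{equation*}
\mathcal{E}''(u_\ep)\!\left[\sum_i a_i\xi_i, \sum_i a_i\xi_i\right] \le \sum_{i=1}^{m+1} a_i^2\, F_i(\xi_i),
\end{equation*}
where $F_i(\xi_i)$ is precisely $\mathcal{E}''(u_\ep)[\xi_i,\xi_i]$ plus $C\|\xi_i\|_{L^1(A_i)}^2$ times the weighted distance sum appearing in the statement (the $\lambda_{ij}$-weighted sum over $j>i$ arises from Young applied to the ordered pair $(i,j)$, while the $1/\lambda_{ji}$-weighted sum over $j<i$ arises from the same step after swapping the roles of $i$ and $j$).

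I would then conclude by contradiction, exactly as at the end of Lemma \ref{asineq}. If for \emph{every} $i\in\{1,\ldots,m+1\}$ there existed some $\xi_i\in C_c^\infty(A_i)$ with $F_i(\xi_i)<0$, then the diagonal bound would make $\mathcal{E}''(u_\ep)$ strictly negative on the entire $(m+1)$-dimensional subspace spanned by the $\xi_i$ (which are linearly independent, having pairwise disjoint supports), contradicting the Morse index bound $m(u_\ep)\le m$. Hence $F_{i_0}(\xi)\ge 0$ for some index $i_0$ and every $\xi\in C_c^\infty(A_{i_0})$, which is precisely the claimed almost-stability on $A_{i_0}$; a routine density argument then extends the inequality to all $\xi\in C_c^1(A_{i_0})$. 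I expect the main ``obstacle'' to be purely bookkeeping: ensuring that the indexing in the Young step really produces the weights $\lambda_{ij}$ for $j>i$ and $1/\lambda_{ji}$ for $j<i$ as in the statement, and that the final constant $C$ is independent both of $m$ and of the chosen weights $\{\lambda_{ij}\}$.
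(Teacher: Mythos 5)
Your proposal is correct and follows essentially the same approach as the paper: expand the second variation on a linear combination $\sum a_i\xi_i$ with $\xi_i$ supported in $A_i$, bound each cross term by $CD_{ij}^{-(n+s)}\|\xi_i\|_{L^1}\|\xi_j\|_{L^1}$ using Lemma \ref{loccomparability} and the flatness assumptions, split it with the asymmetric Young inequality carrying the weight $\lambda_{ij}$, and then invoke the Morse index bound to conclude one diagonal coefficient must be nonnegative. The bookkeeping you flag (why $\lambda_{ij}$ appears for $j>i$ and $1/\lambda_{ji}$ for $j<i$) works out exactly as you describe, and the constant $C$ is indeed independent of the weights.
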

\begin{proof}
The statement is a more precise version of Lemma \ref{asineq}, and the proof proceeds similarly. Using \eqref{2ndvar}, we compute the second variation at $u_\ep$ for linear combinations of $m+1$ test functions $\xi_i$, supported each in the corresponding $A_i$, getting 
\begin{equation*}
\begin{split}
    \mathcal{E}^{''}(u_\ep) &[a_1\xi_1+a_2\xi_2+\dotsc+a_{m+1}\xi_{m+1},a_1\xi_1+a_2\xi_2+\dotsc+a_{m+1}\xi_{m+1}]   \\ 
    & = a_1^2\mathcal{E}^{''}(u)[\xi_1,\xi_1] + \dotsc+a_{m+1}^2\mathcal{E}^{''}(u)[\xi_{m+1},\xi_{m+1}] \\
    & \hspace{0.2cm} +2 a_1a_2\iint_{A_1 \times A_2}(\xi_1(p) - \xi_1(q))(\xi_2(p) - \xi_2(q)) K_s(p,q)\,dV_p\,dV_q\\
    & \hspace{0.2cm}+\dotsc\\
    & \hspace{0.2cm}+2 a_ma_{m+1}\iint_{A_m \times A_{m+1}}(\xi_m(p) - \xi_m(q))(\xi_{m+1}(p) - \xi_{m+1}(q)) K_s(p,q)\,dV_p\,dV_q\,.
\end{split}
\end{equation*}
Thanks to the flatness assumptions and Lemma \ref{loccomparability}, we have that $K_s(\varphi(x),\varphi(y))\leq \frac{C}{|x-y|^{n+s}}$, for some $C=C(n,s)$ and for $(\varphi(x),\varphi(y))\in A_i\times A_j$. Recall that the supports of $\xi_i$ and $\xi_j$ are the disjoint subsets $A_i,A_j\subset\varphi_j(\B_{1/2})$. Then, the term containing the double integral over $A_i \times A_j$ with $i<j$ can be bounded as follows:
\begin{equation*}
\begin{split}
    2a_ia_j \iint_{A_i \times A_j} (\xi_i(p)-\xi_i(q))&(\xi_j(p)-\xi_j(q)) K_s(p,q) \, dV_p dV_q \\ &= -2a_ia_j\iint_{A_i \times A_j}\xi_i(p) \xi_j(q) K(p,q)\,dV_p\,dV_q \\
    &\leq 2|a_ia_j| CD_{ij}^{-(n+s)}\|\xi_i\|_{L^1(A_i)}\|\xi_j\|_{L^1(A_j)} \\
     &\leq \lambda_{ij} a_i^2 C D_{ij}^{-(n+s)}\|\xi_i\|_{L^1(A_i)}^2+\frac{C}{\lambda_{i j}} a_j^2D_{i j}^{-(n+s)}\|\xi_j\|_{L^1(A_j)}^2\, ,
\end{split}
\end{equation*}
where we have applied Young's inequality in the last line. Substituting this into the second variation expression gives
\begin{align*}
    \mathcal{E}^{''}(u) & [a_1\xi_1+a_2\xi_2+\dotsc+a_{m+1}\xi_{m+1},a_1\xi_1+a_2\xi_2+\dotsc+a_{m+1}\xi_{m+1}]   \\  \le
    &  \sum_{i=1}^{m+1} a_i^2\left[\mathcal{E}^{''}(u)[\xi_i,\xi_i] +C\|\xi_i\|_{L^1(A_i)}^2 \left(\sum_{j<i} \frac{1}{\lambda_{ji}} D_{ij}^{-(n+s)}
    +\sum_{j>i} \lambda_{ij} D_{ij}^{-(n+s)} \right)\right].
\end{align*}

The condition that the Morse index is at most $m$ implies that the expression cannot be $<0$ for all $(a_1,\dotsc,a_{m+1})\neq 0$. Hence, we find that there must exist some $i$ such that 
\begin{equation*}
\begin{split}
    \mathcal{E}^{''}(u)[\xi_i,\xi_i]\geq -C\|\xi_i\|_{L^1(A_i)}^2 \left(\sum_{j<i} \frac{1}{\lambda_{ji}} D_{ij}^{-(n+s)}
    +\sum_{j>i} \lambda_{ij} D_{ij}^{-(n+s)} \right)
\end{split}
\end{equation*}
holds for all $\xi_i \in C_c^1(A_i)$, and this concludes the proof.
\end{proof}
From this, we will obtain the desired almost-stability inequality for blow-up sets.
\begin{lemma}\label{lemasbup}
Let $F\in\mathcal{A}_m^{Blow-up}$. Let $X_1,X_2,...,X_{m+1}$ be smooth vector fields on $\R^n$ with disjoint compact supports $A_1,A_2,...,A_{m+1}$, and denote $D_{k\ell}:=\textnormal{dist}(A_k,A_\ell)$.
For $1\leq i<\ell \leq m+1$, choose positive weights $\lambda_{i\ell}>0$. Then, for at least one of the $i$ (depending on $F$) we have that
\begin{equation}\label{asbup}
\frac{d^2}{dt^2}\bigg|_{t=0}\textnormal{Per}_s(\psi_{X_{i}}^t(F);A_i)\geq -C \|X_i\|_{L^\infty}^2\textnormal{diam}(A_i)^{2(n-1)}\left(\sum_{\ell<i} \frac{1}{\lambda_{\ell i}} D_{i \ell}^{-(n+s)}
    +\sum_{\ell>i} \lambda_{i \ell} D_{i\ell}^{-(n+s)}\right)\, ,
\end{equation}
where $C=C(n,s,m)$.
\end{lemma}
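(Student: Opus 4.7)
My plan is to obtain \eqref{asbup} by approximation: combining the definitions of $\mathcal{A}_m$ and $\mathcal{A}_m^{\textnormal{Blow-up}}$ with a diagonal extraction, I would first produce a sequence of Allen-Cahn solutions approximating $F$ and then apply the precise almost-stability inequality of Lemma \ref{improvineq1} at the Allen-Cahn level, passing to the limit via the machinery of Theorem \ref{StrongConv} and Lemma \ref{lemrescen}.

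By Definitions \ref{AClimits} and \ref{BlowUpClass} and a diagonal argument, pick Allen-Cahn solutions $u_k$ of Morse index at most $m$ on rescaled manifolds $\widehat M_k$ (parameters $\widehat \ep_k\downarrow 0$) such that after reading them in the chart $\widehat\varphi_k$, they converge to $\chi_F-\chi_{F^c}$ in $H^{s/2}_{\textnormal{loc}}(\R^n)$. By Proposition \ref{eucproperties1} the metric in coordinates converges locally uniformly to the Euclidean one on balls of increasing radii, so for $k$ large the sets $\widehat A_\ell^{(k)} := \widehat\varphi_k(A_\ell)$ are well-defined, pairwise disjoint, and their geodesic distances $D_{i\ell}^{(k)}$ converge to $D_{i\ell}$. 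Applying Lemma \ref{improvineq1} to $u_k$ with the collection $\{\widehat A_\ell^{(k)}\}_{\ell=1}^{m+1}$ and weights $\lambda_{i\ell}$ produces, for each $k$, some index $i_k\in\{1,\dots,m+1\}$ for which the almost-stability inequality holds; by pigeonhole I may pass to a subsequence with $i_k\equiv i$ constant.

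Next I test this inequality with $\xi_k := \nabla_{\widehat X_i^{(k)}} u_k$, where $\widehat X_i^{(k)} := (\widehat\varphi_k)_* X_i$ (extended by zero). Since $u_k$ is a critical point, the standard identity for inner variations at a critical point gives
\[
\mathcal{E}_k''(u_k)[\xi_k, \xi_k] \;=\; \frac{d^2}{dt^2}\bigg|_{t=0} \mathcal{E}_k(u_k \circ \psi_{\widehat X_i^{(k)}}^{-t}),
\]
while the $BV$ estimate of Theorem \ref{BVEst} (together with the local flatness of $\widehat M_k$) yields the uniform bound
\[
\|\xi_k\|_{L^1(\widehat A_i^{(k)})} \;\leq\; \|\widehat X_i^{(k)}\|_{L^\infty}\!\!\int_{\widehat A_i^{(k)}} |\nabla u_k|\,dV \;\leq\; C\,\|X_i\|_{L^\infty}\,\textnormal{diam}(A_i)^{n-1},
\]
which is precisely the geometric factor appearing in the right-hand side of \eqref{asbup}. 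Combined with $D_{i\ell}^{(k)}\to D_{i\ell}>0$, the right-hand side of the almost-stability inequality at level $k$ converges to (minus a constant times) the right-hand side of \eqref{asbup}.

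The last step is to pass $k\to\infty$ on the left-hand side: this is done exactly as in Step 4 of the proof of Theorem \ref{StrongConv} (using \eqref{q5etgqwa4e5}) combined with Lemma \ref{lemrescen}, which after rescaling identifies $\frac{d^2}{dt^2}|_{t=0} \mathcal{E}_k(u_k\circ \psi_{\widehat X_i^{(k)}}^{-t})$ with $\frac{d^2}{dt^2}|_{t=0}\textnormal{Per}_s^{\R^n}(\psi_{X_i}^t(F); A_i)$ in the limit, while the potential part of $\mathcal{E}_k$ and its $t$-derivatives vanish thanks to Theorem \ref{A-C-energy-est} and the bound \eqref{potabbound}. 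The main obstacle I foresee is the careful justification of this exchange of limit and second $t$-derivative: one must first establish uniform $C^3$-bounds in $t$ for the functions $t\mapsto \mathcal{E}_k(u_k\circ \psi_{\widehat X_i^{(k)}}^{-t})$ (using Lemma \ref{enboundslemma2} together with Proposition \ref{sdfgsdrgs}), and then upgrade subsequential pointwise convergence to $C^2$-convergence in $t$ via Arzelà-Ascoli, precisely as in the proof of Lemma \ref{lemrescen}.
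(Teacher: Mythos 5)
Your overall strategy matches the paper's: reduce to Allen--Cahn solutions, apply the quantitative almost-stability of Lemma \ref{improvineq1}, pick a single index $i$ by pigeonhole, test with $\xi = \nabla_X u$, invoke the $BV$ estimate of Theorem \ref{BVEst} to control $\|\xi\|_{L^1}$, and pass to the limit. The difference is in how the limit passage is organized, and this is where I'd flag a (fixable) issue.

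The paper keeps the two approximation parameters separate and takes two nested limits. It first fixes the blow-up index $j$ and sends the Allen--Cahn parameter $\ep_k\to 0$, using \eqref{q5etgqwa4e5} (valid on a \emph{fixed} ambient manifold) to convert $\frac{d^2}{dt^2}\big|_{t=0}\mathcal{E}(u_k\circ\psi_{X_{i,j}}^{-t})$ into $\frac{d^2}{dt^2}\big|_{t=0}\mathrm{Per}_s^{\widehat M_j}(\psi_{X_{i,j}}^t(\widehat E_j);\widehat\varphi_j(A_i))$. Only then does it send $j\to\infty$, via Lemma \ref{lemrescen}, which is tailored to \emph{characteristic functions of sets} and converges manifold perimeter second variations to the Euclidean one. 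Each limit is covered by an existing lemma, which is why the proof closes cleanly.

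Your diagonal extraction collapses both parameters into one sequence $u_k$ on $\widehat M_k$, so that you must send $\ep_k\to 0$ and $R_k\to\infty$ \emph{simultaneously}. In that regime neither of the two cited tools applies directly: \eqref{q5etgqwa4e5} is stated for a fixed $M$ and a fixed flow, and Lemma \ref{lemrescen} is stated for sets, not for Allen--Cahn profiles; so writing that Lemma \ref{lemrescen} ``identifies $\frac{d^2}{dt^2}|_{t=0}\mathcal{E}_k(u_k\circ\psi^{-t})$ with $\frac{d^2}{dt^2}|_{t=0}\mathrm{Per}_s^{\R^n}(\psi^t(F);A_i)$'' overstates what the lemma gives. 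What your plan actually requires is a new combined convergence statement (Allen--Cahn energy on a flattening manifold $\to$ Euclidean $s$-perimeter, at the level of second $t$-derivatives). You correctly anticipate that this is provable by establishing uniform $C^3$-in-$t$ bounds (Lemma \ref{enboundslemma2}, Proposition \ref{sdfgsdrgs}) and applying Arzel\`a--Ascoli, and indeed the ingredients are all present in the paper (Proposition \ref{eucproperties1} for the kernel convergence, Theorem \ref{A-C-energy-est}/\eqref{potabbound} for the vanishing of the potential part), but this combined lemma does need to be written out; it is genuinely more than a citation. The paper's nested structure sidesteps this entirely, which is why it is the cleaner route. One small bonus of your approach: it removes the implicit second pigeonhole step the paper needs (the index $i$ in the paper a priori depends on $j$, so one more finite subsequence extraction is required to use a fixed $i$ in the $j\to\infty$ limit); your diagonalization only needs one pigeonhole.
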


\begin{proof}
Since $F\in\mathcal{A}_m^{Blow-up}$, from the definition and proceeding as in Remark \ref{rescblowdef} there exist $(\widehat M_j,\widehat g^{(j)})$, $\widehat p_j\in\widehat M_j$, and $R_j\nearrow\infty$ satisfying the assumptions in the Remark and $\widehat E_j\in \mathcal{A}_m(\widehat M_j)$ such that the associated $F_j=\widehat\varphi_j^{-1}(\widehat E_j)$ converge to $F$ in the appropriate sense. Fix one such $j$; since $\widehat E_j\in \mathcal{A}_m(\widehat M_j)$, by definition there exists a sequence $\{u_k\}_{k\in\N}$ of solutions to Allen-Cahn on $\widehat M_j$, with parameters $\ep_k\to 0$, converging to $\chi_{\widehat E_j}-\chi_{\widehat E_j^c}$ in $L^1(\widehat M_j)$ as $k\to \infty$. By Lemma \ref{improvineq1}, given $k$ we can find an index $i(k)$, $1\leq i(k)\leq m+1$, such that the inequality in the Lemma is true for $u_{k}$ on $\widehat\varphi_j(A_{i(k)})\subset\widehat M_j$. We select an index $i$ so that the inequality is valid for a whole subsequence of the $u_k$ (which we do not relabel), so that 
\begin{equation*}
    \mathcal{E}^{''}(u_{k})[\xi_i,\xi_i]\geq -C\|\xi_i\|_{L^1(\widehat\varphi_j(A_i))}^2\left(\sum_{\ell<i} \frac{1}{\lambda_{\ell i}} \widehat D_{i\ell}^{-(n+s)}
    +\sum_{\ell>i} \lambda_{i\ell} \widehat D_{i \ell}^{-(n+s)}\right)\,
\end{equation*}
for all $\xi_i\in C_c^1(\widehat\varphi_j(A_i))$ and $k\in\N$. Here $\widehat D_{i\ell}=\text{dist}(\widehat\varphi_j(A_i),\widehat\varphi_j(A_\ell))$.\\ 
Put $X_{i,j}:=(\widehat\varphi_j)_*X_i$, and extend it by zero outside its domain of definition to a vector field on all of $\widehat M_j$. Selecting $\xi_i=\nabla_{X_{i,j}}u_{k}$, we arrive at
\begin{equation}\label{2acas}
\begin{split}
    \frac{d^2}{dt^2}\bigg|_{t=0}\mathcal{E}(u_k\circ\psi_{X_{i,j}}^{-t})&=\mathcal{E}^{''}(u_k)[\nabla_{X_{i,j}}u_k,\nabla_{X_{i,j}}u_k]\\
    &\geq -C\|\nabla_{X_{i,j}}u_k\|_{L^1(\widehat\varphi_j(A_i))}^2\left(\sum_{\ell<i} \frac{1}{\lambda_{\ell i}} \widehat D_{i \ell}^{-(n+s)}
    +\sum_{\ell>i} \lambda_{i \ell} \widehat D_{i \ell}^{-(n+s)}\right)\, .
\end{split}
\end{equation}

Thanks to the BV estimate of Theorem \ref{BVEst} and the flatness assumption on the metric, we can bound 
$$
\|\nabla_{X_{i,j}}u_{k}\|_{L^1(\widehat\varphi_j(A_i))}^2\leq C \|X_{i,j}\|_{L^\infty}^2\text{diam}(\widehat\varphi_j(A_i))^{2(n-1)}\leq C\|X_{i}\|_{L^\infty}^2\text{diam}(A_i)^{2(n-1)}\, ,
$$
and also
$$
\widehat D_{i,l}=\text{dist}(\widehat\varphi_j(A_i),\widehat\varphi_j(A_l))\leq C\text{dist}(A_i,A_l)=C D_{i,l}\, .
$$
Substituting this into \eqref{2acas}, and using that by \eqref{q5etgqwa4e5} there holds
$$\frac{d^2}{dt^2}\bigg|_{t=0}\mathcal{E}(u_k\circ\psi_{X_{i,j}}^{-t})\to \frac{d^2}{dt^2}\bigg|_{t=0}\textnormal{Per}_s^{\widehat M_j}(\psi_{X_{i,j}}^t(\widehat E_j);\widehat\varphi_j(A_i)) \quad \mbox{as } k\to\infty\, ,$$
we obtain that
\begin{equation*}
\frac{d^2}{dt^2}\bigg|_{t=0}\textnormal{Per}_s^{\widehat M_j}(\psi_{X_{i,j}}^t(\widehat E_j);\widehat\varphi_j(A_i))\geq -C \|X_i\|_{L^\infty}^2\textnormal{diam}(A_i)^{2(n-1)}\left(\sum_{\ell<i} \frac{1}{\lambda_{\ell i}}  D_{i \ell}^{-(n+s)}
    +\sum_{\ell>i} \lambda_{i \ell} D_{i \ell}^{-(n+s)}\right)\, .
\end{equation*}

On the other hand, by Lemma \ref{lemrescen} we have that 
$$
\frac{d^2}{dt^2}\bigg|_{t=0}\textnormal{Per}_s^{\R^n}(\psi_{X_i}^t(F);A_i)=\lim_j \frac{d^2}{dt^2}\bigg|_{t=0}\Big[\textnormal{Per}_s^{(j)}(\psi_{X_i}^t(F_j);A_i)\Big]=\lim_j\frac{d^2}{dt^2}\bigg|_{t=0}\Big[\textnormal{Per}_s^{\widehat M_j}(\psi_{X_{i,j}}^t(\widehat E_j);\widehat\varphi_j(A_i))\Big]\, ,
$$
which then proves \eqref{asbup}.

\end{proof}

\subsection{Classification of blow-up limits}\label{BlowupLimSection}

The main result of this section is the following classification result:

\begin{theorem}[\textbf{Classification result}] \label{A-CBernstein} Let $s\in(0,1)$ and $3\leq n < n_s^*$. Let $\mathcal{F}$ be any family of sets of $\R^n$ satisfying the following properties:

\vspace{5pt}
(1) \textbf{Stationarity.} Every set $E\in \mathcal F$ is an $s$-minimal surface, in the sense of Definition \ref{def-fracminsurface}.
 
  \vspace{5pt}
(2) \textbf{BV estimate.} There is $C_\circ$ such that for every $E\in \mathcal F$, $x\in \R^n$, and $R>0$ we have
\begin{equation*}
    \textnormal{Per}(E; B_R(x)) \leq C_\circ R^{n-1}\, .
\end{equation*}

 \vspace{5pt}
(3) \textbf{Viscosity solution of the NMS\footnote{Meaning ``nonlocal minimal surface".} equation.} If $x_0 \in \partial E$ and $E$ admits an interior (resp. exterior) tangent ball at $x_0$, then $\int \frac{\chi_E(y)-\chi_{E^c}(y)}{|x_0-y|^{n+s}}dy \leq 0$ \ (resp. $\geq 0$).

 \vspace{5pt}
(4) \textbf{Almost-stability in one out of $(m+1)$ disjoint sets.} There exists some (fixed) $m\in \N$ such that the following holds. Let $X_1,X_2,...,X_{m+1}$ be smooth vector fields with disjoint compact supports $A_1,A_2,...,A_{m+1}$, and denote $D_{kl}:=\textnormal{dist}(A_k,A_l)$.
For $1\leq i<l \leq m+1$, choose positive weights $\lambda_{il}>0$. Then, given $E\in\mathcal F$, for at least one of the $i$ (depending on $E$) we have that
\begin{equation}\label{asper}
\frac{d^2}{dt^2}\Big|_{t=0}\textnormal{Per}_s(\psi_{X_{i}}^t(E);A_i)\geq -C \|X_i\|_{L^\infty}^2\textnormal{diam}(A_i)^{2(n-1)}(\sum_{\ell<i} \frac{1}{\lambda_{\ell i}} D_{i \ell}^{-(n+s)}
    +\sum_{\ell>i} \lambda_{i \ell} D_{i\ell}^{-(n+s)})\, ,
\end{equation}
where $C=C(\mathcal{F})$.
 
 \vspace{5pt}
(5) \textbf{Completeness under scalings and $L^1_{\rm loc}(\R^n)$ limits}.
If $E\in\mathcal F$, then any translation, dilation and rotation of $E$ is in $\mathcal F$ as well. Moreover, if $E_i$ is a sequence of elements of $\mathcal F$ and $E_i\to E_\infty$ in $L_{\rm loc}^1(\R^n)$, then $E_\infty\in\mathcal F$ as well.

 \vspace{5pt}
(6) \textbf{Cones with $n-2$ translation-invariant directions are half-spaces.} If $E\in \mathcal F $ is a cone and there is a linear $(n-2)$-dimensional subspace $L\subset \R^n$ such that $E+x = E$ for all $x\in L$, then $\partial E$ must be a hyperplane.

\vspace{5pt}
Then, every  $E\in\mathcal{F}$ which is not equal (up to null sets) to $\R^n$ or $\varnothing$ must be a half-space.
\end{theorem}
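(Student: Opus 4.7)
The plan is to show that every nontrivial $E \in \mathcal{F}$ is a half-space in three stages: (a) a blow-down analysis reducing the problem to the classification of cones in $\mathcal{F}$, (b) a downward induction on the dimension of the translation-invariance subspace of such cones, combined with a scaling argument upgrading (4) to genuine stability for smooth cones, and (c) a rigidity argument via the monotonicity formula of Theorem~\ref{monfor} propagating the cone classification back to $E$.

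For (a): I fix $R_j \to \infty$, put $E_j := R_j^{-1}E$, and extract a subsequential $L^1_{\rm loc}$ limit $E_\infty$ using the scale-invariant bound (2); by (5), $E_\infty \in \mathcal{F}$. The nonlocal monotonicity formula (Theorem~\ref{monfor} on $\R^n$ with $F \equiv 0$) makes $\Phi_E(0,R)$ nondecreasing and bounded, and its equality case forces $E_\infty$ to be a cone centered at the origin.

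The core is (b): I claim every cone $C \in \mathcal{F}$ is a half-space, by downward induction on $k := \dim L_C$ where $L_C := \{v : C+v = C\}$. The base case $k \geq n-2$ is exactly (6). For $k \leq n-3$, write $C = D \times L_C$ with $D \subset L_C^\perp$ a cone in $\R^{n-k}$ having $L_D = \{0\}$. If $D$ has a singular point $y_0 \neq 0$, then any tangent cone of $C$ at $(y_0,0)$ lies in $\mathcal{F}$ by (5) and the blow-up theory of Section \ref{BlowUpSection}, is a cone by monotonicity, and has translation-invariance subspace containing $L_C \oplus \R y_0$; by induction it is a half-space, which by the improvement-of-flatness theorem used in Section \ref{UnifRegSection} makes $y_0$ a regular point of $D$, contradiction. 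Otherwise $D$ is smooth away from the origin, and I upgrade (4) to stability by a scaling trick: fix a test vector field $X_0$ compactly supported on a set $A_0$ disjoint from $L_C$, set $A_i := 2^{iN}A_0$ and $X_i(x) := 2^{iN}X_0(x/2^{iN})$ for $i = 1,\dots,m+1$, and pick the weights $\lambda_{ij} := 2^{(j-i)Ns}$. Since $C$ is a cone, scale invariance gives
\begin{equation*}
\frac{d^2}{dt^2}\bigg|_{t=0}\textnormal{Per}_s(\psi_{X_i}^t(C);A_i) = 2^{iN(n-s)}\,Q_0,\qquad Q_0 := \frac{d^2}{dt^2}\bigg|_{t=0}\textnormal{Per}_s(\psi_{X_0}^t(C);A_0),
\end{equation*}
while a direct computation using $D_{ij} \gtrsim 2^{\max(i,j)N}\,\text{diam}(A_0)$ shows that, for whichever index $i^*$ is selected by (4), the right-hand side divided by $2^{i^*N(n-s)}$ is at most $C(m)\|X_0\|_{L^\infty}^2\text{diam}(A_0)^{2(n-1)}(2^{-Ns}+2^{-Nn})$. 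Letting $N \to \infty$ yields $Q_0 \geq 0$; since $X_0$ was arbitrary, $C$ is stable, and since $n-k \leq n < n_s^*$, the definition of the critical dimension forces $D$ (hence $C$) to be a hyperplane.

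For (c): at each boundary point $p \in \partial E$, Theorem~\ref{monfor} produces tangent cones that lie in $\mathcal{F}$ by (5), and by (b) they are half-spaces; the blow-down is a half-space too. Hence $\lim_{R \downarrow 0}\Phi_E(p,R) = \lim_{R \to \infty}\Phi_E(p,R)$ equals the hyperplane density for every $p$, and monotonicity forces $\Phi_E(p,\cdot)$ to be constant; the equality case of Theorem~\ref{monfor} then makes $E$ a cone with vertex at every $p \in \partial E$, which is only possible if $E$ is a half-space. The main obstacle is the smooth-cone case in (b): the scale-invariance of (4) means a naive blow-up does not improve the error, and producing a bound that is uniformly small in the index $i^*$ selected by (4) requires the precise matching of the scale factors $2^{iN}$ with the weights $\lambda_{ij} = 2^{(j-i)Ns}$.
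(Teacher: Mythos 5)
Your proposal follows the same overall architecture as the paper's proof (Section \ref{BlowupLimSection}): blow down $E$ to a cone $E_\infty$ using the nonlocal monotonicity formula (Theorem \ref{monfor}), classify all cones in $\mathcal{F}$ by reducing to property (6), and then propagate the classification back to $E$. The technical execution is correct, but it differs from the paper's in a few places worth recording.

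In Step (b), your scaling trick upgrading (4) to stability is essentially the paper's Lemma \ref{StableCone}. The only difference is the choice of weights: you take $\lambda_{ij}=2^{(j-i)Ns}$ adapted to the relative scales of the annuli, while the paper takes the constant weight $\lambda_{ij}=R^{(n+s)/2}$. Both choices make the error terms vanish in the limit and work equally well.

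You reorganize the paper's iterated blow-up as a downward induction on $k=\dim L_C$; the two are logically equivalent. There is, however, one genuine subtlety that you do not address, and that the paper also handles only tacitly. In the smooth case of your inductive step, you apply the definition of the critical dimension $n_s^*$ to the slice $D\subset\R^{n-k}$ and conclude that $D$ is flat. This requires knowing that $D$ is a \emph{stable $s$-minimal cone in $\R^{n-k}\setminus\{0\}$}, not merely that $C=D\times\R^k$ is stable in $\R^n\setminus L_C$. Passing from stability and stationarity of the cylinder $D\times\R^k$ in $\R^n$ to stability and stationarity of $D$ in $\R^{n-k}$ is a dimension-reduction fact for the fractional perimeter (the kernel $|z|^{-(n+s)}$ integrates out the $\R^k$ factor to yield the kernel $|w|^{-(n-k+s)}$, and the second variation inherits a corresponding splitting after a cutoff argument in the $\R^k$-variables); this is standard in the nonlocal minimal surface literature but you neither prove nor cite it. Note that the paper's version of this step — inferring ``$\widetilde{E}_1$ is not smooth at some $x_2\neq 0$'' from ``$E_1=\widetilde{E}_1\times\R$ is not smooth outside the origin'' — hides the same reduction, since one must rule out the possibility that $\widetilde E_1$ is smooth away from the origin yet singular there, and doing so requires invoking $n_s^*$ in $\R^{n-1}$ after dimension-reducing the stability. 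So this is a shared implicit ingredient; it deserves a line making the reduction explicit.

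Finally, your Step (c) differs from the paper's. You observe that both the blow-up and the blow-down at any $p\in\partial E$ are half-spaces with the same density, so $\Phi_E(p,\cdot)$ is constant; the rigidity case of Theorem \ref{monfor} then makes $E$ its own tangent cone, hence a half-space. The paper instead appeals to Theorem \ref{improvflatRn} at every scale $r>0$ and sends $r\to\infty$ in \eqref{2formbound} to kill the second fundamental form. Both routes work; yours is a clean density argument while the paper's is more in the spirit of improvement-of-flatness, and in fact your version does not even need the Euclidean improvement-of-flatness result Theorem \ref{improvflatRn} at the very end (though you do use it inside the inductive step of (b) to detect regular points, as the paper does).
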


An important property follows from (1) and (2) above:
\begin{lemma}\label{lemmondens}
Let $\mathcal{F}$ be a family of sets of $\R^n$ satisfying properties (1) and (2) in Theorem \ref{A-CBernstein}. Then, any set $E\in \mathcal F$ also satisfies density estimates, meaning that
there exists a positive constant $\omega_0=\omega_0(n,s, C_0)$ such that if
$$
R^{-n}|E\cap B_R(q)|\leq \omega_0
$$
for some $q\in \varphi(\B_{1/2})$ and $R\in (0,1/8)$, then
$$
|E\cap B_{R/2}(q)|=0\, .
$$
Moreover, if $E_i\in \mathcal F$ and $E_i\to E_\infty$ in $L_{\rm loc}^1(\R^n)$, then they also converge to $E_\infty$ locally in the Hausdorff distance sense.
\end{lemma}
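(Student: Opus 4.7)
The plan is to mirror closely the proof of Lemma \ref{densityest222}, taking advantage of the fact that the ambient space is now $\R^n$ so the monotonicity formula of Theorem \ref{monfor} holds with $K=0$ and $F\equiv 0$, giving exact monotonicity of
\[
\Phi_E(x,R) := R^{s-n}\,\beta_s\!\int_{\widetilde{\B}_R^+(x,0)} z^{1-s}|\widetilde{\nabla} U|^2\,dy\,dz,
\]
where $U$ is the Caffarelli--Silvestre extension of $\chi_E-\chi_{E^c}$. The goal is to show that $\Phi_E(x,0^+)\ge c(n,s)>0$ for every topological boundary point $x\in\partial E$ (having fixed the representative identified in Remark~\ref{top-ess bdry rmk}).

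First, I would prove this lower bound at reduced boundary points $x_0\in\partial^*E$. By De~Giorgi's structure theorem, the rescalings $\tfrac{1}{r_j}(E-x_0)$ converge in $L^1_{\mathrm{loc}}$ to a half-space $H$. Properties~(1) and (2) place this sequence within the framework of Definition~\ref{blowupdef} taken in $\R^n$ (with trivial charts and Euclidean metric, so the flatness assumptions and the rescaled compatibility of Remark~\ref{rescblowdef} are automatic). Theorem~\ref{BlowUpConv} then upgrades the convergence to $H^{s/2}_{\mathrm{loc}}(\R^n)$. By lower semicontinuity of the extended Sobolev energy under blow-up (as in Step~2 of the proof of Lemma~\ref{2Dcone}) and scale invariance,
\[
\liminf_{j\to\infty}\Phi_E(x_0,r_j)\;\ge\;\Phi_H(0,1)\;=\;c(n,s)\;>\;0,
\]
since a half-space has a strictly positive, explicit extended energy density. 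By the monotonicity of $\Phi_E(x_0,\cdot)$, this lower bound persists at every scale, i.e. $\Phi_E(x_0,R)\ge c(n,s)$ for all $R>0$.

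Next, to extend the bound from $\partial^*E$ to every $x\in\partial E$, I would combine two standard facts: the reduced boundary is dense in the essential (equivalently topological, by our choice of representative) boundary, and, for any fixed $R>0$, the map $x\mapsto \Phi_E(x,R)$ is continuous in $x$. Fixing $R>0$, continuity yields $\Phi_E(x,R)\ge c(n,s)$ for every $x\in\partial E$, and then monotonicity lets $R\downarrow 0$. Once this uniform density of extended energy at boundary points is in hand, the proof of the density estimate is exactly the contradiction argument used for Proposition~\ref{DensEst}: if $R^{-n}|E\cap B_R(q)|\le\omega_0$ with $\omega_0$ small, then by the isoperimetric inequality there is some $q'\in\partial E\cap B_{R/2}(q)$; applying the extended-energy lower bound at $q'$, splitting the extended energy into the bulk term controlled by Lemma~\ref{sob-contr-pot} (trivially, with $W\equiv 0$) and then invoking the interpolation Lemma~\ref{lem:whtorwohh} together with the BV bound of property~(2), one arrives at an inequality of the form $c(n,s)\le C\,\omega_0^{1-s}$, which is a contradiction for $\omega_0$ sufficiently small depending only on $n$, $s$, $C_\circ$.

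Finally, the Hausdorff convergence of $E_i\to E_\infty$ is the now-standard consequence of $L^1_{\mathrm{loc}}$ convergence together with two-sided density estimates: if a point $p\in\partial E_\infty$ were not a Hausdorff limit of points in $\partial E_i$, one of the sets $E_i$ or $E_i^c$ would have arbitrarily small density in a fixed ball around $p$, contradicting the density estimate just proved (noting that $E\in\mathcal F$ implies $E^c\in\mathcal F$ in the sense that the same argument applies symmetrically to $E^c$, since the almost-stability/stationarity argument involved only $\chi_E-\chi_{E^c}$). The only delicate step is the first one, namely transferring the strictly positive extended-energy density from half-spaces to arbitrary reduced-boundary points via the blow-up compactness; everything else is soft.
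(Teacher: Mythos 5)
Your proof is correct and follows essentially the same route as the paper, whose own proof of this lemma is simply a pointer to Lemma \ref{densityest222} and Proposition \ref{hausconvprop}: lower-semicontinuity of the extended energy under blow-up at reduced boundary points, propagation to all of $\partial E$ by density of $\partial^* E$ and (semi-)continuity of the monotone density, and then the contradiction argument of Proposition \ref{DensEst} via Lemma \ref{lem:whtorwohh} and the BV bound; the Hausdorff-convergence claim is the standard consequence of $L^1_{\rm loc}$ convergence plus the two-sided density estimate. The one minor difference is cosmetic: the paper invokes upper semicontinuity of the scale-$0^+$ density, whereas you fix a scale $R>0$ and use continuity of $x\mapsto\Phi_E(x,R)$ (your phrase ``monotonicity lets $R\downarrow 0$'' is slightly misleading since monotonicity degrades the bound as $R$ decreases, but the bound you get is already at all scales, so nothing is actually needed in that step).
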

\begin{proof}
    Same as for Lemma \ref{densityest222} and Proposition \ref{hausconvprop}.
\end{proof}

\vsp 

We will need the following result, which is obtained by combining the $C^{1,\alpha}$ improvement of flatness theorem in \cite{CRS} and the $C^{1,\alpha}$-to-$C^\infty$ bootstrap result for nonlocal minimal graphs in \cite{BFV}.
\begin{theorem}[\cite{BFV, CRS}]\label{improvflatRn}
 Let $s \in (0,1)$. Then, there exists $\sigma > 0$, depending on $n$ and $s$, such that the following holds: let $E \subset \R^n$ and $ x \in \partial E$, and assume that
\begin{itemize}
    \item[\textit{(i)}] The set $E$ is a viscosity solution of the NMS equation in $\B_r(x)$, in the sense of Proposition \ref{prop:viscosity}.
    \item[\textit{(ii)}] The boundary $\partial E$ is included in a $\sigma$-flat cylinder in $\B_{r}(x)$, that is 
    \begin{equation*}
        \partial E \cap \B_{r}(x) \subset \{y\in\R^n: |e \cdot (y-x)| \le \sigma r \},
    \end{equation*}
    for some direction $e\in\Sp^{n-1}$. 
\end{itemize}
Then $\partial E$ is a $C^{\infty}$ graph in the direction $e$ in $ \B_{r/2}(x)$, with uniform estimates. In particular, its second fundamental form $ {\rm II}_{\partial E} $ satisfies
\begin{equation}\label{2formbound}
    \sup_{y\in \partial E\cap\B_{r/2}(x)}| {\rm II}_{\partial E} |(y) \le \frac{C}{r}\, ,
\end{equation}
with $C=C(n,s)$.
\end{theorem}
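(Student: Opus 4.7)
The plan is to prove Theorem \ref{improvflatRn} by composing two separate results: first an \emph{improvement of flatness} for viscosity solutions, which upgrades the quantitative flatness hypothesis to $C^{1,\alpha}$ regularity of $\partial E$; and then a \emph{bootstrap} result for $C^{1,\alpha}$ nonlocal minimal graphs, which upgrades $C^{1,\alpha}$ to $C^\infty$ with uniform estimates. After a translation and scaling, we may assume $x=0$ and $r=1$, so that $\partial E \cap \B_1 \subset \{|y\cdot e|\le \sigma\}$ for some $e\in\Sp^{n-1}$, and $E$ is a viscosity solution of the NMS equation in $\B_1$.

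The heart of the matter is the following dichotomy/improvement statement, established in \cite{CRS}: there exist universal constants $\sigma_0=\sigma_0(n,s)>0$, $\eta=\eta(n,s)\in(0,1/2)$ and $\alpha=\alpha(n,s)\in(0,1)$ such that if $\sigma\le \sigma_0$ and $\partial E\cap \B_1 \subset \{|y\cdot e|\le \sigma\}$ with $0\in\partial E$, then there exists a new direction $e'\in \Sp^{n-1}$ with $|e-e'|\le C\sigma$ and
\[
\partial E\cap \B_{\eta}\subset \{|y\cdot e'|\le \eta^{1+\alpha}\sigma\}.
\]
I would prove this by the standard compactness/contradiction scheme: assuming failure, one produces a sequence of viscosity NMS solutions with flatness $\sigma_k\downarrow 0$ but no improvement, renormalizes the graphs $(\partial E_k - x)/\sigma_k$, and passes to the limit. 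Here the main obstacle is to identify the limiting equation as the half-space fractional Laplacian-type problem whose bounded solutions are $C^{1,\alpha}$; this requires (i) a careful boundary Harnack / Hölder estimate to get compactness of the renormalized graphs, (ii) the linearization of the nonlocal mean curvature of an almost-flat graph around a hyperplane, and (iii) $C^{1,\alpha}$ (or Liouville) estimates for the linearized operator, which is essentially $(-\Delta)^{(1+s)/2}$ on the limiting graph. Once improvement of flatness is in hand, a standard dyadic iteration at every point of $\partial E\cap \B_{1/2}$ (applicable because the hypotheses are scale-invariant and translation-invariant along $\partial E$) shows that the normal vector exists everywhere on $\partial E\cap \B_{1/2}$ and is $C^\alpha$, i.e.\ $\partial E\cap \B_{1/2}$ is a $C^{1,\alpha}$ graph in direction $e$ with uniform bounds.

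For the second step I would invoke the regularity bootstrap of \cite{BFV}. Writing $\partial E\cap \B_{1/2}$ as the graph $\{y_n=f(y')\}$ of a $C^{1,\alpha}$ function $f$, the NMS equation becomes a nonlocal, nonlinear equation for $f$ of order $1+s$, which reads schematically $H_s[f]=0$. Thanks to the $C^{1,\alpha}$ regularity of $f$ (with $\alpha$ arbitrarily close to the universal exponent), one can linearize $H_s$ around $f$ and obtain an equation $L f = g$ where $L$ is a linear nonlocal operator of order $1+s$ with $C^\alpha$ coefficients (kernel comparable to $|y-z|^{-(n+s)}$) and $g$ is controlled by the Hölder norm of $\nabla f$. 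Applying fractional Schauder estimates for $L$ upgrades $f$ to $C^{2,\alpha}$ (once $2\alpha > s$); iterating this scheme yields $f\in C^\infty$ with estimates depending only on $n$ and $s$. A standard compactness/scaling argument then turns these into the uniform curvature bound \eqref{2formbound}.

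The main obstacle in this program is clearly the improvement of flatness of step one: it packages the delicate compactness, boundary Harnack and linearization arguments that form the technical core of the Caffarelli-Roquejoffre-Savin theory. In contrast, the $C^{1,\alpha}\to C^\infty$ bootstrap of step two is essentially a Schauder iteration once the graph representation is in place, and the final curvature bound is a direct consequence of the quantitative $C^2$ estimate combined with scaling.
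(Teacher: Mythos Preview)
Your proposal is correct and matches the paper's approach exactly: the paper does not give an independent proof of this theorem but simply states it as ``obtained by combining the $C^{1,\alpha}$ improvement of flatness theorem in \cite{CRS} and the $C^{1,\alpha}$-to-$C^\infty$ bootstrap result for nonlocal minimal graphs in \cite{BFV}.'' Your two-step outline (improvement of flatness via compactness/contradiction and linearization, then Schauder iteration on the graph equation) is precisely the content of those references.
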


We will also need the following intuitive lemma, to be read as ``cones with finite Morse index are stable outside the origin'', and which will be proved after Theorem \ref{A-CBernstein}.

\begin{lemma}\label{StableCone} Let $E \subset \R^n$ be a cone with $\textnormal{Per}_s(E; \B_1(0)) <+\infty $. Assume that $E$ is stationary for the $s$-perimeter, in the sense of Definition \ref{def-fracminsurface}, and that it satisfies property (4) in the statement of Theorem \ref{A-CBernstein}. Then $E$ is stable in $\R^n\setminus \{0\}$.
\end{lemma}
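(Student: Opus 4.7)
The plan is to exploit the scale-invariance of the cone $E$ to upgrade property (4) of Theorem \ref{A-CBernstein}, which is only an almost-stability condition on one of $m+1$ disjoint sets, to genuine stability in $\R^n\setminus\{0\}$. First I would fix a smooth vector field $X$ with compact support $A\subset\R^n\setminus\{0\}$ and aim to show $\frac{d^2}{dt^2}\big|_{t=0}\textnormal{Per}_s(\psi_X^t(E);A)\geq 0$. The strategy is to produce $m+1$ rescaled copies of $X$ supported on widely separated annuli around the origin, apply (4), and pull the resulting inequality back to $X$ via the dilation-invariance of $E$.

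Concretely, for a large parameter $L>1$ (eventually sent to infinity), I would set $\lambda_k := L^k$ and $X_k(x) := \lambda_k X(x/\lambda_k)$ for $k=1,\dots, m+1$. If $A\subset\{r_1\leq |x|\leq r_2\}$, then $A_k:=\textnormal{spt}(X_k)=\lambda_k A$ lies in the annulus $\{\lambda_k r_1\leq |x|\leq \lambda_k r_2\}$; for $L$ larger than an absolute multiple of $r_2/r_1$ these are pairwise disjoint, with $D_{k\ell}:=\textnormal{dist}(A_k,A_\ell)\geq \tfrac{r_1}{2}\max(\lambda_k,\lambda_\ell)$. The crucial identity, combining $E=\lambda_k E$, the conjugation rule $\psi_{X_k}^t(\lambda_k y)=\lambda_k\psi_X^t(y)$, and the homogeneity $\textnormal{Per}_s(\lambda F;\lambda\Omega)=\lambda^{n-s}\textnormal{Per}_s(F;\Omega)$, is
\[
\frac{d^2}{dt^2}\bigg|_{t=0}\textnormal{Per}_s(\psi_{X_k}^t(E);A_k)\;=\;\lambda_k^{n-s}\,\frac{d^2}{dt^2}\bigg|_{t=0}\textnormal{Per}_s(\psi_X^t(E);A).
\]

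To conclude, I would apply property (4) to the $X_k$'s with the symmetric weight choice $\lambda_{ij}:=L^{(n+s)/2}$ for $i<j$: this selects some index $i_0\in\{1,\dots,m+1\}$ for which the almost-stability bound holds. Using $\|X_{i_0}\|_{L^\infty}^2\,\textnormal{diam}(A_{i_0})^{2(n-1)}\leq C\lambda_{i_0}^{2n}$ and $D_{i_0\ell}^{-(n+s)}\leq C\lambda_{\max(i_0,\ell)}^{-(n+s)}$, and dividing through by $\lambda_{i_0}^{n-s}$ via the scaling identity above, a direct computation (splitting the sum over $\ell<i_0$ and $\ell>i_0$, each of which contributes a geometric series) yields
\[
\frac{d^2}{dt^2}\bigg|_{t=0}\textnormal{Per}_s(\psi_X^t(E);A)\;\geq\;-\,C_X\, L^{-(n+s)/2},
\]
with $C_X$ depending only on $n,s,m,X$ but independent of $L$ and $i_0$. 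Sending $L\to\infty$ gives stability. The main obstacle is that the selected index $i_0$ from (4) may depend on both $E$ and $L$, so the weights $\lambda_{ij}$ must be picked so that the error term is small \emph{uniformly} in $i_0$; the symmetric choice $\lambda_{ij}=L^{(n+s)/2}$ (in fact, any exponent in $(0,n+s)$ would work) does the job, because the contributions of near-indices ($\ell<i_0$) and far-indices ($\ell>i_0$) are balanced against each other by this scaling.
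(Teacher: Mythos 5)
Your proposal is correct and follows essentially the same route as the paper's proof: rescaled copies $X_k := \lambda_k X(\cdot/\lambda_k)$ supported on geometrically separated annuli, the scaling identity $\frac{d^2}{dt^2}\big|_{t=0}\textnormal{Per}_s(\psi_{X_k}^t(E);A_k)=\lambda_k^{n-s}\frac{d^2}{dt^2}\big|_{t=0}\textnormal{Per}_s(\psi_X^t(E);A)$ from homogeneity of the cone, application of property (4) with the balanced weight choice $\lambda_{ij}=L^{(n+s)/2}$, and sending $L\to\infty$. The only cosmetic difference is that the paper normalizes $X$ to be supported on a fixed annulus $B_1\setminus B_{R_0}$ rather than an arbitrary compact $A\subset\R^n\setminus\{0\}$, which changes nothing substantive.
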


We will now prove Theorem \ref{A-CBernstein}.

\begin{proof}[Proof of Theorem \ref{A-CBernstein}]

Let $E_\infty$ be a blow-down limit of $E$, i.e., a limit of a sequence $E_i=\frac{1}{r_i}E$, with $r_i\to\infty$ (by property (5), such a limit exists, and it is also a member of $\mathcal{F}$). Then $E_\infty$ is a cone: Let $U$, $U_\infty$ and $U_i$ denote the Caffarelli-Silvestre extensions of $u:=\chi_E-\chi_{E^c}$, $u_\infty:=\chi_{E_\infty}-\chi_{E_\infty^c}$ and 
$u_i:=\chi_{E_i}-\chi_{E_i^c}$, respectively. Using the notation $\Phi_V(r):=r^{s-n}\int_{\tilde B_r^+(0,0)}z^{1-s}|\nabla V(x,z)|^2 dx\,dz$, by convergence of the extended energies\footnote{Follows easily from convergence in $H^{s/2}_{\rm loc}(\R^n)$. The latter is proved, thanks to property (2), as in Step 1 in the proof of Theorem \ref{BlowUpConv}.} and scaling we have that
$$
\Phi_{U_\infty}(r)=\lim_i \Phi_{U_i}(r)=\lim_i \Phi_{U}(rr_i)\, .
$$
By the monotonicity of $\Phi_E$, which we know since $E$ is an $s$-minimal surface by property (1) and thus satisfies Theorem \ref{monfor}, the limit $\lim_{R\to\infty} \Phi_{U}(R)$ exists, and by property (2) and the interpolation result in Lemma \ref{lem:whtorwohh} it is a finite constant. The equality above then shows that $\Phi_{U_\infty}(r)$ is equal to this constant independently of $r$. Since $E_\infty$ is also an $s$-minimal surface (by properties (1) and (5)), the last paragraph in Theorem \ref{monfor} gives that $E_\infty$ is a cone.

\vsp
We will now prove that $E_\infty$ is in fact a hyperplane; by the local Hausdorff convergence of the $E_i=\frac{1}{r_i}E$ to $E_\infty$ (see Lemma \ref{lemmondens} above), $E$ then satisfies the hypotheses of Theorem \ref{improvflatRn} for every $r>0$, and therefore by \eqref{2formbound} $E$ then needs to be a hyperplane as well (since ${\rm II}_{\partial E}$ vanishes).

\vsp
First, Lemma \ref{StableCone} states that $E_\infty$ is stable outside the origin. If it also were smooth outside the origin, the assumption that $3\leq n < n_s^*$ would imply that $\partial E_\infty$ is a hyperplane and we would finish the proof.\\
If, arguing by contradiction, there is instead some point $x_1\neq 0$ where $E_\infty$ is not smooth, we need to apply a dimension reduction argument: blowing up around $x_1$, we obtain a new cone $E_1\in\mathcal{F}$ which is now translation invariant along some direction; after a rotation, we can write $E_1=\widetilde E_1\times\R$, and this is allowed by property (5).

\vsp
We claim that $E_1$ cannot be smooth outside the origin. First, if that were the case, $E_1$ would be a hyperplane, since $3\leq n < n_s^*$. Now, the blow-up rescalings of $E_\infty$ around $x_1$ converge locally in the Hausdorff distance sense to $E_1$ by Lemma \ref{lemmondens}, and they are viscosity solutions of the NMS equation by property (3). If $E_1$ were indeed a hyperplane, the assumption in the improvement of flatness Theorem \ref{improvflatRn} would be satisfied for the blow-up rescalings of $E_\infty$ around $x_1$ (for large enough indices in the sequence). Thus $E_\infty$ would be smooth in a neighborhood of $x_1$, a contradiction.

\vsp
Now that we know that $E_1$ is not smooth outside the origin, we can iterate the argument with this new cone: Since $E_1=\widetilde E_1\times\R$ is not smooth outside the origin, there is some point $x_2\in\R^{n-1}\setminus\{0\}$ where $\widetilde E_1$ is not smooth. Hence, we can blow up again around $(x_2,0)$ and obtain a new cone $E_2$, which is now translation invariant with respect to two orthogonal directions. After a rotation, $E_2=\widetilde E_2 \times \R^2$. Moreover, $E_2$ cannot be smooth outside the origin, by the same improvement of flatness argument we applied to $E_1$.\\
Iterating this reasoning $n-2$ times, we end up with a cone that is translation invariant in $n-2$ orthogonal directions, i.e., of the form $\widetilde E \times \R^{n-2}$ after a rotation, and which is not smooth outside the origin. This is not possible by property (6), and therefore we reach a contradiction.
\end{proof}

We now give the proof of Lemma \ref{StableCone}.
\begin{proof}[Proof of Lemma \ref{StableCone}]
Consider an annular region of the form $A_0=B_{1}\setminus B_{R_0}$ with $ 0< R_0 < 1$, centered at the origin. It suffices to show that $E$ is stable in $A_0$, by the arbitrariness of $R_0$ and the dilation invariance of $E$.

\vsp
The strategy is the following. Let $X$ be a vector field supported on the annulus $A_0$. Let $A_1,...,A_{m+1}$ be $(m+1)$ rescaled copies of $A_0$ of the form $A_{i}=RA_{i-1}=R^iA_0$, with $R>0$ sufficiently large so that they are disjoint. Likewise, consider the $(m+1)$ rescaled vector fields $X_{i}:=R^iX(x/R^i)$, which are supported in the respective $A_i$. Since $E$ satisfies property (4) in the statement of Theorem \ref{A-CBernstein}, we know that the almost-stability inequality (\ref{asper}) will hold in at least one of the $A_i$. Moreover, since $E$ is dilation invariant, we will be able to translate this information back into $A_0$, and taking $R$ arbitrarily large we will find that $E$ is actually stable on $A_0$ and conclude the proof.

\vsp
Define $ u :=\chi_E-\chi_{E^c}$, and let $\psi_X^t$ denote the flow of the vector field $X$ at time $t$. Observe that $u_{i}^t:=\chi_{\psi_{X_i}^t(E)}-\chi_{\psi_{X_i}^t(E)^c}$ is the composition of $u =\chi_E-\chi_{E^c}$ with the flow of $X_{i}=R^iX(x/R^i)$, which is given by $\psi_{X_i}^t=R^i\psi_X^t(x/R^i)$. By the dilation-invariance of the cone $E$, and hence of $u_\infty$, we have that

\begin{equation*} 
u_{i}^t(x)=u(R^i\psi_X^t(x/R^i))=u(\psi_X^t(x/R^i))=u^t(x/R^i);
\end{equation*}
the scaling property of the fractional Sobolev energy then gives

\begin{equation*}
    \text{Per}_s(\psi_{X_i}^t(E); A_i)=\mathcal{E}_{A_{i}}^{\rm Sob}(u_{i}^t)=\mathcal{E}_{A_{i}}^{\rm Sob}(u^t(x/R^i))=R^{i(n-s)}\mathcal{E}_{A_0}^{\rm Sob}(u^t)=R^{i(n-s)}\text{Per}_s(\psi_{X}^t(E); A_0) \, ,
\end{equation*}
so that in particular
\begin{equation}\label{2dresrel}
    \frac{d^2}{dt^2}\bigg|_{t=0}\text{Per}_s(\psi_{X_i}^t(E); A_i)=R^{i(n-s)}\frac{d^2}{dt^2}\bigg|_{t=0}\text{Per}_s(\psi_{X}^t(E); A_0) \, .
\end{equation}
Now, by assumption, we know that the almost-stability inequality \eqref{asper} will be satisfied in one of the $A_i$. Combined with \eqref{2dresrel}, we obtain that
\begin{equation*}
\begin{split}
    R^{i(n-s)}\frac{d^2}{dt^2}\bigg|_{t=0}\text{Per}_s(\psi_{X}^t(E);A_0) &\geq -C \|X_i\|_{L^\infty}^2\text{diam}(A_i)^2(\sum_{\ell<i} \frac{1}{\lambda_{\ell i}} D_{i\ell}^{-(n+s)}
    +\sum_{\ell>i} \lambda_{i\ell} D_{i\ell}^{-(n+s)})\, ,
\end{split}
\end{equation*}
where $\lambda_{ij}$ are positive weights and $D_{ij}= \textnormal{dist}(A_i, A_j)$. We can bound
$$
\|X_i\|_{L^\infty}^2\text{diam}(A_i)^{2(n-1)}= \|X\|_{L^\infty}^2(R^i)^2(R^i\text{diam}(A_0))^{2(n-1)}\leq C_{X,A_0}R^{2ni}\, .
$$ 
We also observe that 
$$
D_{i,l} = \text{dist}(R^{i}A_0,R^{l}A_0) \geq cR^{\max\{i,l\}}
$$
for some small $c$, for all $R$ sufficiently large depending on $A_0$.

\vsp
Substituting into the inequality we obtained and dividing both sides by $R^{i(n-s)}$, we get
\begin{align*}
    \frac{d^2}{dt^2}\bigg|_{t=0}\text{Per}_s(\psi_{X}^t(E);A_0) &\geq -C_{X,A_0} R^{i(n+s)}(\sum_{\ell<i} \frac{1}{\lambda_{\ell i}} R^{-i(n+s)}
    +\sum_{\ell>i} \lambda_{i\ell} R^{-\ell(n+s)})\\
    &= -C_{X,A_0} (\sum_{\ell<i} \frac{1}{\lambda_{\ell i}}
    +\sum_{\ell>i} \lambda_{i\ell} R^{-(\ell-i)(n+s)})\, .
\end{align*}
Now, choosing the positive weights as $\lambda_{ij}=R^{\frac{n+s}{2}}$ for every pair $i<j$, we obtain
\begin{align*}
    \frac{d^2}{dt^2}\bigg|_{t=0}\text{Per}_s(\psi_{X}^t(E);A_0)
    &\geq -C_{X,A_0} (\sum_{\ell<i} R^{-\frac{n+s}{2}}
    +\sum_{\ell>i} R^{-(\ell-i-\frac{1}{2})(n+s)})\, ,
\end{align*}
so that all the powers of $R$ become strictly negative. Letting $R\to\infty$, we deduce that
\begin{align*}
    \frac{d^2}{dt^2}\bigg|_{t=0}\text{Per}_s(\psi_{X}^t(E);A_0)
    &\geq 0
\end{align*}
as desired.
\end{proof}

We will, in particular, apply Theorem \ref{A-CBernstein} to the class $\mathcal{A}_m^{Blow-up}$. The next lemma proves that property (6) in the assumptions holds for this class. This is the (nonlocal and finite index) analogue of a result of Schoen--Simon \cite[p. 785-787]{SchoenSimon}; see also \cite[Proposition 3.2]{TW10}.

\begin{lemma}\label{2Dcone} Let $n\geq 3$. Assume that some nontrivial cone $E\subset \mathbb{R}^n$
belongs to $\mathcal{A}_m^{Blow-up}$ and is of the form $\widetilde E \times \mathbb{R}^{n-2}$
for some cone $\widetilde{E} \subset\mathbb{R}^2$.
Then, $\partial E$ is a hyperplane.
\end{lemma}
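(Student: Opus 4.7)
The plan is to exploit the $(n-2)$ directions of translation invariance of $E=\widetilde E\times\mathbb{R}^{n-2}$ in order to reduce the problem to the classification of stable $s$-minimal cones in $\mathbb{R}^2$. Roughly: almost-stability of $E$ in $\mathbb{R}^n$ combined with translation invariance should yield genuine stability of the 2D slice $\widetilde E$, and such stable 2D cones are half-planes.

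First, I would check that Lemma \ref{StableCone} applies to $E$. Since $E\in\mathcal A_m^{Blow-up}$, property (4) of Theorem \ref{A-CBernstein} holds by Lemma \ref{lemasbup}; $E$ is stationary for the $s$-perimeter (being an $L^1_{\rm loc}$-limit of stationary sets via Theorem \ref{BlowUpConv}); and the uniform BV/energy estimates of Remark \ref{remprop} pass to the blow-up, in particular giving $\textnormal{Per}_s(E;\B_1)<\infty$ by interpolation. Thus Lemma \ref{StableCone} applies and $E$ is stable in $\mathbb{R}^n\setminus\{0\}$, meaning
\[
\frac{d^2}{dt^2}\bigg|_{t=0}\textnormal{Per}_s(\psi_X^t(E);\text{spt}(X))\ge 0
\]
for every smooth vector field $X$ compactly supported in $\mathbb{R}^n\setminus\{0\}$.

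Second, to descend to 2D stability of $\widetilde E$, fix a smooth 2D vector field $\xi:\mathbb{R}^2\to\mathbb{R}^2$ compactly supported in $\mathbb{R}^2\setminus\{0\}$ and a cutoff $\eta:\mathbb{R}^{n-2}\to\mathbb{R}$ with $\eta\equiv 1$ on $\B_1$ and $\text{spt}(\eta)\subset\B_2$. Set $X_0^R(y_1,y_2,y'):=(\xi(y_1,y_2),0)\,\eta(y'/R)$, which is compactly supported in $\mathbb{R}^n\setminus\{0\}$ (since $\xi$ vanishes near the origin of $\mathbb{R}^2$). The key computation uses the identity $\psi_{X_0^R}^t(y_1,y_2,y')=(\phi_\xi^{t\eta(y'/R)}(y_1,y_2),y')$ following from the time-rescaling property $\phi_{\alpha\xi}^t=\phi_\xi^{\alpha t}$, together with the translation invariance of $\widetilde E\times\mathbb{R}^{n-2}$ in the $y'$ variables. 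Expanding $\textnormal{Per}_s(\psi_{X_0^R}^t(E);\text{spt}(X_0^R))$ and integrating the $\mathbb{R}^n$ kernel over $(y',z')$ in the region where $\eta(y'/R)=\eta(z'/R)=1$, the change of variable $w'=z'-y'$ together with
\[
\int_{\mathbb{R}^{n-2}}\frac{dw'}{(r^2+|w'|^2)^{(n+s)/2}}=\frac{c_{n-2,s}}{r^{2+s}}
\]
shows that the leading order in $R$ equals a positive constant times $R^{n-2}\cdot\textnormal{Per}_s^{\mathbb{R}^2}(\phi_\xi^t(\widetilde E);\text{spt}(\xi))$, with the remainder terms coming from the transition zone $R<|y'|<2R$ and from the tail piece $A_0^R\times (A_0^R)^c$ being $o(R^{n-2})$ (using the BV bounds on $\widetilde E$ and the decay of the kernel off-diagonal). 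Dividing by $R^{n-2}$ and sending $R\to\infty$, the stability inequality for $E$ therefore yields
\[
\frac{d^2}{dt^2}\bigg|_{t=0}\textnormal{Per}_s^{\mathbb{R}^2}(\phi_\xi^t(\widetilde E);\text{spt}(\xi))\ge 0,
\]
for every such $\xi$, i.e.\ $\widetilde E$ is a stable $s$-minimal cone in $\mathbb{R}^2\setminus\{0\}$.

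Finally, I would classify 2D stable $s$-minimal cones. Since $\widetilde E$ is a cone in $\mathbb{R}^2$, $\partial\widetilde E$ consists of finitely many rays from the origin. If $\widetilde E$ is not a half-plane, then either $\partial\widetilde E$ contains more than two rays or it consists of two rays meeting at the origin at an angle different from $\pi$; in both cases the origin is a genuine corner. One can then produce a test vector field $\xi$, supported in some annulus $\B_{R_1}\setminus\B_{R_0}\subset\mathbb{R}^2\setminus\{0\}$ (e.g.\ a truncated translation in a direction that straightens out the corner), whose 2D second variation of the $s$-perimeter is strictly negative, contradicting the stability derived above. Hence $\widetilde E$ must be a half-plane and $\partial E=\partial\widetilde E\times\mathbb{R}^{n-2}$ is a hyperplane.

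The main obstacle I expect is the asymptotic analysis in the second step: carefully isolating the leading $R^{n-2}$ behavior of the second variation on the full $\mathbb{R}^n$ relative perimeter and showing that both the transition-zone contribution and the $A_0^R\times(A_0^R)^c$ tail are $o(R^{n-2})$, so that one genuinely obtains the 2D stability of $\widetilde E$ with the correct $|y_{12}-z_{12}|^{-(2+s)}$ kernel. The 2D classification in the third step is conceptually natural but also requires an explicit computation with the nonlocal kernel, and the freedom $3\leq n<n_s^*$ together with the almost-stability mechanism is what allows this planar argument to close (which is why the remark after Theorem \ref{UnifReg} observes that $n=2$ would only go through under an a priori stability assumption).
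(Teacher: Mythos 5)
Your proposal takes a genuinely different route from the paper. Both approaches exploit the $(n-2)$ translation-invariant directions, but in different ways. The paper first establishes, via the test function $\xi=|\nabla U|\eta$ in the almost-stability inequality and a Bochner identity on the Caffarelli-Silvestre extension, a Simons-type bound $\int_{\widetilde B^+_{1/2}}\mathcal A^2(U)z^{1-s}\le C$ for almost-stable Allen-Cahn solutions (Step~1). It then uses the translation-invariant $e_3$-direction to place $m+1$ unit balls in the approximating Allen-Cahn solutions, so that almost-stability holds in at least one of them and, by translation invariance, WLOG at the origin; passing to the limit, the Simons integrand of the $0$-homogeneous extension $U_\circ$ is shown to diverge near the vertex unless $E$ is a half-space (Step~2). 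Your proposal instead first upgrades almost-stability to genuine stability in $\R^n\setminus\{0\}$ via Lemma~\ref{StableCone}, then cuts off in the $y'$-directions and sends $R\to\infty$ to deduce $2$D stability of $\widetilde E$ in $\R^2\setminus\{0\}$, and finally appeals to a classification of stable $2$D $s$-minimal cones. The upshot is that your approach outsources the extra dimensions to a dimensional-reduction argument rather than to the ``$m+1$ translated balls $\Rightarrow$ one almost-stable ball'' mechanism, at the price of needing a $2$D classification as input.

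The key gap is precisely the third step, and it is not a minor one. The classification of \emph{stable} (as opposed to minimizing, which is Savin--Valdinoci \cite{SV}) $s$-minimal cones in $\R^2\setminus\{0\}$ is not a black-box fact one can cite, and the proposed ``truncated translation that straightens out the corner'' is not obviously a destabilizing variation. Stationarity of a $2$D cone away from the origin (vanishing nonlocal mean curvature on the rays) already forces a rigid symmetric structure, and showing that the second variation is negative for some compactly supported test field supported away from the vertex is exactly as delicate as what the paper's Simons argument resolves; the paper's remark after Theorem~\ref{UnifReg} makes this explicit by observing that the $n=2$ version of Lemma~\ref{2Dcone} is obtained by running the \emph{same} Step~1 extension/Simons argument under an a priori stability hypothesis, not by a direct geometric test-function computation. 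Without invoking that argument (or proving the $2$D classification independently), your proof does not close. The second gap — the $o(R^{n-2})$ control of the transition zone $R<|y'|<2R$ and of the exterior tail in the $R\to\infty$ dimensional reduction — is real but more of a technical burden; similar reductions appear for nonlocal minimal surfaces in the literature (e.g.\ in \cite{FV}), and the argument should go through, though the cross-terms between the core and the transition shell need genuine care because the nonlocal kernel does not localize the way $|\nabla\phi|^2$ does in the classical second-variation formula.
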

\begin{proof} We divide the proof into two steps. 

    \vsp
    \textbf{Step 1.} 
Let us show the following claim: assume that ${\rm FA}_1(M,g,1,p,\varphi)$ holds, and  $u:M\to (-1,1)$ is a solution of Allen-Cahn with parameter $\ep\in (0,1)$ in $B_1(p)$ (equivalently a critical point of $\mathcal E_{B_1(p)}$) that is $\Lambda$-almost stable in $B_1(p)$ (see Definition \ref{almoststab}).
Let $U: M\times  \R_+\to (-1,1)$ be the Caffarelli-Silvestre extension of $u$. Then, for some constant $C=C(n,s,\Lambda) >0$ we have:
\[
\int_{\widetilde B_{1/2}^+(p,0)} \mathcal A^2(U) \, dV \,z^{1-s} dz  \le C\, ,
\]
where 
\[
\mathcal A^2(U) : = \big(|\nabla^2 U|^2 -|\nabla|\nabla U||^2\big)\chi_{\{|\nabla u|>0 \}} =\Big(|\nabla^2 U|^2-\nabla^2 U\Big(\tfrac{\nabla U}{|\nabla U|}, \tfrac{\nabla U}{|\nabla U|}\Big)\Big) \chi_{\{|\nabla u|>0\}} \ge 0\, .
\]
Here $\nabla^2 U$ denotes the ``horizontal'' Hessian of $U(\cdot , z)$ ---i.e. for $z$ fixed--- with respect to $g$.

    \vsp 
Indeed since $u$ is  $\Lambda$-almost stable, for all $\xi\in C^1( \widetilde B^+_{1}(p,0))$ with support contained in $\overline{ \widetilde B^+_{3/4}(p,0)}$ and trace $\xi_0$ on $z=0$, we have 
\begin{equation*}
\begin{split}
\widetilde{\mathcal E}_{1}''(U)[\xi,\xi] &=\beta_{s}\int_{\widetilde B^+_{1}}z^{1-s}|\nabla \xi|^2 \,dVdz + \ep^{-s}\int_{B_{1}} W''(u) \xi_0^2\, dV  
\\
&\geq  \mathcal{E}_{B_1}''(u)[\xi_0,\xi_0] \ge -\Lambda\|\xi_0 \|^2_{L^1(B_1)}\,,
\end{split}
\end{equation*}
where $\widetilde B^+_{1}$ and $B_{1}$ are brief notations for $\widetilde B^+_{1}(p,0)$ and $B_{1}(p)$.

\vsp
Thus, testing the above almost stability inequality with a test function that is product $\xi=c\eta$ we obtain (with a simple integration by parts similar to \cite[Proof of Theorem 1.3]{Cabre-Sanz})
\begin{align}
\int_{B_{1}} c\big( \beta_s(z^{1-s}\partial_z) & c(\,\cdot\,,0^+) -\ep^{-s} W''(u)\big) \eta^2dV \nonumber \\ & \le \beta_s \int_{\widetilde B_1^+}\big(c^2 z^{1-s}|\widetilde \nabla \eta|^2-c\widetilde{\rm div}(z^{1-s} \widetilde\nabla c)\eta^2\big)dVdz + \Lambda \Big(\int_{B_1} |c\eta|dV\Big)^2\, . \label{whtiohoew22}
\end{align}

Taking the horizontal gradient $\nabla$ of 
$\beta_s(z^{1-s}\partial_z) U(\,\cdot\,,0^+) -\ep^{-s} W'(u)=0$ at $z=0$, and computing the scalar product with $\nabla u$, we obtain 
\[
\beta_s(z^{1-s} \partial_z)|_{z=0^+} (\nabla U)\cdot \nabla u -\ep^{-s} W''(u) |\nabla u|^2=0\, .
\]
Using $\beta_s(z^{1-s}\partial_z)|_{z=0^+} (\nabla U)\cdot \nabla u =\frac{\beta_s}{2}( z^{1-s}\partial_z)|_{z=0^+} |\nabla U|^2 =
\frac{\beta_s}{2}|\nabla U| (\beta_s z^{1-s}\partial_z)|_{z=0^+} |\nabla U|$ we obtain that 
$c= |\nabla U|$ makes the left hand side of \eqref{whtiohoew22} vanish. 
Hence, for this choice of $c$ we obtain 

\begin{equation}\label{whtiohoew}
0\le \beta_s \int_{\widetilde {B}_1^+}\big(c^2 z^{1-s}|\widetilde \nabla \eta|^2-c\widetilde{\rm div}(z^{1-s} \widetilde\nabla c)\eta^2\big)dVd + \Lambda \Big(\int_{B_1} |c\eta|dV\Big)^2\,.
\end{equation}

Notice that 
\[
c\widetilde{\rm div}(z^{1-s} \widetilde\nabla c) = z^{1-s} c\Delta c + c\partial_z(z^{1-s}\partial_z c)  = \big( \tfrac 1 2\Delta (c^2) - |\nabla c|^2)\big) z^{1-s} +c\partial_z(z^{1-s}\partial_z c)\, . 
\]
Now, since $c=|\nabla U|$, the Bochner identity ---applied to each ``horizontal slice'' $M\times\{z\}$ of $\widetilde{M}$--- yields
\[
\tfrac1 2\Delta (c^2) = \nabla U\cdot \nabla (\Delta U) + |\nabla^2 U|^2 +  {\rm Ric}(\nabla u, \nabla u)\, .
\]
Since, by the equation defining the extension, $z^{1-s}\Delta U = - \partial_z(z^{1-s}\partial_z U)$, we obtain
\[
z^{1-s}\nabla U\cdot \nabla (\Delta U) = - \nabla U\cdot \partial_z(z^{1-s}\partial_z \nabla U)\, .
\]
But explicit computation shows that
\[
\begin{split}
|\nabla U|\partial_z(z^{1-s}\partial_z |\nabla U|)
 &= |\nabla U|\partial_z\bigg(\frac{z^{1-s}\partial_z(\tfrac 1 2 |\nabla U|^2)}{|\nabla U|}\bigg) \\
 &= \nabla U\partial_z\big(z^{1-s} \cdot\partial_z\nabla U)\big) +z^{1-s} \Big(|\partial_z\nabla U|^2 -\big(\partial_z|\nabla U|\big)^2\Big) \\
& \ge \nabla U\cdot \partial_z(z^{1-s}\partial_z \nabla U)\, .
\end{split}
\]
Hence, estimating ${\rm Ric}(\nabla U, \nabla U) \ge -C|\nabla U|^2$, we deduce that
\[
c\widetilde{\rm div}(z^{1-s} \widetilde\nabla c) \ge z^{1-s}\big(|\nabla^2 U|^2 -|\nabla|\nabla U||^2 -C |\nabla U|^2\big)\chi_{\{|\nabla u|>0\}}\, .
\]
Inserting this in \eqref{whtiohoew}, we reach 
\[
\int_{\widetilde {B}_1^+} z^{1-s}\mathcal A^2(u) \eta^2 dVdz \le \beta_s \int_{\widetilde {B}_1^+} |\nabla U|^2 z^{1-s} ( |\widetilde \nabla \eta|^2 +C\eta^2)dVdz + \Lambda \Big(\int_{B_1} |\nabla u||\eta |dV\Big)^2 .
\]

From this we conclude the claim in Step 1, fixing a cutoff satisfying $\chi_{\widetilde B_{1/2}^+}  \le \eta \le \chi_{\widetilde B_{3/4}^+}$ and using the estimates for $ \beta_s \int_{\widetilde B_{3/4}^+} z^{1-s}|\nabla U|^2 dVdz$  and$\int_{B_{3/4}} |\nabla u| dV$ proved in Section \ref{BVSection}. In particular, Lemma \ref{lem:whtorwohh} with $R=1, k=0$ and the fact that $\Lambda$-almost stability implies a uniform $BV$ estimates, that is Proposition \ref{BVest}.

\vsp
\textbf{Step 2.} Recall now that, as recorded in Remark \ref{rescblowdef}, if $E$ belongs to  $\mathcal{A}_m^{Blow-up}$ we have sequences of:
\begin{itemize}
\item closed manifolds $(M_j, g^{M_j})$;
\item  points $p_j\in M_j$ and scales $R_j\uparrow \infty$ for which 
${\rm FA}_3( M_j, g^{M_j}, R_j, p_j, \varphi_j)$ holds and $g^{M_j}_{k\ell}(0) = \delta_{k\ell}$.
\item solutions of Allen-Cahn $u_j:M_j \to (-1,1)$ with parameters $\ep_j\downarrow 0$ and Morse index bounded by $m$ such that 
$(u_j\circ \varphi_j) \to u_\circ : = \chi_{E}-\chi_{E^c}$ in $L^1_{\rm loc}(\R^n)$. 
\end{itemize}

Let $U_j: \widetilde M_j \to (-1,1)$ be the extensions of the $u_j$ and and observe that $U_j\rightharpoonup U_\circ$ in weakly in $L^{1}_{\rm loc}(\R^{n+1}_+)$, where $U_\circ$ is the (unique, bounded) Caffarelli-Silvestre extension of $u_\circ$ to $\R^{n+1}_{+}$. Actually, thanks to Theorem \ref{BlowUpConv}, one could prove  local strong convergence in the weighted Hilbert space $H^1_{\rm loc}(\R^{n+1}_+; |z|^{1-s}dxdz)$, although (much rougher) weak $L^{1}_{\rm loc}$ will suffice here.

\vsp
Notice also that in the local coordinates $\varphi_j^{-1}$ we will have
$g^{M_j}_{k\ell} \to \delta_{k\ell}$ in $C^2_{\rm loc}(\R^n) $), since $R_j \to \infty$. Hence by standard elliptic estimates $U_j\circ \widetilde\varphi_j \to U_\circ$ in $C^2_{\rm loc} (\R^n\times (0,+\infty))$  (up to subsequence), where $\widetilde\varphi_j(x,z) = (\varphi_j(x),z) $. 

\vsp
Now, for all $j \gg 1$ sufficiently large, take the  $m+1$ balls $ \{ B_1 (\varphi_j(3i e_3)) \}_{i\le m}$, $i=0,1,\dots, m$. By property (4) ---i.e. almost stability in one out of $(m+1)$ disjoint sets--- $U_j$ will be almost stable in one of them. We may assume without loss of generality (up to translation and subsequence) that it actually is $B_1(\varphi_1(0))$, and then Step 1 gives \[
\int_{\varphi_j(\B_{1/2}^+)} \mathcal A^2(U_j) dV_j \,z^{1-s}dz  \le C\, .
\]

After passing to the limit (using that $U_j\circ \varphi_j\to U_\circ $ in $C^2_{\rm loc}$) we obtain, for every $\delta>0$\,:
\begin{equation}\label{eq: bdd integral in delta}
    \int_{\B^+_{1/2-\delta} \cap \{z>\delta \}} \Big(|D^2 U_\circ|^2-D^2 U_\circ\Big(\tfrac{D U_\circ }{|D U_\circ|}, \tfrac{D U_\circ }{|D U_\circ|}\Big)\Big)  \chi_{\{|D U_\circ|>\delta\}}\, dx \,z^{1-s}dz  \le C. 
\end{equation}

On the other hand, if $ E = \widetilde{E} \times \mathbb{R}^{n-2} $ for some nontrivial cone $ \widetilde{E} \subset \mathbb{R}^2 $, then the associated extension $ U_\circ $ of $u_\circ = \chi_E-\chi_{E^c}$ depends only on the first two variables and is $0$-homogeneous. This implies that $ \mathcal{A}(U_\circ) $ is homogeneous of degree $-2$, leading (since also $ \mathcal{A}(U_\circ) $ is not indentically zero as $E$ is not flat) to a blow-up of the integral 
\begin{equation*}
    \int \mathcal{A}^2(U_\circ) \, dx z^{1-s}  dz
\end{equation*}
around the origin. This would contradict \eqref{eq: bdd integral in delta} and thus $E$ is flat.
\end{proof}

The properties proved so far for the family $\mathcal F = \mathcal{A}_m^{Blow-up}$ (see Definition \ref{BlowUpClass}) show that it satisfies the hypotheses of Theorem \ref{A-CBernstein}, whence we deduce
\begin{corollary}[\textbf{Blow-ups of limit surfaces of Allen-Cahn are hyperplanes}]\label{adfhgsfhd} Let $s\in(0,1)$ and $3\leq n < n_s^*$. Then, any nonempty $\partial F$ in $\mathcal{A}_m^{Blow-up}$ is a hyperplane.
\end{corollary}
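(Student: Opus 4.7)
The plan is to verify that the family $\mathcal{F}=\mathcal{A}_m^{Blow-up}$ satisfies all six hypotheses (1)--(6) of Theorem~\ref{A-CBernstein}, and then simply apply that theorem to conclude that every nonempty $\partial F\in\mathcal{A}_m^{Blow-up}$ is a hyperplane. Most of the verifications are immediate consequences of results already established in the excerpt; the only properties requiring a genuine argument are the preservation of the viscosity property and the $L^1_{\rm loc}$-closedness.

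Properties (1), (2), (4), and (6) are essentially free. Stationarity (1) is proved in Theorem~\ref{BlowUpConv}. The BV estimate (2) follows by passing to the $L^1_{\rm loc}$ limit in the uniform BV bound $\textnormal{Per}(F_j;B_R)\le C_0 R^{n-1}$ built into the definition of a blow-up sequence (Definition~\ref{blowupdef}), which in turn comes from Remark~\ref{remprop}(1) applied on the rescaled manifolds. The almost-stability condition (4) is precisely the content of Lemma~\ref{lemasbup}, where the constant $C=C(n,s,m)$ depends only on the class. Property (6), that cones in the family with $n-2$ translation-invariant directions must be half-spaces, is Lemma~\ref{2Dcone}.

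The only items requiring a bit more care are (3) and (5). For (3), I would argue as follows: if $F\in\mathcal{A}_m^{Blow-up}$ is obtained as the $L^1_{\rm loc}$ limit of a blow-up sequence $F_j$ with associated $\widehat E_j\in\mathcal A_m(\widehat M_j)$, and if $F$ admits an interior tangent ball at some $x_0\in\partial F$, then the local Hausdorff convergence (Proposition~\ref{hausconvprop}) provides points $x_j\in\partial F_j\to x_0$ and interior balls whose radii approach that of the tangent ball at $x_0$ (after a small inward shift by a vanishing amount). Translating this back to the manifolds $\widehat M_j$, Proposition~\ref{prop:viscosity} applied to $\widehat E_j$ gives a nonpositive value of the NMS functional at $\widehat\varphi_j(x_j)$; passing to the limit using the uniform kernel estimates from Proposition~\ref{prop:kern1} and the local convergence of the metrics to the Euclidean one (Proposition~\ref{eucproperties1}) yields the Euclidean NMS inequality at $x_0$. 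The exterior case is symmetric.

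Property (5) is mostly tautological: translations, rotations and dilations of a blow-up sequence produce another blow-up sequence (adjusting the charts $\widehat\varphi_j$ and the scales $r_j$ accordingly), so the class is closed under these operations. For closure under $L^1_{\rm loc}$ limits, suppose $F^{(k)}\in\mathcal{A}_m^{Blow-up}$ with $F^{(k)}\to F_\infty$ in $L^1_{\rm loc}(\R^n)$. Each $F^{(k)}$ is an $L^1_{\rm loc}$ limit of a blow-up sequence coming from some $\partial E^{(k)}_j\in\mathcal{A}_m(M^{(k)}_j)$, hence by Theorem~\ref{BlowUpConv} also a limit in $H^{s/2}_{\rm loc}$ and locally in Hausdorff distance. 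A standard diagonal selection (choosing $j=j(k)$ large enough that the corresponding rescaled set approximates $F^{(k)}$ on $\B_k$ to error $1/k$ in $L^1$) exhibits $F_\infty$ itself as the $L^1_{\rm loc}$ limit of a blow-up sequence drawn from $\mathcal{A}_m$, so $F_\infty\in\mathcal{A}_m^{Blow-up}$. With all six hypotheses verified, Theorem~\ref{A-CBernstein} applies and gives the conclusion. The main conceptual obstacle, already addressed in the paper, is property (6); the only technical point in this corollary itself is the limit argument for the viscosity property in (3).
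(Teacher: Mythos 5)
Your proof is correct and follows essentially the same route as the paper: you verify the six hypotheses of Theorem~\ref{A-CBernstein} for $\mathcal{F}=\mathcal{A}_m^{Blow-up}$ by citing exactly the same intermediate results (Theorem~\ref{BlowUpConv} for (1) and (5), Remark~\ref{remprop}(1) and lower semicontinuity for (2), Lemma~\ref{lemasbup} for (4), Lemma~\ref{2Dcone} for (6), Propositions~\ref{prop:viscosity}, \ref{hausconvprop} and \ref{eucproperties1} for (3)) and then invoke the theorem. The main difference is that you supply the diagonal argument for closure under $L^1_{\rm loc}$ limits and sketch the tangent-ball passage-to-the-limit for viscosity, both of which the paper leaves as brief remarks.
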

\begin{proof}
    It follows from applying Theorem \ref{A-CBernstein} to the class $\mathcal F=\mathcal{A}_m^{Blow-up}$. Properties (1) and (5) in the assumptions of Theorem \ref{A-CBernstein} follow immediately from (the proof of) Theorem \ref{BlowUpConv}. Property (2) follows from (1) in Remark \ref{remprop} and the lower semicontinuity of the BV seminorm under $L^1$-convergence. Properties (4) and (6) have been proved, respectively, in Lemma \ref{lemasbup} and Lemma \ref{2Dcone}. Finally, property (3) ---namely that blow-ups are viscosity solutions of the NMS equation in $\R^n$--- follows easily from Proposition \ref{prop:viscosity} and the convergence of boundaries in Hausdorff distance under a blow-up (Proposition \ref{hausconvprop}), using the convergence of the kernels in Proposition \ref{eucproperties1} part $(ii)$. See \cite{CRS} and \cite{CSapprox}.
\end{proof}

\subsection{Uniform regularity and separation in low dimensions -- Proof of Theorem \ref{UnifReg}}\label{UnifRegSection}
In this section we will prove Theorem \ref{UnifReg}, which stated that sets in $\mathcal{A}_m(M)$, i.e. the limits of Allen-Cahn solutions on $M$ with index at most $m$, are smooth with \textit{uniform regularity} and \textit{separation} estimates in low dimensions.

\vsp
We will need the following improvement of flatness theorem for sets which are viscosity solutions of the NMS equation in a Riemannian manifold, proved in \cite{Moy} more generally assuming boundedness of the nonlocal mean curvature, and which extends the result in \cite{CRS} to the setting of ambient Riemannian manifolds.
\begin{theorem}[\cite{Moy}]\label{improvflat}
 Let $s \in (0,1)$ and $0 < \alpha < s$. Then, there exists $\sigma > 0$, depending on $n$,$s$ and
$\alpha$, such that the following holds. Let $(M
, g)$ be an $n$-dimensional Riemannian manifold.
Take $p\in M$, and assume that the flatness assumption ${\rm FA}_1(M, g, r, p, \varphi)$ holds. Let $E \subset M$ with $p \in \partial E$, and
assume that
\begin{itemize}
    \item[\textit{(i)}] The set $E$ is a viscosity solution of the NMS equation in $\varphi(B_r(0))$, in the sense of Proposition \ref{prop:viscosity}.
    \item[\textit{(ii)}] The boundary $\varphi^{-1}(\partial E )$ is included in a $\sigma$-flat cylinder in $\B_{r}(0)$, that is 
    \begin{equation*}
        \varphi^{-1}(\partial E) \cap \B_{r}(0) \subset \{|e \cdot x| \le \sigma r \},
    \end{equation*}
    for some direction $e\in\Sp^{n-1}$. 
\end{itemize}
    Then $\varphi^{-1}(\partial E)$ is a single $C^{1,\alpha}$ graph in the direction $e$ in $ \B_{r/2}(0)$, with uniform estimates.
\end{theorem}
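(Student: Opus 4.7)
The proof follows the compactness--contradiction scheme of Caffarelli--Roquejoffre--Savin \cite{CRS}, adapted to the Riemannian setting by exploiting that the flatness assumption ${\rm FA}_1(M,g,r,p,\varphi)$ makes the ambient metric quantitatively close to Euclidean at scale $r$. It suffices to establish the following one-step improvement: there exist $\sigma_0>0$ and $\eta \in (0,1/2)$, depending on $n,s,\alpha$, such that if $\varphi^{-1}(\partial E)$ is $\sigma$-flat in a direction $e$ in $\B_r(0)$ with $\sigma \le \sigma_0$, then it is $\eta^{1+\alpha}\sigma$-flat in some direction $e'$ with $|e-e'|\le C\sigma$ in $\B_{\eta r}(0)$. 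Iterating this geometrically over dyadic scales, in the standard way, promotes $\sigma$-flatness into the $C^{1,\alpha}$ graph conclusion at scale $r/2$, and forces the graph to be a single one.

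\textbf{Compactness.} Suppose the one-step improvement fails. Then there exist $(M_k,g_k)$ satisfying ${\rm FA}_1(M_k,g_k,r_k,p_k,\varphi_k)$, viscosity NMS solutions $E_k$ with $p_k \in \partial E_k$, unit vectors which we normalize to $e_k = e_n$, and parameters $\sigma_k \downarrow 0$ such that $\varphi_k^{-1}(\partial E_k)\cap \B_{r_k}(0) \subset \{|x_n| \le \sigma_k r_k\}$, yet the improved flatness fails at scale $\eta r_k$. Passing to the rescaled manifold $\widehat M_k := (M_k, r_k^{-2} g_k)$ as in Remark \ref{flatscalingrmk}, we may assume $r_k = 1$; by Proposition \ref{prop:kern1}, in particular \eqref{remaining0}, the pulled-back kernel satisfies
\[
\Big|K_{\widehat M_k}\big(\varphi_k(x), \varphi_k(y)\big) - \tfrac{\alpha_{n,s}}{|x-y|^{n+s}}\Big| \le \frac{C}{|x-y|^{n+s-1}}, \qquad x,y \in \B_{1/4}(0),
\]
while the far-field estimates \eqref{remaining2}--\eqref{remaining3} control the complement. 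I then introduce the vertically compressed ``flatness function''
\[
u_k(x') := \sigma_k^{-1}\sup\{x_n : (x',x_n) \in \varphi_k^{-1}(\partial E_k)\}, \qquad x' \in \B_{1/2}^{n-1}(0),
\]
and its lower analogue. A Harnack inequality for viscosity NMS solutions, in the form proved on $\R^n$ in \cite{CRS} and transplanted to the manifold using Lemma \ref{loccomparability} and Proposition \ref{prop:kern1}, gives a uniform $C^\beta$ bound on $u_k$, so after extracting a subsequence $u_k \to u_\infty$ locally uniformly on $\B_{1/2}^{n-1}(0)$.

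\textbf{Limit equation and contradiction.} The central step is to show that $u_\infty$ is a viscosity solution of the linearized NMS equation, which on a flat background is (up to a positive constant) $(-\Delta_{\R^{n-1}})^{(1+s)/2} u_\infty = 0$. Concretely, if a smooth $\phi$ touches $u_\infty$ from above at $x'_\circ$, one slides it to touch $u_k$ at nearby points $x'_k$, uses the graph $\{x_n = \sigma_k \phi(x')\}$ as a one-sided barrier in the viscosity definition of Proposition \ref{prop:viscosity}, and expands the resulting integral in $\sigma_k$. The leading term reproduces the linearized operator acting on $\phi$, the Riemannian correction of the kernel contributes $o_k(1)$ thanks to the displayed inequality above, and the quadratic-in-$\sigma_k$ terms from the graph parametrization vanish as $\sigma_k \to 0$. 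Interior $C^{1,\alpha}$ estimates for the fractional Laplacian on $\R^{n-1}$ (valid for any $\alpha < s$) then give $\|u_\infty\|_{C^{1,\alpha}(\B_{1/4}^{n-1})} \le C$, so $u_\infty$ is extremely well approximated by an affine function in $\B_\eta^{n-1}$. Via the Hausdorff convergence of the boundaries (provided by the density estimates analogous to Lemma \ref{densityest222}), this contradicts the failure of one-step improvement for large $k$, provided $\eta$ was chosen small enough depending on $n,s,\alpha$.

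\textbf{Main obstacle.} The hard part is the derivation of the limit linear equation. One has to control (i) the nonlocal tails of the NMS equation, where the far-away set $\partial E_k$ may oscillate wildly outside $\B_1$ but with a decay that must be captured uniformly, and (ii) the discrepancy between the Riemannian kernel $K_{\widehat M_k}$ and the Euclidean one along the singular diagonal, since this is precisely where the leading contribution comes from. Both are handled using Proposition \ref{prop:kern1}: the near-diagonal correction \eqref{remaining0} is genuinely subleading (order $|x-y|^{-(n+s-1)}$ versus $|x-y|^{-(n+s)}$), and the far-field estimates \eqref{remaining2}--\eqref{remaining3} are integrable and insensitive to Hausdorff convergence. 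Together with the manifold Harnack inequality, these are the ingredients that make the Euclidean proof of \cite{CRS} robust enough to extend to the Riemannian setting, as carried out in \cite{Moy}.
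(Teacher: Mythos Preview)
The paper does not give its own proof of this theorem: it is quoted from \cite{Moy} and only the statement is recorded (see the sentence immediately preceding Theorem~\ref{improvflat}, where the authors say the result ``extends the result in \cite{CRS} to the setting of ambient Riemannian manifolds'' and is ``proved in \cite{Moy}''). So there is no in-paper argument to compare against.

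That said, your sketch is exactly the expected route for such an extension, and is consistent with how the paper frames the result: the CRS compactness--contradiction scheme for improvement of flatness, transported to the Riemannian setting by using the kernel estimates of Proposition~\ref{prop:kern1} (near-diagonal correction \eqref{remaining0} and far-field bounds \eqref{remaining2}--\eqref{remaining3}) together with Lemma~\ref{loccomparability} to reduce to the Euclidean linearized problem. The only point I would flag is that you invoke a ``manifold Harnack inequality'' for viscosity NMS solutions as if it were available off the shelf; in the Riemannian setting this oscillation-decay step is itself part of what \cite{Moy} must establish, and is where most of the technical work beyond \cite{CRS} lies. Your identification of the two key obstacles (tail control and kernel discrepancy along the diagonal) is correct, but the Harnack step deserves to be listed alongside them rather than assumed.
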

\begin{proof}[Proof of Theorem \ref{UnifReg}]
We will first show that $E$ is trapped (in the coordinates given by $\varphi^{-1}$) in a very flat cylinder, as recorded in the next claim:

\vsp
\textbf{Claim.} Let $\sigma>0$. Then there exists a uniform constant $R_\sigma=R_{\sigma}(m,s,\sigma)$ and a unit vector $e\in\Sp^{n-1}$ such that
\begin{gather}\label{SigmaFlat1}
\scalebox{1}{$
    -\sigma R_{\sigma} \leq y \cdot e \leq \sigma R_{\sigma} \text{\ \ \ for all } y\in \varphi^{-1}(\partial E) \cap \B_{R_\sigma}.
$}
\end{gather}

\begin{proof}[Proof of the Claim:]
Fix $\sigma>0$; the proof will be by contradiction and blow-up. Let $R_j=1/j$. If the Claim were false, then for every $j\in\N$ there would exist closed manifolds $M_j$ satisfying the flatness assumptions ${\rm FA}_3(M_j,g,1, p_j, \varphi_j)$, and some sets $E_j\in\mathcal{A}_m({M_j})$ so that $p_j\in\partial E_j$ but such that (\ref{SigmaFlat1}) is not satisfied for any unit vector (with $E_j$, $R_j$ and $\varphi_j$ in place of $E$, $R_\sigma$ and $\varphi$).

\vsp
Consider, then, the blow-up sequence $F_j=\frac{1}{R_j}\varphi_j^{-1}(E_j)$. By Proposition \ref{hausconvprop}, a subsequence of the $F_j$ converges (in particular) locally in the Hausdorff distance sense to a limit set $F\in\mathcal{A}_m^{Blow-up}$. Moreover, since $0\in F_j$, we see that $0\in F$ as well.

\vsp
Now, from the classification result of Corollary \ref{adfhgsfhd}, we know that $\partial F$ is, in fact, a hyperplane passing through the origin. The local Hausdorff convergence of the $\partial F_j=\frac{1}{R_j}\varphi_j^{-1}(\partial E_j)$ to the hyperplane $\partial F$ implies then that the condition
\begin{equation*}
\scalebox{1}{$
    -\sigma \leq y \cdot e \leq \sigma  \text{\ \ \ for all } y\in \frac{1}{R_j}\varphi_j^{-1}(\partial E_j) \cap \B_{1}
$}
\end{equation*}
will be satisfied for all $j$ large enough in the subsequence and for $e$ the normal vector to the limit hyperplane. Rescaling this condition by a factor $R_j$, we obtain exactly that (for $j$ large) the $E_j$ satisfy (\ref{SigmaFlat1}) with $E_j, R_j$ and $\varphi_j$, contradiction. This finishes the proof of the claim.
\end{proof}
Now that the claim is known to be true, choosing $\sigma$ in it to be the constant in Theorem \ref{improvflat} (recall that sets in $\mathcal{A}_m(M)$ are viscosity solutions of the NMS equation by Proposition \ref{prop:viscosity}, and that our notion of viscosity solution in Proposition \ref{prop:viscosity} is equivalent to the one used in \cite{Moy} to obtain Theorem \ref{improvflat}), we obtain that $\varphi^{-1}(\partial E)\cap \B_{R_\sigma/2}$ is a single graph with uniform $C^{1,\alpha}$ estimates.
\end{proof}

\subsection{Dimension reduction -- Proof of Theorem \ref{FracYau2}}\label{dimredsection}
This section proves Theorem \ref{FracYau2}. We will call singular points those points $x \in \partial E $ where $\partial E$ cannot be described as a $C^{1, \alpha}$ graph around $x$, and we will denote by $\sing(\partial E)$ or $\sing(E)$ the (closed) set of all the singular points of $\partial E$. We state here a more general result about regularity for $s$-minimal surfaces, which are limits of Allen-Cahn, and immediately show how it proves Theorem \ref{FracYau2}.

\begin{theorem}\label{dimred}
    Let $s\in(0,1)$. Let $(M,g)$ be a closed Riemannian manifold of dimension $n\geq 3$, and let $\partial E\in\mathcal{A}_m(M)$. Then:
    \begin{itemize}
        \item If $n<n_s^*$, then $\partial E$ is a $C^{\infty}$ hypersurface.
        \item If $n=n_s^*$, $\partial E$ is a $C^{\infty}$ hypersurface outside of a discrete set.
        \item If $n>n_s^*$, then $\partial E$ is a $C^{\infty}$ hypersurface outside of a closed set of Hausdorff dimension at most $n-n_s^*$.
    \end{itemize}
\end{theorem}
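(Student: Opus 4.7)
The strategy is a classical Federer-type dimension reduction, using as its main inputs the blow-up/convergence machinery of Sections \ref{BlowUpSection}--\ref{ACpropsect}, the classification Corollary \ref{adfhgsfhd}, and the improvement-of-flatness Theorem \ref{improvflat}. First, I address the case $n<n_s^*$. Since $M$ is closed, by Remark \ref{fbsvdg} there is a uniform $R_0>0$ such that the flatness assumption ${\rm FA}_3(M,g,R_0,p,\varphi_p)$ holds at every $p\in M$. Hence Theorem \ref{UnifReg} applies around each $p\in\partial E$ and yields a single $C^{1,\alpha}$ graph representation of $\partial E$ in a uniform neighborhood of $p$. The $C^{1,\alpha}$-to-$C^\infty$ bootstrap of \cite{BFV} (as used in Theorem \ref{improvflatRn}) then promotes this to $C^\infty$. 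This disposes of the first bullet.

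For $n\ge n_s^*$, write $\sing(\partial E)$ for the closed set of points at which $\partial E$ is not a $C^{1,\alpha}$ graph. The first step is to show that at each $p\in\sing(\partial E)$ there is a tangent cone $C\subset\R^n$ with $\partial C\in\mathcal A_m^{Blow-up}$ and $0\in\sing(C)$. Indeed, choosing $r_j\downarrow 0$ and defining the blow-up sequence $F_j := r_j^{-1}\varphi_p^{-1}(E)$ as in Definition \ref{blowupdef} (using the uniform flatness of $M$), Theorem \ref{BlowUpConv} and Proposition \ref{hausconvprop} furnish a subsequential limit $F\in\mathcal A_m^{Blow-up}$, and the monotonicity formula of Theorem \ref{monfor} (constancy of the extended density forces homogeneity) shows that $F$ is a cone. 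If $0$ were regular for $F$, then $\partial F$ would be a hyperplane locally near $0$ and, thanks to the local Hausdorff convergence of boundaries (Proposition \ref{hausconvprop}), for $j$ large the blow-ups of $\partial E$ would sit in arbitrarily flat cylinders; Theorem \ref{improvflat} would then give a $C^{1,\alpha}$ graph representation of $\partial E$ at $p$, contradicting $p\in\sing(\partial E)$. Hence $0\in\sing(C)$.

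Next I carry out the Federer stratification. Define, for $0\le k\le n-1$, the stratum
\[
\Sigma_k(\partial E) := \{p\in\sing(\partial E)\,:\,\text{no tangent cone at $p$ is translation-invariant along a subspace of dimension $>k$}\}.
\]
By the previous step and the standard abstract Federer lemma, $\dim_{\mathcal H}(\Sigma_k)\le k$. The heart of the matter is to identify the smallest admissible $k$. A cone $C\in\mathcal A_m^{Blow-up}$ that is translation-invariant along a $k$-plane factors, after rotation, as $C=C'\times\R^k$ with $C'\subset\R^{n-k}$ a cone; the traces $\partial C'$ inherit stationarity, the BV estimate, viscosity solution property, the almost-stability inequality of Lemma \ref{lemasbup}, and closedness under rescaling/limits, so $\partial C'$ belongs to a family $\mathcal F$ satisfying hypotheses (1)--(6) of Theorem \ref{A-CBernstein} in dimension $n-k$ (the hypothesis (6) being verified by an inductive application of the same reduction, using Lemma \ref{2Dcone} in the base case). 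Consequently, if $3\le n-k<n_s^*$, Corollary \ref{adfhgsfhd}/Theorem \ref{A-CBernstein} forces $\partial C'$ to be a hyperplane, and hence $0\notin\sing(C)$, a contradiction. The cases $n-k\in\{1,2\}$ are handled by the stability-outside-the-origin Lemma \ref{StableCone} together with the stable $2$D case recorded in the remark at the end of Section 1.4 (and the trivial $1$D case). Therefore only $k\le n-n_s^*$ strata can be nonempty, giving $\dim_{\mathcal H}(\sing(\partial E))\le n-n_s^*$, which is the third bullet.

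Finally, the borderline $n=n_s^*$ requires a slightly sharper conclusion. Here, if $\sing(\partial E)$ had an accumulation point $p_\infty$, blowing up at $p_\infty$ along a sequence $r_j\downarrow 0$ chosen so that $r_j^{-1}\mathrm{dist}(p_j,p_\infty)\to 1$ for some singular points $p_j\to p_\infty$ would produce a tangent cone $C\in\mathcal A_m^{Blow-up}(\R^{n_s^*})$ with a singular point at distance $1$ from the origin (by the local Hausdorff convergence of singular sets and the fact that singular points persist in the limit thanks to the failure of improvement of flatness). Blowing up $C$ at that auxiliary singular point gives, by the cone structure, a new cone $\widetilde C\in\mathcal A_m^{Blow-up}$ with a translation-invariant direction, hence $\widetilde C=\widetilde C'\times\R$ with $\widetilde C'\subset\R^{n_s^*-1}$; Corollary \ref{adfhgsfhd} in dimension $n_s^*-1<n_s^*$ forces $\widetilde C'$ to be a hyperplane, hence $\widetilde C$ a hyperplane, contradicting that blow-ups at singular points are singular at $0$. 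Thus $\sing(\partial E)$ has no accumulation points, i.e.\ it is discrete, proving the second bullet. The main technical obstacle across the argument is the rigorous verification that the product $C=C'\times\R^k$ of a blow-up with a Euclidean factor produces a cross-section $C'$ that still lies in a class to which Theorem \ref{A-CBernstein} applies; this is achieved by restricting the almost-stability inequality (Lemma \ref{lemasbup}) to vector fields pulled back from the slice, exploiting the product structure of the fractional kernel on $\R^k\times\R^{n-k}$.
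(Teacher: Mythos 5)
Your overall strategy (tangent cones at singular points, Federer dimension reduction, improvement of flatness to propagate regularity) is the same as the paper's, and your treatment of the easy case $n<n_s^*$ and of the case $n=n_s^*$ via blow-up at an accumulating singular point match the paper's Lemma \ref{singcone} and Theorem \ref{dimred}. There is, however, a genuine gap in the step that actually produces the dimension bound.

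The gap is in how you handle a tangent cone with $k$ translation-invariant directions. You write $C=C'\times\R^k$ with $C'\subset\R^{n-k}$ and try to apply Theorem \ref{A-CBernstein} (via Corollary \ref{adfhgsfhd}) to the cross-section $C'$ in $\R^{n-k}$. That requires $C'$ to belong to a class satisfying hypotheses (1)--(6) of Theorem \ref{A-CBernstein} in the ambient $\R^{n-k}$, and you acknowledge this as the main technical obstacle. But the resolution you propose does not work as stated: the fractional kernel $|x-y|^{-(n+s)}$ on $\R^n$ does \emph{not} factor as a product of kernels on $\R^{n-k}$ and $\R^k$, and a vector field on $\R^{n-k}$ extended to be constant along the $\R^k$ factor has \emph{noncompact} support, so it cannot be used to test the almost-stability inequality of Lemma \ref{lemasbup}, which needs $(m+1)$ fields with \emph{disjoint compact} supports. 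You would need to introduce cutoffs in the $\R^k$ directions and carry the resulting error terms through hypotheses (1)--(6) in dimension $n-k$, which is a nontrivial piece of work that is nowhere established in the paper's framework and that your proof silently skips. (The case $n-k\in\{1,2\}$ is a further loose end, which you only wave at.)

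The paper avoids this difficulty entirely: its Lemma \ref{singcone2} never classifies cross-sections in a smaller ambient space. Each blow-up is performed on the full-dimensional cone, which remains a member of $\mathcal{A}_m^{Blow-up}(\R^n)$, and the definition of $n_s^*$ is invoked only for the \emph{smooth} cross-section (where the standard dimension reduction for smooth stable cones, implicit in Definition \ref{critdim}, suffices). The iteration produces, after at most $n-2$ steps, a cone in $\mathcal{A}_m^{Blow-up}(\R^n)$ with $n-2$ translation-invariant directions that is not a half-space, which is then ruled out by Lemma \ref{2Dcone} (hypothesis (6) of Theorem \ref{A-CBernstein}), a lemma proved by a Bochner/stability-inequality argument rather than a cross-section classification. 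The measure-theoretic part of the dimension reduction is likewise done concretely, by selecting points of positive $\mathcal{H}^t_\infty$-density and using local Hausdorff convergence of the singular sets together with the improvement of flatness, instead of appealing to the abstract stratification lemma as you do. Either packaging of the Federer lemma would be fine; the essential repair your proof needs is to avoid classifying cross-sections and instead iterate the blow-up in $\R^n$ all the way down to the $(n-2)$-translation-invariant base case, as in Lemma \ref{singcone2}.
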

We readily deduce:
\begin{proof}[Proof of Theorem \ref{FracYau2}]
    The surfaces $\Sigma^{\p}=\partial E^{\p}$ in Theorem \ref{FracYau3} belong to $\mathcal{A}_\p(M)$ by construction (see Section \ref{YauSection} for the proof of Theorem \ref{FracYau3}). Therefore, Theorem \ref{dimred} applies to them, which gives Theorem \ref{FracYau2}.
\end{proof}
Theorem \ref{dimred} will be proved after two preliminary lemmas.

\begin{lemma}\label{singcone}
    Let $\partial E\in\mathcal{A}_m^{Blow-up}$ and let $x\in\sing(\partial E)$. Choose $r_j\to 0$; then, the blow-up sequence $\frac{1}{r_j}(E-x)$ converges in $L^1_{\rm loc}$ and locally in the Hausdorff distance sense to a singular cone $C_\infty\in\mathcal{A}_m^{Blow-up}$ which is stable in $\R^n\setminus\{0\}$. If, moreover, $x$ is an accumulation point of $\sing(\partial E)$, then $r_j$ can be chosen so that $C_\infty$ has a singular point on $\partial B_1$ (thus an entire line of singular points).
\end{lemma}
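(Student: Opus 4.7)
The plan is to recognize $C_j := \frac{1}{r_j}(E-x)$ as itself a blow-up sequence in the sense of Definition \ref{blowupdef}. Since $\partial E \in \mathcal{A}_m^{Blow-up}$, there exist Allen--Cahn limits $\partial E_k \in \mathcal{A}_m(M_k)$, charts $\varphi_k$, and scales $\rho_k \downarrow 0$ with $F_k := \rho_k^{-1}\varphi_k^{-1}(E_k) \to E$ locally in the Hausdorff distance sense (Proposition \ref{hausconvprop}). A diagonal extraction produces $k(j)\to\infty$ and points $y_{k(j)} \in F_{k(j)}$ with $y_{k(j)}\to x$ such that $r_j^{-1}(F_{k(j)} - y_{k(j)})$ approximates $C_j$ on balls of radius $1/r_j$. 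Unwinding the scalings as in Remark \ref{rescblowdef}, this exhibits the $C_j$ themselves as a blow-up sequence of the original Allen--Cahn limits (at scales $r_j\rho_{k(j)}$ and centers $\rho_{k(j)}y_{k(j)}$), so Theorem \ref{BlowUpConv} and Proposition \ref{hausconvprop} yield a subsequential limit $C_\infty \in \mathcal{A}_m^{Blow-up}$ converging in $L^1_{\rm loc}$, $H^{s/2}_{\rm loc}$, and locally in Hausdorff distance.

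Next I would show that $C_\infty$ is a cone centered at $0$. Since $E$ is stationary in $\R^n$ with the uniform BV/perimeter bound from Remark \ref{remprop}(1), Theorem \ref{monfor} (with $F\equiv 0$, $K=0$) gives that $\Phi_U(R) = R^{s-n}\int_{\widetilde B_R^+(x,0)} z^{1-s}|\widetilde\nabla U|^2$ is nondecreasing, and bounded above by the interpolation estimate of Lemma \ref{lem:whtorwohh}. Hence $\lim_{\rho\downarrow 0}\Phi_U(\rho)$ exists as a finite constant $\Theta$. The scaling identity $\Phi_{U_j}(R) = \Phi_U(r_j R)$ for the extension $U_j$ of $\chi_{C_j}-\chi_{C_j^c}$, together with the convergence of extended energies under $H^{s/2}_{\rm loc}$ convergence (as in the proof of Theorem \ref{A-CBernstein}), yields $\Phi_{U_\infty}(R)\equiv \Theta$; the last part of Theorem \ref{monfor} (the equality case) then forces $\langle\widetilde\nabla U_\infty,\widetilde\nabla d\rangle \equiv 0$, i.e.\ $C_\infty$ is a cone with vertex $0$. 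Density estimates (Lemma \ref{densityest222}) applied to $E$ at $x\in\partial E$ ensure $0\in\partial C_\infty$, so $C_\infty$ is nontrivial.

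To see that $0$ is singular for $C_\infty$, I argue by contradiction: if $\partial C_\infty$ were smooth at $0$, being a cone it would be a hyperplane, and local Hausdorff convergence would force $r_j^{-1}(\partial E - x)$ to lie in an arbitrarily thin flat cylinder in $\B_1$ for $j$ large. Since $\partial E$ is a viscosity solution of the NMS equation in $\R^n$ (Proposition \ref{prop:viscosity}), Theorem \ref{improvflatRn} applied at $x$ at scale $r_j$ would then give $\partial E \in C^{\infty}$ near $x$, contradicting $x\in\sing(\partial E)$. Stability of $C_\infty$ in $\R^n\setminus\{0\}$ now follows from Lemma \ref{StableCone}, since $C_\infty\in\mathcal{A}_m^{Blow-up}$ inherits property (4) of Theorem \ref{A-CBernstein} from Lemma \ref{lemasbup}.

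For the accumulation claim, I would pick $x_j\in\sing(\partial E)$ with $x_j\to x$ and set $r_j := |x_j-x|$, so that $\xi_j := (x_j-x)/r_j \in \partial B_1$ is a singular point of $\partial C_j$. After extracting a subsequence, $\xi_j\to\xi\in\partial B_1$, and the same improvement of flatness argument applied at $\xi$ rather than at the origin shows $\xi\in\sing(\partial C_\infty)$. By conical invariance, the entire ray $\{t\xi:t>0\}$ lies in $\sing(\partial C_\infty)$. The main technical point is the first paragraph, namely verifying rigorously that the iterated blow-up lands in $\mathcal{A}_m^{Blow-up}$ so that the almost-stability of Lemma \ref{lemasbup} propagates to $C_\infty$; once this is in place, the monotonicity formula and the improvement of flatness theorem handle the rest.
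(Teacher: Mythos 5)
Your proof is correct and follows essentially the same route as the paper's: the monotonicity formula gives the cone structure, Lemma \ref{StableCone} gives stability off the origin, and the improvement of flatness theorem (applied at the origin for the singularity of $0$, and at the accumulation point $\xi$ for the last claim) yields the rest. The one place where you do more work is the first paragraph: you re-derive, via a diagonal extraction, that $C_\infty$ is again a blow-up limit of Allen--Cahn solutions, whereas the paper simply appeals to the already-verified closure of $\mathcal{A}_m^{Blow-up}$ under translations, dilations and $L^1_{\rm loc}$ limits --- property (5) of Theorem \ref{A-CBernstein}, established in the proof of Corollary \ref{adfhgsfhd} --- which makes the first step immediate. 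This is a presentational rather than a substantive difference (the diagonal argument is precisely how one proves that closure), though strictly speaking what your extraction exhibits as a blow-up sequence is $\widetilde C_j := r_j^{-1}(F_{k(j)}-y_{k(j)})$ rather than $C_j$ itself; since $\widetilde C_j$ and $C_j$ have the same $L^1_{\rm loc}$ limit, the conclusion $C_\infty\in\mathcal{A}_m^{Blow-up}$ stands either way.
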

\begin{proof}
    Recall that $\mathcal{A}_m^{Blow-up}$ satisfies the properties in the statement of Theorem \ref{A-CBernstein}, see the proof of Corollary \ref{adfhgsfhd}. The convergence of the $F_j:=\frac{1}{r_j}(E-x)$ to a cone $C_\infty$ in the appropriate sense follows then as in the beginning of the proof of Theorem \ref{A-CBernstein}, and Lemma \ref{StableCone} gives the stability of $C_\infty$ outside the origin.

    \vsp
    If $C_\infty$ were non-singular (i.e. if $C_\infty$ were a half-space), then the Hausdorff convergence of the $F_j=\frac{1}{r_j}(E-x)$ to $C_\infty$ on $\B_1$ would imply that the assumption of the improvement of flatness result of Theorem \ref{improvflatRn} is satisfied by $E$ on a small ball centered at $x$. Hence $\partial E$ would be a $C^{1,\alpha}$ hypersurface around $x$, and this would contradict the assumption that $x$ is a singular point. Moreover, in case $x$ is a limit point of a sequence $x_j\in\sing(\partial E)$, choosing $r_j:=\text{dist}(x,x_j)$ the $F_j=\frac{1}{r_j}(E-x)$ have singular points at $0$ and at $\frac{1}{r_j}(x_j-x)\in\partial\B_1$. Selecting a subsequence $j_k$ such that the $x_{j_k}$ converge to a limit point $x'\in\partial\B_1$, the improvement of flatness argument above shows that the limit cone of the $F_{j_k}$ must have a singular point at $x'\in\partial\B_1$.
\end{proof}
\begin{lemma}\label{singcone2} Let $C\subset\R^n$ be a cone in $\mathcal{A}_m^{Blow-up}$, with $n\geq n_s^*$. Then $\mathcal{H}^t(\sing(C))=0$ for all $t>n-n_s^*$. Moreover, in the case $n=n_s^*$, $C$ is smooth outside the origin.
\end{lemma}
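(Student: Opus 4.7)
The argument is a Federer-style dimension reduction, the blow-up step being provided by Lemma \ref{singcone}.

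\textbf{Splitting dichotomy.} Given any cone $D \in \mathcal A_m^{Blow-up}$ and any $x_0 \in \sing(D)\setminus\{0\}$, pick a sequence $r_j\downarrow 0$. By Lemma \ref{singcone}, a subsequence of $\tfrac{1}{r_j}(D-x_0)$ converges in $L^1_{\rm loc}$ (and locally in Hausdorff distance) to a cone $D' \in \mathcal A_m^{Blow-up}$ that is stable in $\R^n\setminus\{0\}$. Since $D$ is itself a cone with apex at $0$ and $x_0\neq 0$, the dilates are asymptotically translation-invariant along $\R x_0$, so after a rotation one has $D'=\widetilde D'\times\R$ with $\widetilde D'\subset \R^{n-1}$ a cone. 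Moreover $\widetilde D'$ cannot be a hyperplane: otherwise $D'$ would be flat, and by the Hausdorff convergence combined with the improvement-of-flatness Theorem \ref{improvflatRn}, the surface $\partial D$ would be smooth in a neighborhood of $x_0$, contradicting $x_0\in \sing(D)$. In particular $\widetilde D'$ is a non-flat cone stable in $\R^{n-1}\setminus\{0\}$, singular at the origin.

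\textbf{Critical dimension $n=n_s^*$.} Suppose towards contradiction that $\sing(C)\setminus\{0\}\neq\varnothing$; apply the dichotomy to $C$ to obtain $C_1=\widetilde C_1\times\R$ with $\widetilde C_1\subset\R^{n_s^*-1}$ a non-flat stable cone. By the very definition of $n_s^*$, every stable $s$-minimal cone in $\R^{n_s^*-1}$ that is smooth away from the origin must be a hyperplane; since $\widetilde C_1$ is not a hyperplane, it follows that $\sing(\widetilde C_1)\setminus\{0\}\neq\varnothing$. We may therefore iterate the dichotomy starting from $D=\widetilde C_1$, producing $\widetilde C_2\subset\R^{n_s^*-2}$, then $\widetilde C_3\subset \R^{n_s^*-3}$, and so on. Each iteration strictly decreases the ambient dimension, which is absurd. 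Hence $\sing(C)\subseteq\{0\}$, proving the second assertion of the lemma.

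\textbf{Hausdorff bound $n> n_s^*$.} For $t>n-n_s^*$, the vanishing $\mathcal H^t(\sing(C))=0$ will follow from Step 1 by the standard Almgren-Federer tangent-cone/density argument: if it failed, density point considerations would produce a point $y\in \sing(C)$ at which some tangent cone $T$ is invariant under translation by a linear subspace of dimension strictly greater than $n-n_s^*$. Writing $T=\widetilde T\times\R^k$ with $k>n-n_s^*$ and $\widetilde T\subset\R^{n-k}$, the argument of the critical-dimension step applied in $\R^{n-k}$ (where $n-k<n_s^*$) would then force $\widetilde T$ to be a hyperplane, contradicting the singularity of $T$ at $0$.

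\textbf{Main obstacle.} The delicate point is that the iterated cones $\widetilde C_j$ produced in the induction no longer a priori live in $\mathcal A_m^{Blow-up}$ associated to the original manifold $M$, so that Lemma \ref{singcone} is not directly available for repeated application. The fix is to observe that the splitting $D\mapsto\widetilde D$ preserves all ingredients actually used in Lemma \ref{singcone} and in its applications---stationarity, the BV/density estimates, the viscosity NMS condition of Proposition \ref{prop:viscosity}, and stability outside the origin (in particular the almost-stability property (4) of Theorem \ref{A-CBernstein} with $m=0$, which is trivial for stable cones). One can therefore run the entire argument inside the natural class of stable $s$-minimal cones in Euclidean space arising as iterated blow-ups of elements of $\mathcal A_m^{Blow-up}$, which satisfies hypotheses (1)-(6) of Theorem \ref{A-CBernstein}, thereby closing the induction.
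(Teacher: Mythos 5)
Your overall plan---Federer dimension reduction, with the blow-up/splitting step supplied by Lemma \ref{singcone} and the dichotomy ``tangent cone is either flat (contradicting singularity via Theorem \ref{improvflatRn}) or singular off the origin''---is the same as the paper's. But the termination of your induction is not actually established. When you write, in the critical-dimension step, ``Each iteration strictly decreases the ambient dimension, which is absurd,'' you are implicitly using the definition of $n_s^*$ to propagate the property ``singular outside the origin'' to the next cross-section; that step reads ``a non-flat stable cone smooth off the origin in $\R^{n_s^*-j}$ cannot exist since $n_s^*-j<n_s^*$.'' The definition of $n_s^*$, however, only concerns dimensions $\geq 3$. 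Once $n_s^*-j=2$ you obtain a non-flat cone $\widetilde C_{n_s^*-2}\subset\R^2$ (equivalently $\widetilde C_{n_s^*-2}\times\R^{n-2}\subset\R^n$) that is stable off $0$, and there is no longer any a priori reason it should have a singular point on $\partial\B_1$; the iteration simply stalls rather than contradicting itself on dimension grounds. This is exactly where the paper invokes Lemma \ref{2Dcone} (equivalently hypothesis (6) of Theorem \ref{A-CBernstein}): a nontrivial cone in $\mathcal A_m^{Blow-up}$ of the form $\widetilde E\times\R^{n-2}$ must be a hyperplane, which contradicts non-flatness and closes the induction. You actually cite hypothesis (6) in your last paragraph, but only as a property your auxiliary class should verify; it needs to be \emph{deployed} as the base case of the dimension reduction. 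The same remark applies to the $n>n_s^*$ case, where your ``critical-dimension step applied in $\R^{n-k}$'' also bottoms out at a $2$-dimensional cross-section.

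On the ``Main obstacle'': this is not actually an obstacle, and introducing an auxiliary class is unnecessary. Lemma \ref{singcone} already shows that $\mathcal A_m^{Blow-up}$ is closed under blow-ups, so provided you phrase the iteration as the paper does---always working with cones $\widetilde C_j\times\R^{j}\subset\R^n$ rather than with the lower-dimensional cross-sections $\widetilde C_j$ in isolation, and blowing up at points of the form $(y,0)$ with $y\in\sing(\widetilde C_j)\cap\partial\B_1$---each object produced remains an honest member of $\mathcal A_m^{Blow-up}$ and all of Lemma \ref{singcone}, Theorem \ref{improvflatRn}, and Lemma \ref{2Dcone} are directly available. Also, your $n>n_s^*$ step is a bit too condensed: to ``produce a point $y\in\sing(C)$ at which some tangent cone is invariant under a subspace of dimension $>n-n_s^*$'' one must run the Hausdorff-density version of the splitting (the paper's Claim, using $\mathcal H^t_\infty$-density points and the lower semicontinuity of $\mathcal H^t_\infty$ under Hausdorff convergence) $n-n_s^*$ times to preserve $\mathcal H^{t-j}(\sing)>0$, and only then switch to the purely topological splitting; the phrase ``density point considerations'' conceals precisely the part that needs writing.
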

\begin{proof}
    Fix $t>n-n_s^*$, and assume for contradiction that $\mathcal{H}^t(\sing(C))>0$ (or that $C$ is not smooth outside the origin in the case $n=n_s^*$).

    \vsp
    \textbf{Claim.} If $n>n_s^*$, there exists $x\in \sing(C)\cap\partial \B_1$ such that, blowing up around $x$, we find a cone of the form $\widetilde C \times \R$ (up to a rotation) with $\mathcal{H}^{t-1}(\sing(\widetilde C))>0$.
    \begin{proof}[Proof of the claim]
        Since we are assuming that $\mathcal{H}^t(\sing(C))>0$, there must exist some point $x\in \sing(C)\cap\partial \B_1$ of positive $\mathcal{H}^t_\infty$-density, in the sense that (with the appropriate constant normalization) there exists a sequence $r_j\to 0$ such that $\mathcal{H}^t_\infty(\sing(C)\cap \B_{r_j}(x))\geq r_j^t$ for all $j$. Consider the blow-up sequence $C_j=\frac{1}{r_j}(C-x)$; by Lemma \ref{singcone}, a subsequence will converge locally in the Hausdorff distance sense to a limit cone $C_\infty$ which (since $C$ is itself already a cone) is of the form $C_\infty=\widetilde C_\infty \times \R$, up to performing a rotation. Assume by contradiction that $\mathcal{H}^{t-1}(\sing(\widetilde C_\infty))=0$, or equivalently that $\mathcal{H}^{t}(\sing(C_\infty))=0$.\\
        Now, given any fixed finite cover by open sets of $\sing(C_\infty)\cap \bar\B_1$, for $j$ large enough the $\sing(C_j)\cap \B_1$ are also contained in the cover: otherwise, we would have a subsequence $y_j\in \sing(C_j)$ converging to some $y\in (C_\infty\setminus \sing(C_\infty))\cap \bar\B_1$, so that by Hausdorff convergence the $C_j$ would be contained (for $j$ large enough) in an arbitrarily flat piece of slab around the $y_j$ (thanks to the regularity of $C_\infty$ at $y$); by the improvement of flatness result of Theorem \ref{improvflatRn}, the $y_j$ would be regular points as well, a contradiction. By arbitrariness of the finite open cover of $\sing(C)\cap\bar\B_1$, the assumption that $\mathcal{H}^{t}(\sing(\widetilde C_\infty))=0$ and the definition of $\mathcal{H}^t_\infty$ lead us to deduce that $\mathcal{H}^t_\infty (\sing(C_j)\cap \B_1)$ converges to zero. Scaling back (recall that $C_j=\frac{1}{r_j}(C-x)$), we find that for some $j$ large enough $\mathcal{H}^{t}_\infty(\sing(C)\cap \B_{r_j}(x))\leq \frac{1}{2}r_j^t$, a contradiction with how $x$ was chosen.
    \end{proof}
    
            With the claim at hand, the proof now continues as follows.\\
            In the case $n>n_s^*$, since we assumed $t>n-n_s^*$, the claim can be easily further iterated up to $(n-n_s^*)$ times. This leads to the existence, in the class $\mathcal{A}_m^{Blow-up}$, of a cone of the form $\widetilde C\times \R^{n-n_s^*}$ with $\widetilde C\subset \R^{n_s^*}$ and $\mathcal{H}^{t-(n-n_s^*)}(\sing(\widetilde C))>0$. In particular, $\widetilde C$ is not smooth outside the origin.\\
            In the case $n=n_s^*$, we are already assuming by contradiction that $\widetilde C:=C\subset \R^{n_s^*}$ is not smooth outside the origin.\\
            The rest of the proof is now common for both cases. Let $y\in\R^{n_s^*}$ be such that $y\in\sing(\widetilde C)\cap\partial \B_1$. Blowing up around $(y,0)\in\R^n$, we obtain a new cone that is translation invariant with respect to an additional orthogonal direction. Moreover, this new cone will not be smooth outside the origin either, since otherwise the definition of $n_s^*$ would imply that it is a half-space, and then the Hausdorff convergence and the improvement of flatness result in Theorem \ref{improvflatRn} would give that $\widetilde C$ is smooth around $y$. Iterating this argument, we obtain in the end a cone in $\mathcal{A}_m^{Blow-up}$ which is translation invariant with respect to $n-2$ directions and which is not a half-space by the improvement of flatness argument we have repeatedly been using. Lemma \ref{2Dcone} then gives a contradiction, concluding the proof.
\end{proof}
\begin{proof}[Proof of Theorem \ref{dimred}]
    Let $\partial E\in\mathcal{A}_m(M)$, with $M$ of dimension $n\geq 3$. We distinguish between the three cases depending on $n_s^*$:
    \begin{itemize}
        \item Assume $n_s^*>n$. At every $p\in \partial E$, the flatness assumptions ${\rm FA}_3(M,g,R_0,p,\varphi_p)$ will be satisfied for some $R_0>0$ (recall (d) in Remark \ref{flatscalingrmk}), so that we can apply Theorem \ref{UnifReg} after scaling and conclude the $C^{1,\alpha}$ regularity (in fact, with quantitative estimates) of $\partial E$ around $p$.
        \item Assume $n_s^* < n$. Fix any $t>n-n_s^*$; by Theorem \ref{BlowUpConv} and the arguments in Lemma \ref{singcone}, given any $q\in \sing(E)$ we can blow up around $q$ and find a cone $C_q$. Applying Lemma \ref{singcone2}, we deduce that $\mathcal{H}^t( \sing(C_q))=0$.\\
        Now, assume for contradiction that $\mathcal{H}^t(\sing(E))>0$. We can then apply the same argument as in the Claim in the proof of Lemma \ref{singcone2}, but with $\sing(E)$ instead of $\sing(C)\cap\partial\B_1$; this shows the existence of a point $q\in \sing(E)$ such that, blowing-up around $q$, we would find a cone with $\mathcal{H}^t( \sing(C_q))>0$, thus reaching a contradiction.
        \item Assume $n_s^*=n$. Suppose that $q\in \sing(E)$ is an accumulation point. By Theorem \ref{BlowUpConv} and the arguments in Lemma \ref{singcone}, we can blow up around $q$ and find a cone $C_q$ which is not smooth outside the origin. Lemma \ref{singcone2} then gives a contradiction.
    \end{itemize}

    This proves that $E$ is $C^{1,\alpha}$ outside of a set of the desired size. The fact that $C^{1,\alpha}$ $s$-minimal surfaces are smooth ($C^\infty$) is proved in \cite{FrancS}.
\end{proof}

\subsection{The De Giorgi and Bernstein conjectures in the finite Morse index case -- proof of Theorems \ref{DeGiorgi} and \ref{BernsteinIntro}}\label{DGBSection}

We will now first prove Theorem \ref{DeGiorgi}. We will need the following result, which is a consequence of an improvement of flatness theory for phase transitions in the ``genuinely nonlocal'' regime, meaning that the order $s$ of the operator is strictly less than 1.
\begin{theorem}[Theorem 1.2 in \cite{dPSV}]\label{AsympFlat}
Let $n\ge 2$, $s\in(0,1)$, and $W(u)=\frac 1 4 (1-u^2)^2$.  Let $u:\R^n\rightarrow (-1,1)$ be a  solution of $(-\Delta)^{s/2}u + W'(u)=0$ in $\R^n$.

Assume that there exists a function $a:(1,\infty) \rightarrow (0,1]$ such that  $a(R)\to 0$ as $R\to +\infty$ and
such that, for all $R>0$, we have
\begin{equation}\label{eq: 1D improvement of flatness hypothesis}
\{ e_R\cdot x\le -a(R)R\} \subset \big\{u\le -{\textstyle \frac 4 5} \big\}\subset  \big\{u\le {\textstyle \frac 4 5}\big \} \subset \{e_R\cdot x\le a(R)R\} \quad \mbox{in } \B_{R} \,,
\end{equation}
for some $e_R\in \Sp^{n-1}$
which may depend on $R$.

\vsp
Then, $u(x)=\phi(e\cdot x)$ for some direction $e\in \Sp^{n-1}$ and an increasing function $\phi:\R\to (-1,1)$.
\end{theorem}

\begin{proof}[Proof of Theorem \ref{DeGiorgi}]
    Let \(u:\R^{n}\to(-1,1)\) be a finite Morse index solution of the Allen-Cahn equation with parameter $\ep =1$. For every $R>0$ we introduce the blow-down rescalings
\begin{equation*}
    u_{R}(x):=u(Rx). 
\end{equation*}
These are solutions of the Allen-Cahn equation with parameter $\ep =1/R$ and the same Morse index as $u$. 

By the strong convergence result of Theorem \ref{StrongConv} (whose proof on \(\R^{n}\) is identical to the closed-manifold case, plus a diagonal argument to get convergence in each of the balls $\B_{k}$ for $k\in \N$), there exists a sequence $ R_{j}\to\infty $ and an $s$-minimal surface $E \subset \R^n$ such that
\begin{equation*}
u_{R_{j}} \longrightarrow  u_{E}:=\chi_{E}-\chi_{E^{c}}
\quad\text{in }L^{1}_{\mathrm{loc}}(\R^{n}).
\end{equation*}
In particular, $E$ belongs to the class $\mathcal{A}_m(\R^n)$ (see Definition \ref{AClimits}). The properties in the hypotheses of Theorem \ref{A-CBernstein} can be proved for the class $\mathcal{A}_m(\R^n)$, exactly as they were proved for the class $\mathcal{A}_m^{Blow-up}$ as recorded in Corollary \ref{adfhgsfhd}. In fact, all necessary results have been stated with local assumptions, other than the use of the kernel $K_s(x,y)$, which becomes $\alpha_{n,s} |x-y|^{-n-s}$ on $\R^n$, and in fact, several proofs could be simplified due to working on $\R^n$.

Applying Theorem \ref{A-CBernstein} to the class \(\mathcal{A}_{m}(\R^{n})\) we deduce that $E$ must be a half-space. Moreover, the local convergence in the Hausdorff distance of the level sets of $u_{R_j}$ to $\partial E$---see Theorem \ref{StrongConv}---shows that both $\{u_{R_j} \le -4/5 \}$ and $\{u_{R_j} \le 4/5 \}$ converge (in Hausdorff distance) in $\B_1$ to a half-plane. Rescaling back to $u$ gives that \eqref{eq: 1D improvement of flatness hypothesis} is eventually satisfied in $\B_{R_j}$. Then, Theorem \ref{AsympFlat} gives that $u$ is a one-dimensional solution.  
\end{proof}

We turn now to the proof of Theorem \ref{BernsteinIntro}, the finite Morse index analog for $s$-minimal surfaces of class $C^2$ of the Bernstein conjecture. We recall that this result is false for classical minimal surfaces since for example, the catenoid in $\R^3$ is a complete minimal surface with Morse index $1$, and that even under the assumption of stability, this result is only known up to dimension $4$ despite stable classical minimal cones being known to be hyperplanes up to dimension $7$. See the Introduction for more details.\\
To prove Theorem \ref{BernsteinIntro}, we will again apply the classification result of Theorem \ref{A-CBernstein}. For that reason, we introduce the following definition.

\begin{definition}
We say that a set $E\subset \R^n$ belongs to the class $\mathcal A'_m(\R^n)$ if 
there exists a sequence of sets $E_j\subset \R^n$ with $E_j \to E$ in $L^1_{\rm loc}(\R^n)$ such that:
\begin{itemize}
 \item[\textit{(i)}] the boundaries $\partial E_j$ are $(n-1)$-dimensional manifolds of class $C^2$;
 \item[\textit{(ii)}] $E_j$ are critical sets for the $s$-perimeter in $\R^n$ with Morse index $\leq m$ in the weak sense.
\end{itemize}
\end{definition}

\begin{proposition}\label{good2}
Let $n\geq 3$. Then the family $\mathcal{A}'_m(\R^n)$ satisfies the properties in the hypotheses of Theorem \ref{A-CBernstein}.
\end{proposition}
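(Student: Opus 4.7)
The strategy is to verify the six properties (1)-(6) of Theorem \ref{A-CBernstein} for $\mathcal A'_m(\R^n)$, mirroring the arguments for $\mathcal A_m^{Blow-up}$ in Corollary \ref{adfhgsfhd} but replacing the Allen-Cahn approximation by a direct analysis of the sharp interfaces $\partial E_j$. As noted in Remark \ref{EstSurf}, the relevant estimates for Allen-Cahn critical points transfer to $C^2$ $s$-minimal surfaces with bounded Morse index via essentially identical proofs, only technically simpler since one works with characteristic functions $\chi_{E_j}-\chi_{E_j^c}$ throughout and the potential term drops out. I will leverage this parallelism.

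First, I would dispatch the ``soft'' items. Property (5) follows since the classes involved are defined via $L^1_{\rm loc}$ limits and the PDE/Morse-index conditions are scale-, translation-, and rotation-invariant; closedness under $L^1_{\rm loc}$ limits is automatic by a diagonal argument. Property (1) (stationarity) is preserved under $L^1_{\rm loc}$ convergence of $s$-minimal surfaces thanks to Lemma \ref{lemrescen} (its Euclidean version, working on $M=\R^n$ with the trivial chart): one passes the first-variation identity $\frac{d}{dt}|_{t=0}\mathrm{Per}_s(\psi^t_X(E_j);\B_\rho)=0$ to the limit, using the convergence of $\chi_{E_j}$ in $H^{s/2}_{\rm loc}(\R^n)$ provided by the uniform BV bound of (2). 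Property (3): any $C^2$ $s$-minimal surface is a classical (hence viscosity) solution of the zero nonlocal mean curvature equation by the standard computation of Euler-Lagrange; the viscosity property passes to the $L^1_{\rm loc}$ (and hence, by density estimates, local Hausdorff) limit by the exact argument of Proposition \ref{prop:viscosity}, which only uses the NMS equation in the viscosity sense for the sequence and the comparison with the barrier $v_{\varepsilon,\tau}$ (here the potential term is absent and the barrier is directly a half-space indicator).

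Property (2), the uniform BV estimate, is the key quantitative step. The proof of Theorem \ref{BVEst} goes through with $u$ replaced by $\chi_{E_j}-\chi_{E_j^c}$: one tests the (almost-)stability inequality with $\xi=|\nabla_X u|$ (now a measure supported on $\partial E_j$, interpreted via $C^2$ regularity), applies the extension estimate of Lemma \ref{ext2derbound} to control $\mathcal E''(u)[\nabla_X u,\nabla_X u]$ by the extended Sobolev energy (the potential part vanishes here), and uses Simon's covering Lemma \ref{LSlemma} plus the covering Lemma \ref{morsecovering} to reduce Morse index $\leq m$ to $(m+1)$-almost-stability at small scales. Property (4), the almost-stability inequality, follows from Lemma \ref{improvineq1}, whose proof is purely formal in terms of the second variation quadratic form and works identically for the $s$-perimeter second variation (formula \eqref{2ndvar} with the potential term omitted); one then passes to the $L^1_{\rm loc}$ limit exactly as in Lemma \ref{lemasbup}, using Lemma \ref{lemrescen}(2) to upgrade convergence of the quadratic forms $Q_{E_j}\to Q_E$.

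The main obstacle is property (6): showing that a cone $E=\widetilde E\times\R^{n-2}\in\mathcal A'_m(\R^n)$ must be a half-space. The $\mathcal A_m^{Blow-up}$ analog (Lemma \ref{2Dcone}) used the Bochner-type computation of Step 1 applied to the Allen-Cahn extension $U_j$ of approximating Allen-Cahn solutions, combined with the blow-down $C^2_{\rm loc}$ convergence of $U_j$ and the fact that $\int \mathcal A^2(U_\circ)=\infty$ for a nontrivial $2$D cone extended by $\R^{n-2}$. The plan is to carry out the same Bochner argument directly on the $s$-harmonic extensions $U_j$ of $\chi_{E_j}-\chi_{E_j^c}$: the almost-stability inequality for the extension, combined with testing by $\xi=c\eta$ with $c=|\nabla U_j|$, yields
\[
\int_{\widetilde B_{1/2}^+} \mathcal A^2(U_j)\, z^{1-s}\,dV\,dz \le C,
\]
uniformly in $j$. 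One then needs $U_j\to U_\circ$ locally in $C^2$ away from $\{z=0\}$; this follows from standard interior regularity for the degenerate-elliptic extension equation applied to the uniformly bounded $U_j$, using the BV bound of (2) to control the weighted Sobolev energy. Passing to the limit and then using the $0$-homogeneity of $U_\circ$ combined with nontriviality of $E$ to force $\int \mathcal A^2(U_\circ)=\infty$ yields the contradiction, exactly as in Step 2 of Lemma \ref{2Dcone}. The delicate point (and the anticipated difficulty) is justifying the Bochner identity and its integration by parts on the sharp-interface extension $U_j$ despite the lack of smoothness across $\{z=0\}\cap\partial E_j$; this is handled by exploiting the $C^2$ regularity of $\partial E_j$ (so $U_j$ is smooth in a neighborhood of regular boundary points of the interface after the boundary condition is resolved) together with a cutoff argument away from the singular contact set, much as in \cite{Cabre-Sanz}.
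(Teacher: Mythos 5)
Your overall plan of mirroring the Allen--Cahn arguments for the sharp interfaces is the right high-level idea, and your treatment of properties (1), (3), (5) is essentially the same as the paper's. However, for properties (2), (4), and (6) there is a genuine gap in your technical route, which the paper avoids by a key switch that you do not make.

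The gap concerns which second-variation formula is used. You propose to run the BV and almost-stability arguments by inserting $\xi=|\nabla_X u|$ with $u=\chi_{E_j}-\chi_{E_j^c}$ into the Allen--Cahn-type second variation \eqref{2ndvar}. But for a characteristic function $u$, $\nabla_X u$ is a measure supported on $\partial E_j$, and for $s\in(0,1)$ a measure concentrated on a hypersurface has infinite $H^{s/2}(\R^n)$ seminorm; the quantity $\mathcal E''(u)[\xi,\xi]$ in \eqref{2ndvar} is then $+\infty$ and the stability inequality gives no information. The ``interpreted via $C^2$ regularity'' parenthetical does not resolve this: it is the ambient fractional norm that blows up, not the regularity of the interface. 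The paper sidesteps this entirely by switching to the \emph{geometric} second-variation formula for the fractional perimeter on the hypersurface $\partial E$ (Theorem 6.1 and Remark 6.2 in \cite{FFMMM}), whose test objects are scalar functions $\phi\in C^1_c(\partial E)$ (typically $\phi = X\cdot\nu_{\partial E}$), and whose quadratic form involves a double integral over $\partial E\times\partial E$, which is finite. This is the crucial move for (2) and (4), and it is absent from your proposal. The same issue recurs for (6): your plan to reproduce the Bochner computation of Lemma \ref{2Dcone} Step 1 on the $s$-harmonic extensions $U_j$ of $\chi_{E_j}-\chi_{E_j^c}$ requires differentiating the boundary equation $\beta_s (z^{1-s}\partial_z U_j)(\,\cdot\,,0^+)=0$ along $\{z=0\}$ and testing with $c=|\nabla U_j|$, but $U_j$ has a jump in its trace across $\partial E_j$, so $|\nabla U_j|$ is not an admissible test function near $\{z=0\}\cap\partial E_j$ and the integration by parts needed for the Bochner identity fails there. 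You flag this as a ``delicate point'' handled by cutting off, but cutting off does not remove the boundary contribution since the blow-up is exactly at the interface. The paper instead proves (6) by adapting Lemma 7.7 in \cite{Stable}, which is a Simons-type inequality derived directly from the geometric second variation of the fractional perimeter on the hypersurface (using translation invariance and a test function built from the normal), avoiding the extension altogether; the almost-stability error term is then bounded by the BV estimate. To repair your proposal, you would need to replace the Allen--Cahn style second variation and the extension Bochner computation with the hypersurface second-variation machinery throughout.
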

\begin{proof}
For $E$ an $s$-minimal surface of class $C^2$ and Morse index at most $m$ in $\R^n$:
\begin{itemize}
    \item A uniform BV estimate holds, which one can see by considering the proof for stable $s$-minimal surfaces in \cite{CSV} and adapting it to the finite Morse index case using the ideas in the present paper, or by directly interpreting the arguments in the present paper for $s$-minimal surfaces of class $C^2$ and Morse index at most $m$ in $\R^n$ instead of for solutions of the Allen-Cahn equation. More precisely, the $C^2$ assumption allows one to use the geometric formula on $\R^n$ for the second variation of the fractional perimeter (see Theorem 6.1 and Remark 6.2 in \cite{FFMMM}) instead of the second-variation formula for the Allen-Cahn energy, with which one first proves an almost-stability inequality like the one in Lemma \ref{asineq}. Afterwards, one shows the BV estimate arguing as in Section \ref{BVSection}. See \cite[Theorem 5.4]{Florit} for a full proof.
    
    \item The surface $E$ is a viscosity solution of the NMS equation, since it is stationary and of class $C^2$ (in particular, its nonlocal mean curvature is well defined and equal to $0$ at every boundary point).

    \item The almost-stability inequality (4) is proved exactly like the one in Lemma \ref{improvineq1} considering the formula for the second variation of the fractional perimeter and test functions $\xi_i=X_i\cdot\nu_{\partial E}$ instead, where $\nu_{\partial E}$ denotes the outer normal vector. 
\end{itemize}
If we then consider $E$ to be any element of $\mathcal{A}_m'(\R^n)$, not necessarily of class $C^2$, by definition we can approximate it with $E_i$ satisfying the above. This readily shows that $E$ inherits the properties of the $E_i$, which proves (1)-(5) for the class $\mathcal{A}_m'(\R^n)$. Regarding property (6), concerning the classification of cones in $\mathcal{A}_m'(\R^n)$ with $n-2$ directions of translation invariance, it is proved as in Lemma 7.7 in \cite{Stable}, the latter dealing with limits of Allen-Cahn solutions in the stable case and in $\R^n$. Instead of the inequality (7.17) in \cite{Stable}, which is stated for solutions of Allen-Cahn, one considers the second variation formula for the fractional perimeter. The almost-stability inequality (4) together with the assumption of having at least one direction of translation invariance (recall that we are assuming $n\geq 3$) then results in an inequality analogous to (7.17) in \cite{Stable}, with an additional term coming from the assumption of almost-stability (instead of stability) but which is immediately seen to be bounded thanks to the BV estimate.
\end{proof}

\begin{proof} [Proof of Theorem \ref{BernsteinIntro}]
Thanks to Proposition \ref{good2}, we can apply Theorem \ref{A-CBernstein} to the family $\mathcal{A}_m'(\R^n)$ and conclude the result.
\end{proof}

\appendix

\section{Hölder estimates for nonlocal equations}

\begin{lemma}\label{fracholest} Let $v: \R^n \to \R$ be Lipschitz function with $\| v\|_{L^\infty (\R^n)}\le C_\circ$ satisfying $|L v(x)| \le C_\circ $ for every $x \in \B_1(0)$, where $L$ is an integro-differential operator of order $s\in (0,1)$ of the integral form 
\begin{equation*}
    Lu(x) = \int_{\R^n} (u(x)-u(y))K(x, y) \, dxdy \,,
\end{equation*}
and $K$ is a nonnegative kernel comparable to the one of the fractional $s$-Laplacian, that is satisfying
\begin{equation}\label{cdcdcd}
     \frac{c}{|y|^{n+s}} \le K(x, x-y) \le \frac{C}{|y|^{n+s}}  \s \forall \, x,y\in \R^n ,
\end{equation}
for some constants $c,C>0$. Then

\begin{equation}\label{78uijky7}
    [v]_{C^{\alpha}(\B_{1/2}(0))} \le C(n,s) C_\circ \,,
\end{equation}
for some small positive $\alpha=\alpha(n,s)$. 
    
\end{lemma}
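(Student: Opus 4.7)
\medskip

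\noindent \textbf{Proof plan for Lemma \ref{fracholest}.}

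My approach follows the classical Silvestre-type strategy for Hölder regularity of integro-differential operators with kernels comparable to that of the fractional Laplacian: establish a \emph{diminish-of-oscillation} estimate via a barrier/weak-Harnack argument, then iterate it on dyadically shrinking balls to obtain the $C^{\alpha}$ bound. The Lipschitz assumption on $v$ and the global $L^\infty$ bound ensure that $Lv(x)$ is defined pointwise for all $x\in \B_1$ (near $x$ we use Lipschitz regularity to control the singularity at $y=x$ by \eqref{cdcdcd}, while the tail converges by the global $L^\infty$ bound and the decay $|y|^{-n-s}$ of the kernel). After the preliminary reduction of dividing $v$ by a suitable multiple of $C_\circ$, we may assume $\|v\|_{L^\infty(\R^n)}\le 1$ and $|Lv|\le \varepsilon_0$ in $\B_1$, for any fixed small $\varepsilon_0>0$ (rescale $v(x)\mapsto v(rx)$ if needed, noting that the ellipticity constants of $L$ are preserved up to factors depending only on $s$).

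The heart of the argument is the following \emph{point estimate}: there exist $\theta\in (0,1)$ and $\varepsilon_0>0$, depending only on $n$, $s$ and the constants $c,C$ of \eqref{cdcdcd}, such that if $w:\R^n\to \R$ satisfies $-1\le w\le 1$ in $\R^n$, $|Lw|\le \varepsilon_0$ in $\B_1$, and
\[
\big|\{w\le 0\}\cap \B_1\big| \ge \tfrac{1}{2}|\B_1|,
\]
then $w\le 1-\theta$ in $\B_{1/2}$. To prove this I would construct a radial barrier $\Phi\in C^2(\R^n\setminus \B_{1/4})$ with $\Phi\equiv 0$ in $\B_{1/4}$, $\Phi\le 1-\theta$ in $\B_{1/2}$, $\Phi\ge 1$ outside $\B_1$, and such that $L\Phi\ge c_1>0$ on $\B_{1/2}\setminus \B_{1/4}$; such barriers are standard and their construction uses only the two-sided bound \eqref{cdcdcd}, since the lower bound on $K$ yields positive contributions from the region $\{y:|y|\sim 1\}$ where $\Phi$ jumps upward. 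Then the argument is a comparison: if $w>1-\theta$ somewhere in $\B_{1/2}$, slide $\Phi$ from below until it touches $w$ at some point $x_\ast \in \overline{\B_{1/2}}$; evaluating $Lw(x_\ast)\ge L\Phi(x_\ast)+L[w-\Phi](x_\ast)$, using $(w-\Phi)(x_\ast)=\min$ and the negative contribution of $w-\Phi\le 0$ on the set $\{w\le 0\}\cap \B_1$ (which has measure $\ge |\B_1|/2$), one deduces $Lw(x_\ast)\ge c_1-C\varepsilon_0' $ for some $c_1,\varepsilon_0'>0$, contradicting $|Lw|\le \varepsilon_0$ for $\varepsilon_0$ small enough.

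Granted the point estimate, the diminish-of-oscillation lemma follows: for any $x_0\in \B_{1/2}$ and $r>0$ with $\B_{2r}(x_0)\subset \B_1$, set
\[
w(y):=\frac{2}{\osc_{\B_{2r}(x_0)}v}\Big(v(x_0+ry)-\tfrac{1}{2}(\sup_{\B_{2r}(x_0)}v+\inf_{\B_{2r}(x_0)}v)\Big)
\]
so that $|w|\le 1$ in $\B_2$; truncating $w$ outside $\B_2$ gives a globally bounded function to which the point estimate applies (the truncation introduces a controlled additional error in $Lw$ via the tail, absorbed if $\osc_{\B_{2r}(x_0)}v \ge C r^s$). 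Whichever of $\{w\le 0\}$ or $\{w\ge 0\}$ has measure $\ge |\B_1|/2$ in $\B_1$, the point estimate yields
\[
\osc_{\B_{r/2}(x_0)}v \le (1-\tfrac{\theta}{2})\osc_{\B_{2r}(x_0)}v + C C_\circ\,r^s,
\]
from which a standard geometric iteration on scales $r_k=2^{-k}$ produces $\osc_{\B_r(x_0)}v \le C C_\circ r^{\alpha}$ with $\alpha=\min(s,-\log_2(1-\theta/2))$, giving \eqref{78uijky7}.

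The main obstacle is the barrier construction and the quantitative nonlocal comparison used in the point estimate: one needs that $L\Phi$ stays \emph{uniformly} positive on $\B_{1/2}\setminus \B_{1/4}$ using only the two-sided pointwise bound on $K$, without any regularity of $K$ itself; the lower bound in \eqref{cdcdcd} is what makes this work, since the positive contribution comes from the far annulus where $\Phi$ is strictly larger than at the touching point. Everything else is a bookkeeping exercise in rescaling and truncation, combined with the standard Campanato-type iteration.
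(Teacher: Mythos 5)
Your overall strategy — a barrier-based diminish-of-oscillation lemma iterated at dyadic scales — is the standard Krylov--Safonov/Silvestre route, and with care it does give \eqref{78uijky7}. Note, though, that the paper does not reprove anything: it simply invokes Theorem~5.1 of \cite{silreg}, remarking that the global-scale hypothesis there need only be checked at scales $r\in(0,1)$, which \eqref{cdcdcd} supplies. So you are effectively reconstructing the cited theorem from scratch rather than appealing to it.

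As written, however, the central comparison step has a sign error that would make it fail. You describe a barrier $\Phi$ with $\Phi\equiv 0$ in $\B_{1/4}$, increasing to $\ge 1$ outside $\B_1$, and you slide it \emph{from below} to a point $x_\ast$ where $w-\Phi$ attains a \emph{minimum}. For the operator $Lu(x)=\int(u(x)-u(y))K(x,y)\,dy$ with $K\ge 0$, a global minimum of $w-\Phi$ at $x_\ast$ forces $L(w-\Phi)(x_\ast)\le 0$, hence $Lw(x_\ast)\le L\Phi(x_\ast)$: an \emph{upper} bound, which does not contradict $|Lw|\le\varepsilon_0$ when combined with a claimed lower bound $L\Phi\ge c_1>0$. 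Moreover that lower bound cannot hold on all of $\B_{1/2}\setminus\B_{1/4}$ for your barrier: at any $x$ with $\Phi(x)=0$ (just past $\partial\B_{1/4}$) one has $L\Phi(x)=-\int\Phi(y)K(x,y)\,dy<0$, since $\Phi\ge 0$ and is strictly positive somewhere. The standard argument runs in the opposite direction: pick a smooth bump $\beta$ with $\chi_{\B_{1/2}}\le\beta\le\chi_{\B_{3/4}}$ and set $v:=w+\theta\beta$. If $\sup_{\B_{1/2}}w>1-\theta$ then $v$ attains a \emph{global maximum} $>1$ at some interior point $x_\ast\in\B_{3/4}$; globality of the maximum together with $K\ge 0$ gives $Lv(x_\ast)\ge 0$, and restricting the defining integral to the set $\{w\le 0\}\cap\B_1$ (on which $v\le\theta$, while $v(x_\ast)>1$, and $K(x_\ast,y)\ge c$) gives $Lv(x_\ast)\ge c_2>0$; on the other hand $Lv(x_\ast)=Lw(x_\ast)+\theta L\beta(x_\ast)\le \varepsilon_0+C\theta$, a contradiction for $\theta$ and $\varepsilon_0$ small. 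Once this touching direction and the role of the measure set are fixed, the truncation bookkeeping and the geometric iteration you describe go through.
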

\begin{proof}
The result is a standard consequence of  \cite[Theorem 5.1]{silreg}. Let us point out that Theorem 5.1 in \cite{silreg} would seem to require assumption \cite[(2.2)]{silreg} to hold for all $r>0$. However, it is clear from its (very short) proof that \eqref{78uijky7} only requires assumption \cite[(2.2)]{silreg} to be verified at ``small'' scales $r\in (0,1)$ 
(and in our setting this can be easily verified using \eqref{cdcdcd}).
\end{proof}

\subsection*{Acknowledgements} 
M.C. would like to thank the FIM (Institute for Mathematical Research) at ETH Z\"{u}rich for the wonderful hospitality during his many stays in 2022-2023. J.S. is supported by the European Research Council under Grant Agreement No 948029. E.F-S has been partially supported
by the CFIS Mobility Grant, the MOBINT-MIF Scholarship from AGAUR, and the support of a fellowship from ”la Caixa” Foundation (ID 100010434)”. The
fellowship code is “LCF/BQ/EU22/11930072”.


\end{document}